\newtheorem{theorem}{{Theorem}}[section]
\newtheorem{lemma}[theorem]{{Lemma}}
\newtheorem{definition}[theorem]{{Definition}}
\newtheorem{korollar}[theorem]{{Corollary}}
\numberwithin{equation}{section}
\newcommand{\Sp}{\mathbb S}
\newcommand{\N}{\mathbb{N}}
\newcommand{\R}{\mathbb{R}}
\newcommand\extrafootertext[1]{%
    \bgroup
    \renewcommand\thefootnote{\fnsymbol{footnote}}%
    \renewcommand\thempfootnote{\fnsymbol{mpfootnote}}%
    \footnotetext[0]{#1}%
    \egroup
}
\begin{document}
\author{Jan-Henrik Metsch
\thanks{Corresponding author: J.-H. Metsch, Department of Mathematics, University of Freiburg, Germany ({\tt\small jan.metsch@math.uni-freiburg.de})}
}
\title{Axially Symmetric Willmore Minimizers with Prescribed Isoperimetric Ratio}
\maketitle

\begin{abstract}
    We establish the existence and smoothness of minimizers of the Willmore energy among axially symmetric surfaces of spherical type with prescribed isoperimetric ratio. Afterwards, we study the behavior of these minimizers as the isoperimetric ratio tends to zero.
\end{abstract}

\extrafootertext{2020 Mathematics Subject Classification: Primary: 49Q10, 35B07, Secondary: 35A15, 35B65}
\extrafootertext{Keywords:  Willmore Functional, Isoperimetric Constraint, Direct Method, Singular Limit}

\section{Introduction}
The isoperimetric ratio $\mathcal I(f)$ of a smooth embedding $f:\Sp^2\rightarrow\R^3$ is defined as
$$\mathcal I[f]:=6\sqrt\pi\frac{V[f]}{A[f]^{\frac32}},$$
where $A[f]$ and $V[f]$ denote the surface area of $f$ and the volume enclosed by $f$, respectively. By the classical isoperimetric inequality 
$$V[f]\leq\frac1{6\sqrt\pi} A[f]^{\frac32}$$
we have $\mathcal I(f)\in(0,1]$ with $\mathcal I(f)=1$ occurring only for round spheres. For $\sigma\in(0,1]$, Schygulla \cite{schygulla} introduces the problem of minimizing the Willmore energy 
$$\mathcal W[f]:=\frac14\int_{\Sp^2} H[f]^2 d\mu_f$$
under all embeddings $f\in C^\infty(\Sp^2,\R^3)$ with $\mathcal I[f]=\sigma$. This problem is motivated by the models for the shape of blood cells by  Canham \cite{canham} and Helfrich \cite{helfrich}, who independently proposed to model the bending energy of a blood cell by a quadratic expression in its curvature. Helfrich's article \cite{helfrich} also motivates the isoperimetric constraint: He prescribes the area and the volume. However, as the Willmore energy is invariant under scaling, these two constraints reduce to prescribing the isoperimetric ratio. By Hopf's theorem, the constraint of prescribed isoperimetric ratio $\mathcal I[f]=\sigma$ is nondegenerate for $\sigma<1$ so that by Lagrange's theorem, the minimizers satisfy the Euler-Lagrange equation 
\begin{equation}\label{IntroPDE}
\Delta_g H+\frac12 H(H^2-4K)=\Lambda\sigma\left(\frac1{V[f]}-\frac{3 H}{2A[f]}\right),
\end{equation}
where $H$ and $K$ denote the mean and the Gauß curvature of $f$ respectively.  In this article, we investigate the axially symmetric case. For $\sigma\in(0,1]$, we introduce 
$$\mathcal S_\sigma:=\set{f\in C^\infty(\Sp^2,\R^3)\textrm{ embedding }|\ \mathcal I[f]=\sigma\textrm{ and }f\circ T=f\textrm{ for all }T\in\operatorname{SO}(2)}.$$
Additionally, we let $\beta(\sigma):=\inf_{f\in\mathcal S_\sigma}\mathcal W[f]$. A convenient characterization of the surfaces in $\mathcal S_\sigma$ is achieved using their \emph{profile curves}. Indeed, after translation, every $f\in\mathcal S_\sigma$ is of the form 
\begin{equation}\label{introrevolutioneq}
f_\gamma:[0,1]\times \Sp^1\rightarrow\R^3,\ f_\gamma(s,\omega):=\begin{bmatrix}
\gamma_1(s)\omega\\
\gamma_2(s)
\end{bmatrix}
\end{equation}
where $\gamma\in\mathcal F_\sigma:=\set{\gamma\in\mathcal P\ |\ \mathcal I[f_\gamma]=\sigma}$ and $\mathcal P$ denotes the set of admissible profile curves
$$\mathcal P:=\left\{
\gamma\in C^\infty([0,1],\R^2)\ \big|\ 
\begin{array}{l}
     \gamma_1(0)=\gamma_1(1)=0\textrm{ and } \gamma_1(t)>0 \textrm{ for all }t\in(0,1)
\end{array}
\right\}.$$
Conversely, using Equation \eqref{introrevolutioneq}, any $\gamma\in\mathcal P$ can be assigned a surface of spherical type in $\R^3$ so that we can identify $f_\gamma$ and $\gamma$. 

\begin{theorem}\label{theorem1}
For all $\sigma\in(0,1]$, there exists $f\in\mathcal S_\sigma$ such that $\mathcal W[f]= \beta(\sigma)$. Moreover, $\beta(\sigma)< 8\pi$ for all $\sigma\in(0,1]$ and $\beta(\sigma)\rightarrow 8\pi$ as $\sigma\rightarrow 0^+$. 
\end{theorem}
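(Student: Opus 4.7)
I would prove the three assertions in the order (i) the strict bound $\beta(\sigma)<8\pi$, (ii) existence of a minimizer, (iii) $\beta(\sigma)\to 8\pi$ as $\sigma\to 0^+$. The ordering is dictated by the fact that (i) is precisely what makes the direct method work: by the Li--Yau inequality, any embedded closed surface has Willmore energy at least $4\pi$, and a ``pinched'' surface (two bubbles meeting at a point) has Willmore energy at least $8\pi$, so a minimizing sequence with $\mathcal W<8\pi-\delta$ cannot develop such a pinch in the limit.

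\textbf{Upper bound.} The round sphere gives $\beta(1)\leq 4\pi$. For $\sigma\in(0,1)$ I would construct a continuous family $\{\gamma_\tau\}_{\tau\in[0,1)}\subset\mathcal P$ of embedded axisymmetric spheres with $\mathcal I[f_{\gamma_0}]=1$, $\mathcal I[f_{\gamma_\tau}]\to 0$ as $\tau\to 1$, and $\mathcal W[f_{\gamma_\tau}]<8\pi$ throughout, with $\mathcal W[f_{\gamma_\tau}]\to 8\pi$ at the endpoint. A natural candidate is a family of smooth ``dumbbells'', two round spheres joined by a catenoidal neck whose thickness shrinks to zero. Since two disjoint round spheres realize $\mathcal W=8\pi$ exactly, the delicate point is to arrange the neck together with slight perturbations of the adjacent spheres so that the combined Willmore energy stays strictly below $8\pi$ for all $\tau<1$. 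The intermediate value theorem then provides a competitor for every $\sigma\in(0,1]$, and the same family gives $\limsup_{\sigma\to 0^+}\beta(\sigma)\leq 8\pi$.

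\textbf{Existence.} Take a minimizing sequence $\gamma_n\in\mathcal F_\sigma$ with $\mathcal W[f_{\gamma_n}]\leq 8\pi-\delta$. Using scale invariance of $\mathcal W$ and $\mathcal I$ normalize $A[f_{\gamma_n}]=1$, and reparametrize with constant speed $|\gamma_n'|\equiv L_n$. A diameter estimate of Simon type bounds $L_n$, while the Willmore bound controls the plane curvature of $\gamma_n$ in $L^2$ with weight $\gamma_{n,1}$ (use $\mathcal W[f_\gamma]=\tfrac{\pi}{2}(\int\kappa_1^2\gamma_1\,ds+\int\kappa_2^2\gamma_1\,ds)+2\pi$, a consequence of Gauss--Bonnet for the axisymmetric sphere). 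This yields uniform $W^{2,2}$-bounds on compact subsets of $(0,1)$ and a $C^{1,\alpha}_{\mathrm{loc}}$-convergent subsequence with limit $\gamma_\infty$. The crucial step is to show $\gamma_\infty\in\mathcal P$: no interior zero of $\gamma_{\infty,1}$ appears, and the limit extends properly to the poles. An interior zero would split the limit into two axisymmetric bubbles with total Willmore energy at least $8\pi$ by Li--Yau, contradicting lower semicontinuity. Smoothness at the poles is then a regularity question handled through the axisymmetric form of the Euler--Lagrange equation \eqref{IntroPDE}. Continuity of $\mathcal I$ along the convergence gives $\gamma_\infty\in\mathcal F_\sigma$, hence a minimizer.

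\textbf{The limit and the main obstacle.} The $\limsup$ direction was built into step (i); what remains is $\liminf_{\sigma\to 0^+}\beta(\sigma)\geq 8\pi$. Suppose instead $\beta(\sigma_k)\leq 8\pi-2\delta$ along some $\sigma_k\to 0$; the minimizers from (ii) then have uniformly bounded Willmore energy while $\mathcal I\to 0$, so after scale normalization their enclosed volume tends to zero. The natural dichotomy is: either $\gamma_{k,1}$ approaches the axis at some interior point (an incipient neck-pinch, which by Li--Yau drives $\mathcal W$ up to $8\pi$), or the surfaces remain bounded away from the axis and converge to a non-degenerate limit with positive volume, contradicting $\mathcal I\to 0$. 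The main obstacle I anticipate is making this dichotomy quantitative, in particular ruling out thin ``slab'' configurations where $\mathcal I\to 0$ without either a clear neck-pinch or a curvature blow-up. I expect the one-dimensional, ODE-type nature of axisymmetric profile curves, together with Li--Yau applied to small spheres centered on the axis near a near-pinch, to be what ultimately closes the argument.
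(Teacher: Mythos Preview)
Your overall architecture matches the paper: prove $\beta(\sigma)<8\pi$ first, then run the direct method using Li--Yau to exclude pinching, then establish the limit $\beta(\sigma)\to 8\pi$. Two points deserve comment.

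\textbf{Upper bound.} Your dumbbell family is the right picture, but making ``two spheres joined by a thin neck with $\mathcal W<8\pi$'' rigorous is exactly the delicate part. The paper sidesteps the neck estimates: it takes the \emph{inverted catenoid}, which is an axisymmetric $C^{1,\alpha}$ sphere with $\mathcal W=8\pi$ and a pair of graphical sheets meeting at the pole, perturbs the two sheets apart by $\pm\epsilon\varphi$ to get a smooth embedded sphere with $\mathcal W<8\pi$ (the first variation at the singular point is nonzero), and then runs the Willmore flow from this surface. The flow exists globally, converges to a round sphere, and preserves axial symmetry by uniqueness; hence $\mathcal I$ sweeps through every value in $(\mathcal I(\text{start}),1)$ while $\mathcal W$ decreases. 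This is cleaner than controlling a dumbbell family by hand.

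\textbf{The liminf obstacle.} You correctly locate the gap: after normalizing area, the limit $\gamma^*$ has $V[\gamma^*]=0$, and you need to rule out a non-pinched limit with vanishing volume. Your dichotomy ``pinch vs.\ positive volume'' is incomplete because a third possibility survives: $\gamma^*\in\mathcal P$ (no interior zero of $\gamma^*_1$) but $\gamma^*$ is not injective, so the surface is immersed with a double point and the signed volume can vanish. The paper closes this as follows. A double point forces $\mathcal W[\gamma^*]\ge 8\pi$ by the Li--Yau/monotonicity inequality; combined with $\mathcal W[\gamma^*]\le 8\pi$ from lower semicontinuity you are in the \emph{equality} case of the monotonicity formula, which the paper classifies (Lemma~\ref{SphereOrInvertedCat}): the surface must be a round sphere or an inverted catenoid. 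Both enclose positive volume, contradicting $V[\gamma^*]=0$. Hence the limit must pinch, i.e.\ $\gamma^*\notin\mathcal P$, and then Li--Yau on each bubble gives $\mathcal W[\gamma^*]=8\pi$ and $\beta(\sigma_k)\to 8\pi$. So the missing ingredient in your outline is precisely the classification of the equality case in the monotonicity formula, not a further quantitative ``thin slab'' estimate.
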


The proof of \ref{theorem1} is split into three steps. First, the existence of a weak minimizing profile $\gamma\in W^{1,2}((0,1))\cap W^{2,2}_{\operatorname{loc}}((0,1))$ is established using the direct method of the calculus of variations. The arguments used here follow those executed by Choski and Venernoni in \cite{choskiveneroni}. Next, the smoothness of $\gamma$ on $(0,1)$ -- away from the axis of rotation --  is established using elliptic regularity arguments. 
To study regularity at the axis of rotation, a careful analysis of the local graph representation 
$$\set{(r\omega, u(r))\ |\ r\in[0, r_0),\ \omega\in \mathbb S^1}$$
of $f_\gamma$ near the axis of rotation is required. For the unconstrained Willmore equation, such analysis has been carried out in \cite{chenODE} by Chen and Li. We adapt their analysis to prove
\begin{equation}\label{introsingularsolution}
u(r)=\lambda r^2\ln(r)+C^2([0, r_0)).
\end{equation}
The parameter $\lambda$ is a residue, as has been studied by Kuwert and Schätzle in \cite{removability} and cannot be eliminated by ODE arguments alone, as the Euler-Lagrange equation \eqref{IntroPDE} allows for singular solutions of the form in Equation \eqref{introsingularsolution}.\\

After proving the existence of smooth minimizers, we investigate their behavior in the limit $\sigma\rightarrow0^+$. This has already been done by Schygulla \cite{schygulla}. He proved that as $\sigma\rightarrow0^+$, his minimizers converge to a double sphere in the sense of measures. In \cite{kuwertli}, this result was later refined by Kuwert and Li, who prove that asymptotically, Schygulla's minimizers look like two spheres joined by a catenoidal neck. We prove that our axially symmetric minimizers display the same behavior. Restricting to the class 
$$\mathcal F^{0+}_\sigma:=\left\{\gamma\in\mathcal F_\sigma,\ \bigg|\ \gamma(0)=(0,0),\ A[f_\gamma]=1\textrm{ and }\int_0^1 \gamma_2(t)dt\geq 0\right\},$$
we eliminate the invariance of the problem under vertical translation, scaling and reflection -- thereby out ruling the trivial obstructions to convergence. Characterizing the solutions to the Euler-Lagrange equation in the limit $\sigma\rightarrow0^+$ we then establish:

\begin{theorem}[Convergence to a Double Sphere]\label{doublesphereconvergence}
    Let $(\sigma_k)\subset(0,1]$ be a sequence of isoperimetric ratios $\sigma_k\rightarrow 0^+$ and $(\gamma^{(k)})_k$ be a sequence of profile curves $\gamma^{(k)}\in\mathcal F^{0+}_{\sigma_k}$ satisfying $\beta(\sigma_k)=\mathcal W[f_{\gamma^{(k)}}]$. Then as $k\rightarrow\infty$,
    $$\gamma^{(k)}(t)\rightarrow\kappa(t):=(0,R)+R(|\sin(2\pi t)|, -\cos(2\pi t))\hspace{.5cm}\textrm{with }R:=\frac{1}{\sqrt{8\pi}}$$
   in $C^\infty([0,1]\backslash \{\frac12\})$ and in $W^{1,2}((0,1))$.
\end{theorem}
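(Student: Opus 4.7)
The proof proceeds in three stages: compactness of the minimizing profiles, identification of the limit via the singular limit of the Euler-Lagrange equation, and an upgrading step.

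First I extract a weak limit. Since $\beta(\sigma_k)<8\pi$ by Theorem~\ref{theorem1}, together with $A[f_{\gamma^{(k)}}]=1$ and the anchor $\gamma^{(k)}(0)=(0,0)$, the axially symmetric expression for the Willmore functional yields uniform $W^{1,2}((0,1))$-bounds on $\gamma^{(k)}$. Along a subsequence, $\gamma^{(k)}\rightharpoonup\gamma^*$ weakly in $W^{1,2}((0,1);\R^2)$ and uniformly on $[0,1]$, with $\gamma^*(0)=(0,0)$, $\gamma^*_1\geq 0$, $A[f_{\gamma^*}]\leq 1$ and $\int_0^1\gamma^*_2\,dt\geq 0$.

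Next I identify $\gamma^*$ with $\kappa$. The constraint $\mathcal I[f_{\gamma^{(k)}}]=\sigma_k$ combined with $A_k=1$ reads $V[f_{\gamma^{(k)}}]=\sigma_k/(6\sqrt\pi)\to 0$, and weak continuity of the axially symmetric volume functional forces $V[f_{\gamma^*}]=0$. Lower semicontinuity of $\mathcal W$ under weak $W^{2,2}$-convergence on any subset of $(0,1)$ where $\gamma^*_1$ is uniformly positive, combined with Li-Yau multiplicity bounds accounting for mass that might concentrate on the axis, gives $\mathcal W[f_{\gamma^*}]\leq 8\pi$. The key technical step is passing to the limit in \eqref{IntroPDE}: testing the Euler-Lagrange equation against generators of translational and scaling invariance (first-variation identities) yields explicit expressions for the Lagrange multiplier $\Lambda_k$ as curvature integrals controlled by $\mathcal W[f_{\gamma^{(k)}}]$, hence uniform bounds on $\Lambda_k$. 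The limit is then a generalized axially symmetric Willmore surface of unit area with vanishing enclosed volume and Willmore energy at most $8\pi$; axial symmetry and the Kuwert-Li analysis \cite{kuwertli} force it to be a pair of round spheres of radius $R=1/\sqrt{8\pi}$ tangent along the axis, and the normalizations $\gamma^*(0)=(0,0)$ and $\int_0^1\gamma_2^*\,dt\geq 0$ select $\gamma^*=\kappa$.

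Finally, I upgrade the convergence. On any $K\Subset[0,1]\setminus\{\tfrac12\}$, $\kappa_1$ is uniformly positive; hence so is $\gamma^{(k)}_1$ for large $k$ by $C^0$-convergence, making \eqref{IntroPDE} a uniformly elliptic fourth-order ODE with uniformly bounded coefficients on $K$. Schauder bootstrap then yields $C^\infty_{\mathrm{loc}}([0,1]\setminus\{\tfrac12\})$-convergence. Global $W^{1,2}$-convergence follows from the weak $W^{1,2}$-convergence plus convergence of norms; the latter reduces to convergence of arc lengths, which I would establish by combining the $C^\infty$-convergence on compacta with an area-cutoff argument showing that the neck region around $t=\tfrac12$ contributes vanishing length in both $\gamma^{(k)}$ and $\kappa$.

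The main obstacle is the second step: controlling $\Lambda_k$ in the degenerate regime $\sigma_k,V_k\to 0^+$ and carefully classifying the limit configurations so as to rule out other admissible $W^{1,2}$-limits — e.g. two disjoint round spheres on the axis joined by a vanishing catenoidal neck — requires the full axial Euler-Lagrange machinery and will constitute the technical core of the argument.
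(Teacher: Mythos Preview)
Your upgrade step contains a genuine gap. You assert that on any $K\Subset[0,1]\setminus\{\tfrac12\}$ the function $\kappa_1$ is uniformly positive, but $\kappa_1(0)=\kappa_1(1)=0$, so this fails for any $K$ touching the endpoints. The profile-curve form of \eqref{IntroPDE} degenerates at the axis of rotation ($\gamma_1=0$), and interior elliptic estimates only yield $C^\infty_{\mathrm{loc}}\big((0,1)\setminus\{\tfrac12\}\big)$ convergence, not the claimed $C^\infty\big([0,1]\setminus\{\tfrac12\}\big)$. In the paper this is precisely the content of the second half of Section~\ref{doublespheresection}: after establishing the interior statement (Lemma~\ref{Lemma5point4}), one first deduces $\Lambda_k\to0$ as a \emph{consequence} of the interior convergence (Corollary~\ref{Lagrangemultipliergoto0Lemma}), passes to the graph representation $u_k$ near the axis, redoes the Chen--Li type ODE analysis of Subsection~\ref{RegularityAtAxisSubsection} \emph{uniformly in $k$} to obtain $|u_k''|\le C$ (Lemmas~\ref{lowerboundeddotgamma1Lemma}--\ref{AsymptoticsSecondDerivativeBound}), and then bootstraps (Lemma~\ref{ukfinalconvergencecorollary}). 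None of this is captured by a Schauder argument.

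Your identification step also differs from the paper and is, as you acknowledge, where the difficulty lies. The paper does not pass to the limit in the Euler--Lagrange equation and needs no a~priori control on $\Lambda_k$ to identify $\gamma^*$. Instead it argues geometrically (Lemma~\ref{Lemma5point4}): if $\gamma^*\in\mathcal P$, then $V[\gamma^*]=0$ forces a self-intersection, the monotonicity formula (Lemma~\ref{monotonicityformula}) yields $\mathcal W[\gamma^*]=8\pi$, and the equality case (Lemma~\ref{SphereOrInvertedCat}) forces $\Sigma_{\gamma^*}$ to be an inverted catenoid, contradicting $V=0$; otherwise $\gamma^*$ splits into two $\mathcal P$-curves, Li--Yau gives $\mathcal W=4\pi$ on each, hence two round spheres, and the normalizations $A=1$, $V=0$, $\gamma^*(0)=0$, $\int\gamma_2^*\ge0$ pin down $\kappa$. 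Note that ``two round spheres on the axis joined by a vanishing catenoidal neck'' is not an alternative to be ruled out---it \emph{is} the double sphere $\kappa$. Your proposed route via \cite{kuwertli} is vague; that reference treats Schygulla's (non-axisymmetric) minimizers, and the present paper is precisely establishing the axisymmetric analogue, so citing it here would be circular.
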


Since $\kappa$ is not smooth at $t=\frac12$, it is clear that the curves $\gamma^{(k)}$ have to develop some form of singularity near $t=\frac12$. We prove that near $t=\frac12$, the profile curves solve approximately the equation $H=0$. Using a suitable blow-up argument, we then establish the following theorem:   

\begin{theorem}[Formation of a Catenoidal Neck]\label{CatenoidNeckTheorem} Let $\sigma_k\rightarrow0^+$ and $\gamma^{(k)}\in\mathcal F^{0+}_{\sigma_k}$ satisfy $\mathcal{W}[f_{\gamma^{(k)}}]=\beta(\sigma_k)$. There exist $k_0\in\N$ and $\rho>0$ such that the following statements hold:
\begin{enumerate}[(1)]
    \item For $k\geq k_0$ there exists a unique $\tau_k\in[\frac18,\frac78]$ such that $\gamma_1^{(k)}(\tau_k)=\epsilon_k:=\inf_{[\frac18,\frac78]}\gamma_1^{(k)}$.
    \item $\dot\gamma_1^{(k)}<0$ on $[\tau_k-\rho,\tau_k)$ and $\dot\gamma_1^{(k)}>0$ on $(\tau_k,\tau_k+\rho]$. 
    \item The parameters from $(1)$ satisfy $\epsilon_k\rightarrow 0^+$ and $\tau_k\rightarrow\frac12$.
    \item After potentially putting
$\hat\gamma^{(k)}(t):=\left(\gamma_1^{(k)}(1-t),\ \gamma_2^{(k)}(1-t)-\gamma_2^{(k)}(1)\right)$,
it can be assumed that $\dot\gamma_2^{(k)}(\tau_k)\leq 0$ for all $k\geq k_0$ and the following convergence is in $C^\infty_{\operatorname{loc}}(\R)$:
    $$\frac1{\epsilon_k}\left(\gamma^{(k)}(\tau_k+\epsilon_k t)-(0,\gamma_2^{(k)}(\tau_k))\right)\rightarrow \left(\sqrt{1+L_*^2 t^2}, -\operatorname{asinh}(L_*t)\right)\hspace{.5cm}\textrm{with }L_*:=\sqrt{\frac\pi2}$$
\end{enumerate} 
\end{theorem}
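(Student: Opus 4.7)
Parts (1)--(3) follow without substantial difficulty from Theorem \ref{doublesphereconvergence}. The limit curve satisfies $\kappa_1(t)=R|\sin(2\pi t)|$, which vanishes in $[1/8,7/8]$ only at $t=1/2$ and is bounded below by some $c(\delta)>0$ on $[1/8,1/2-\delta]\cup[1/2+\delta,7/8]$. The Sobolev embedding $W^{1,2}((0,1))\hookrightarrow C^0([0,1])$ upgrades $\gamma^{(k)}\to\kappa$ to uniform convergence, so for $k$ large $\gamma_1^{(k)}\geq c(\delta)/2$ off $(1/2-\delta,1/2+\delta)$ while $\gamma_1^{(k)}(1/2)\to 0$. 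Any minimizer $\tau_k$ therefore eventually lies in $(1/2-\delta,1/2+\delta)$, which yields $\tau_k\to 1/2$ and $\epsilon_k\to 0$. Uniqueness in (1) and the monotonicity (2) can be deduced a posteriori from the blow-up in (4): the limit catenoid has a unique, non-degenerate minimum of its radius, and the $C^\infty_{\text{loc}}([0,1]\setminus\{1/2\})$ convergence to $\kappa$ (whose first component is strictly monotone on $[1/4,1/2]$ and on $[1/2,3/4]$) excludes additional critical points on a fixed neighborhood $[\tau_k-\rho,\tau_k+\rho]$ for any $\rho<1/4$.

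The core of the argument is (4). After reparametrizing each $\gamma^{(k)}$ by constant speed (permissible since both $\mathcal W$ and $\mathcal I$ are reparametrization-invariant), write $L^{(k)}:=|\dot\gamma^{(k)}|$. The area normalization $1=A[f_{\gamma^{(k)}}]=2\pi L^{(k)}\int_0^1\gamma_1^{(k)}\,dt$ combined with the $L^1$-convergence $\gamma_1^{(k)}\to\kappa_1$ and $\int_0^1\kappa_1\,dt=2R/\pi$ yields
$$L^{(k)}=\frac{1}{2\pi\int_0^1\gamma_1^{(k)}\,dt}\longrightarrow\frac{1}{4R}=\sqrt{\tfrac{\pi}{2}}=:L_*.$$
Define the blow-up $\tilde\gamma^{(k)}(t):=\epsilon_k^{-1}\bigl(\gamma^{(k)}(\tau_k+\epsilon_k t)-(0,\gamma_2^{(k)}(\tau_k))\bigr)$ on $[-\tau_k/\epsilon_k,(1-\tau_k)/\epsilon_k]$, which exhausts $\R$. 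By construction $\tilde\gamma^{(k)}(0)=(1,0)$, $\dot{\tilde\gamma}_1^{(k)}(0)=0$, $\tilde\gamma^{(k)}$ is constant-speed with speed $L^{(k)}$, and so $\dot{\tilde\gamma}^{(k)}(0)=(0,\pm L^{(k)})$; after the reflection allowed in (4), the sign assumption $\dot\gamma_2^{(k)}(\tau_k)\leq 0$ forces $\dot{\tilde\gamma}^{(k)}(0)\to(0,-L_*)$.

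To identify the limit, use the scale-invariance of $\mathcal W$: for any $T>0$,
$$\mathcal W\bigl[f_{\tilde\gamma^{(k)}}\big|_{[-T,T]}\bigr]=\mathcal W\bigl[f_{\gamma^{(k)}}\big|_{[\tau_k-\epsilon_k T,\tau_k+\epsilon_k T]}\bigr].$$
The two outer pieces of $\gamma^{(k)}$ approximate the two congruent spheres of $\kappa$ (each contributing $4\pi$ to the Willmore energy), so as $k\to\infty$ the neck contribution must vanish. Combining this with standard Willmore compactness for axially symmetric surfaces (curvature $L^2$-bounds yielding $W^{2,2}$-compactness of profile curves, followed by bootstrap via the Euler--Lagrange equation), one extracts a $C^\infty_{\text{loc}}(\R)$ subsequential limit $\gamma^\infty$ with zero Willmore energy, hence $H\equiv 0$. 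Since $\tilde\gamma_1^\infty(0)=1>0$, the limit is not a plane but a catenoid of minimum radius $1$; the constant-speed parametrization with speed $L_*$ and the initial data $\gamma^\infty(0)=(1,0)$, $\dot\gamma^\infty(0)=(0,-L_*)$ pin it down to $(\sqrt{1+L_*^2t^2},-\operatorname{asinh}(L_*t))$. Uniqueness of the limit lifts subsequential to full-sequence convergence.

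The main obstacle is the compactness step. The rescaled Euler--Lagrange equation reads
$$\tilde\Delta\tilde H+\tfrac12\tilde H(\tilde H^2-4\tilde K)=6\sqrt\pi\,\Lambda_k\epsilon_k^3-\tfrac{3\Lambda_k\sigma_k\epsilon_k^2}{2}\tilde H,$$
whose right-hand side vanishes on compact sets only after establishing a priori bounds $\Lambda_k=o(\epsilon_k^{-3})$ and $\Lambda_k\sigma_k=o(\epsilon_k^{-2})$ on the Lagrange multiplier. These bounds, together with ruling out loss of Willmore energy at spatial infinity in the blow-up scale, constitute the principal technical challenges; they are the axially symmetric analogues of difficulties already encountered in \cite{kuwertli}.
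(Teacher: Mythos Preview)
Your overall strategy for (4) matches the paper's, and the energy argument showing the neck carries vanishing Willmore energy is correct. However, there is a genuine gap in your argument for (1) and (2). You propose to deduce uniqueness and monotonicity a posteriori from two pieces of information: the blow-up at $\tau_k$ (which controls $[\tau_k-R\epsilon_k,\tau_k+R\epsilon_k]$ for each fixed $R$) and the $C^\infty_{\operatorname{loc}}([0,1]\setminus\{\tfrac12\})$ convergence to $\kappa$ (which controls the complement of any fixed neighborhood of $\tfrac12$). These two regimes do not overlap: a hypothetical extra critical point $s_k$ at an intermediate scale, say with $|s_k-\tau_k|=\sqrt{\epsilon_k}$, escapes both. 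The paper closes this gap differently (Lemma~\ref{catenoidLemma02}): assuming such an $s_k$ exists, one first locates a nearby local \emph{maximum} $\hat\tau_k$ of $\gamma_1^{(k)}$ with $h_k:=\gamma_1^{(k)}(\hat\tau_k)\to 0$, and blows up around $\hat\tau_k$ at scale $h_k$ (not around $\tau_k$ at scale $\epsilon_k$). The blow-up lemma still applies and yields a limit with $H\equiv 0$, but now the initial data $\dot\Gamma^*(0)=(0,\pm L_*)$ and $\ddot\Gamma_1^*(0)\leq 0$ force $H[\Gamma^*](0)\geq 1$, a contradiction.

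You also overstate the difficulty of the Lagrange multiplier bound. You identify the needed estimates $\Lambda_k=o(\epsilon_k^{-3})$ and $\Lambda_k\sigma_k=o(\epsilon_k^{-2})$ as principal technical challenges, but in fact the paper proves the much stronger statement $\Lambda_k\to 0$ almost for free (Corollary~\ref{Lagrangemultipliergoto0Lemma}): evaluate the Euler--Lagrange equation on a fixed interval $I\subset(0,\tfrac12)$ where $\gamma^{(k)}\to\kappa$ in $C^\infty$; since $W[\kappa]\equiv 0$ and $H[\kappa]$ is bounded there, solving for $\Lambda_k$ gives $\Lambda_k\to 0$ immediately. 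With $\Lambda_k\to 0$ in hand, the right-hand side of your rescaled equation is $O(\Lambda_k\epsilon_k^3)+O(\Lambda_k\sigma_k\epsilon_k^2)\tilde H\to 0$, and the compactness then follows from the quantitative regularity estimate of Theorem~\ref{GeneralRegularityTheorem} rather than from an abstract compactness principle.
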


The results from Theorems \ref{doublesphereconvergence} and \ref{CatenoidNeckTheorem} are in agreement with the numerically computed minimizers in \cite{seifert} (see Figure 9).  Using numerical methods, several authors have computed candidates for the minimizers. For an overview, we refer to \cite{seifert}. However, we mention that Helfrich has already investigated axially symmetric minimizers in the aforementioned \cite{helfrich} and in a joint work with Deuling in \cite{helfrichDeuling}. \\

The first rigorous treatment of Willmore minimizers with isoperimetric constraint was perhaps carried out in \cite{Nagasawa}, where a one-parameter family of critical points bifurcating from the sphere is constructed. After the aforemetnioned article \cite{schygulla} from Schygulla, the isoperimetric problem for surfaces of higher genus was studied in \cite{KellerIso,kusnerISO,MondinoScharrerISO}.\\

Regarding the axisymmetric case, Choski and Veneroni \cite{choskiveneroni} study systems of axisymmetric surfaces $S=(\Sigma_1,...,\Sigma_N)$ where each $\Sigma_i$ is either an embedded sphere or an embedded torus that minimize the Helfrich energy under the generalized isoperimetric constraint 
$$6\sqrt\pi\frac{V[S]}{A[S]^{\frac32}}=\sigma\in(0,1]\hspace{.5cm}\textrm{where }A[S]=\sum_{i=1}^N A[\Sigma_i]\textrm{ and }V[S]=\sum_{i=1}^N V[\Sigma_i].$$
After the initial submission of this mansucript, the author was informed that Rupp \cite{Ruppiso} recently estblished the existence of critical Willmore spheres with prescribed isoperimetric ratio by analyzing the \emph{`Willmore flow with prescribed isoperimetric ratio'}. Restricting to axially symmetric initial datums, his results establish the existence of axially symmetric solutions to the Euler-Lagrange Equation \eqref{IntroPDE}.\\

\cite{scharrer2022embedded} studies axisymmetric tori minimizing the Willmore energy with an isoperimetric constraint. \\

This article is structured as follows. Section \ref{PreliminariesSection} collects the notation and formulas we require throughout this article. In Section \ref{diectmethod}, the existence of a weak minimizer is proven, and in Section \ref{regularityawayfromaxis}, its smoothness is established. Sections \ref{doublespheresection} and \ref{catenoidNeckSection} contain the proofs of Theorems \ref{doublesphereconvergence} and \ref{CatenoidNeckTheorem}. Technical arguments are moved to the \hyperref[Appendix1]{appendix}.

\section{Preliminaries}\label{PreliminariesSection}

\paragraph{General Notation}\ \\
For $k\in\N_0\cup\set\infty$ and $U\subset\R^n$ open, we denote the set of $C^k$-functions with compact support in $U$ by $C^k_0(U)$.
The right half plane in $\R^2$ is denoted by
$$\mathcal H^2:=\set{(x_1,x_2)\in\R^2\ |\ x_1\geq 0}.$$
If $Q$ is some functional, we put 
$$\delta Q[x]v:=\frac d{dt}\bigg|_{t=0} Q[x+tv].$$

\paragraph{Immersions}\ \\
Let $f:\Sp^2\rightarrow\R^3$ be a sufficiently regular immersion and $n$ denote a regular unit normal vector field along $f$. The first and second fundamental forms are respectively denoted by
$$g_{ij}:=\langle\partial_i f,\partial_j f\rangle 
\hspace{.5cm}\textrm{and}\hspace{.5cm}
h_{ij}:=\langle\partial_{ij} f, n\rangle.$$
The surfaces measure on $\Sp^2$ induced by $f$ is denoted by $\mu_f$. In local coordinates $d\mu_f=\sqrt{\det g}dx$. Let $k_1$ and $k_2$ denote the principal curvatures of $f$ -- that is, the eigenvalues of the Weingarten operator $g^{-1}h$. The mean curvature $H$ and the Gauß curvature $K$ are given by
$$H=k_1+k_2
\hspace{.5cm}\textrm{and}\hspace{.5cm}
K=k_1k_2.$$
For example, considering the inclusion $\imath:\Sp^2\hookrightarrow\R^3$ with the interior normal field $n=-\imath$, we get $H[f]=2$ and $K[f]=1$.\\

\paragraph{Functionals}\ \\
Let $f:\Sp^2\rightarrow\R^3$ be a sufficiently regular immersion. The Willmore energy of $f$ is defined as 
\begin{equation}\label{willmoreenergydef}
\mathcal W[f]:=\frac14\int_{\Sp^2} H[f]^2 d\mu_f.
\end{equation}
For any of the two choices of continuous normal vector field $n$ along $f$, the surface area, the volume and the isoperimetric ratio of $f$ are respectively defined as
\begin{equation}\label{areavoldef}
A[f]:=\int_{\Sp^2} d\mu_f,
\hspace{.5cm}
V[f]:=\left|\int_{\Sp^2}f_2 \langle n, e_2\rangle d\mu_f\right|
\hspace{.5cm}\textrm{and}\hspace{.5cm}
\mathcal I[f]:=6\sqrt\pi\frac{V[f]}{A[f]^{\frac32}}.
\end{equation}
To justify the definition of volume, we have the following lemma:
\begin{lemma}\label{generalvolumelemma}
Let $f\in C^1(\Sp^2,\R^3)$ be an embedding. Then there exists a bounded $C^1$-domain $\Omega\subset\R^3$ with $\partial\Omega=f(\Sp^2)$ and which satisfies $|\Omega|=V[f]$.
\end{lemma}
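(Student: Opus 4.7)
\medskip
\noindent\textbf{Proof Proposal.} The plan is to produce $\Omega$ via the Jordan--Brouwer separation theorem and then to compute its volume by applying the divergence theorem to a vector field with unit divergence.

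Since $f \in C^1(\Sp^2,\R^3)$ is a $C^1$-embedding, its image $M := f(\Sp^2)$ is a compact embedded $C^1$-submanifold of $\R^3$ without boundary. By the Jordan--Brouwer separation theorem, $\R^3 \setminus M$ has exactly two connected components, and precisely one of them, call it $\Omega$, is bounded; it satisfies $\partial \Omega = M$. The $C^1$-regularity of the boundary follows from the immersion property: at each $p \in M$, the implicit function theorem applied to the tangent-plane decomposition provides a neighborhood in which $M$ is a $C^1$-graph, and $\Omega$ lies on one local side of it. Compactness of $M$ upgrades this to the statement that $\Omega$ is a bounded $C^1$-domain in the classical sense.

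Having identified $\Omega$, I would compute its volume via the divergence theorem applied to $F(x) := x_2 e_2$, for which $\operatorname{div} F \equiv 1$:
\begin{equation*}
|\Omega| \;=\; \int_\Omega \operatorname{div} F \, dx \;=\; \int_{\partial\Omega} x_2 \langle e_2, \nu\rangle \, d\sigma,
\end{equation*}
where $\nu$ is the outward unit normal along $\partial \Omega$. Pulling the boundary integral back to $\Sp^2$ through the parametrization $f$ yields
\begin{equation*}
|\Omega| \;=\; \varepsilon \int_{\Sp^2} f_2 \, \langle n, e_2\rangle \, d\mu_f,
\end{equation*}
where $\varepsilon \in \{+1,-1\}$ records whether the continuous unit normal $n$ chosen on $f$ agrees with $\nu$ or points opposite to it. The absolute value in the definition \eqref{areavoldef} of $V[f]$ absorbs this sign, so $|\Omega| = V[f]$, as required.

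The main technical point is confirming that $\Omega$ is genuinely a bounded $C^1$-domain and not merely a topological ball, which is what licenses the divergence theorem. This is classical but has to be invoked carefully at the low $C^1$ regularity assumed on $f$: it combines the Jordan--Brouwer conclusion with the local graph representation produced by the immersion condition, and crucially relies on the fact that a $C^1$-embedded closed surface in $\R^3$ is two-sided (orientable). Once this structural fact is available, the remainder of the argument is a direct divergence-theorem calculation, and the absolute value in \eqref{areavoldef} ensures that the resulting identity $V[f] = |\Omega|$ is independent of the choice among the two continuous unit normals along $f$.
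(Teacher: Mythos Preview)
Your proposal is correct and follows essentially the same approach as the paper: invoke the Jordan--Brouwer separation theorem to produce the bounded $C^1$-domain $\Omega$, then apply the divergence theorem to the vector field $x_2 e_2$ and absorb the normal-orientation ambiguity into the absolute value in \eqref{areavoldef}. The paper's version is terser (it cites a reference for Jordan--Brouwer and does not spell out the $C^1$-domain verification), but the argument is the same.
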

\begin{proof}
The existence of $\Omega$ is due to the Jordan-Brouwer separation theorem (see e.g. \cite{MayerBook}, Corollary 5.24). Let $n_{\operatorname{int}}$ denote the interior unit normal along $\partial\Omega$. Since $\Omega\in C^1$, we can use Gauß's theorem to compute
$$|\Omega|
=-\int_{\partial\Omega}x_2\langle n_{\operatorname{int}}, e_2\rangle dS
=-\int_{\Sp^2}f_2\langle n_{\operatorname{int}}, e_2\rangle d\mu_f.$$
This establishes Equation \eqref{areavoldef} when $n$ in the interior normal and thereby also for the exterior normal $n_{\operatorname{ext}}=-n_{\operatorname{int}}$.
\end{proof}

\paragraph{First Variation of the Willmore Energy}\ \\
Let $f:\Sigma\rightarrow\R^3$ be a smoothly immersed surface and $\Phi:(-\epsilon_0,\epsilon_0)\rightarrow \R^3$ be a smooth variation of $f$. Denote by $n$ the unit normal field used to define the scalar mean curvature $H=\langle\vec H, n\rangle$. The following formula is, for example, derived in \cite{AK}, Theorem 1. 
\begin{equation}\label{classicalformula}
\frac{d}{d\epsilon}\bigg|_{\epsilon=0}\mathcal W[\Phi(\epsilon)]=\int_{\Sigma}W[f]\langle \Phi'(0), n\rangle d\mu_f+\int_{\partial\Sigma}\omega(\eta) dS_f.
\end{equation}
In this formula, $\eta$ is the interior unit normal along $\partial\Sigma$ and splitting the variational vector field $\Phi'(0)$ into a normal and a tangential part by writing $\Phi'(0)=\varphi n + Df X$, the Willmore operator $W[f]$ and the one-form appearing in Equation \eqref{classicalformula} are given by
\begin{align}
    W[f]&=\frac12\left(\Delta_g H+\frac12H(H^2-4K)\right),\label{WillmoreOperator}\\
\omega(\eta)&=\frac12\left(\varphi\frac{\partial H}{\partial\eta}-H\frac{\partial\varphi}{\partial\eta}-\frac12 H^2 \langle Df\eta, Df X\rangle\right).\label{WillmoreBoundaryterm}
\end{align}

\subsection{Graphical Surfaces}
Let $r_0>0$ and $u:[0,r_0)\rightarrow\R$ be sufficiently regular. We consider the axisymmetric surface $\Sigma_u$ generated by rotating the graph of $u$ around the $x_3$-axis:
$$f_u:[0,r_0)\times[0,2\pi)\rightarrow\R^3,\ f_u(r,\theta):=(r\cos\theta,r\sin\theta, u(r))$$
We collect the geometric data of $\Sigma_u$. The first fundamental form is given by
\begin{equation}\label{graphmetric}
g(r,\theta)=\begin{bmatrix}
1+u'(r)^2 & \\
 & r^2
\end{bmatrix}
\hspace{.5cm}\textrm{and}\hspace{.5cm}\sqrt{\det g}=r\sqrt{1+u'(r)^2}.
\end{equation}
Next, we introduce a normal field $n$, which we choose to coincide with the interior unit normal when $u(r)=\sqrt{1-r^2}$ -- that is when $\Sigma_u$ is a sphere.
$$n(r,\theta):=
\frac{1}{\sqrt{1+u'(r)^2}}
\begin{bmatrix}
u'(r)\cos\theta\\
u'(r)\sin\theta\\
-1
\end{bmatrix}$$
Using this normal, the scalar second fundamental form is given by 
\begin{equation}\label{graphsecondff}
h=-\frac1{\sqrt{1+u'(r)^2}}\begin{bmatrix}
u''(r) & \\
 & ru'(r)
\end{bmatrix}.
\end{equation}
Combining Equations \eqref{graphmetric} and \eqref{graphsecondff}, we obtain the following formulas for the two principal curvatures:
\begin{equation}\label{graphicalPrincipalCurvatures}
k_1=-\frac{u''(r)}{\sqrt{1+u'(r)^2}^3}
\hspace{.5cm}\textrm{and}\hspace{.5cm}
k_2=-\frac{u'(r)}{r\sqrt{1+u'(r)^2}}
\end{equation}
Using these, it is readily checked that the mean curvature $H$ of $\Sigma_u$ is given by
\begin{equation}\label{graphH}
H=k_1+k_2=-\frac1r\frac{d}{dr}\left[r\frac{u'(r)}{\sqrt{1+u'(r)^2}}\right].
\end{equation}

\subsection{Profile Curves}
Let $\gamma:[0,1]\rightarrow\mathcal H^2$ be a sufficiently regular curve. We begin by recalling that the arc length of $\gamma$ is defined as 
\begin{equation}
    L[\gamma]:=\int_0^1|\dot\gamma(t)|dt.
\end{equation}
We consider the surface $\Sigma_\gamma$ generated by rotating the graph of $\gamma$ around the $x_2$-axis. Concretely, it is given by the parameterization
\begin{equation}\label{fgammadef}
f_\gamma:[0,1]\times[0,2\pi)\rightarrow\R^3,\ f_\gamma(t,\theta):=\begin{bmatrix}
\gamma_1(t)\cos\theta\\
\gamma_1(t)\sin\theta\\
\gamma_2(t)
\end{bmatrix}.
\end{equation}
In the following, we collect the geometric data of $\Sigma_\gamma$. The first fundamental form is given by
\begin{equation}\label{metricprofilecurve}
g(t,\theta)=\begin{bmatrix}
|\dot\gamma(t)|^2 & \\
 & \gamma_1(t)^2
\end{bmatrix}
\hspace{.5cm}\textrm{and}\hspace{.5cm}\sqrt{\det g}=\gamma_1(t)|\dot\gamma(t)|.
\end{equation}
We denote the surface element on $\Sigma_\gamma$ by $d\mu_{\Sigma_\gamma}:=\gamma_1|\dot\gamma_1|dtd\theta$. Next, we define normal fields $n$ along $\Sigma_\gamma$ and $\nu$ along $\gamma$. 
\begin{equation}\label{normalsprofilecurve}
n(t,\theta)=\frac1{|\dot\gamma(t)|}\begin{bmatrix}
-\dot\gamma_2(t)\cos\theta\\
-\dot\gamma_2(t)\sin\theta\\
\dot\gamma_1(t)
\end{bmatrix}
\hspace{.5cm}\textrm{and}\hspace{.5cm}
\nu(t)=\frac1{|\dot\gamma(t)|}\begin{bmatrix}
-\dot\gamma_2(t)\\
\dot\gamma_1(t)
\end{bmatrix}
\end{equation}
This normal is chosen, such that it agrees with the interior normal along $\Sigma_\gamma$ when $\gamma(t)=(\cos(\pi t), \sin(\pi t))$ -- that is when $\gamma$ parameterizes a circle run through counterclockwise. Using this normal, we get the following formula for the scalar second fundamental form of $\Sigma_\gamma$:
\begin{equation}\label{curvesecondff}
h(t,\theta):=\frac1{|\dot\gamma(t)|}\begin{bmatrix}
    \ddot\gamma_2\dot\gamma_1-\ddot\gamma_1\dot\gamma_2 & \\
     & \gamma_1\dot\gamma_2
\end{bmatrix}
\end{equation}
Combining Equations \eqref{metricprofilecurve} and \eqref{curvesecondff} gives the following formulas for the two principal curvatures:
\begin{equation}\label{principalcurvatures} 
k_1=\frac{\ddot\gamma_2\dot\gamma_1-\ddot\gamma_1\dot\gamma_2}{|\dot\gamma|^3}
\hspace{.5cm}\textrm{and}\hspace{.5cm}
k_2=\frac{\dot\gamma_2}{\gamma_1|\dot\gamma|}
\end{equation}

\paragraph{Functionals}\ \\
Using Equations \eqref{willmoreenergydef}, \eqref{areavoldef}, \eqref{metricprofilecurve} and \eqref{normalsprofilecurve}, we get the following formula for the Willmore energy, the area and the volume of $\Sigma_\gamma$:
\begin{align}
    \mathcal W[\gamma]&:=\mathcal W[\Sigma_\gamma]=\frac\pi2\int_0^1 (k_1(t)+k_2(t))^2 \gamma_1(t)|\dot\gamma(t)| dt\label{willmoredef}\\
    A[\gamma]&:=A[\Sigma_\gamma]=2\pi\int_0^1 \gamma_1(t)|\dot\gamma(t)|dt\label{areadef}\\
    V[\gamma]&:=V[\Sigma_\gamma]=\pi\left|\int_0^1 \gamma_1(t)^2\dot\gamma_2(t)dt\right|\label{volumedef}
\end{align}
We also put $\mathcal I[\gamma]:=\mathcal I[\Sigma_\gamma]$. In view of Lemma \ref{generalvolumelemma}, we have the following result:
\begin{lemma}\label{volumegauss}
    Let $\gamma\in C^1([0,1],\mathcal H^2)$ be injective, $\dot\gamma(t)\neq 0$ for all $t\in[0,1]$ and $\gamma_1(0)=\gamma_1(1)=0$ as well as $\dot\gamma_2(0)=\dot\gamma_2(1)=0$. Then $\Sigma_\gamma$ is the boundary of a bounded $C^1$ domain $\Omega$ and $|\Omega|=V[\gamma]$. 
\end{lemma}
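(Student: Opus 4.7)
The plan is to reduce to Lemma \ref{generalvolumelemma} by presenting $\Sigma_\gamma$ as the image of a $C^1$-embedding $f:\Sp^2\to\R^3$, and then to identify $V[f]$ from \eqref{areavoldef} with $V[\gamma]$ from \eqref{volumedef} by a direct change-of-variables computation in profile-curve coordinates.

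I would think of $\Sp^2$ as the quotient of $[0,1]\times\Sp^1$ obtained by collapsing the two boundary circles to points; the assumption $\gamma_1(0)=\gamma_1(1)=0$ then makes the parametrization $f_\gamma$ from \eqref{fgammadef} descend to a continuous map $f:\Sp^2\to\R^3$. Injectivity on the interior annulus follows from the injectivity of $\gamma$ (combined with $\gamma_1>0$ on $(0,1)$, which in this axisymmetric setting is needed for $\Sigma_\gamma$ to be a topologically embedded sphere), while the two poles are mapped to the distinct points $(0,0,\gamma_2(0))$ and $(0,0,\gamma_2(1))$ again by injectivity of $\gamma$. On the open annulus $f$ is visibly $C^1$; the real issue is $C^1$-regularity at the poles.

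The key step is the following local graph representation near the pole $t=0$ (and analogously at $t=1$). The condition $\dot\gamma_2(0)=0$ together with $\dot\gamma(0)\neq 0$ forces $\dot\gamma_1(0)\neq 0$, so the inverse function theorem provides a $C^1$-inverse $\gamma_1^{-1}$ on a one-sided neighborhood of $0$. Setting $u(r):=\gamma_2(\gamma_1^{-1}(r))$ expresses $\Sigma_\gamma$ locally as the graph of $u$ over a small disk in the $x_1x_2$-plane. A direct computation gives $u'(r)=\dot\gamma_2(\gamma_1^{-1}(r))/\dot\gamma_1(\gamma_1^{-1}(r))\to 0$ as $r\to 0^+$, and $u(r)-\gamma_2(0)=o(r)$; together these show that $u$ is differentiable at the origin with vanishing gradient and that the partial derivatives $\partial_{x_i}u=u'(r)\,x_i/r$ extend continuously across the pole. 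Hence $\Sigma_\gamma$ has a horizontal tangent plane at the pole and is $C^1$ there, so $f$ is a $C^1$-embedding of $\Sp^2$.

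Lemma \ref{generalvolumelemma} then supplies a bounded $C^1$-domain $\Omega$ with $\partial\Omega=\Sigma_\gamma$ and $|\Omega|=V[f]$. It remains to compute $V[f]$ in profile-curve coordinates: substituting the normal \eqref{normalsprofilecurve} and the area element \eqref{metricprofilecurve} into \eqref{areavoldef} yields
\[\int_{\Sp^2}f_2\,\langle n,e_2\rangle\,d\mu_f=\int_0^{2\pi}\!\!\int_0^1\gamma_1\sin\theta\cdot\frac{-\dot\gamma_2\sin\theta}{|\dot\gamma|}\,\gamma_1|\dot\gamma|\,dt\,d\theta=-\pi\int_0^1\gamma_1^2\dot\gamma_2\,dt,\]
so taking absolute values gives $V[f]=V[\gamma]$. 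The main obstacle is the $C^1$-regularity at the poles: it uses the hypothesis $\dot\gamma_2(0)=\dot\gamma_2(1)=0$ in an essential way, since otherwise the resulting graph would have a conical corner rather than a tangent plane.
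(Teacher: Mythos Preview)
Your approach is correct and is precisely what the paper indicates: the lemma is stated immediately after Lemma \ref{generalvolumelemma} with the remark ``In view of Lemma \ref{generalvolumelemma}, we have the following result'' and no further proof, so your reduction to a $C^1$-embedding of $\Sp^2$ together with the change-of-variables identification $V[f]=V[\gamma]$ is exactly the intended argument, fleshed out in full. Your observation that $\gamma_1>0$ on $(0,1)$ is implicitly needed (to rule out interior cone points and get a topological sphere) is a fair and accurate reading of the hypotheses.
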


\paragraph{Arc Length Parameterization}\ \\
A regular curve $\gamma:[0,1]\rightarrow\mathcal H^2$ is \emph{parameterized proportional to arc length} if $|\dot\gamma(t)|=L[\gamma]=:L$ for all $t\in[0,1]$. These curves enjoy several beneficial properties. First 
\begin{equation}\label{velocity_acc_Perp_Eq}
    0=\frac12\frac{d}{dt}|\dot\gamma(t)|^2=\langle\dot\gamma(t),\ddot\gamma(t)\rangle=\dot\gamma_1(t)\ddot\gamma_1(t)+\dot\gamma_2(t)\ddot\gamma_2(t).
\end{equation}
Considering Equation \eqref{principalcurvatures}, it is then straightforward to check that 
\begin{equation}\label{ArcLength_H_Identity}
\dot\gamma_1 H= \frac{\ddot\gamma_2}L+\frac{\dot\gamma_1\dot\gamma_2}{\gamma_1 L}
\hspace{.5cm}\textrm{and}\hspace{.5cm}
\dot\gamma_2 H= -\frac{\ddot\gamma_1}L+\frac{\dot\gamma_2^2}{\gamma_1 L}.
\end{equation}
Additionally, it is readily checked that
\begin{equation}\label{ArcLength_K_Identity}
    k_1^2=\frac{|\ddot\gamma|^2}{L^4}
    \hspace{.5cm}\textrm{and}\hspace{.5cm}
    K=k_1k_2=-\frac{\ddot\gamma_1}{\gamma_1 L^2}.
\end{equation}

\section{Direct Method of the Calculus of Variations}\label{diectmethod}
In this section, we employ the direct method of the calculus of variations to obtain an axially symmetric minimizer of the Willmore energy with prescribed isoperimetric ratio $\sigma$. The analysis follows with only minor modifications, the arguments in \cite{choskiveneroni}.

\subsection{Functional Space}\label{functionalspacesection}
We first define a suitable space of profile curves.
\begin{definition}[Admissible Profile Curves]\label{classPdef}
A curve $\gamma=(\gamma_1,\gamma_2):[0,1]\rightarrow \mathcal H^2$ is called an \emph{admissible profile curve} if:
\begin{enumerate}[(1)]
    \item $\gamma\in C^1([0,1])\cap W^ {2,2}_{\operatorname{loc}}((0,1))$.
    \item $\gamma_1(0)=\gamma_1(1)=0$ and $\gamma_1(t)>0$ for all $t\in(0,1)$.
    \item $\gamma$ is parameterized proportional to arc length.
    \item $\Sigma_\gamma$ has curvature bounded in $L^2(\mu_{\Sigma_\gamma})$. That is 
    $$\int_{\Sp^2}k_1^2+k_2^2 d\mu_{\Sigma_\gamma}=\int_0^1 (k_1^2+k_2^2)2\pi|\dot\gamma|\gamma_1 dt<\infty.$$
\end{enumerate}
We denote the set of all admissible profile curves by $\mathcal P$.
\end{definition}

We wish to deduce the existence of a curve $\gamma\in\mathcal P$ that minimizes the Willmore energy under all curves in $\mathcal P$ with prescribed isoperimetric ratio. Given $\sigma\in(0,1)$ we put
$$
\mathcal F_\sigma:=
\left\{
\gamma\in\mathcal P\ \bigg|\ A[\gamma]=1\hspace{.2cm}\textrm{and}\hspace{.2cm}V[\gamma]=\frac{\sigma}{6\sqrt\pi}
\right\}
\hspace{.5cm}\textrm{and}\hspace{.5cm}
\beta(\sigma):=\inf\set{\mathcal W[\gamma]\ |\  \gamma\in\mathcal F_\sigma}.$$
Note that in the definition of $\mathcal F_\sigma$, we prescribe the area and the volume of $\gamma$. However, as both the Willmore energy $\mathcal W$ and the isoperimetric ratio $\mathcal I$ are invariant under scaling, we have 
\begin{equation}\label{betaDefinition}
\beta(\sigma)
=\inf\set{\mathcal W[\gamma]\ |\  \gamma\in\mathcal F_\sigma}
=\inf\set{\mathcal W[\gamma]\ |\  \gamma\in\mathcal P\textrm{ with }\mathcal I[\gamma]=\sigma}.
\end{equation}

Unfortunately, the space $\mathcal P$ does not have sufficient compactness properties and we require a weakened notation of admissible profile curves. 
\begin{definition}[Weak Profile Curves]\label{PWeakDefinition}
We say that $\gamma:[0,1]\rightarrow\mathcal H^2$ is a \emph{weak profile curve} if:
\begin{enumerate}[(1)]
    \item $\gamma\in W^{1,2}((0,1))$ (so in particular $\gamma\in C^0([0,1])$). Additionally  $\gamma_1(0)=\gamma_1(1)=0$ and $\gamma_1(t)>0$ for almost all $t\in[0,1]$.
    \item $|\dot\gamma|=L[\gamma]$ almost everywhere.
    \item $\gamma\in W^{2,2}_{\operatorname{loc}}(U)$ for all $U\subset[0,1]$ satisfying $\gamma_1>0$ on $U$.
    \end{enumerate}
    By (1) and (3), $\ddot\gamma$, and thus also $k_1$, is defined almost everywhere even though $\dot\gamma$ is not weakly differentiable on $(0,1)$. This allows the formulation of:
    \begin{enumerate}[(1)]\setcounter{enumi}{3}
    \item $\Sigma_\gamma$ has curvature bounded in $L^2(\mu_{\Sigma_\gamma})$. That is 
    $$\int_0^1 (k_1^2+k_2^2)2\pi|\dot\gamma|\gamma_1 dt<\infty.$$
\end{enumerate}
The set of weak profile curves is denoted by $\mathcal P^w$. 
\end{definition}

Clearly $\mathcal P\subsetneq\mathcal P^w$. However, the next theorem establishes that weak profile curves correspond exactly to chaining several admissible profile curves from $\mathcal P$ together. For a proof, we refer to Lemma 6 in \cite{choskiveneroni}.
\begin{theorem}[Regularity of Weak Profile Curves]\label{weakandstrongPconnection}
Let $\gamma\in \mathcal P^w$, put $M:=\int_0^1 (k_1^2+k_2^2)2\pi|\dot\gamma|\gamma_1 dt$ and let $n_0:=\lfloor\frac{M}{8\pi}\rfloor$. Then there exists $m\leq n_0$ and points $0=\tau_0<\tau_1...<\tau_m=1$ such that (up to linear reparamterization) $\gamma|_{[\tau_i,\tau_{i+1}]}$ belongs to $\mathcal P$. Additionally, when $t\rightarrow\tau_i^ \pm$ we have
$$\lim_{t\rightarrow\tau_i^ \pm}\dot\gamma_2(t)=0
\hspace{.5cm}\textrm{and}\hspace{.5cm}
\lim_{t\rightarrow\tau_i^ \pm}\dot\gamma_1(t)=\pm L[\gamma].$$
\end{theorem}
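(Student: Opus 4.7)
Let $Z := \{t \in [0,1] : \gamma_1(t) = 0\}$. By Definition~\ref{PWeakDefinition}~(1), $Z$ is closed, has Lebesgue measure zero, and contains $0$ and $1$. Write $(0,1) \setminus Z = \bigsqcup_j I_j$ with $I_j = (a_j, b_j)$; on each such $I_j$ we have $\gamma_1 > 0$, $\gamma \in W^{2,2}_{\mathrm{loc}}(I_j)$, and the $L^2$ curvature bound restricts to a finite integral. My plan is to prove (a) at each endpoint $a_j, b_j$ one has $\dot\gamma_1 \to \pm L$ and $\dot\gamma_2 \to 0$, and (b) only finitely many intervals $I_j$ can exist, in fact at most $n_0$.

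\textbf{Tangent limit at axis crossings.} Fix $I_j$. Since $|\dot\gamma| \equiv L$ almost everywhere and $\gamma \in W^{2,2}_{\mathrm{loc}}(I_j)$, lift to $\dot\gamma = L(\cos\theta, \sin\theta)$ with $\theta \in W^{1,2}_{\mathrm{loc}}(I_j)$. A short computation from \eqref{principalcurvatures} gives $k_1 = \dot\theta/L$ and $k_2 = \sin\theta/\gamma_1$, so condition (4) of Definition~\ref{PWeakDefinition} translates into
\[
\int_{I_j}\left(\gamma_1 \dot\theta^2 + \frac{\sin^2\theta}{\gamma_1}\right) dt < \infty.
\]
Because $\gamma_1(a_j) = 0$ and $|\dot\gamma_1| \leq L$ force $\gamma_1(t) \leq L(t - a_j)$, the second summand dominates $\sin^2\theta/(L(t-a_j))$ pointwise. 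Arguing by contradiction, if $\sin\theta$ does not tend to $0$ as $t \to a_j^+$, then a Hardy-type estimate combined with the $\gamma_1$-weighted $L^2$ control on $\dot\theta$ (which prevents wild oscillation) violates this integrability. Moreover $\theta$ cannot cluster at two distinct multiples of $\pi$, because in between those values $|\sin\theta| \geq 1/2$ would hold on intervals whose total measure does not shrink quickly enough to be consistent with the integrability just derived. Hence $\theta(t) \to k\pi$ for some $k \in \mathbb{Z}$ as $t \to a_j^+$, which is equivalent to $\dot\gamma_1 \to \pm L$ and $\dot\gamma_2 \to 0$; the analogous statement at $b_j$ is symmetric. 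This is the technical heart of the statement and follows the line of Lemma~6 in \cite{choskiveneroni}.

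\textbf{Counting via the Willmore lower bound.} Once the tangent limits are established, each completed piece $\gamma|_{[a_j, b_j]}$ lies in $C^1([a_j,b_j]) \cap W^{2,2}_{\mathrm{loc}}((a_j,b_j))$ with vertical tangents at the axis endpoints, so the surface of revolution $\Sigma_j$ it generates via \eqref{fgammadef} is a closed $W^{2,2}$ topological sphere. Gauss--Bonnet gives $\int_{\Sigma_j} K\,d\mu = 4\pi$, and the Willmore inequality yields $\mathcal{W}[\Sigma_j] \geq 4\pi$, so using $H^2 = k_1^2 + k_2^2 + 2K$,
\[
\int_{\Sigma_j}(k_1^2 + k_2^2)\,d\mu_{\Sigma_j} = 4\mathcal{W}[\Sigma_j] - 2\int_{\Sigma_j} K\,d\mu \geq 16\pi - 8\pi = 8\pi.
\]
Summation over all pieces gives $M \geq 8\pi \cdot \#\{I_j\}$, hence $\#\{I_j\} \leq \lfloor M/(8\pi) \rfloor = n_0$. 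In particular $Z$ must be finite, and writing $Z = \{0 = \tau_0 < \tau_1 < \cdots < \tau_m = 1\}$ with $m \leq n_0$, each restriction $\gamma|_{[\tau_i, \tau_{i+1}]}$ satisfies the four axioms of Definition~\ref{classPdef} up to linear reparameterization. The main obstacle is the tangent-limit analysis in the second paragraph: verifying that $\theta$ itself converges (and not merely that $\sin\theta$ is small along subsequences) is delicate and requires both the Hardy-type estimate against $\sin^2\theta/(t - a_j)$ and the weighted $L^2$ oscillation control coming from $\int \gamma_1 \dot\theta^2\,dt < \infty$.
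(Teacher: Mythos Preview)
The paper does not give its own proof of this theorem; it simply refers to Lemma~6 in \cite{choskiveneroni}. Your sketch follows exactly that line (as you yourself note), and the overall architecture---decompose along the zero set $Z$, establish the tangent limits on each component via the angle lift with the Hardy-type bound on $\sin^2\theta/\gamma_1$ and the weighted oscillation control $\int\gamma_1\dot\theta^2<\infty$, then count components via $\int_{\Sigma_j}(k_1^2+k_2^2)\,d\mu\geq 8\pi$ from Gauss--Bonnet plus the Willmore lower bound---is correct. Two small points worth making explicit: once you know $\theta\to k\pi$, the sign of $\lim\dot\gamma_1$ at each endpoint is forced by $\gamma_1>0$ on the interior (so $+L$ from the right, $-L$ from the left); and the conclusion that $Z$ is finite needs the observation that finitely many complementary intervals together with $|Z|=0$ force $Z$ to consist of finitely many points. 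The tangent-limit paragraph is, as you say, only a sketch, but it is the right sketch.
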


As an immediate consequence, we obtain the following corollary:
\begin{korollar}
Let $\gamma\in\mathcal P$. Then 
\begin{equation}\label{velocityatends}
    \dot\gamma_1(0)=L[\gamma],\ \dot\gamma_1(1)=-L[\gamma]\hspace{.5cm}\textrm{and}\hspace{.5cm}\dot\gamma_2(0)=\dot\gamma_2(1)=0.
\end{equation}
Consequently $\Sigma_\gamma$ is a $C^1$ surface.
\end{korollar}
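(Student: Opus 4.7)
Equation \eqref{velocityatends} should drop out directly from Theorem \ref{weakandstrongPconnection}. Since every $\gamma\in\mathcal P$ lies in $\mathcal P^w$, the theorem supplies a decomposition $0=\tau_0<\tau_1<\dots<\tau_m=1$ and the one-sided limits $\dot\gamma_2(\tau_i^\pm)=0$, $\dot\gamma_1(\tau_i^\pm)=\pm L[\gamma]$. Applying this at $\tau_0=0$ (right limit) and $\tau_m=1$ (left limit) and using that $\gamma\in C^1([0,1])$ by Definition \ref{classPdef}, the one-sided limits coincide with the actual derivative values, which gives \eqref{velocityatends}. The sign of $\dot\gamma_1$ at the two endpoints is forced by the fact that $\gamma_1(0)=\gamma_1(1)=0$ while $\gamma_1>0$ on $(0,1)$.

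For the $C^1$-regularity of $\Sigma_\gamma$, I would split the argument according to whether one is away from or at the axis of rotation. On the open region $\{t\in(0,1)\}$ we have $\gamma_1>0$, and since $|\dot\gamma|=L>0$ the map $f_\gamma(t,\theta)$ from \eqref{fgammadef} is a $C^1$ immersion, hence $\Sigma_\gamma$ is a $C^1$ surface there. The real content lies in analyzing a neighbourhood of each pole $f_\gamma(0,\cdot)$ and $f_\gamma(1,\cdot)$; I will treat only the north pole $t=0$ since the argument at $t=1$ is identical.

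Near $t=0$, the identity $\dot\gamma_1(0)=L>0$ allows me to invert $r=\gamma_1(t)$ on a small interval $[0,r_0)$ by the $C^1$ inverse function theorem, obtaining a $C^1$ function $u:[0,r_0)\to\R$ with $\gamma_2(t)=u(\gamma_1(t))$. Differentiating yields $u'(r)=\dot\gamma_2(t)/\dot\gamma_1(t)$, and plugging in $t=0$ gives $u'(0)=0/L=0$. On the cap $\{\gamma_1(t)<r_0\}$ the surface $\Sigma_\gamma$ then coincides with the graph of the function $U(x,y):=u(\sqrt{x^2+y^2})$ on a disk around the origin. The routine computation $\nabla U(x,y)=u'(\sqrt{x^2+y^2})\,(x,y)/\sqrt{x^2+y^2}$ combined with $u'(0)=0$ shows that $\nabla U$ extends continuously to the origin with value $0$, so $U\in C^1$ on the closed disk and the graph $\Sigma_\gamma$ is a $C^1$ surface at the pole.

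No serious obstacle is present: the first half is a direct invocation of Theorem \ref{weakandstrongPconnection}, and the second half only requires the standard observation that an axisymmetric graph over a disk is $C^1$ at its vertex precisely when the generating profile has vanishing derivative there, a condition which Equation \eqref{velocityatends} delivers.
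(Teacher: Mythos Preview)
Your proposal is correct and follows the paper's approach: the paper simply states the corollary as an immediate consequence of Theorem \ref{weakandstrongPconnection} without giving any details, so your argument is in fact more explicit than the paper's. Your treatment of the $C^1$-regularity at the poles via the graph representation $u(r)=\gamma_2(\gamma_1^{-1}(r))$ with $u'(0)=0$ is exactly the device the paper later introduces in Subsection \ref{RegularityAtAxisSubsection}, so you are anticipating the paper's own construction.
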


\paragraph{Properties of Profile Curves}\ \\
We collect several properties of (weak) profile curves. 
The following theorem is Lemma 2 from \cite{choskiveneroni}.
\begin{theorem}[Choski and Veneroni]\label{choskiveneroniresult}
Let $\gamma\in\mathcal P$. Then 
\begin{equation}\label{lengthbound}
\frac{A[\gamma]}{2\pi\operatorname{diam}(\Sigma_\gamma)}\leq L[\gamma]\leq \frac{\sqrt{A[\gamma]}}{2\pi}\left(
\left(\int_{\Sp^2} k_1^2 d\mu_{\Sigma_\gamma}\right)^{\frac12}+
\left(\int_{\Sp^2}  k_2^2 d\mu_{\Sigma_\gamma}\right)^{\frac12}
\right)
.
\end{equation}
\end{theorem}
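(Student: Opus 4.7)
The lower bound I would dispatch by a direct area estimate. Since for any $t\in(0,1)$ the two antipodal points $(\pm\gamma_1(t),0,\gamma_2(t))$ lie on $\Sigma_\gamma$, one has $2\max_t \gamma_1(t)\le \operatorname{diam}(\Sigma_\gamma)$, and in particular $\gamma_1(t)\le\operatorname{diam}(\Sigma_\gamma)$ for all $t$. Combined with the representation $A[\gamma]=2\pi\int_0^1\gamma_1(t)|\dot\gamma(t)|\,dt = 2\pi L[\gamma]\int_0^1\gamma_1(t)\,dt$ coming from $|\dot\gamma|\equiv L[\gamma]$, this yields $A[\gamma]\le 2\pi L[\gamma]\operatorname{diam}(\Sigma_\gamma)$, which is the desired lower estimate.

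For the upper bound I would start from the pointwise inequality $L[\gamma]=|\dot\gamma|\le|\dot\gamma_1|+|\dot\gamma_2|$ (valid a.e.\ by the arc-length parametrization), integrate, and treat the two summands separately:
\[
L[\gamma] = \int_0^1 L[\gamma]\,dt \le \int_0^1|\dot\gamma_1|\,dt+\int_0^1|\dot\gamma_2|\,dt.
\]
The $\dot\gamma_2$-term is handled by the explicit identity $\dot\gamma_2 = L[\gamma]\,k_2\,\gamma_1$ coming from \eqref{principalcurvatures}, together with Cauchy--Schwarz in the weight $\gamma_1$:
\[
\int_0^1|\dot\gamma_2|\,dt = L[\gamma]\int_0^1|k_2|\gamma_1\,dt \le L[\gamma]\Bigl(\int_0^1\gamma_1\,dt\Bigr)^{\!1/2}\!\Bigl(\int_0^1 k_2^2\gamma_1\,dt\Bigr)^{\!1/2}.
\]
Substituting the identities $\int_0^1\gamma_1\,dt = A[\gamma]/(2\pi L[\gamma])$ and $\int_0^1 k_2^2\gamma_1\,dt = \|k_2\|_{L^2(\mu_{\Sigma_\gamma})}^2/(2\pi L[\gamma])$ gives exactly $\sqrt{A[\gamma]}\,\|k_2\|_{L^2(\mu_{\Sigma_\gamma})}/(2\pi)$.

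The $\dot\gamma_1$-term is the delicate point: here one needs the analogous bound $\int_0^1|\dot\gamma_1|\,dt\le \sqrt{A[\gamma]}\,\|k_1\|_{L^2(\mu_{\Sigma_\gamma})}/(2\pi)$ but the variable $\dot\gamma_1$ is not directly a product of $k_1$, $\gamma_1$ and $L[\gamma]$. I would exploit the arc-length identity $\ddot\gamma_1=-L[\gamma]\,k_1\,\dot\gamma_2$ from \eqref{ArcLength_H_Identity} together with the boundary values $\gamma_1(0)=\gamma_1(1)=0$. Integration by parts converts $\dot\gamma_1^2$ into a pairing of $k_1$ against $\gamma_1$ and $\dot\gamma_2$, and a careful application of Cauchy--Schwarz (combined, if needed, with the already-established control of $\dot\gamma_2$) produces the bound in terms of $\|k_1\|_{L^2(\mu_{\Sigma_\gamma})}$ and $\sqrt{A[\gamma]}$. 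Summing the two estimates yields the right-hand side of \eqref{lengthbound}.

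The principal obstacle is precisely the asymmetric roles of $k_1$ and $k_2$ in the axially symmetric setting: $k_2=\dot\gamma_2/(\gamma_1 L[\gamma])$ is an algebraic identity, whereas $k_1$ is a second-order expression in $\gamma$, so the clean bound for $\int|\dot\gamma_1|\,dt$ requires more than a one-line Cauchy--Schwarz, and must borrow regularity from the vanishing boundary conditions of $\gamma_1$. The full execution is carried out in \cite{choskiveneroni}.
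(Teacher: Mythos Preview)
The paper does not prove this theorem; it is quoted from \cite{choskiveneroni} without argument, so there is no paper proof to compare against.

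Your lower bound and your treatment of $\int_0^1|\dot\gamma_2|\,dt$ are both correct. The gap is in the $\dot\gamma_1$ term. Carrying out the integration by parts you propose yields $\int_0^1\dot\gamma_1^2\,dt=L\int_0^1 k_1\,\gamma_1\,\dot\gamma_2\,dt$, and Cauchy--Schwarz together with $|\dot\gamma_2|\le L$ gives only
\[
\int_0^1\dot\gamma_1^2\,dt\le \frac{L\sqrt{A[\gamma]}\,\|k_1\|_{L^2(\mu_{\Sigma_\gamma})}}{2\pi},\qquad
\int_0^1|\dot\gamma_1|\,dt\le\Bigl(\tfrac{L\sqrt{A[\gamma]}\,\|k_1\|}{2\pi}\Bigr)^{1/2}.
\]
Feeding this back into $L\le\int|\dot\gamma_1|+\int|\dot\gamma_2|$ leads to a quadratic inequality in $\sqrt{L}$ whose solution is strictly larger than the right-hand side of \eqref{lengthbound} (for instance, when $\|k_1\|=\|k_2\|$ one gets a constant $\tfrac{3+\sqrt5}{2}$ in place of $2$). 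The split $|\dot\gamma|\le|\dot\gamma_1|+|\dot\gamma_2|$ already loses too much to recover the sharp constant.

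A route that does close: avoid the split and write instead $L^2=\int_0^1(\dot\gamma_1^2+\dot\gamma_2^2)\,dt$. Integrate the first summand by parts using $\gamma_1(0)=\gamma_1(1)=0$ and $\ddot\gamma_1=-L\,k_1\dot\gamma_2$, and rewrite the second via $\dot\gamma_2=L\,k_2\gamma_1$, to obtain the identity $L=\int_0^1 H\,\dot\gamma_2\,\gamma_1\,dt$. Then $|\dot\gamma_2|\le L$ and a single Cauchy--Schwarz in the weight $\gamma_1$ give
\[
1\le\int_0^1|H|\,\gamma_1\,dt\le\Bigl(\int_0^1\gamma_1\,dt\Bigr)^{1/2}\Bigl(\int_0^1 H^2\gamma_1\,dt\Bigr)^{1/2}=\frac{\sqrt{A[\gamma]}\,\|H\|_{L^2(\mu_{\Sigma_\gamma})}}{2\pi L},
\]
and the upper bound in \eqref{lengthbound} follows from $\|H\|\le\|k_1\|+\|k_2\|$. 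For the applications in the paper (compactness), your weaker constant would in fact suffice, but it does not establish the inequality as stated.
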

As an immediate consequence of this theorem, we can also bound the inverse arc length. Indeed, using $\gamma_1(0)=0$, we can bound the diameter of $\Sigma_\gamma$ by estimating
$$|f_\gamma(t,\theta)-f_\gamma(t',\theta')|
\leq |\gamma(t)-\gamma(t')|+|\gamma_1(t')|\ \left|\begin{pmatrix}
\cos\theta-\cos\theta'\\
\sin\theta-\sin\theta'
\end{pmatrix}
\right|
\leq 3L[\gamma].$$
Consequently $\operatorname{diam}(\Sigma_\gamma)\leq 3L[\gamma]$ and thus
\begin{equation}\label{inverseLbound}
\frac1{L[\gamma]^2}\leq\frac{2\pi\operatorname{diam}(\Sigma_\gamma)}{A[\gamma]L[\gamma]}\leq \frac{6\pi}{A[\gamma]}.
\end{equation}

Additionally, many facts about smooth surfaces remain true for surfaces $\Sigma_\gamma$ with $\gamma\in\mathcal P$. We collect these below and refer to Appendix \ref{proofpropertiesofP} for their proofs. 

\begin{lemma}[Li-Yau Inequality]\label{WillmoreLowerBound}\ \\
Let $\gamma\in\mathcal P$. Then $\mathcal W[\Sigma_\gamma]\geq 4\pi$ and equality is only true if $\Sigma_\gamma$ is a sphere and $f_\gamma:\Sp^2\rightarrow \Sigma_\gamma$ is a diffeomorphism. 
\end{lemma}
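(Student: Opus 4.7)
The plan is to establish the inequality via the algebraic identity $H^2 = (k_1-k_2)^2 + 4K$ together with an explicit Gauss--Bonnet computation tailored to profile curves. Concretely, writing
$$\mathcal W[\Sigma_\gamma]=\frac14\int_{\Sp^2}(k_1-k_2)^2\,d\mu_{\Sigma_\gamma}+\int_{\Sp^2}K\,d\mu_{\Sigma_\gamma},$$
the inequality $\mathcal W[\Sigma_\gamma]\geq 4\pi$ will follow once I verify that the total Gauss curvature equals $4\pi$, which in view of the $L^2$-curvature bound together with $|K|\leq\tfrac12(k_1^2+k_2^2)$ makes $K$ integrable with respect to $\mu_{\Sigma_\gamma}$.

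To compute $\int K\,d\mu_{\Sigma_\gamma}$, I use the arc-length identity $K=-\ddot\gamma_1/(\gamma_1 L^2)$ from Equation \eqref{ArcLength_K_Identity}. The factor $\gamma_1$ cancels against $\sqrt{\det g}=\gamma_1 L$ in $d\mu_{\Sigma_\gamma}$, so for every $\epsilon\in(0,\tfrac12)$ the integrability of $K$ together with $W^{2,2}_{\operatorname{loc}}$-regularity gives
$$2\pi\int_\epsilon^{1-\epsilon}K\gamma_1 L\,dt=-\frac{2\pi}{L}\int_\epsilon^{1-\epsilon}\ddot\gamma_1\,dt=\frac{2\pi}{L}\bigl(\dot\gamma_1(\epsilon)-\dot\gamma_1(1-\epsilon)\bigr).$$
Since $\dot\gamma_1$ is continuous on $[0,1]$ with boundary values $\dot\gamma_1(0)=L$, $\dot\gamma_1(1)=-L$ (Equation \eqref{velocityatends}), the right-hand side converges to $4\pi$. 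On the other hand, $K\in L^1(\mu_{\Sigma_\gamma})$ lets me pass to the limit on the left via dominated convergence, so $\int_{\Sp^2}K\,d\mu_{\Sigma_\gamma}=4\pi$ and the inequality follows.

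For the equality case, $\mathcal W[\Sigma_\gamma]=4\pi$ forces $k_1=k_2$ almost everywhere. In the arc-length parameterization this translates, via Equations \eqref{principalcurvatures}, into the ODE
$$\frac{\ddot\gamma_2\dot\gamma_1-\ddot\gamma_1\dot\gamma_2}{L^2}=\frac{\dot\gamma_2}{\gamma_1}\quad\text{a.e. on }(0,1),$$
together with $\dot\gamma_1^2+\dot\gamma_2^2=L^2$. I plan to differentiate the squared norm $|\gamma(t)-p|^2$ for a suitable candidate center $p=(0,c)$ (with $c$ determined by $\gamma(0)$ and $\gamma(1)$) and verify, using the ODE above and the orthogonality relation \eqref{velocity_acc_Perp_Eq}, that $|\gamma-p|$ is constant. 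This identifies $\Sigma_\gamma$ with a round sphere. A separate short argument — using that $\gamma_1>0$ on $(0,1)$, $\gamma$ is proportional to arc-length, and $\gamma_2$ must be monotone between the two points where $\gamma_1=0$ — then shows $\gamma$ parameterizes a semicircle injectively, so that $f_\gamma$ is a diffeomorphism onto $\mathbb S^2$.

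The main technical hurdles I anticipate are (i) justifying the limit of boundary terms in the Gauss--Bonnet computation, which is delicate because $\ddot\gamma_1$ need not lie in $L^1((0,1))$ — the saving fact is that $K$ itself (not $\ddot\gamma_1$) is integrable against $d\mu_{\Sigma_\gamma}$, so one must pass to the limit on the geometric side rather than on the ODE side; and (ii) in the equality case, promoting the almost-everywhere umbilic identity to the global conclusion that $\Sigma_\gamma$ is isometric to a round sphere under only the $C^1\cap W^{2,2}_{\operatorname{loc}}$-regularity available. If the direct ODE argument sketched above becomes cumbersome at the low-regularity endpoints $t=0,1$, a fallback is to quote the classical Li--Yau inequality after an approximation by smooth embeddings preserving both $\mathcal W$ and the axial symmetry, using the boundary behavior from Theorem \ref{weakandstrongPconnection}.
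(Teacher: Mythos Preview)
Your plan is correct and follows essentially the same route as the paper: both use $(k_1+k_2)^2\geq 4k_1k_2$ (equivalently $H^2=(k_1-k_2)^2+4K$) and then compute $\int K\,d\mu_{\Sigma_\gamma}=4\pi$ by rewriting $K\gamma_1 L=-\ddot\gamma_1/L$, integrating over $(\epsilon,1-\epsilon)$, and passing to the limit using $K\in L^1(\mu_{\Sigma_\gamma})$ together with the boundary values of $\dot\gamma_1$ from Equation~\eqref{velocityatends}. For the equality case the paper proceeds slightly differently from your sketch: instead of guessing a center $p$ and checking $|\gamma-p|^2$ is constant, it multiplies the umbilic identity by $\dot\gamma_2$, uses $\langle\dot\gamma,\ddot\gamma\rangle=0$ to obtain the scalar ODE $\ddot\gamma_1\gamma_1=\dot\gamma_1^2-L^2$, and solves this explicitly for $\gamma_1(t)=a\cos(\sqrt Kt)+b\sin(\sqrt Kt)$, which avoids having to identify the center in advance and handles the low-regularity endpoints cleanly via Picard--Lindel\"of on $(0,1)$.
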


A direct consequence from Lemma \ref{WillmoreLowerBound} is $\beta(\sigma)\geq 4\pi$ for all $\sigma\in(0,1]$. Additionally, since $\mathcal W[\Sp^2]=4\pi$ we deduce $\beta(1)=4\pi$ and the minimizers are precisely rounds spheres.

\begin{lemma}[Gauß-Bonnet Theorem]\label{gausbonnetlemma}\ \\
Let $\gamma\in\mathcal P$. Then $\Sigma_\gamma$ satisfies the Gauß-Bonnet theorem. That is
\begin{equation}\label{gaussbonnettheorem}
\int_{\Sp^2}k_1k_2d\mu_{\Sigma_\gamma}=4\pi.
\end{equation}
\end{lemma}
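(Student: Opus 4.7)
The plan is to reduce the surface integral to a one-dimensional integral of $\ddot\gamma_1$ via the formulas from the preliminaries, and then to recover $4\pi$ by applying the fundamental theorem of calculus together with the boundary values of $\dot\gamma_1$ supplied by the corollary to Theorem \ref{weakandstrongPconnection}.

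Concretely, the parameterization \eqref{fgammadef} is independent of $\theta$, so integrating out the angular variable gives $d\mu_{\Sigma_\gamma} = 2\pi\gamma_1 L\,dt$. Combining this with the identity $K = -\ddot\gamma_1/(\gamma_1 L^2)$ from \eqref{ArcLength_K_Identity}, one computes formally
\begin{equation*}
\int_{\Sp^2} K\,d\mu_{\Sigma_\gamma}
\;=\;\int_0^1 K\cdot 2\pi\gamma_1 L\,dt
\;=\;-\frac{2\pi}{L}\int_0^1 \ddot\gamma_1\,dt.
\end{equation*}
Since $\gamma\in C^1([0,1])$ is parameterized proportional to arc length and satisfies \eqref{velocityatends}, we have $\dot\gamma_1(0)=L$ and $\dot\gamma_1(1)=-L$, so the fundamental theorem of calculus would give $\int_0^1\ddot\gamma_1\,dt = -2L$ and hence the value $4\pi$.

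The subtle point, and the step I expect to require the most care, is justifying the computation above given only the regularity $\gamma\in C^1([0,1])\cap W^{2,2}_{\operatorname{loc}}((0,1))$: a priori $\ddot\gamma_1$ is defined only on $(0,1)$ and need not be globally integrable. To handle this, I would first invoke Cauchy–Schwarz applied to the $L^2(d\mu_{\Sigma_\gamma})$-bounds on $k_1$ and $k_2$ coming from condition (4) of Definition \ref{classPdef} to conclude $K = k_1 k_2 \in L^1(\mu_{\Sigma_\gamma})$. Then the identity $K\cdot 2\pi\gamma_1 L = -\tfrac{2\pi}{L}\ddot\gamma_1$ on $(0,1)$ upgrades this to $\ddot\gamma_1\in L^1((0,1))$, so that the displayed equality above is rigorous.

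Finally, because $\ddot\gamma_1\in L^1((0,1))$ the function $t\mapsto L+\int_0^t \ddot\gamma_1(s)\,ds$ is absolutely continuous on $[0,1]$, agrees with $\dot\gamma_1$ on $(0,1)$ in the distributional sense, and coincides with the continuous function $\dot\gamma_1$ at $t=0$ by \eqref{velocityatends}. Hence it equals $\dot\gamma_1$ on $[0,1]$, and evaluating at $t=1$ yields $\int_0^1\ddot\gamma_1\,dt=\dot\gamma_1(1)-\dot\gamma_1(0)=-2L$. Substituting back gives $\int_{\Sp^2} K\,d\mu_{\Sigma_\gamma}=4\pi$, as claimed.
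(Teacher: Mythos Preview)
Your proof is correct and follows essentially the same route as the paper: use the $L^2$-curvature bound to get $K\in L^1(\mu_{\Sigma_\gamma})$, apply the identity $K\,\gamma_1 L=-\ddot\gamma_1/L$ from \eqref{ArcLength_K_Identity}, and evaluate via \eqref{velocityatends}. The only cosmetic difference is that the paper truncates to $[\epsilon,1-\epsilon]$ and passes to the limit using $K\in L^1(\mu_{\Sigma_\gamma})$ and the continuity of $\dot\gamma_1$, whereas you argue directly that $\ddot\gamma_1\in L^1((0,1))$ and invoke absolute continuity; these are equivalent justifications.
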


\begin{lemma}[Hopf Theorem]\label{hopftypetheorem}\ \\
Let $\gamma\in \mathcal P$ and assume that $\Sigma_\gamma$ has constant mean curvature $H_0$. Then $\Sigma_\gamma$ is a sphere.
\end{lemma}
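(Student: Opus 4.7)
The plan is to derive a first integral of the constant mean curvature equation that reduces the problem to the Delaunay-style ODE for spheres. Using the arc-length identity $\dot\gamma_1 H = \ddot\gamma_2/L + \dot\gamma_1\dot\gamma_2/(\gamma_1 L)$ from Equation \eqref{ArcLength_H_Identity} with $H\equiv H_0$, multiplying through by $\gamma_1 L$ gives
$$H_0 L\,\gamma_1\dot\gamma_1 \;=\; \gamma_1\ddot\gamma_2+\dot\gamma_1\dot\gamma_2 \;=\; \frac{d}{dt}(\gamma_1\dot\gamma_2),$$
and the left-hand side equals $\tfrac{H_0 L}{2}\frac{d}{dt}(\gamma_1^2)$. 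Thus $\gamma_1\dot\gamma_2-\tfrac{H_0 L}{2}\gamma_1^2$ is constant on $(0,1)$. Evaluating the limit as $t\to 0^+$ and invoking Corollary \eqref{velocityatends} (so that $\dot\gamma_2(0)=0$) together with $\gamma_1(0)=0$ shows this constant is zero, hence on $(0,1)$ one obtains the first integral
$$\dot\gamma_2(t) \;=\; \frac{H_0 L}{2}\,\gamma_1(t).$$

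Next, I would combine this with the arc-length constraint $\dot\gamma_1^2+\dot\gamma_2^2=L^2$ to get
$$\dot\gamma_1(t)^2 \;=\; L^2\Bigl(1-\tfrac{H_0^2}{4}\gamma_1(t)^2\Bigr).$$
Differentiating once (and using that $\dot\gamma_1\not\equiv 0$, since otherwise $\gamma_1$ would be constantly zero) yields the linear ODE $\ddot\gamma_1=-\tfrac{L^2 H_0^2}{4}\gamma_1$ on $(0,1)$. Together with the boundary data $\gamma_1(0)=\gamma_1(1)=0$ and the sign condition $\gamma_1>0$ on $(0,1)$, this forces $H_0\neq 0$, $L|H_0|=2\pi$, and
$$\gamma_1(t) \;=\; \tfrac{2}{|H_0|}\sin(\pi t);$$
any higher integer multiple $L|H_0|=2n\pi$ is excluded because it would introduce interior zeros of $\gamma_1$. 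Reinserting into the first integral and integrating then determines $\gamma_2$ up to a vertical translation as $\gamma_2(t)=-\tfrac{2\,\mathrm{sgn}(H_0)}{|H_0|}\cos(\pi t)+c$, so that $\gamma$ parametrizes a semicircle of radius $2/|H_0|$ and $\Sigma_\gamma$ is a round sphere of that radius.

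The main delicate point is the justification of the integration by parts defining the first integral, since the boundary points are precisely the places where $\gamma_1$ vanishes and $\gamma$ is only of class $W^{2,2}_{\operatorname{loc}}((0,1))$; however, both $\gamma_1\dot\gamma_2$ and $\gamma_1^2$ are absolutely continuous on any $[a,b]\subset(0,1)$ with the right-hand derivatives matching the pointwise computation, so the conservation law holds on compact subintervals and extends to the boundary by the $C^1$-regularity of $\gamma$ up to $\{0,1\}$ combined with the endpoint behavior from Corollary \eqref{velocityatends}. Once the first integral is in hand, the remainder is an elementary ODE argument.
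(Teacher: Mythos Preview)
Your proof is correct and follows essentially the same route as the paper: both derive the first integral $\dot\gamma_2=\tfrac{H_0L}{2}\gamma_1$ from the arc-length identity \eqref{ArcLength_H_Identity}, reduce to the harmonic oscillator $\ddot\gamma_1+\omega^2\gamma_1=0$ with $\omega=\tfrac{L H_0}{2}$, and then use the boundary conditions $\gamma_1(0)=\gamma_1(1)=0$ together with positivity on $(0,1)$ to pin down the solution as a semicircle. The only cosmetic differences are that the paper obtains the ODE for $\gamma_1$ by substituting the first integral into the second identity in \eqref{ArcLength_H_Identity} rather than via the arc-length constraint, and that the paper rules out $H_0=0$ upfront via the Li-Yau inequality (Lemma~\ref{WillmoreLowerBound}) whereas you extract it from the ODE itself.
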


\begin{lemma}\label{gamma1ddotgammainL1}
    Let $\gamma\in\mathcal P^w$, $L:=L[\gamma]$ and $U\subset[0,1]$ be measurable such that $\inf_U \gamma_1>0$. Then $\gamma_1|\ddot\gamma|^2\in L^1((0,1))$, $|\ddot\gamma|^2\in L^1(U)$ and
    $$\int_0^1 \gamma_1(t)|\ddot\gamma(t)|^2dt\leq \frac{L^3}{2\pi}\int_0^1 k_1^22\pi L\gamma_1(t) dt
    \hspace{.5cm}\textrm{and}\hspace{.5cm}
    \int_U|\ddot\gamma(t)|^2dt\leq \frac{2L^3}{\pi\inf_U\gamma_1}\mathcal W[\gamma].$$
\end{lemma}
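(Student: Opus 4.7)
The plan is to reduce both estimates to the pointwise identity $|\ddot\gamma|^2=L^4 k_1^2$, which is exactly the first half of \eqref{ArcLength_K_Identity} for an arc length parameterized curve. Since $\gamma\in W^{2,2}_{\operatorname{loc}}$ on the open set $\{\gamma_1>0\}$ and this set has full measure in $(0,1)$, both sides of the identity are defined almost everywhere and agree pointwise a.e.

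For the first bound, I would multiply this identity by $\gamma_1$ and integrate, obtaining
$$\int_0^1\gamma_1|\ddot\gamma|^2dt=L^4\int_0^1 k_1^2\gamma_1 dt=\frac{L^3}{2\pi}\int_0^1 k_1^2\cdot 2\pi L\gamma_1 dt,$$
which is in fact an equality; the finiteness of the right hand side is exactly Definition \ref{PWeakDefinition}(4), giving $\gamma_1|\ddot\gamma|^2\in L^1((0,1))$. For the second bound, on $U$ the assumption $\gamma_1\geq c:=\inf_U\gamma_1>0$ allows one to divide by $\gamma_1$ and apply the previous estimate:
$$\int_U|\ddot\gamma|^2 dt\leq\frac{1}{c}\int_0^1\gamma_1|\ddot\gamma|^2 dt\leq\frac{L^3}{2\pi c}\int_{\Sp^2}k_1^2 d\mu_{\Sigma_\gamma}.$$
It therefore suffices to prove $\int_{\Sp^2}k_1^2 d\mu_{\Sigma_\gamma}\leq 4\mathcal W[\gamma]$.

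For this last inequality I expand $H^2=k_1^2+2K+k_2^2$ and integrate to obtain
$$\int_{\Sp^2}k_1^2 d\mu_{\Sigma_\gamma}=4\mathcal W[\gamma]-\int_{\Sp^2}k_2^2 d\mu_{\Sigma_\gamma}-2\int_{\Sp^2}K d\mu_{\Sigma_\gamma}.$$
The $k_2^2$ integral is manifestly nonnegative, and the Gauß-Bonnet contribution is nonnegative too. This is the one step that requires care and is the only real obstacle: Lemma \ref{gausbonnetlemma} is stated only for curves in $\mathcal P$, whereas here $\gamma$ is only a weak profile curve. I resolve this by invoking Theorem \ref{weakandstrongPconnection} to split $\gamma$ into finitely many pieces $\gamma|_{[\tau_{i-1},\tau_i]}\in\mathcal P$, apply Lemma \ref{gausbonnetlemma} on each piece to get $\int K d\mu=4\pi$ per lobe, and sum to conclude $\int_{\Sp^2}K d\mu_{\Sigma_\gamma}=4\pi m\geq 0$. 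Plugging back yields $\int k_1^2 d\mu_{\Sigma_\gamma}\leq 4\mathcal W[\gamma]$ and hence the claimed $\tfrac{2L^3}{\pi\inf_U\gamma_1}\mathcal W[\gamma]$ bound.
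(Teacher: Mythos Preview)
Your proof is correct and follows essentially the same route as the paper: both use the identity $|\ddot\gamma|^2=L^4k_1^2$ from \eqref{ArcLength_K_Identity} for the first estimate, and both obtain the second by splitting $\gamma$ via Theorem \ref{weakandstrongPconnection}, applying Gau\ss--Bonnet on each piece to get $\int K\,d\mu_{\Sigma_\gamma}=4\pi m\geq 0$, and then bounding $\int k_1^2\,d\mu_{\Sigma_\gamma}\leq 4\mathcal W[\gamma]$. The only cosmetic difference is that the paper carries out the first estimate via an $\epsilon$-approximation on $[\epsilon,1-\epsilon]$ and Beppo--Levi, whereas you appeal directly to the fact that $\{\gamma_1>0\}$ has full measure.
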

\begin{proof}
    In view of Theorem \ref{weakandstrongPconnection}, we can assume that $\gamma\in\mathcal P$ to estblish the first esimtate. By Definition \ref{classPdef} we have $\gamma\in W^{2,2}(\epsilon,1-\epsilon)$ for all small $\epsilon>0$. We use the definition of $\mathcal P$ and Equation \eqref{ArcLength_K_Identity} to deduce that for some $C$ independent of $\epsilon$
    $$C\geq \int_\epsilon^{1-\epsilon}k_1^2 2\pi|\dot\gamma|\gamma_1 dt=\frac{2\pi}{L^3}\int_\epsilon^{1-\epsilon}|\ddot\gamma|^2\gamma_1 dt.$$
    The first part of the lemma follows by letting $\epsilon\rightarrow 0^+$ and using the theorem of Beppo-Levi. Let $\gamma\in\mathcal P^w$. To establish the second estimate, it suffices to bound $\int_0^1 k_1^2 2\pi L\gamma_1dt$. By Theorem \ref{weakandstrongPconnection}, there exists $m\in\N$ such that $\gamma$ is built from chaining together $m$ many $\mathcal P$-curves. We use the Gauß-Bonnet theorem (Lemma \ref{gaussbonnettheorem}) to obtain $\int_0^1 k_1k_22\pi|\dot\gamma|\gamma_1 dt=4n_0\pi\geq 0$ and estiamte 
    $$\int_0^1 k_1^2 2\pi |\dot\gamma|\gamma_1 dt\leq 4\mathcal W[\gamma]-2\int_0^1 k_1k_22\pi |\dot\gamma|\gamma_1 dt\leq 4\mathcal W[\gamma]. $$
\end{proof}

\subsection{Topology}
We now introduce a notion of convergence in $\mathcal P^w$. The following is essentially the same definition as Definition 6 in \cite{choskiveneroni}. Their definition, however, allows for systems of curves, which we exclude here. Additionally, we have eliminated the concept of measure-function couples. 
\begin{definition}\label{convergencedefinition}
We say that a sequence $(\gamma^ {(n)})\subset \mathcal P^w$ \emph{converges to $\gamma^*\in \mathcal P^w$ in $\mathcal P^w$} if:
\begin{enumerate}[(1)]
    \item For $i=1,2$ we have $\gamma^ {(n)}_i\rightarrow\gamma^*_i$ in $C^ 0([0,1])$.
    \item For $i=1,2$ we have $\dot\gamma^ {(n)}_i\rightarrow\dot\gamma^*_i$ in $L^ 2((0,1))$.
    \item For $i=1,2$ we have $\ddot\gamma^ {(n)}_i\gamma_1^{(n)}\rightarrow\ddot\gamma^*_i\gamma_1^*$ weakly in $L^ 1((0,1))$.
\end{enumerate}
\end{definition}
We note that the third condition is well-defined by Lemma \ref{gamma1ddotgammainL1}. 

\paragraph{Lower Semicontinuity}\ \\
We show that the notation of convergence in Definition \ref{convergencedefinition} is strong enough to imply lower semicontinuity of our problem. Let $\gamma^ {(n)}\rightarrow \gamma^*$ in $\mathcal P^w$. Then $\gamma^ {(n)}\rightarrow\gamma^*$ in $C^ 0([0,1])$ and $\dot\gamma^ {(n)}\rightarrow\dot\gamma^*$ in $L^2((0,1))$. Hence 
\begin{align}
    A[\gamma^{(n)}]&=2\pi\int_0^1|\dot\gamma^ {(n)}(t)|\gamma_1^ {(n)}(t) dt\rightarrow A[\gamma^*],\label{areaconv}\\
    V[\gamma^{(n)}]&=\pi\left|\int_0^1\dot\gamma^ {(n)}_2(t)\left(\gamma_1^ {(n)}(t)\right)^2 dt\right|\rightarrow V[\gamma^*].\label{volconv}
\end{align}
In particular $\mathcal I[\gamma^{(n)}]\rightarrow\mathcal I[\gamma^*]$. So, the area, volume and isoperimetric ratio are actually continuous with respect to the convergence from Definition \ref{convergencedefinition}. Moreover, in Lemmas \ref{lowersemiconLemma} and \ref{lemmaprinciplacuvaturesconv} in the appendix we prove
\begin{align}
&\mathcal W[\gamma^*]\leq\liminf_{n\rightarrow\infty}\mathcal W[\gamma^{(n)}],\label{Willmorelowersemicont}\\
&\int_0^1\left((k_1^* )^2+(k_2^* )^2\right)2\pi|\dot\gamma^*|\gamma_1^*dt
\leq\liminf_{n\rightarrow\infty}\int_0^1 \left((k_1^ {(n)})^2+(k_2^ {(n)})^2\right)2\pi|\dot\gamma^{(n)}|\gamma_1 ^{(n)}dt.\label{curvsqlowersemicont}
\end{align}

\subsection{Compactness}
The following compactness theorem is essentially due to Choski's and Veneroni's paper \cite{choskiveneroni} (see Subsection 3.4). However, as we have slightly changed the definition of convergence in $\mathcal P^w$, their proof must also be slightly modified. For the reader's convenience, we provide the proof in Appendix \ref{compactnessappendix}.
\begin{theorem}[Compactness Theorem]\label{compactnesstheorem}
Let $(\gamma^{(n)})\subset\mathcal P^w$ such that:
\begin{enumerate}[(1)]
    \item There exists $R>0$ such that $\gamma^ {(n)}(t)\in B_R(0)$ for all $t\in[0,1]$ and $n\in\N$.
    \item There exists $C<\infty$ such that 
    $$\sup_n \int_0^1\left[ \left(k_1^{(n)}\right)^2+\left(k_2^{(n)}\right)^2\right]2\pi\gamma_1^ {(n)}|\dot\gamma^ {(n)}|dt\leq C.$$
    \item There exist $\theta>0$ and $C<\infty$ such that  $0<\theta\leq A[\gamma^ {(n)}]\leq C$ for all $n\in\N$.
\end{enumerate} 
Then there exists $\gamma^*\in \mathcal P^w$  and a subsequence $\gamma^ {(n_k)}$ such that $\gamma^ {(n_k)}\rightarrow\gamma^*$ in $\mathcal P^w$.
\end{theorem}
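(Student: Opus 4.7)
The plan is to realize each of the three convergences required by Definition \ref{convergencedefinition} by extracting subsequences in turn, and then to verify that the resulting limit belongs to $\mathcal P^w$.

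First I would establish uniform bounds $0 < c \leq L_n \leq C$ for $L_n := L[\gamma^{(n)}]$. The upper bound is immediate from Theorem \ref{choskiveneroniresult} using hypotheses (2) and (3). For the lower bound, hypothesis (1) forces $\operatorname{diam}(\Sigma_{\gamma^{(n)}}) \leq 2R$, so the first inequality in Theorem \ref{choskiveneroniresult} combined with the lower area bound gives $L_n \geq \theta/(4\pi R) > 0$. After passing to a subsequence, $L_n \to L^* > 0$. Since $|\dot\gamma^{(n)}| = L_n$ is uniformly bounded, $\gamma^{(n)}$ is uniformly Lipschitz; Arzelà–Ascoli then yields a subsequence with $\gamma^{(n)} \to \gamma^*$ uniformly on $[0,1]$ for some $\gamma^* \in C^0([0,1],\mathcal H^2)$ with $\gamma_1^*(0) = \gamma_1^*(1) = 0$. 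Boundedness of $\dot\gamma^{(n)}$ in $L^2((0,1))$ yields a weakly convergent subsequence whose limit must equal $\dot\gamma^*$ by the uniform convergence.

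By Lemma \ref{gamma1ddotgammainL1} and hypothesis (2), $\int_0^1 \gamma_1^{(n)} |\ddot\gamma^{(n)}|^2 \, dt$ is uniformly bounded. Combined with $\gamma_1^{(n)} \leq R$, this makes $\gamma_1^{(n)} \ddot\gamma^{(n)}$ bounded in $L^2((0,1))$, so a further subsequence converges weakly in $L^2$ (hence weakly in $L^1$) to some $\psi$. To identify $\psi$, I would work on the open set $U := \set{t \in (0,1) : \gamma_1^*(t) > 0}$. On any compact $K \subset U$, uniform convergence gives $\inf_K \gamma_1^{(n)} \geq \tfrac12 \inf_K \gamma_1^* > 0$ for large $n$, so the curvature bound upgrades to a uniform $W^{2,2}(K)$ estimate for $\gamma^{(n)}$. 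By Rellich–Kondrachov and a diagonal argument, a subsequence converges strongly in $W^{1,2}_{\operatorname{loc}}(U)$ and weakly in $W^{2,2}_{\operatorname{loc}}(U)$ to $\gamma^*$; this identifies $\psi = \gamma_1^* \ddot\gamma^*$ a.e.\ on $U$.

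The main obstacle is verifying that $\gamma^* \in \mathcal P^w$, in particular that $\gamma_1^* > 0$ a.e.\ on $(0,1)$. Without this, both the identification of $\psi$ on the full interval and the arc-length identity $|\dot\gamma^*| = L[\gamma^*]$ could fail. The key point, adapted from Subsection 3.4 of \cite{choskiveneroni}, is that any interval on which $\gamma_1^*$ vanishes would force a pinching of the surfaces $\Sigma_{\gamma^{(n)}}$ that is incompatible with the $L^2$-curvature bound; this restricts the zero set of $\gamma_1^*$ to at most a finite set of isolated points. Once this is in hand, passing to a further subsequence gives pointwise a.e.\ convergence of $\dot\gamma^{(n)}$ on $U$, whence $|\dot\gamma^*| = L^*$ a.e.\ on $(0,1)$. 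This produces
$$\|\dot\gamma^*\|_{L^2}^2 = (L^*)^2 = \lim_n \|\dot\gamma^{(n)}\|_{L^2}^2,$$
so the weak $L^2$-convergence of $\dot\gamma^{(n)}$ upgrades to strong convergence by the Radon–Riesz property. Finally, the curvature bound in part (4) of Definition \ref{PWeakDefinition} for $\gamma^*$ follows from the lower semicontinuity \eqref{curvsqlowersemicont}.
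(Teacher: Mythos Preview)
Your outline differs from the paper's in one pleasant way: once $\gamma_1^*>0$ a.e.\ is known, your Radon--Riesz argument for upgrading the weak $L^2$-convergence of $\dot\gamma^{(n)}$ to strong convergence is cleaner than the paper's route, which establishes strong $L^p$-convergence of $\dot\gamma_1^{(n)}$ and $\dot\gamma_2^{(n)}$ separately via BV bounds (Lemma~\ref{oscbound}) and Helly's selection theorem on the sets $A_k=\{\gamma_1^*>2/k\}$. Likewise, obtaining weak $L^1$-convergence of $\gamma_1^{(n)}\ddot\gamma^{(n)}$ from an $L^2$ bound is more direct than the paper's integration-by-parts argument in Step~4.

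However, there is a genuine gap at the step you yourself flag as the main obstacle. Your justification that the zero set $E:=\{\gamma_1^*=0\}$ is ``at most a finite set of isolated points'' because ``an interval on which $\gamma_1^*$ vanishes would force a pinching \ldots\ incompatible with the $L^2$-curvature bound'' is not sufficient: $E$ is merely closed, and ruling out intervals does not rule out sets of positive measure (a fat Cantor set is possible a~priori). The paper's argument for $|E|=0$ (following \cite{choskiveneroni}, Subsection~3.4) is more specific and crucially uses \emph{strong} convergence of $\dot\gamma_1^{(n)}$: on $E$ the $k_2$-bound forces $\dot\gamma_2^{(n)}\to 0$ in $L^2(E)$, and then strong convergence of $\dot\gamma_1^{(n)}$ yields $\int_E(\dot\gamma_1^*)^2=L_*^2|E|$, giving $|\dot\gamma_1^*|=L_*$ a.e.\ on $E$, which contradicts $\gamma_1^*\geq 0$ at a point of differentiability in $E$. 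With only weak convergence one gets $\int_E(\dot\gamma_1^*)^2\leq L_*^2|E|$---indeed the standard fact that a Sobolev function has vanishing weak derivative on its level sets already gives $\dot\gamma_1^*=0$ a.e.\ on $E$, and no contradiction arises. In your ordering, strong convergence of $\dot\gamma^{(n)}$ comes only from Radon--Riesz \emph{after} $|E|=0$ is established, so the argument is circular as written.

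The fix is to reinstate the BV estimate for $\dot\gamma_1^{(n)}$ alone: the identity $\ddot\gamma_1=-L^2 K\gamma_1$ (Equation~\eqref{ArcLength_K_Identity}) together with hypothesis~(2) bounds the total variation of $\dot\gamma_1^{(n)}$ uniformly (this is the first estimate of Lemma~\ref{oscbound}), so Helly gives strong $L^1$- and hence $L^2$-convergence of $\dot\gamma_1^{(n)}$. With this in hand, the paper's argument yields $|E|=0$, and the remainder of your outline (local $W^{2,2}$ compactness on $U$, Radon--Riesz for $\dot\gamma^{(n)}$, lower semicontinuity \eqref{curvsqlowersemicont} for the curvature bound) goes through and is a legitimate streamlining of Steps~2--4 of the paper's proof.
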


\subsection{Existence of a Weak Minimizer}
To prove the existence of a minimizer $\gamma\in\mathcal F_\sigma$ satisfying $\mathcal W[\gamma]=\beta(\sigma)$, we require the following theorem, that is essentially due to Schygulla \cite{schygulla}.
\begin{theorem}[Schygulla]\label{schygulla}\ \\
 Let $\beta$ be as in Equation \eqref{betaDefinition}. Then $\beta(\sigma)<8\pi$ for all $\sigma\in(0,1]$.
\end{theorem}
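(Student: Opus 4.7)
The plan is to exhibit, for every $\sigma \in (0,1]$, an explicit axially symmetric competitor $\gamma_\sigma \in \mathcal F_\sigma$ with $\mathcal W[\gamma_\sigma] < 8\pi$. I would split into regimes by the size of $\sigma$. For $\sigma = 1$, the curve $\gamma(t) = \frac{1}{2\sqrt\pi}(\sin(\pi t), -\cos(\pi t))$ parameterizes (after scaling to unit area) a round sphere and gives $\mathcal W = 4\pi < 8\pi$. For $\sigma$ close to $1$, small axisymmetric perturbations of this sphere yield competitors with $\mathcal W$ close to $4\pi$. The delicate case is $\sigma$ close to $0$.

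For small $\sigma$, the limit geometry of Theorem \ref{doublesphereconvergence} suggests the competitor: take two nearly coincident spheres, cut two small polar caps off to make room for an axis-aligned catenoidal neck of waist radius $\epsilon > 0$, and insert the catenoid to produce an embedded axisymmetric surface. After mollifying the joins by an axisymmetric cutoff, one obtains $\gamma_\epsilon \in \mathcal P$. The two punctured sphere pieces together contribute $8\pi - \delta(\epsilon)$ to $\mathcal W$, where $\delta(\epsilon) > 0$ counts the Willmore energy of the removed caps; the catenoid contributes nothing since its mean curvature vanishes identically; the join region contributes an error $R(\epsilon)$. Provided the parameters are chosen so that $|R(\epsilon)| < \delta(\epsilon)$, one gets $\mathcal W[\gamma_\epsilon] < 8\pi$. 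As $\epsilon$ varies, $\mathcal I[\gamma_\epsilon]$ ranges continuously down to $0^+$, and a continuous interpolation with the perturbed-sphere family combined with an intermediate-value argument (using continuity of $\mathcal I$ along the family, as in \eqref{areaconv}--\eqref{volconv}) realizes every $\sigma \in (0,1]$.

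The main obstacle is the quantitative control of the join error $R(\epsilon)$: because the unmodified double sphere already saturates $\mathcal W = 8\pi$, the saving $\delta(\epsilon)$ from cap removal is small, and the smoothing must introduce a strictly smaller error. This is achieved by matching the catenoid to the sphere to sufficiently high order before cutting off --- precisely the delicate computation carried out in Schygulla's paper \cite{schygulla}. Since Schygulla's competitors are already surfaces of revolution about a common axis, their profile curves, after reparameterization proportional to arc length (which preserves both $\mathcal W$ and $\mathcal I$), directly furnish elements of $\mathcal F_\sigma$ satisfying $\mathcal W[\gamma] < 8\pi$, and the theorem follows.
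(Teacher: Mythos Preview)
Your concluding observation is the correct one and is exactly the paper's argument: Schygulla's competitors in \cite{schygulla} are already surfaces of revolution, so once his construction produces a smooth embedded axisymmetric sphere with $\mathcal W<8\pi$ and isoperimetric ratio $\sigma$, its profile curve (reparameterized proportional to arc length and rescaled) lies in $\mathcal F_\sigma$ and witnesses $\beta(\sigma)<8\pi$.

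However, your description of what Schygulla actually constructs is inaccurate, and the alternative construction you propose has a genuine gap. Schygulla does \emph{not} glue two spheres to a catenoid neck. He starts from the \emph{inverted catenoid} $\Sigma_a=I(g_a(\R\times[0,2\pi)))\cup\{e_3\}$, where $I$ is an inversion centered at $e_3$. This is a $C^{1,\alpha}$ (not $C^2$) sphere, two graphs $u^\pm$ touching tangentially at $e_3$, with $\mathcal W[\Sigma_a]=8\pi$ exactly. The decisive computation is a first-variation residue at the singular point: for axisymmetric perturbations $\varphi_\pm$ of the two sheets one has
\[
\frac{d}{d\epsilon}\bigg|_{\epsilon=0}\mathcal W[\operatorname{graph}(u_\pm+\epsilon\varphi_\pm)]=\mp\,c_0\,\varphi_\pm(0),\qquad c_0>0.
\]
Taking $\varphi_\pm(0)=\pm 1$ pulls the sheets apart and yields an axisymmetric sphere $\Sigma_a^\epsilon$ with $\mathcal W<8\pi$. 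Your direct sphere--catenoid gluing has no such residue mechanism, and the assertion that the join error $R(\epsilon)$ is strictly dominated by the cap saving $\delta(\epsilon)$ is not in Schygulla; proving it would require a separate and delicate asymptotic analysis that you have not supplied.

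Your interpolation step is also incomplete. An intermediate-value argument on $\mathcal I$ along some family gives a surface with $\mathcal I=\sigma$, but not one with $\mathcal W<8\pi$ unless the whole family stays strictly below $8\pi$. The paper (following Schygulla) enforces this by running the Willmore flow from a smooth approximation $F_0$ of $\Sigma_a^\epsilon$ with $\mathcal I[F_0]<\sigma$ and $\mathcal W[F_0]<8\pi$: the flow exists for all time and converges to a round sphere by \cite{removability}, preserves axial symmetry by uniqueness, and is Willmore-decreasing. Hence $\mathcal I[F(t)]$ passes through $\sigma$ at some $t_0$ with $\mathcal W[F(t_0)]\le\mathcal W[F_0]<8\pi$.
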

Schygulla proves this for surfaces without requiring axial symmetry. Therefore Theorem \ref{schygulla} cannot be cited directly from his paper. However, inspecting his proof, it is straightforward to see that it can easily be adapted for our purposes. We comment on the necessary changes in Appendix \ref{schygullapp}.\\

Using Theorem \ref{schygulla}, we can now prove the existence of a weak minimizer.
\begin{theorem}[Existence of Weak Minimizer]\label{weakminimizerexistence}\ \\
For all $\sigma\in(0,1]$ there exists $\gamma\in\mathcal F_\sigma$ such that $\beta(\sigma)=\mathcal W[\gamma]$.
\end{theorem}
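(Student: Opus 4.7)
The plan is to apply the direct method in the weak class $\mathcal P^w$ and then use the energy gap $\beta(\sigma)<8\pi$ from Theorem \ref{schygulla}, together with Theorem \ref{weakandstrongPconnection}, to promote the weak limit to an element of $\mathcal P$.

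First, I fix a minimizing sequence $\gamma^{(n)}\in\mathcal F_\sigma$ with $\mathcal W[\gamma^{(n)}]\to\beta(\sigma)$. Since $\mathcal W$, $A$, and $V$ are invariant under vertical translation, I translate each $\gamma^{(n)}$ so that $\gamma^{(n)}(0)=(0,0)$. By Theorem \ref{schygulla}, I may also assume $\mathcal W[\gamma^{(n)}]\leq 8\pi-\delta$ for some fixed $\delta>0$ and all large $n$. Applying Lemma \ref{gausbonnetlemma} to each $\gamma^{(n)}\in\mathcal P$ yields
$$\int_{\Sp^2}(k_1^2+k_2^2)\,d\mu_{\Sigma_{\gamma^{(n)}}} = \int_{\Sp^2}H^2\,d\mu_{\Sigma_{\gamma^{(n)}}} - 2\int_{\Sp^2}k_1k_2\,d\mu_{\Sigma_{\gamma^{(n)}}} = 4\mathcal W[\gamma^{(n)}]-8\pi,$$
which is uniformly bounded. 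Combined with $A[\gamma^{(n)}]=1$, the length bound \eqref{lengthbound} of Theorem \ref{choskiveneroniresult} produces a uniform bound on $L[\gamma^{(n)}]$, and since $\gamma^{(n)}$ is parameterized proportional to arc length with $\gamma^{(n)}(0)=0$, one has $|\gamma^{(n)}(t)|\leq L[\gamma^{(n)}]$, placing all $\gamma^{(n)}$ in a common ball $B_R(0)$. This verifies the three hypotheses of Theorem \ref{compactnesstheorem}, so a subsequence converges to some $\gamma^*\in\mathcal P^w$.

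Transferring the constraints to the limit is then routine. The continuity relations \eqref{areaconv} and \eqref{volconv} give $A[\gamma^*]=1$ and $V[\gamma^*]=\sigma/(6\sqrt\pi)$, and the lower semicontinuity \eqref{Willmorelowersemicont} yields $\mathcal W[\gamma^*]\leq\liminf_k\mathcal W[\gamma^{(n_k)}]=\beta(\sigma)<8\pi$.

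The subtle point I expect to be the main obstacle is that $\gamma^*$ a priori only lies in the larger class $\mathcal P^w$, so it could split into several admissible pieces rather than parameterizing a single spherical surface. Here the energy gap is decisive. By Theorem \ref{weakandstrongPconnection}, $\gamma^*$ decomposes into $m\geq 1$ pieces each in $\mathcal P$, and applying Lemma \ref{gausbonnetlemma} piecewise yields $\int k_1k_2\,d\mu_{\Sigma_{\gamma^*}}=4\pi m$. Setting $M:=\int_0^1(k_1^2+k_2^2)\,2\pi|\dot\gamma^*|\gamma_1^*\,dt$, the identity used above becomes $M=4\mathcal W[\gamma^*]-8\pi m$. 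The bound $m\leq\lfloor M/(8\pi)\rfloor\leq M/(8\pi)$ from Theorem \ref{weakandstrongPconnection} rearranges to $\mathcal W[\gamma^*]\geq 4\pi m$; combined with $\mathcal W[\gamma^*]<8\pi$ this forces $m=1$, hence $\gamma^*\in\mathcal P$ and therefore $\gamma^*\in\mathcal F_\sigma$. Since also $\mathcal W[\gamma^*]\geq\beta(\sigma)$ by definition of $\beta$, equality holds and $\gamma^*$ is the desired minimizer.
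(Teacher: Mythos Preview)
Your proof is correct and follows essentially the same direct-method strategy as the paper: normalize and extract a convergent subsequence via Theorem \ref{compactnesstheorem}, pass the constraints to the limit, and use the gap $\beta(\sigma)<8\pi$ to force the weak limit into $\mathcal P$. The only organizational difference is in the step excluding splitting: the paper first bounds $M<3\cdot 8\pi$ via the lower semicontinuity \eqref{curvsqlowersemicont} and then applies Li--Yau (Lemma \ref{WillmoreLowerBound}) to each piece, whereas you compute $M=4\mathcal W[\gamma^*]-8\pi m$ by piecewise Gau\ss--Bonnet and feed this back into the quantitative bound $m\le M/(8\pi)$ of Theorem \ref{weakandstrongPconnection} to obtain $\mathcal W[\gamma^*]\ge 4\pi m$ directly---a slightly cleaner route that avoids citing \eqref{curvsqlowersemicont} and Lemma \ref{WillmoreLowerBound}.
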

\begin{proof}
For $\sigma=1$, any round sphere is a minimizer. For $\sigma\in(0,1)$ let $\gamma^{(n)}\subset \mathcal P$ such that $\mathcal W[\gamma^{(n)}]\rightarrow\beta(\sigma)$. As $\beta(\sigma)<8\pi$, we can assume without loss of generality that $\mathcal W[\gamma^{(n)}]\leq 8\pi-\rho_0$ for all $n\in\N$ and some $\rho_0>0$. The Willmore energy $\mathcal W$ and the isoperimetric ratio $\mathcal I$ are invariant under translations. So, by defining
$$\tilde\gamma^{(n)}(t):=\gamma^{(n)}(t)-(0,\gamma^{(n)}_2(0)),$$
we can further assume that $\gamma^{(n)}(0)=0$ for all $n\in\N$. Let $L_n:=L[\gamma^{(n)}]$. For every individual $\gamma^{(n)}$, the Gauß-Bonnet theorem (see Lemma \ref{gausbonnetlemma}) holds and hence
\begin{align}
&\int_0^1\left(\left(k_1^{(n)}\right)^2+\left(k_2^{(n)}\right)^2\right)2\pi L_n \gamma_1^{(n)}(t)dt\nonumber\\
=&4\mathcal W[\gamma^{(n)}]-2\int_{0}^1k_1^{(n)}k_2^{(n)}2\pi L_n \gamma_1^{(n)}(t)dt\nonumber\\
\leq & 4\cdot(8\pi-\rho_0)-8\pi.\label{compactnesssecondassumpion}
\end{align}
In view of Theorem \ref{choskiveneroniresult}, we get $L_n\leq C$. For any $t\in[0,1]$ and $n\in\N$ we can now estimate 
$$|\gamma^{(n)}(t)|=|\gamma^{(n)}(t)-\gamma^{(n)}(0)|\leq \int_0^1 |\dot\gamma^{(n)}(t)|dt\leq C.$$
Finally $A[\gamma^ {(n)}]\equiv 1$. Thus, all assumptions from the compactness theorem (Theorem \ref{compactnesstheorem}) are satisfied and we can extract a subsequence, again denoted by $\gamma^{(n)}$, that converges to a limit curve $\gamma^*\in\mathcal P^w$. Let $L_*:=L[\gamma^*]$. We will now prove that $\gamma^*\in\mathcal P$. Using the lower semicontinuity results from Equations \eqref{Willmorelowersemicont} and \eqref{curvsqlowersemicont} and Estimate \eqref{compactnesssecondassumpion}, we get 
\begin{align} 
\mathcal W[\gamma^*]\leq\beta(\sigma)
\hspace{.5cm}\textrm{and}\hspace{.5cm}
\int_0^1\left(\left(k_1^{*}\right)^2+\left(k_2^{*}\right)^2\right)2\pi L_* \gamma_1^{*}(t)dt&<3\cdot 8\pi.\label{leq38pi}
\end{align}
So, by Theorem \ref{weakandstrongPconnection}, there is at most one $\tau\in(0,1)$ such that $\gamma^*_1(\tau)=0$. Suppose that such a $\tau$ exists. Then, by Theorem \ref{weakandstrongPconnection}, $\gamma^*$ restricted to $[0,\tau]$ and $[\tau,1]$, up to reparameterization, belong to $\mathcal P$. Applying the Li-Yau-inequality (Lemma \ref{WillmoreLowerBound}) produces the contradiction
\begin{equation}\label{reallyinP}
8\pi-\rho_0\geq \mathcal W[\gamma^*]
=
\mathcal W[\gamma^*|_{[0,\tau]}]+\mathcal W[\gamma^*|_{[\tau,1]}]
\geq 2\cdot 4\pi.
\end{equation}
This shows $\gamma_{1}^*(t)>0$ for all $t\in(0,1)$ and hence $\gamma^*\in\mathcal P$. Using the continuity of the area and volume functional from Equations \eqref{areaconv} and \eqref{volconv}, we deduce $\gamma^*\in\mathcal F_\sigma$ and therefore $\mathcal W[\gamma^*]\geq \beta(\sigma)$. In view of Estimate \eqref{leq38pi} we get $\mathcal W[\gamma^*]=\beta(\sigma)$ and hence $\gamma^*$ is a minimizer.
\end{proof}
\section{Regularity of Minimizers}\label{regularityawayfromaxis}
In this section, we prove that the weak minimizers from Theorem \ref{weakminimizerexistence} are smooth.
\begin{theorem}\label{regularitytheorem1}
Let $\sigma\in(0,1]$ and $\gamma\in\mathcal F_\sigma$ such that $\beta(\sigma)=\mathcal W[\gamma]$. Then $\Sigma_\gamma$ is a smooth surface. 
\end{theorem}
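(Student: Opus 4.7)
The plan is to proceed in three stages: derive the Euler--Lagrange equation for $\gamma$, prove smoothness on $\set{t\in(0,1):\gamma_1(t)>0}$ via local graph representations and an elliptic bootstrap, and finally upgrade regularity at the two axis points $t=0,1$ via a graph over the axis of revolution. For $\sigma=1$ the minimizer is a round sphere, so we may assume $\sigma<1$; then Lemma \ref{hopftypetheorem} yields that the isoperimetric constraint is nondegenerate at $\gamma$ (otherwise a first variation argument combined with the Lagrange principle would produce an axisymmetric surface of spherical type with constant scalar mean curvature, forcing $\mathcal I=1$). Consequently, the Lagrange multiplier theorem supplies constants $\lambda_A,\lambda_V\in\R$ such that \eqref{IntroPDE} holds weakly against smooth compactly supported normal variations of $f_\gamma$ on $\set{\gamma_1>0}$.

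For the interior, fix $t_0\in(0,1)$. Since $|\dot\gamma(t_0)|=L[\gamma]>0$ and $\gamma_1(t_0)>0$, a neighborhood of $f_\gamma(t_0,\cdot)$ in $\Sigma_\gamma$ is, after a rigid motion, the graph of a $W^{2,2}$-function $\psi$ over its tangent plane. In these coordinates \eqref{IntroPDE} is a fourth order quasilinear elliptic equation for $\psi$: the principal part comes from $\frac12\Delta_g H$ and has positive leading symbol, while the nonlinear Willmore terms and the constraint term $\Lambda\sigma(V^{-1}-3H/(2A))$ depend smoothly on $\psi$ and its derivatives up to order two (note that $V[\gamma]=\sigma/(6\sqrt\pi)>0$). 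Starting from $\psi\in W^{2,2}$, hence $H\in L^2$, a standard $L^p$-bootstrap on the linearized operator followed by Schauder iteration yields $\psi\in C^\infty$ in a smaller neighborhood and therefore $\gamma\in C^\infty$ on $\set{\gamma_1>0}$.

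Near $t=0$ (the case $t=1$ is symmetric), the corollary following Theorem \ref{weakandstrongPconnection} gives $\dot\gamma_1(0)=L[\gamma]>0$, so $\gamma_1$ is a $C^1$-diffeomorphism of a right-neighborhood of $0$ onto $[0,r_0)$. Setting $u(r):=\gamma_2(\gamma_1^{-1}(r))$, the surface $\Sigma_\gamma$ locally coincides with $\set{(r\omega,u(r))\ |\ r\in[0,r_0),\ \omega\in\Sp^1}$, and by the previous stage $u\in C^\infty((0,r_0))$. Using \eqref{graphH}, the Euler--Lagrange equation becomes a fourth order ODE for $u$ that is degenerate at $r=0$. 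I would adapt the Frobenius-style analysis of Chen and Li \cite{chenODE} to the present setting, treating the bounded constraint contribution as a lower-order forcing, to obtain the singular expansion \eqref{introsingularsolution}, namely $u(r)=\lambda r^2\ln r+w(r)$ with $w\in C^2([0,r_0))$ and some $\lambda\in\R$.

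The main obstacle is to show $\lambda=0$, which, as emphasized in the introduction, is not a consequence of the ODE reduction alone because $\lambda\neq0$ does correspond to genuine singular solutions of \eqref{IntroPDE}. The plan is to invoke the point-removability framework of Kuwert and Schätzle \cite{removability}: since $\int(k_1^2+k_2^2)\,d\mu_{\Sigma_\gamma}<\infty$, the $L^2$-concentration of the second fundamental form in a small axial neighborhood can be made smaller than the absolute threshold of \cite{removability}, and, combined with $\Sigma_\gamma$ being closed and embedded with finite Willmore energy, this forces the residue to vanish. Once $\lambda=0$, $u\in C^2([0,r_0))$, the ODE is regular at $r=0$, and a final ODE bootstrap yields $u\in C^\infty([0,r_0))$. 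Together with the interior smoothness from the second stage, $\Sigma_\gamma$ is a smooth surface.
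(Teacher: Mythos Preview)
Your three-stage outline matches the paper's architecture, and the interior regularity step (graph over a tangent plane plus elliptic bootstrap) is a reasonable alternative to the paper's more hands-on estimates on the profile curve (Appendix~\ref{GeneralRegularityAppendix}). The Chen--Li expansion at the axis is also exactly what the paper does in Lemma~\ref{regularityatend01}.

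The real gap is your argument for $\lambda=0$. The removability theorem of \cite{removability} is stated for \emph{unconstrained} Willmore surfaces; its proof uses the divergence structure of the pure Willmore operator to compute the residue as a boundary flux. Here the Euler--Lagrange equation carries the extra term $\Lambda(\pm_\gamma 4\sqrt\pi-\sigma H)$, and since $H(r)=-\lambda\ln r+H_0(r)$ is unbounded near $r=0$, you cannot simply declare this a ``bounded lower-order forcing'' that does not interact with the residue; you must check that it contributes nothing to the limiting flux. Merely citing a small-energy threshold does not settle this. The paper handles this directly (Lemmas~\ref{eulerlagangeataxis} and~\ref{uisc2}): it tests the first variation with an axisymmetric variation $\eta$ that \emph{touches the axis}, corrected by a compactly supported $\psi_0$ to preserve $\mathcal I$, writes $\delta\mathcal W[\gamma]\eta$ via \eqref{variationformulastandard} with a boundary term $B(\rho)$ at $r=\rho$, and shows explicitly that the constraint contribution to the volume integral limits to $-\tfrac{2\Lambda}{3}\delta\mathcal I[\gamma]\eta=0$ while $B(\rho)\to-\pi\lambda\varphi(0)$. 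Minimality then forces $\lambda=0$. This is precisely the residue computation you allude to, but carried out so that the constraint terms are seen to drop out; your proposal needs either this computation or a genuine extension of \cite{removability} to constrained Willmore immersions.

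One further point you skip: $u\in C^\infty([0,r_0))$ does not by itself give smoothness of $\Sigma_\gamma$ at the pole; you also need $u^{(2k+1)}(0)=0$ for all $k\ge 0$ so that $(r,\theta)\mapsto(r\cos\theta,r\sin\theta,u(r))$ extends smoothly across $r=0$. The paper proves this by induction from the ODE in Lemma~\ref{Sigmaissmooth}.
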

For $\sigma=1$, we have already shown that the minimizers in $\mathcal P$ are round spheres. So it suffices to consider the case $\sigma\in(0,1)$. The proof is split into two parts. In the first step, we prove that the profile curve $\gamma$ is smooth on $(0,1)$. This shows that $\Sigma_\gamma$ is smooth away from the axis of rotation. In the second step, we consider the two intersection points $\gamma_2(0)e_3$ and $\gamma_2(1)e_3$ of $\Sigma_\gamma$ with the axis of rotation and show that here locally $\Sigma_\gamma$ is a smooth graph. 

\subsection{Euler-Lagrange Equation}
Our main tool for improving the regularity of $\Sigma_\gamma$ is a suitable Euler-Lagrange equation. However, deriving this is a little involved when we restrict ourselves to the class $\mathcal P$ due to the many restrictions imposed on curves $\gamma\in\mathcal P$. 
\begin{definition}[Generalized Profile Curves]
We say that $\gamma:[0,1]\rightarrow \mathcal H^2$ is a \emph{generalised profile curve} if
\begin{enumerate}[(1)]
    \item $\gamma\in C^1([0,1])\cap W^ {2,2}_{\operatorname{loc}}((0,1))$,
    \item $\gamma_1(0)=\gamma_1(1)=0$ and $\gamma_1(t)>0$ for all $t\in(0,1)$,
    \item $\Sigma_\gamma$ has curvature bounded in $L^2(\Sigma_\gamma)$: $\int_{\Sp^2} k_1^2+k_2^2d\mu_{\Sigma_\gamma}<\infty$. 
\end{enumerate}
We denote the set of all generalized profile curves by $\mathcal P'$. 
\end{definition}
Clearly $\mathcal P\subset\mathcal P'$. 
We consider the isoperimetric ratio functional
$$\mathcal I:\mathcal P'\rightarrow\R_0^+,\ \mathcal I[\gamma]:=6\sqrt\pi\frac{V[\gamma]}{A[\gamma]^{\frac32}}
\hspace{.5cm}\textrm{ and put }\hspace{.5cm}\
\mathcal F'_\sigma:=\set{\gamma\in\mathcal P'\ |\ \mathcal I[\gamma]=\sigma}.$$
As both the Willmore energy $\mathcal W$ and the isoperimetric ratio $\mathcal I$ are invariant under scaling and reparameterization, we have 
$$
\beta(\sigma)
=
\inf\left\{\mathcal W[\gamma]\ |\ \gamma\in\mathcal F_\sigma\right\} 
=
\inf\left\{\mathcal W[\gamma]\ |\ \gamma\in\mathcal F'_\sigma\right\} .
$$
Consequently, given a minimizer $\gamma\in\mathcal F_\sigma$ and a variation $\Phi:(-\epsilon_0,\epsilon_0)\rightarrow\mathcal F'_\sigma$ satisfying $\Phi(0)=\gamma$ we have 
\begin{equation}\label{eqeq0willmorevar}
0=\frac d{d\epsilon}\bigg|_{\epsilon=0}\mathcal W[\Phi(\epsilon)]=\delta\mathcal W[\gamma]\Phi'(0).
\end{equation}
\paragraph{Variation Formulas}\ \\
In the following, we require the variation of the isoperimetric ratio functional. Let $\gamma\in\mathcal P'$ with $V[\gamma]>0$ and $\phi\in C^\infty([0,1])$, not necessarily vanishing at $t=0,1$. Using the definition of $\mathcal I$ as well as Equations \eqref{areadef} and \eqref{volumedef} it is easy to see that
\begin{align}
    \delta\mathcal I[\gamma]\phi&=\frac{6\sqrt\pi}{A[\gamma]^{\frac32}}
    \left[
    \delta V[\gamma]\phi-\frac32\frac{V[\gamma]}{A[\gamma]}\delta A[\gamma]\phi
    \right],\label{var1}\\
    \delta V[\gamma]\phi&=\pm_\gamma\pi\int_0^1 \gamma_1^2(t)\dot\phi_2+2\gamma_1(t)\dot\gamma_2(t)\phi_1(t)dt,\label{var2}\\
\delta A[\gamma]\phi&=2\pi\int_0^1|\dot\gamma(t)|\phi_1(t)+\frac{\gamma_1(t)\langle\dot\phi(t),\dot\gamma(t)\rangle}{|\dot\gamma(t)|}dt.\label{var3}
\end{align}
The $\pm_\gamma$ in Equation \eqref{var2} is the sign of the integral in Equation \eqref{volumedef}. Now suppose that $\phi\in C^\infty_0((0,1))$. Then, as $\gamma\in W^{2,2}_{\operatorname{loc}}((0,1))$, we can integrate the integrals in Equations \eqref{var2} and \eqref{var3} by parts. Using $\nu$ from Equation \eqref{normalsprofilecurve} we get
\begin{align}
    \delta V[\gamma]\phi&=\mp_\gamma 2\pi\int_0^1 \langle \nu,\phi\rangle|\dot\gamma|\gamma_1 dt,\label{var4}\\
    \delta A[\gamma]\phi&=-2\pi\int_0^1 H[\gamma]\langle \nu,\phi\rangle |\dot\gamma|\gamma_1 dt.\label{var5}
\end{align}
The following lemma proves that the constraint of prescribed isoperimetric ration $\sigma\neq 1$ is non-degenerate.
\begin{lemma}\label{psi0existencelemma}
Let $\gamma\in\mathcal P$. Either $\Sigma_\gamma$ is a sphere or there exists $\psi_0\in C_0^\infty((0,1))$ such that 
$$\frac d{d\epsilon}\bigg|_{\epsilon=0}\mathcal I[\gamma+\epsilon\psi_0]=1.$$
\end{lemma}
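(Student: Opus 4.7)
The plan is to argue by contraposition: I want to show that if the linear functional $\phi \mapsto \delta\mathcal{I}[\gamma]\phi$ vanishes identically on $C_0^\infty((0,1);\mathbb{R}^2)$, then $\Sigma_\gamma$ must be a sphere. Once this is established, for any non-spherical $\gamma \in \mathcal P$ I can pick some $\phi$ with $\delta\mathcal{I}[\gamma]\phi \neq 0$ and simply set $\psi_0 := \phi/\delta\mathcal{I}[\gamma]\phi$, completing the proof.

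Combining the variation formula \eqref{var1} with the integrated-by-parts expressions \eqref{var4} and \eqref{var5} (which are valid for $\phi \in C_0^\infty((0,1))$ since $\gamma \in W^{2,2}_{\operatorname{loc}}((0,1))$), one obtains
$$\delta\mathcal{I}[\gamma]\phi \;=\; \frac{12\pi\sqrt\pi}{A[\gamma]^{3/2}}\int_0^1 F(t)\,\langle\nu(t),\phi(t)\rangle\,L\gamma_1(t)\,dt,$$
where $L := L[\gamma]$ and $F(t) := \mp_\gamma 1 + \tfrac{3V[\gamma]}{2A[\gamma]}H(t)$.

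Assume now that $\delta\mathcal{I}[\gamma]\phi = 0$ for every $\phi \in C_0^\infty((0,1); \mathbb{R}^2)$. Testing against the scalar families $\phi = (\eta,0)$ and $\phi = (0,\eta)$ with $\eta \in C_0^\infty((0,1);\mathbb{R})$, together with the identity $\langle\nu,\phi\rangle = L^{-1}(-\dot\gamma_2 \phi_1 + \dot\gamma_1 \phi_2)$, the fundamental lemma of the calculus of variations yields $F(t)\dot\gamma_2(t)\gamma_1(t) = 0$ and $F(t)\dot\gamma_1(t)\gamma_1(t) = 0$ almost everywhere. Since $\gamma_1 > 0$ on $(0,1)$ and $\dot\gamma_1^2 + \dot\gamma_2^2 \equiv L^2 > 0$, at least one of $\dot\gamma_1, \dot\gamma_2$ is nonzero at each point, which forces $F \equiv 0$ a.e.\ and hence $H$ is a.e.\ constant on $(0,1)$. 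Hopf's theorem for profile curves (Lemma \ref{hopftypetheorem}) then identifies $\Sigma_\gamma$ as a sphere, proving the contrapositive.

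I do not anticipate substantial obstacles. The only mild technical points will be (a) justifying the integration by parts passing from \eqref{var2}, \eqref{var3} to \eqref{var4}, \eqref{var5}, which is standard given $\gamma \in W^{2,2}_{\operatorname{loc}}$ and compact support of $\phi$, and (b) the implicit nondegeneracy hypothesis $V[\gamma] > 0$ that makes $\mathcal{I}$ differentiable at $\gamma$ in the first place, which is automatic under the assumption that $\gamma$ is not a sphere (otherwise $\mathcal{I}[\gamma]=0$, forcing $\gamma_2$ to be constant and contradicting $\gamma_1(0)=\gamma_1(1)=0$ together with arc length parameterization).
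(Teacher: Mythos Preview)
Your argument is essentially identical to the paper's: assume $\delta\mathcal I[\gamma]\phi=0$ for all $\phi\in C_0^\infty((0,1))$, use \eqref{var1}, \eqref{var4}, \eqref{var5} to rewrite this as $\int_0^1 F\langle\nu,\phi\rangle|\dot\gamma|\gamma_1\,dt=0$, conclude $H$ is constant by the fundamental lemma, and invoke the Hopf-type Lemma~\ref{hopftypetheorem}.

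One small correction: your remark~(b) is not right. The condition $V[\gamma]=0$ does \emph{not} force $\gamma_2$ to be constant (think of a profile curve that traces out two equal lobes with opposite orientation), so $V[\gamma]>0$ is not automatic for non-spheres in $\mathcal P$. The paper sidesteps this by stating the variation formula \eqref{var1} only under the standing hypothesis $V[\gamma]>0$, which is satisfied in every application (minimizers lie in $\mathcal F_\sigma$, where $V[\gamma]=\sigma/(6\sqrt\pi)>0$). You should simply adopt the same convention rather than try to derive it.
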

\begin{proof}
If no $\phi\in C_0 ^ \infty((0,1))$ satisfying $\delta \mathcal I[\gamma]\phi\neq 0$ exists, then we can use Equations \eqref{var1}, \eqref{var4} and \eqref{var5} to get
$$\pm_\gamma2\pi\int_0^1 \langle \nu,\phi\rangle |\dot\gamma|\gamma_1 dt
=
2\pi\frac{3 V[\gamma]}{2A[\gamma]}\int_0^1 H[\gamma]\langle \nu,\phi\rangle |\dot\gamma|\gamma_1 dt \hspace{.5cm}\textrm{for all }\phi\in C^ \infty_0((0,1)).$$
By the fundamental lemma of the calculus of variations analysis $H\equiv \pm_\gamma\frac{2A[\gamma]}{2V[\gamma]}$ and by Lemma \ref{hopftypetheorem} -- the Hopf theorem -- $\Sigma_\gamma$ is a sphere.
\end{proof}

\subsection{Regularity Away from the Axis of Rotation}
Let $\sigma\in(0,1)$ and $\gamma\in\mathcal F_\sigma$ such that $\mathcal W[\gamma]=\beta(\sigma)$. We derive a suitable Euler-Lagrange equation for $\gamma$ on $(0,1)$. Using Lemma \ref{psi0existencelemma}, we choose $\psi_0\in C_0^\infty((0,1))$ such that $\delta\mathcal I[\gamma]\psi_0=1$. For any smooth $\eta\in C_0^\infty((0,1))$ we consider the vector field 
$$\tilde\eta:=\eta-\alpha(\eta)\psi_0\hspace{.5cm}\textrm{with}\hspace{.5cm}\alpha(\eta):=\delta\mathcal I[\gamma]\eta.$$
In Lemma \ref{variationexistence} we prove that for any such $\eta$ there exists a variation $\Phi_\eta:(-\epsilon_0,\epsilon_0)\rightarrow\mathcal F_\sigma'$ such that $\Phi_\eta'(0)=\tilde\eta$. So, by Equation \eqref{eqeq0willmorevar} we get 
\begin{equation}\label{weakeulerlagrangefirstappe}
0=\frac d{d\epsilon}\bigg|_{\epsilon=0}\mathcal W[\Phi_\eta(\epsilon)]=\delta\mathcal W[\gamma]\tilde\eta
\hspace{.5cm}\textrm{and hence}\hspace{.5cm}\delta\mathcal W[\gamma]\eta=\left(\delta\mathcal W[\gamma]\psi_0\right)\delta\mathcal I[\gamma]\eta.
\end{equation}

Using this Euler-Lagrange equation, we can now derive regularity of $\gamma$ as long as we \emph{`stay away from the axis of rotation'}. To do so, we require a general regularity result, which we have moved to Appendix \ref{GeneralRegularityAppendix}.

\begin{korollar}\label{gammasmooththeorem1}
    Let $\sigma\in(0,1)$, $\gamma\in\mathcal F_\sigma$ such that $\mathcal W[\gamma]=\beta(\sigma)$, $0<\tau_1<\tau_2<1$ and $U:=(\tau_1,\tau_2)$. Assume that $\kappa_1(\gamma;U)=\inf_U\gamma_1>0$
    and that there exists $\psi_0\in C^2_0(U)$ such that $\delta\mathcal I[\gamma]\psi_0=1$.
    Then $\gamma\in C^\infty(U)$ and for all $m\in\N$
    $$\|\gamma-\gamma(0)\|_{C^m((\tau_1,\tau_2))}\leq C(m,\kappa(\gamma;U),\|\psi_0\|_{C^2(U)}).$$
    The constant is decreasing in the second slot. 
\end{korollar}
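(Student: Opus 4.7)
The plan is to recognize Corollary \ref{gammasmooththeorem1} as a direct application of the general regularity theorem in Appendix \ref{GeneralRegularityAppendix}, fed with the weak Euler--Lagrange equation \eqref{weakeulerlagrangefirstappe}. Since $\psi_0 \in C_0^2(U)\subset C_0^2((0,1))$ satisfies $\delta\mathcal{I}[\gamma]\psi_0 = 1$, the construction preceding \eqref{weakeulerlagrangefirstappe} goes through with this $\psi_0$, so for every $\eta \in C_0^\infty(U)$ one obtains
\[
\delta\mathcal{W}[\gamma]\eta \;=\; \Lambda\, \delta\mathcal{I}[\gamma]\eta, \qquad \Lambda := \delta\mathcal{W}[\gamma]\psi_0 \in \R.
\]
Expanding both sides using \eqref{classicalformula}--\eqref{WillmoreBoundaryterm} and \eqref{var1}--\eqref{var3} presents this as the weak form of a quasilinear fourth-order ODE system for $\gamma$ on $U$, with principal part proportional to $\gamma_1|\dot\gamma|$; hence the equation is uniformly non-degenerate on $U$ since $\gamma_1 \geq \kappa_1 > 0$.

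Before invoking the appendix, I would gather the a priori bounds that will enter the regularity constants. The identity $A[\gamma] = 1$ combined with \eqref{inverseLbound} and Theorem \ref{choskiveneroniresult} gives a two-sided bound on $L[\gamma]$; Lemma \ref{gamma1ddotgammainL1} controls $\int_U |\ddot\gamma|^2\,dt$ by $\mathcal{W}[\gamma]/\kappa_1 \leq \beta(\sigma)/\kappa_1$; and integration by parts on $\delta\mathcal{W}[\gamma]\psi_0$ converts the second derivatives of $\psi_0$ into integrals against $\ddot\gamma$ and its first derivatives, yielding $|\Lambda|\leq C(\kappa_1)\,\|\psi_0\|_{C^2(U)}$. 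With these inputs in hand I would apply the general regularity result: starting from $\gamma \in W^{2,2}_{\mathrm{loc}}(U)$, Sobolev embedding gives $\gamma\in C^{1,1/2}_{\mathrm{loc}}(U)$, the ODE then promotes this to $C^{3,1/2}_{\mathrm{loc}}(U)$, and iteration yields $\gamma\in C^\infty(U)$ with the claimed quantitative $C^m$-bounds. Subtracting $\gamma(0)$ in the norm merely removes the translation invariance in $\gamma_2$ that would otherwise spoil the $C^0$ piece.

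The main obstacle lies in the appendix rather than in the corollary: it is the quasilinear fourth-order character of the equation combined with the fact that its coefficients degenerate at $\gamma_1 = 0$. The cleanest workaround is to introduce $H$ as an auxiliary unknown, so that via \eqref{ArcLength_H_Identity} and the Willmore operator formula \eqref{WillmoreOperator} the Euler--Lagrange equation decomposes into a coupled second-order system whose inhomogeneity is the quasilinear expression $\Lambda\sigma\bigl(1/V[\gamma] - 3H/(2A[\gamma])\bigr)$. Standard elliptic bootstrap on this second-order system then delivers the required estimates, with constants inheriting a monotone dependence on the non-degeneracy parameter $\kappa_1$; this is exactly what is reflected in the statement that the constant in the corollary is decreasing in its second slot.
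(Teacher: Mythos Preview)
Your proposal is correct and follows essentially the same route as the paper: establish the weak Euler--Lagrange identity \eqref{weakeulerlagrangefirstappe} on $U$, collect the a priori bounds on $L[\gamma]$, $\mathcal W[\gamma]$, $\|\ddot\gamma\|_{L^2(U)}$ and $|\Lambda|=|\delta\mathcal W[\gamma]\psi_0|$ via $A[\gamma]=1$, Theorem~\ref{choskiveneroniresult}, \eqref{inverseLbound} and Lemma~\ref{gamma1ddotgammainL1}, and then feed these into Theorem~\ref{GeneralRegularityTheorem}. Your closing remarks about decomposing the system via $H$ are a reasonable sketch of what that appendix theorem does, though the paper organizes the bootstrap slightly differently (working directly with $\delta\mathcal W[\gamma]\phi$ and duality with $L^1$ rather than formally isolating $H$ as an unknown).
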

\begin{proof}
    Without loss of generality, we assume $\gamma(0)=0$. We wish to apply Theorem \ref{GeneralRegularityTheorem}. To do so, we first establish an appropriate differential equation. Using Equation \eqref{weakeulerlagrangefirstappe}, we get
    $$\delta\mathcal W[\gamma]=\left(\delta\mathcal W[\gamma]\psi_0\right)\delta\mathcal I[\gamma](\phi)=I(\phi)\hspace{.5cm}\textrm{for all }\phi\in C^\infty_0(U,\R^2).$$
    Here $I(\phi)$ is an in Equation \eqref{IFUnctionalPDE} with parameters $\alpha$ and $\beta$ that, in view of Equations \eqref{var1}, \eqref{var2} and \eqref{var3}, are given as
    $$\alpha=\pm_\gamma\frac{6\sqrt\pi^3}{A[\gamma]^{\frac32}}\left(\delta\mathcal W[\gamma]\psi_0\right)
    \hspace{.5cm}\textrm{and}\hspace{.5cm}
    \beta=-\frac{3\pi}{A[\gamma]}\frac{6\sqrt\pi V[\gamma]}{A[\gamma]^{\frac32}}\left(\delta\mathcal W[\gamma]\psi_0\right).$$
    Next, we collect the appropriate bounds to satisfy assumption \eqref{uniformestimate}.  We have $\mathcal W[\gamma]=\beta(\sigma)\leq 8\pi$. Using $A[\gamma]=1$, Theorem \ref{choskiveneroniresult} and Estimate \eqref{inverseLbound}, we get $L[\gamma]+L[\gamma]^{-1}\leq C$. Combining the bound for $L[\gamma]$ with Lemma \ref{gamma1ddotgammainL1}, we have $\|\ddot\gamma\|_{L^2(U)}\leq C(\kappa(\gamma;U),U)$.
     Since $|\dot\gamma|=L[\gamma]\leq C$ and we assume $\gamma(0)=0$, we obtain $\|\gamma\|_{C^0}\leq C$.\\

    To estimate $\alpha$ and $\beta$ we recall $A[\gamma]=1$ and $V[\gamma]=\frac\sigma{6\sqrt\pi}$. It remains to estimate  $|\delta\mathcal W[\gamma]\psi_0|$. Considering Equations \eqref{principalcurvatures} and \eqref{willmoredef} it is easy to see that 
    $$|\delta\mathcal W[\gamma]\psi_0|\leq C(\|\gamma\|_{W^{1,\infty}(U)},\|\ddot\gamma\|_{L^2(U)},\kappa(\gamma;U))\|\psi_0\|_{C^2(U)}.$$
    In fact, we essentially establish this estimate in Step 2 in the proof of Lemma \ref{regularitystep1}.\\

    Summarising the above, we see that we can satisfy Estimate \eqref{uniformestimate} with a parameter $M$ that depends only on $U$, $\kappa(\gamma;U)$ and $\|\psi_0\|_{C^2(U)}$. So, Theorem \ref{GeneralRegularityTheorem} implies the corollary. 
\end{proof}

Combining Lemma \ref{psi0existencelemma}, $\gamma_1(t)>0$ for all $t\in(0,1)$ and Corollary \ref{gammasmooththeorem1}, we immediately get:

\begin{korollar}\label{gammasmooththeorem}
    Let $\sigma\in(0,1)$ and $\gamma\in\mathcal F_\sigma$ such that $\mathcal W[\gamma]=\beta(\sigma)$. Then $\gamma\in C^\infty((0,1))$. 
\end{korollar}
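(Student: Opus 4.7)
The plan is to cover $(0,1)$ by open subintervals on which Corollary~\ref{gammasmooththeorem1} can be invoked, leveraging the fact that all of the hard analytic work has already been packaged into that corollary. First I would observe that since $\sigma\in(0,1)$, the surface $\Sigma_\gamma$ cannot be a round sphere (round spheres have isoperimetric ratio $1$). Hence the first alternative in Lemma~\ref{psi0existencelemma} is excluded, and the lemma furnishes a test function $\psi_0\in C^\infty_0((0,1))$ with $\delta\mathcal I[\gamma]\psi_0=1$. I would then fix once and for all a compact interval $[a,b]\subset(0,1)$ containing $\operatorname{supp}(\psi_0)$.

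Given any point $t_0\in(0,1)$, I would pick $0<\tau_1<\tau_2<1$ with $\tau_1<\min(t_0,a)$ and $\tau_2>\max(t_0,b)$, and set $U:=(\tau_1,\tau_2)$. Two hypotheses of Corollary~\ref{gammasmooththeorem1} must then be verified. First, by construction $\psi_0\in C^2_0(U)$ and $\delta\mathcal I[\gamma]\psi_0=1$. Second, since $\gamma_1\in C^0([0,1])$ with $\gamma_1>0$ on $(0,1)$, its infimum $\kappa_1(\gamma;U)=\inf_U\gamma_1$ over the compact subinterval $[\tau_1,\tau_2]\subset(0,1)$ is strictly positive. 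The corollary therefore yields $\gamma\in C^\infty(U)$, and in particular $\gamma$ is smooth at $t_0$.

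Since $t_0\in(0,1)$ was arbitrary, this will give $\gamma\in C^\infty((0,1))$. There is no genuine obstacle here: the entire analytic content of the regularity statement is encapsulated by Corollary~\ref{gammasmooththeorem1}, and the only new ingredient needed is the soft observation that $\gamma_1$ is uniformly bounded away from zero on any compact subinterval of $(0,1)$, which is immediate from continuity together with the positivity of $\gamma_1$ on $(0,1)$. Note that the interior quantitative bound provided by the corollary depends on $U$, so this argument is not expected to extend up to the endpoints $\{0,1\}$ — that is exactly where the separate analysis of the axis of rotation (via the graph representation in Equation~\eqref{introsingularsolution}) will be required in the next subsection.
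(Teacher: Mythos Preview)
Your proposal is correct and takes essentially the same approach as the paper: the paper simply notes that the result follows immediately from combining Lemma~\ref{psi0existencelemma}, the positivity $\gamma_1(t)>0$ on $(0,1)$, and Corollary~\ref{gammasmooththeorem1}. You have merely spelled out in detail how to choose $U$ so that it contains both the given point $t_0$ and $\operatorname{supp}(\psi_0)$, which is exactly the intended argument.
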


\subsection{Regularity At the Axis of Rotation}\label{RegularityAtAxisSubsection}
We continue to fix $\sigma\in(0,1)$ and $\gamma\in\mathcal F_\sigma$ such that $\mathcal W[\gamma]=\beta(\sigma)$. By Definition \ref{classPdef} and Corollary \ref{gammasmooththeorem} we have $\gamma\in C^\infty((0,1))\cap C^1([0,1])$. The main ingredient in the discussion of the regularity of $\Sigma_\gamma$ near the axis of rotation is the study of an appropriate Euler-Lagrange equation:

\begin{lemma}[Euler-Lagrange Equation]\label{eulerlagrangeequation}
There exists $\Lambda\in\R$ such that $\gamma$ satisfies the following Euler-Lagrange equation on $(0,1)$:
$$\Delta_g H+ \frac12H(H^2-4K)=\Lambda(\pm_\gamma4\sqrt\pi-\sigma H)$$
\end{lemma}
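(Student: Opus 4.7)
The plan is to upgrade the weak Euler--Lagrange relation \eqref{weakeulerlagrangefirstappe} to the claimed pointwise PDE by expressing both $\delta\mathcal W[\gamma]\eta$ and $\delta\mathcal I[\gamma]\eta$ as integrals against the normal component $\langle\nu,\eta\rangle$ and then invoking the fundamental lemma of the calculus of variations on the scalar factor.

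For the Willmore side I would use that by Corollary \ref{gammasmooththeorem} the profile curve is smooth on $(0,1)$, so $f_\gamma$ is a smooth immersion of $(0,1)\times\Sp^1$. Given a test vector $\eta\in C^\infty_0((0,1),\R^2)$ the associated axisymmetric variation of $f_\gamma$ has support in a smooth subdomain of $\Sigma_\gamma$, and the classical first variation formula \eqref{classicalformula} applies with vanishing boundary term. A short computation using $n$ from \eqref{normalsprofilecurve} shows $\langle\Phi'(0),n\rangle=\langle\nu,\eta\rangle$, and integrating over the angular coordinate produces a factor $2\pi$, so
\begin{equation*}
\delta\mathcal W[\gamma]\eta = \pi\int_0^1\left[\Delta_g H+\tfrac12 H(H^2-4K)\right]\langle\nu,\eta\rangle\,\gamma_1|\dot\gamma|\,dt.
\end{equation*}
For the isoperimetric side, combining \eqref{var1}, \eqref{var4} and \eqref{var5} with the normalisations $A[\gamma]=1$ and $V[\gamma]=\sigma/(6\sqrt\pi)$ that hold on $\mathcal F_\sigma$ collects all terms as an integral against the same density $\langle\nu,\eta\rangle\gamma_1|\dot\gamma|$, namely
\begin{equation*}
\delta\mathcal I[\gamma]\eta = 12\pi\sqrt\pi\int_0^1\left[\mp_\gamma 1+\tfrac{\sigma H}{4\sqrt\pi}\right]\langle\nu,\eta\rangle\,\gamma_1|\dot\gamma|\,dt.
\end{equation*}

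Substituting both identities into \eqref{weakeulerlagrangefirstappe} with the abbreviation $\Lambda_0:=\delta\mathcal W[\gamma]\psi_0$, and specialising the test vector to $\eta=\phi\nu$ for arbitrary $\phi\in C^\infty_0((0,1))$ so that $\langle\nu,\eta\rangle=\phi$, the fundamental lemma of the calculus of variations produces the pointwise identity
\begin{equation*}
\Delta_g H+\tfrac12 H(H^2-4K) = 12\sqrt\pi\,\Lambda_0\left[\mp_\gamma 1+\tfrac{\sigma H}{4\sqrt\pi}\right] = \mp_\gamma 12\sqrt\pi\,\Lambda_0 + 3\sigma\Lambda_0 H.
\end{equation*}
Setting $\Lambda:=-3\Lambda_0$ rearranges the right-hand side into the stated form $\Lambda(\pm_\gamma 4\sqrt\pi-\sigma H)$.

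The main obstacle I anticipate is essentially bookkeeping rather than analytic: keeping track of the sign convention $\pm_\gamma$ coming from the absolute value in \eqref{volumedef}, and of the various factors of $\pi$ and $\sqrt\pi$ that enter through \eqref{classicalformula}, \eqref{var1}--\eqref{var5} and through the normalisation of the constraint, so that $12\sqrt\pi\,\Lambda_0$ and $3\sigma\Lambda_0$ conspire to give a single constant $\Lambda$ multiplying $\pm_\gamma 4\sqrt\pi-\sigma H$. No new regularity input is needed, since smoothness of $\gamma$ on $(0,1)$ is supplied by Corollary \ref{gammasmooththeorem} and the weak Euler--Lagrange relation by \eqref{weakeulerlagrangefirstappe}.
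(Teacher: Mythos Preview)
Your proposal is correct and follows essentially the same route as the paper: both arguments combine the weak Euler--Lagrange relation \eqref{weakeulerlagrangefirstappe} with the first variation formula for $\mathcal W$ (the paper cites \eqref{variationformulastandard}, you cite the equivalent \eqref{classicalformula}) and with \eqref{var1}, \eqref{var4}, \eqref{var5} for $\delta\mathcal I$, then set $\Lambda:=-3\,\delta\mathcal W[\gamma]\psi_0$ and apply the fundamental lemma. Your explicit choice $\eta=\phi\nu$ to reduce to scalar test functions is a harmless variant of the paper's direct application of the fundamental lemma to $\langle\phi,\nu\rangle$.
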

\begin{proof}
Let $\phi\in C^\infty_0((0,1))$ and $\psi_0$ be as in Lemma \ref{psi0existencelemma}. Then $\delta\mathcal W[\gamma](\phi-\alpha(\phi)\psi_0)=0$ by Equation \eqref{weakeulerlagrangefirstappe} . As $\phi$ is supported in $(0,1)$ where $\gamma$ is smooth, we can use Equation \eqref{variationformulastandard} for the variation of Willmore energy from the appendix to get 
$$\delta \mathcal W[\gamma]\phi=2\pi\int_0^1 W[\gamma]\langle\phi,\nu\rangle\gamma_1 |\dot\gamma|dt.$$
We put $\Lambda:=-3(\delta\mathcal W[\gamma]\psi_0)$ and recall from Equations \eqref{var1}, \eqref{var4} and \eqref{var5} that 
$$\alpha(\phi)=\delta\mathcal I[\gamma]\phi=-2\pi\sigma \int_0^1 \left(\pm_\gamma\frac1{V[\gamma]}-\frac32\frac{H}{A[\gamma]}\right)\langle\phi,\nu\rangle \gamma_1|\dot\gamma|ds.$$
Using $A[f]=1$, $V[f]=\frac\sigma{6\sqrt\pi}$, the explicit formula for the Willmore operator as well as the fundamental lemma of the calculus of variations, we get 
$$\frac12\left(\Delta_g H+\frac12H(H^2-4K)\right)=W[\gamma]=\frac13\Lambda \sigma \left(\pm_\gamma\frac{6\sqrt\pi}\sigma-\frac32 H\right).$$

\end{proof}
 To continue the investigation of the regularity of $\Sigma_\gamma$, we will now introduce a more suitable description of the surface $\Sigma_\gamma$ close to $(0,0,\gamma_2(0))$. 

\paragraph{Graph Representation near $(0,0,\gamma_2(0))$}\ \\
Near $(0,0,\gamma_2(0))$ we can write the surface $\Sigma_\gamma$ as a graph 
$$(r\cos\theta,r\sin\theta, u(r))\hspace{.5cm}\textrm{where}\hspace{.5cm} r\in[0, r_0),\ \theta\in[0,2\pi).$$
Indeed, putting $r(t):=\gamma_1(t)$ we have $r\in C^1([0,1])$ and $r'(0)=L[\gamma]\neq 0$. By the inverse function theorem, there exists $t_0>0$ such that $\gamma_1:[0, t_0]\rightarrow\gamma_1([0,t_0])$ is a $C^1$-diffeomorphism and we may define
$$u:[0, r_0)\rightarrow\R,\ u(r):=\gamma_2(\gamma_1^ {-1}(r))\hspace{.5cm}\textrm{where }r_0=\gamma_1(t_0).$$
Since $\gamma\in C^\infty((0, t_0])$, we get $u\in C^1([0, r_0])\cap C^ \infty((0, r_0])$. Additionally, $\dot\gamma_2(0)=0$ implies $u'(0)=0$.\\

Throughout the rest of this subsection, we fix $u$ as above.\\

We now wish to write the Euler-Lagrange equation from Lemma \ref{eulerlagrangeequation} in the graph parameterization. Using a result from \cite{dziuk2006error}, the authors of \cite{chenODE} derive an elegant formula for the Willmore operator. Letting $w:=u'(r)$ and $v:=\sqrt{1+w^2}$ we deduce that $w$ solves the following ODE on $(0, r_0)$:
\begin{equation}\label{ODE}
\frac1r\frac d{dr}\left[ r\frac1{v^ 5}\left(w''+\left(\frac{w}{r}\right)'-\varphi(w)\right)
\right]=W[\gamma]=
a- b H
\end{equation}
In view of Lemma \ref{eulerlagrangeequation} $a$ and $b$ are explicitly given by $a=\pm_\gamma4\sqrt\pi\Lambda$ and $b=\sigma\Lambda$. Furthermore, $\varphi(w)$ is computed in Lemma 2.2 in \cite{chenODE} and given by
\begin{equation}\label{varphiofwdef}
\varphi(w):=\frac{5w}{2(1+w^2)}(w')^2+\frac{w^ 3}{2r^2}(3+w^2).
\end{equation}
Note from Equation \eqref{graphH}, that $H$ is also of the form $r^{-1}\partial_r(...)$. This allows us to reduce the order of the ODE in Equation \eqref{ODE}. Indeed, multiplying Equation \eqref{ODE} by $r$, inserting Equation \eqref{graphH} for $H$, integrating and finally multiplying by $\frac{v^ 5}r$ shows that there exists $\lambda\in\R$ such that
\begin{equation}\label{ODEamRand}
\left\{
\begin{aligned}
&w''+\left(\frac{w}{r}\right)'-\varphi(w)
=
\frac12 v^ 5 a r+bv^ 4w+\frac\lambda rv^ 5,\\
&w(0)=0.
\end{aligned}
\right.
\end{equation}

\paragraph{Regularity of $u$}\ \\
$u$ satisfies Equation \eqref{ODEamRand} on $(0,r_0)$. We note that Equation \eqref{ODEamRand} allows for nonsmooth solutions. For example, the first remark in Section 3 in \cite{removability} discusses that the inverted catenoid solves \eqref{ODEamRand} on $(0, r_0)$ with $a=b=0$. It is also shown that the graph function $u_{ic}$ describing the inverted catenoid satisfies $u_{ic}\in C^ {1,\alpha}([0,r_0])$ for all $\alpha\in(0,1)$ and $u_{ic}''\in L^p((0, r_0))$ for all $p\in[1,\infty)$ but $u_{ic}\not\in C^{1,1}((0,r_0))$. In the next lemma, we prove that our solution $u$ possesses at least the same regularity.\\

The following lemma is essentially the same as Lemma 3.3 in \cite{chenODE}. Here the same analysis is carried out for the unconstrained Willmore equation. We show that the same argument can be applied when including the Lagrange multiplier in Equation \eqref{ODEamRand}.\\

\begin{lemma}\label{regularityatend01}
There exists $\xi\in C^1([0, r_0])$ such that $u'(r)=\frac12\lambda r\ln(r)+\xi(r)$. In particular, $u'\in C^{0,\alpha}([0,r_0])$ and $u''\in L^ p((0, r_0))$ for all $\alpha\in (0,1)$ and $p\in[1,\infty)$.
\end{lemma}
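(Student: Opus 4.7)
The plan is to integrate the second-order ODE \eqref{ODEamRand} twice, extracting the logarithmic singularity produced by the residue term $\tfrac{\lambda}{r}v^5$. The argument mirrors Lemma 3.3 of \cite{chenODE}; the two additional inhomogeneities $\tfrac{1}{2}v^5 a r$ and $b v^4 w$ contributed by the Lagrange multipliers are bounded near $r=0$ and leave the asymptotic analysis unchanged. The key algebraic observation is
$$
w'' + \left(\frac{w}{r}\right)' \;=\; \frac{d}{dr}\!\left[\frac{1}{r}\frac{d}{dr}(rw)\right],
$$
so that setting $F(r):=\tfrac{w(r)}{r}+w'(r)$ converts \eqref{ODEamRand} into the first-order equation $F'(r)=\tfrac{\lambda}{r}+\mathcal N(r)$ with
$$
\mathcal N(r):=\varphi(w) + \tfrac{1}{2}v^5 a r + b v^4 w + \tfrac{\lambda}{r}\bigl(v^5-1\bigr).
$$

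The next step is to prove that $\mathcal N$ extends continuously to $r=0$. Since $u\in C^1([0,r_0])$ with $u'(0)=0$, we have $w\in C^0([0,r_0])$ with $w(0)=0$ and $v(0)=1$, so $v^5-1$ vanishes at $r=0$ to the same order as $w^2$. The sensitive piece is $\varphi(w)$ from \eqref{varphiofwdef}. I would handle it by a bootstrap: the finite Willmore energy together with \eqref{graphicalPrincipalCurvatures} gives preliminary weighted $L^2$-bounds on $w'$ and $w/r$; then integrating $F'=\tfrac{\lambda}{r}+\mathcal N$ from $r_0$ down to $r$ and feeding the result back into $\mathcal N$ yields, after finitely many iterations, the pointwise bounds $w(r)=O(r|\log r|)$ and $w'(r)=O(|\log r|)$ as $r\to 0^+$. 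Substituting these into \eqref{varphiofwdef} gives $\varphi(w)=O(r|\log r|^3)$, and hence $\mathcal N\in C^0([0,r_0])$.

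With $\mathcal N$ continuous on $[0,r_0]$, integrating $F'(r)=\tfrac{\lambda}{r}+\mathcal N(r)$ from $r_0$ downward produces $F(r)=\lambda\log r+g_1(r)$ with $g_1\in C^1([0,r_0])$. Multiplying by $r$ gives $(rw)'(r)=\lambda r\log r+r g_1(r)$, and integrating on $(0,r)$ -- permissible because $w$ is bounded, so $rw(r)\to 0$ -- yields
$$
r w(r) \;=\; \frac{\lambda}{2}r^2\log r - \frac{\lambda}{4}r^2 + G(r),
$$
where $G\in C^2([0,r_0])$ satisfies $G(0)=G'(0)=0$. A short Taylor-expansion argument shows $G(r)/r\in C^1([0,r_0])$, and dividing by $r$ delivers the desired decomposition $u'(r)=w(r)=\tfrac{1}{2}\lambda r\log r+\xi(r)$ with $\xi\in C^1([0,r_0])$. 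The Hölder regularity $u'\in C^{0,\alpha}([0,r_0])$ for all $\alpha\in(0,1)$ and the integrability $u''\in L^p((0,r_0))$ for all $p\in[1,\infty)$ follow at once from this explicit singular profile.

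The main obstacle is the bootstrap establishing continuity of $\mathcal N$: because $w'$ is not a priori bounded near the axis, one must extract successively sharper pointwise bounds from the integral form of the ODE before the nonlinear source term $\varphi(w)$ can be pinned down to continuous behaviour at $r=0$. Once this step is accomplished, the remainder of the argument is a direct double integration, in which the contribution $\tfrac{\lambda}{r}$ of the residue produces precisely the $\tfrac{1}{2}\lambda r\log r$ singular profile.
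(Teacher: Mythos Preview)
Your outline is essentially the paper's proof: rewrite \eqref{ODEamRand} as $\bigl((rw)'/r\bigr)'=\tfrac{\lambda}{r}+\mathcal N(r)$, establish enough control on $\mathcal N$ near $r=0$ to integrate twice, and read off the $\tfrac{1}{2}\lambda r\log r$ profile. The double integration (your last two paragraphs) matches Step~6 of the paper almost verbatim, and you correctly flag the bootstrap controlling $\mathcal N$ as the real work.

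Two small points of comparison. First, the paper does \emph{not} obtain the pointwise bound $w'(r)=O(|\log r|)$ during the bootstrap; it only proves the integral estimate $\int_0^\rho (w')^2\,dt<\infty$ (Steps~1--5), which already forces $\mathcal N\in L^1((0,r_0))$ and hence $\psi=\int\mathcal N\in C^0$. That is all Step~6 needs. Your stronger claim $\mathcal N\in C^0$ would require $w'=O(|\log r|)$, which only becomes available \emph{after} the decomposition is established. So weaken your target to $\mathcal N\in L^1$ and the argument closes as written. Second, your proposed starting point---weighted curvature bounds $\int (w')^2 r\,dr<\infty$ and $\int w^2/r\,dr<\infty$ coming from $k_1,k_2\in L^2(\mu)$---is a bit too weak by itself to control $\int w(w')^2\,dt$ inside $\varphi(w)$. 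The paper instead multiplies \eqref{ODEamRand} by $w$ and integrates (Step~1), then combines this with a direct integration (Step~2) to bootstrap to the unweighted bound $\int_0^\rho\bigl((w')^2+w^2/t^2\bigr)\,dt\le C$ and the pointwise estimate $|w(r)|\le Cr\log\tfrac1r$ (Steps~3--4). This energy trick is the substance you would need to fill in.
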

\begin{proof}
The proof is separated into six steps.\\

\noindent
\textbf{Step 1}\ \\
Note  $w\varphi(w)\geq 0$ by the definition of $\varphi(w)$ in Equation \eqref{varphiofwdef}.  We multiply Equation \eqref{ODEamRand} by $w$, choose any $\rho\in (0, r_0)$  and integrate:
$$\int_r ^ \rho w''w+\left(\frac wt\right)' w dt
\geq \frac a2 \int_ r^ \rho v^ 5 t w dt+
b\int_r^ \rho v^ 4 w^2 dt+\lambda \int_r ^ \rho \frac{v^ 5}{t} w dt$$
We already know that $u$ is $C^1([0, r_0))$ and satisfies $u'(0)=0$. Hence $v$ and $w$ are bounded and we get the estimate  
$$\int_r ^ \rho w''w+\left(\frac wt\right)' w dt
\geq 
-C\left(1+\int_r^ \rho\frac{|w|}tdt\right).$$
$C$ depends on $\sigma$, $\lambda$ and on the Lagrange multiplier. We will not make this dependence explicit in the notation. Integrating by parts gives 
$$-w'(r)w(r)-\frac{w(r)^2}r-\int_r^ \rho (w'(t))^2+\frac{ ww'}{t}dt
\geq 
-C(\rho)\left(1+\int_r^ \rho\frac{|w|}tdt\right).$$
We have included the boundary terms at $t=\rho$ in the constant $C(\rho)$. Note that $C(\rho)<\infty$ for all $\rho\in(0, r_0)$ as $w\in C^\infty((0, r_0))$. Next, we integrate by parts again and get
$$-w'(r)w(r)-\frac{w(r)^2}r+\frac{w(r)^2}{2r}-\int_r^ \rho (w'(t))^2+\frac{ w(t)^2}{2t^2}dt
\geq 
-C(\rho)\left(1+\int_r^ \rho\frac{|w|}tdt\right).$$
Rearranging this estimate gives 
\begin{align}
\int_r^ \rho (w'(t))^2+\frac{ w(t)^2}{2t^2}dt
&\leq C(\rho)\left(1+\int_r^ \rho\frac{|w|}tdt\right)+\frac{w(r)^2}{2r}-w(r)\frac{w(r) +rw'(r)}r\nonumber\\
&= C(\rho)\left(1+\int_r^ \rho\frac{|w|}tdt\right)+\frac{w(r)^2}{2r}-w(r)\frac{(r w(r))'}r.\label{step1result}
\end{align}

\noindent
\textbf{Step 2}\ \\
Let again $\rho\in(0,r_0)$. This time we integrate Equation \eqref{ODEamRand} directly and use $w'+\frac wr=\frac1r(wr)'$ to get 
\begin{equation}\label{step2begin}
\frac {(w t)'}t\bigg|_{t=\rho}-\frac {(w r)'}r=\int_ r^ \rho \varphi(w)+\lambda\frac{ v^ 5-1}t+\frac12 a v^ 5 t + b v^ 4wdt-\lambda\ln\frac r\rho.
\end{equation}
Let $\epsilon>0$. Since $w=u'\in C^0([0, r_0))$ and $w(0)=0$, we can choose $\rho(\epsilon)$ small enough such that $|w(r)|\leq \epsilon$ for all $r\in[0,\rho(\epsilon)]$. For $\rho\leq\rho(\epsilon)$ we get 
\begin{align*}
    \left|\frac{(r w(r))'}{r}\right| 
    &\leq C(\epsilon)+\left|\lambda\ln\frac r\rho\right|+\int_r^ \rho
    \frac{5 |w(t)| w'(t)^2}{2(1+w(t)^2)}+w(t)^2\frac{3|w(t)|+|w(t)|^3}{2 t^2}+|\lambda|\left|\frac{v^ 5-1}{t}\right|dt\\
    &\hspace{.5cm}+C\int_r^ \rho t v^ 5+ v^ 4|w| dt.
\end{align*}
The constant $C(\epsilon)$ is due to the boundary terms at $\rho(\epsilon)$. The second constant $C$ does not depend on $\epsilon$. Using $u'(0)=w(0)=0$, we can shrink $\rho(\epsilon)$ to achieve
\begin{align}
    \left|\frac{(r w(r))'}{r}\right| 
    &\leq C(\epsilon)\ln\frac1r+\epsilon \int_r^ \rho w'(t)^2 +\frac{w(t)^2}{2t^2} dt\hspace{.5cm}\textrm{for all }r\in(0, \rho(\epsilon)).\label{step2result}
\end{align}

\noindent
\textbf{Step 3}\ \\
Let $\epsilon>0$ be arbitrary. By taking $\rho=\rho(\epsilon)$ small enough in Estimate \eqref{step1result} we may insert Estimate \eqref{step2result} to get 
\begin{align*} 
\int_ r^ {\rho}w'(t)^2+\frac{w(t)^2}{2t^2} dt&\leq C(\epsilon)\left(1+\int_r^ \rho\frac{|w|}{t}dt\right)+\frac{w(r)^2}{2r}\\
&\hspace{1cm}+|w(r)|\left(C(\epsilon)\ln\frac1r+\epsilon \int_r^ \rho w'(t)^2 +\frac{w(t)^2}{2t^2} dt\right).
\end{align*}
By potentially shrinking $\rho$ we can assume without loss of generality that $|w|\leq \epsilon$ on $[0,\rho]$ and so, in particular $|w(r)|\leq \epsilon$. Choosing $\epsilon$ small enough, we can absorb the second integral on the right to the left and obtain that for some small enough $\rho$ and $r\in(0,\rho)$
\begin{equation}\label{step3result}
\int_r^ {\rho}w'(t)^2+\frac{w(t)^2}{2t^2} dt
\leq
C(\rho)\left(\frac{w(r)^2}{2r}+|w(r)|\ln\frac1r+\int_r^ \rho\frac{|w(t)|}tdt+1\right).
\end{equation}

\noindent
\textbf{Step 4}\ \\
Let $\rho$ be small enough so that Estimate \eqref{step3result} holds. Inserting Estimate \eqref{step3result} into Estimate \eqref{step2result} with $\epsilon=1$ and using that $|w|$ is bounded, we get
\begin{align*} 
\left|\frac{(r w(r))'}{r}\right| 
    \leq &
    C\ln\frac1r+C\left(\frac{w(r)^2}{2r}+|w(r)|\ln\frac1r+\int_r^ \rho\frac{|w(t)|}tdt+1\right) \\
    \leq &
    C\ln\frac1r+C\left(\frac{w(r)^2}{2r}+\int_r^ \rho\frac{1}tdt\right).
\end{align*}
We multiply by $r$. Clearly $|\ln(\rho)-\ln(r)|\leq C\ln(\frac1r)$ as $r\leq \rho$. Let $\mu\in(0,1)$. By potentially shrinking $\rho$ we can assume that $|w|\leq C^ {-1}\mu$ on $[0,\rho]$. Hence 
$$|(r w)'|\leq C(w(r)^2+r\ln\frac1r)\leq \mu |w|+Cr\ln\frac1r.$$ 
In general, for any weakly differentiable $g$ we have $|g|'\leq |g'|$. As $|rw|=r|w|$ we get 
$$r|w|'+|w|=(r|w|)'=|rw|'\leq |(rw)'|\leq \mu|w|+Cr\ln\frac1r \hspace{.5cm}\textrm{for all $r\in(0,\rho]$}.$$
Multiplying with $r^ {1-\mu}$ we get 
$$(r^ {1-\mu} |w|)'=r^ {1-\mu}|w|'+(1-\mu) r^ {-\mu}|w|\leq C r^ {1-\mu} \ln\frac1r.$$
We integrate this inequality from $r_1<r$ to $r$ and subsequently send $r_1\rightarrow 0^+$. Since $w\in C^0([0,r_0))$, we get
$$r^ {1-\mu}|w(r)|\leq C\int_0^ r t^ {1-\mu}\ln\frac1 t dt\leq C(\mu) r^{2-\mu}\ln\frac1r.$$
Taking e.g. $\mu=\frac12$ shows that there exists $\rho_0>0$ such that
\begin{equation}\label{step4result}
|w(r)|\leq Cr\ln\frac1r\hspace{.5cm}\textrm{for all }0<r<\rho_0.
\end{equation}

\noindent
\textbf{Step 5}\ \\
 Using Estimate \eqref{step4result} we deduce the following three estimates for $r\in(0,\rho_0)$:
\begin{align}
    \frac{w(r)^2}r&\leq C\frac{r^ 2\ln^2\frac1r}{r}\leq Cr\ln^2\frac1r\label{step5estimate1}\\
    |w(r)|\ln\frac1r&\leq Cr\ln^2\frac1r\label{step5estimate2}\\
    \int_0^ r\frac{|w(t)|}tdt&\leq C\int_0^  r \ln\frac1t dt\leq C\label{step5estimate3}
\end{align}
After potentially shrinking $\rho_0$, we may use Estimate \eqref{step3result}. Inserting Estimates \eqref{step5estimate1}-\eqref{step5estimate3} into Estimate \eqref{step3result}, we get
\begin{equation}\label{step5ausgangspunkt}
\int_r^ {\rho_0}w'(t)^2+\frac{w(t)^2}{2t^2} dt
\leq C.
\end{equation}
Potentially shrinking $\rho_0$ even further, we may assume $|w|\leq\frac12$ and, recalling that $v=\sqrt{1+w^2}$, we get
\begin{equation}\label{step5helpestimates}
\left|\frac{5w}{2(1+w^2)}(w')^2\right|
\leq C|w'|^2,
\hspace{.5cm}
\left|\frac{3w^3+w^ 5}{2t^2}\right|
\leq 
C\frac{w^2}{t^2}
\hspace{.5cm}\textrm{and}\hspace{.5cm}
\left|\frac{v^5-1}t\right|\leq C\frac{w^2}t.
\end{equation}
Combining Estimates \eqref{step5ausgangspunkt} and \eqref{step5helpestimates}, we obtain
\begin{equation}\label{step5result}
\int_0^ {\rho_0}\left|
\frac{5w}{2(1+w^2)}(w')^2
+
\frac{3w^3+w^ 5}{2t^2}
+\lambda\frac{v^ 5-1}t
\right|dt<\infty.
\end{equation}

\noindent
\textbf{Step 6}\ \\
We choose $\rho$ small enough, such that the results from all previous steps are valid. Considering the integral appearing in Equation \eqref{step2begin} from Step 2 we introduce
$$\psi(r):=-\int_ r^ \rho \varphi(w)+\lambda\frac{ v^ 5-1}t+\frac12 a v^ 5 t + b v^ 4wdt.$$
Recalling the definition of $\varphi(w)$ from Equation \eqref{varphiofwdef} and using Estimate \eqref{step5result} as well as $v\in C^ 0([0,\rho])$ we deduce $\psi\in C^0([0,\rho])$. Rewriting Equation \eqref{step2begin} we have shown that there exists a constant $\tilde c_0\in\R$ such that
$$(w(r) r)'=r\psi(r)+\tilde c_0+\lambda r\ln(r).$$
This implies that there exist constants $c_0, c_1\in\R$ such that
$$w(r)r=\int_0 ^r t\psi(t)dt +c_0 r+c_1 +\frac12 \lambda r^2 \ln(r)-\frac14\lambda r^2.$$
Letting $r\rightarrow 0^+$ we get that $c_1=0$ so that
$$w(r)-\frac12 \lambda r\ln(r)-c_0+\frac14\lambda r=\frac1r\int_0 ^r t\psi(t)dt=:h(r).$$
It remains to prove that $h\in C^1([0,\rho])$. First note, that $\psi\in C^0([0,\rho])$ is bounded. Hence $h$ is continuous at $r=0$ with $h(0)=0$. Indeed
$$|h(r)|\leq \frac Cr\int_0 ^r tdt\leq Cr\rightarrow 0,\hspace{.5cm}\textrm{as $r\rightarrow 0^+$}.$$
On $(0,\rho]$ we can differentiate the definition of $h$ to derive the equation $\frac{(r h(r))'}{r}=\psi(r)$. 
As $\psi\in C^0([0,\rho])$ we may use Lemma \ref{oderegularitylemma} from the appendix and get $h\in C^1([0,\rho])$. Finally, as $u$ is smooth on $[\rho, r_0]$, the lemma follows. 
\end{proof}

The next step is to derive an expansion of the mean curvature $H$.

\begin{lemma}\label{HExpansionLemma}
There exists a function $H_0(r)\in C^ {1,\alpha}([0, r_0])$ for all $\alpha\in(0,1)$ such that $H(r)=-\lambda\ln(r)+H_0(r)$.
\end{lemma}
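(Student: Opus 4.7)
The plan is to start from Equation~\eqref{graphH}, which yields $H(r) = -r^{-1}(rw/v)'$ with $w := u'$ and $v := \sqrt{1+w^2}$. Inserting the expansion $w(r) = \tfrac{1}{2}\lambda r\ln r + \xi(r)$ from Lemma~\ref{regularityatend01} (with $\xi \in C^1([0,r_0])$ and $\xi(0) = w(0) = 0$) and writing $rw/v = rw + q(r)$ where
\begin{equation*}
q(r) := rw\bigl(v^{-1}-1\bigr) = -\frac{rw^3}{v(v+1)} = O(r^4|\ln r|^3),
\end{equation*}
I differentiate and divide by $r$ to obtain
\begin{equation*}
H(r) = -\lambda\ln r - \tfrac{1}{2}\lambda - \frac{\xi(r)}{r} - \xi'(r) - \frac{q'(r)}{r}.
\end{equation*}

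The algebraic heart of the argument is that the apparently singular quotient $\xi(r)/r$ combines cleanly with $\xi'(r)$. Recalling from Step~6 of the proof of Lemma~\ref{regularityatend01} that $\xi(r) = h(r) - \tfrac{1}{4}\lambda r$ with $h(r) = \Psi(r)/r$ and $\Psi(r) = \int_0^r t\psi(t)\,dt$, one computes $h(r)/r + h'(r) = \Psi(r)/r^2 + \bigl(\psi(r) - \Psi(r)/r^2\bigr) = \psi(r)$, hence
\begin{equation*}
\frac{\xi(r)}{r} + \xi'(r) = \psi(r) - \tfrac{1}{2}\lambda.
\end{equation*}
Substituting back gives the desired decomposition $H(r) = -\lambda\ln r + H_0(r)$ with
$H_0(r) := -\psi(r) - q'(r)/r$.

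It remains to verify $H_0 \in C^{1,\alpha}([0, r_0])$ for every $\alpha \in (0,1)$. Smoothness on $(0, r_0]$ follows from Corollary~\ref{gammasmooththeorem}, so only behaviour at $r = 0$ requires attention. Differentiating the definition of $\psi$ yields
\begin{equation*}
\psi'(r) = \varphi(w(r)) + \lambda\,\frac{v(r)^5 - 1}{r} + \tfrac{1}{2}a\,v(r)^5\, r + b\,v(r)^4\, w(r),
\end{equation*}
and the sharp pointwise estimates $w = O(r|\ln r|)$ and $w' = O(|\ln r|)$ obtained in Steps~4--5 of Lemma~\ref{regularityatend01} show $|\psi'(r)| \leq C r|\ln r|^3$; this places $\psi' \in C^{0,\alpha}$ at $r = 0$ and hence $\psi \in C^{1,\alpha}$. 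For the remainder $q'(r)/r$, writing $q = rw^3\,\Phi(w)$ with $\Phi(w) = -1/\bigl(v(v+1)\bigr)$ smooth in $w^2$ and using the ODE~\eqref{ODEamRand} to bound $rw^2 w'' = O(r^2|\ln r|^2)$, a direct computation gives $q'/r = O(r^2|\ln r|^3)$ and $(q'/r)' = O(r|\ln r|^3)$, so $q'/r \in C^{1,\alpha}$ as well. The main obstacle throughout is the cancellation identity $\xi/r + \xi' = \psi - \tfrac{1}{2}\lambda$: without it one only knows $\xi \in C^1$, which is too weak to give any H\"older control of $\xi(r)/r$ directly; the identity replaces this recalcitrant quotient by the manifestly smoother function $\psi$, after which the pointwise asymptotics for $w$ deliver the required H\"older upgrade.
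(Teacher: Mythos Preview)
Your approach is genuinely different from the paper's, and the algebraic part is correct and rather elegant. The paper never computes $H$ from the first-order formula $H=-r^{-1}(rw/v)'$; instead it treats the Euler--Lagrange equation as a second-order ODE for $H$, integrates once to find $H'(r)=\frac{c_0}{r}\sqrt{1+u'^2}+\frac{\sqrt{1+u'^2}}{r}\int_0^r z$, then sets $H_0:=H-c_0\ln r$ and bounds $|H_0''(r)|\le C|\ln r|^3$, obtaining $H_0\in W^{2,p}\hookrightarrow C^{1,\alpha}$ by Sobolev embedding; only afterwards is $c_0=-\lambda$ identified. Your route --- expanding $rw/v=rw+q$ and using the cancellation $\xi/r+\xi'=\psi-\tfrac{\lambda}{2}$ from Step~6 of Lemma~\ref{regularityatend01} --- produces the explicit formula $H_0=-\psi-q'/r$ and the correct coefficient $-\lambda$ immediately, which is a real advantage.

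There is, however, a gap in the final regularity step. From $|\psi'(r)|\le Cr|\ln r|^3$ you conclude ``$\psi'\in C^{0,\alpha}$ at $r=0$'' and then $\psi\in C^{1,\alpha}([0,r_0])$. Pointwise decay of $\psi'(r)$ to $\psi'(0)=0$, together with smoothness on $(0,r_0]$, does \emph{not} yield global H\"older continuity: the function $r\mapsto r\sin(1/r)$ satisfies $|f(r)|\le r$ and is smooth on $(0,1]$, yet $f\notin C^{0,\alpha}$ for $\alpha>\tfrac12$. The same objection applies to your claim for $(q'/r)'$. What is missing is a bound on the \emph{next} derivative. Using the ODE~\eqref{ODEamRand} to get $w''=O(1/r)$ and $w'''=O(1/r^2)$, one checks $|\psi''(r)|\le C|\ln r|^3$ and $|(q'/r)''(r)|\le C|\ln r|^3$; both lie in $L^p((0,r_0))$ for every $p<\infty$, so $\psi',\,(q'/r)'\in W^{1,p}\hookrightarrow C^{0,\alpha}$. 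With this addition your argument is complete --- and note that this is precisely the mechanism (an $L^p$ bound on the second derivative, then Sobolev) that the paper uses for its $H_0$.
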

\begin{proof}
Writing out the Willmore operator in Equation \eqref{ODE} in terms of $H$, we get 
$$\Delta_g H+ \frac12H(H^2-4K)=a-bH.$$
We put $f(r):=a-bH-\frac12 H(H^2-4K)$. Using Equations \eqref{graphmetric} and \eqref{graphH} we may write
$$\frac1{r \sqrt{1+u'(r)^2}}\frac{d}{dr}\left[\frac{r}{\sqrt{1+u'(r)^2}}H'(r)\right]
=\Delta_g H(r)
=f(r).$$
Lemma \ref{regularityatend01} gives $u'(r)=\frac12\lambda r\ln(r)+\xi$ with $\xi\in C^1([0, r_0])$. As $u'(0)=0$ we have $\xi(0)=0$. Using Equations \eqref{graphmetric} and \eqref{graphicalPrincipalCurvatures} allows us to estimate both $|H(r)|\leq C\ln\frac1r$ and $|K(r)|\leq C\ln^2\frac1r$. Therefore $|f(r)|\leq C\ln^3\frac1r$ and for all $p\in[1,\infty)$
\begin{equation}\label{docheinname}
\frac{d}{dr}\left[\frac r{\sqrt{1+u'(r)^2}}H'(r)\right]=r \sqrt{1+u'(r)^2} f(r)=:z(r)\in L^p((0, r_0)).
\end{equation}
Since $u\in C^ \infty((0,r_0])\cap C^1([0,r_0])$, $|H|\leq C\ln\frac1r$ and $|K|\leq C\ln^2\frac1r$ we get $z\in C^ \infty((0,r_0])\cap C^ 0([0,r_0])$. Integrating Equation \eqref{docheinname} implies that there exists $c_0\in\R$ such that  
$$H'(r)=\frac{c_0}{r}\sqrt{1+u'(r)^2}+\frac{\sqrt{1+u'(r)^2}}r\int_0^r z(s) ds.$$
We put $H_0(r):=H(r)-c_0\ln(r)$. Clearly $H_0\in C^ \infty((0,r_0])$ and we can compute 
\begin{align*}
    H_0''(r)=&\frac d{dr}\left[c_0\frac{\sqrt{1+u'(r)^2}-1}r+\frac{\sqrt{1+u'(r)^2}}r\int_0 ^r z(s)ds\right]\\
    =&-c_0\frac{\sqrt{1+u'(r)^2}-1}{r^2}+c_0\frac{u'(r) u''(r)}{r\sqrt{1+u'(r)^2}}
    -\frac{\sqrt{1+u'(r)^2}}{r^2}\int_0 ^r
    z(s)ds
    \\
    &+\frac{u'(r)u''(r)}{r\sqrt{1+u'(r)^2}}\int_0 ^r z(s)ds
    +\frac{\sqrt{1+u'(r)^2}}rz(r).
\end{align*}
We have $|u'(r)|\leq Cr\ln\frac1r$ and $|u''(r)|\leq C\ln\frac1r$. Also $|z(r)|\leq Cr\ln^3\frac1r$. This gives 
$$
|H_0''(r)|\leq C\left[\ln^2\frac1r+\frac 1{r^2}\int_0^r s\ln^3\frac1sds+\ln^2\frac1r\int_0^r s\ln^3\frac1sds+\ln^3\frac1r\right]\leq C\ln^3\frac1r.
$$
Hence $|H_0''(r)|\leq C\ln^3\frac1r\in L^p((0, r_0))$ for all $p\in[1,\infty)$. Using Lemma \ref{derivativeinL1} we get $H_0\in C^1([0,r_0])$ and thus $H_0\in W^ {2,p}((0,r_0))$ for all $p<\infty$. By the Sobolev embedding theorem we deduce $H_0\in C^{1,\alpha}([0,r_0])$ for all $\alpha\in(0,1)$. Recalling the definition of $H_0$, this gives
\begin{equation}\label{Hexpansionversion0}
H(r)=c_0\ln(r)+H_0(r)\hspace{.5cm}\textrm{with $H_0\in C^ {1,\alpha}([0, r_0])$}.
\end{equation}
It remains to check that $c_0=-\lambda$. We recall $u'(r)=\frac\lambda2 r\ln(r)+\xi(r)$ where $\xi\in C^1([0,r_0])$. In particular $u'\in C^0([0,r_0])$. Using $u'(0)=0$ and Equation \eqref{graphicalPrincipalCurvatures} we compute 
$$ \frac{H(r)}{\ln(r)}=-\frac{u''(r)}{\ln(r)}\frac1{\sqrt{1+u'(r)^2}^3}-\frac{u'(r)}{r\ln(r)}\frac1{\sqrt{1+u'(r)^2}}
\rightarrow-\lambda\hspace{.5cm}\textrm{as }r\rightarrow0^+.$$
In view of Equation \eqref{Hexpansionversion0}, we obtain $c_0=-\lambda$.
\end{proof}

\paragraph{Higher Regularity}\ \\
In the next lemma, we prove that the parameter $\lambda$ from Equation \eqref{ODEamRand} vanishes. In view of Lemma \ref{regularityatend01} we then get $u\in C^2([0,r_0])$. The key ingredient that allows this further improvement of regularity is the fact that the surface $\Sigma_\gamma$ minimizes the Willmore energy inside the class $\mathcal F_\sigma$.\\ 

Recall that $\gamma_1$ is a $C^1$-diffeomorphism from $[0,t_0]$ onto $[0,r_0]$ and that $u:[0,r_0]\rightarrow\R,\ u(r):=\gamma_2(\gamma_1^{-1}(r))$. We put $r_1:=\frac{r_0}2$, $t_1:=\gamma_1^{-1}(r_1)\in(0, t_0)$ and choose $\chi\in C^\infty([0,1])$ such that $\chi\equiv 1$ on $[0, t_1]$ and $\chi\equiv 0$ on $[\frac{t_0+t_1}2,1]$. For $\varphi\in C^\infty_0([0,r_1))$ satisfying $\varphi'(0)=0$ we consider the variation
$$
\Phi:(-\epsilon_0,\epsilon_0)\times[0,1]\rightarrow\mathcal H^2,\ \Phi(\epsilon, t):=\gamma(t)+\epsilon\chi(t)\varphi(\gamma_1(t))e_2\hspace{.5cm}\textrm{and put}\hspace{.5cm}
\phi(t):=\frac{\partial\Phi}{\partial\epsilon}\bigg|_{\epsilon=0}.$$
We note that $\Phi(\epsilon, t)\equiv \gamma(t)$ for $t\geq t_1$ and $\Phi(\epsilon,\cdot)\in\mathcal P'$ for all small $\epsilon$. However, the variation $\Phi$ does not preserve the isoperimetric ratio. Employing the function $\psi_0\in C^\infty_0((0,1))$ from Lemma \ref{psi0existencelemma} we may define an admissible variational vector field by putting 
\begin{equation}\label{Vdef}
\eta:=\phi-\alpha(\phi)\psi_0
\hspace{.5cm}\textrm{with}\hspace{.5cm}
\alpha(\phi)=\delta\mathcal I[\gamma]\phi.
\end{equation}

We require the following lemma, 
\begin{lemma}\label{eulerlagangeataxis}
There exists a variation $\tilde\Phi:(-\epsilon_0,\epsilon_0)\rightarrow\mathcal F_\sigma'$ such that $\tilde\Phi'(0)=\eta$. In particular $\delta \mathcal W[\gamma]\eta=0$ by Equation \eqref{weakeulerlagrangefirstappe}. Additionally 
$$0=\delta \mathcal I[\gamma] \eta=-3\pi\lim_{\tau\rightarrow0^+}\int_\tau^1\left( \pm_\gamma 4\sqrt\pi-\sigma H[\gamma]\right)\langle\nu,\eta\rangle L\gamma_1 dt.$$
\end{lemma}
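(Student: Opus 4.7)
My plan is to build the variation $\tilde\Phi$ via the implicit function theorem so as to enforce the isoperimetric constraint, use the minimizing property of $\gamma$ to conclude $\delta\mathcal W[\gamma]\eta=0$, and then convert $\delta\mathcal I[\gamma]\eta$ into the stated surface integral by integrating by parts on $[\tau,1]$.

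For the first step, I would consider the two-parameter map
\[
G(\epsilon,s):=\mathcal I\!\left[\gamma+\epsilon\,\chi\varphi(\gamma_1)e_2+s\psi_0\right]
\]
and verify that for $(\epsilon,s)$ near $(0,0)$ its argument lies in $\mathcal P'$: since $\phi=\chi\varphi(\gamma_1)e_2$ is purely vertical and $\psi_0\in C^\infty_0((0,1))$ is compactly supported away from the endpoints, the first component agrees with $\gamma_1$ near $t=0,1$, it stays positive on compact subsets of $(0,1)$ for small parameters, and the $C^1\cap W^{2,2}_{\mathrm{loc}}$-regularity and the $L^2$-curvature bound pass to the perturbation by continuity. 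Because $G(0,0)=\sigma$ and $\partial_sG(0,0)=\delta\mathcal I[\gamma]\psi_0=1$, the implicit function theorem supplies $s\in C^1((-\epsilon_0,\epsilon_0))$ with $s(0)=0$, $G(\epsilon,s(\epsilon))=\sigma$ and $s'(0)=-\delta\mathcal I[\gamma]\phi=-\alpha(\phi)$. Setting $\tilde\Phi(\epsilon):=\gamma+\epsilon\chi\varphi(\gamma_1)e_2+s(\epsilon)\psi_0\in\mathcal F_\sigma'$ one then has $\tilde\Phi'(0)=\phi-\alpha(\phi)\psi_0=\eta$, and Equation \eqref{eqeq0willmorevar} together with the minimality of $\gamma$ yields $\delta\mathcal W[\gamma]\eta=0$. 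The identity $\delta\mathcal I[\gamma]\eta=0$ then follows by linearity, $\delta\mathcal I[\gamma]\eta=\alpha(\phi)-\alpha(\phi)\cdot\delta\mathcal I[\gamma]\psi_0=0$.

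For the surface integral representation, I would use $A[\gamma]=1$ and $V[\gamma]=\sigma/(6\sqrt\pi)$ to rewrite Equation \eqref{var1} as $\delta\mathcal I[\gamma]\eta=6\sqrt\pi\,\delta V[\gamma]\eta-\tfrac{3\sigma}{2}\delta A[\gamma]\eta$, split the integrals in \eqref{var2}, \eqref{var3} as $\int_0^1=\int_0^\tau+\int_\tau^1$, and integrate the second piece by parts. The boundary contributions at $t=1$ vanish because $\gamma_1(1)=0$, while those at $t=\tau$ (namely $\gamma_1(\tau)^2\eta_2(\tau)$ and $\gamma_1(\tau)\langle\eta(\tau),\dot\gamma(\tau)\rangle/L$) vanish as $\tau\to 0^+$ since $\gamma_1(0)=0$ and $\eta$ is bounded; the $\int_0^\tau$ pieces also vanish by continuity of the bounded integrands. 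Invoking the arc-length identities in \eqref{ArcLength_H_Identity} to rewrite $\gamma_1\langle\eta,\ddot\gamma\rangle/L$ as $\gamma_1 LH\langle\nu,\eta\rangle-\dot\gamma_2\langle\nu,\eta\rangle$, and noting the cancellation with a $-\dot\gamma_2\langle\nu,\eta\rangle$ contribution coming from $L\eta_1-\dot\gamma_1\langle\eta,\dot\gamma\rangle/L$, yields
\[
\delta V[\gamma]\eta=\mp_\gamma 2\pi\lim_{\tau\to 0^+}\int_\tau^1\langle\nu,\eta\rangle L\gamma_1\,dt,\qquad \delta A[\gamma]\eta=-2\pi\lim_{\tau\to 0^+}\int_\tau^1 H\langle\nu,\eta\rangle L\gamma_1\,dt,
\]
and combining these with the expression for $\delta\mathcal I[\gamma]\eta$ produces the claimed identity.

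The main obstacle is the absolute integrability of the $\ddot\gamma$-term in the area variation, which I expect to follow from $\gamma_1|\ddot\gamma|\in L^1((0,1))$ via Cauchy–Schwarz applied to Lemma \ref{gamma1ddotgammainL1}; this is what justifies passing to the limit in $\int_\tau^1\gamma_1\langle\eta,\ddot\gamma\rangle/L\,dt$. The bookkeeping of the arc-length identities must be carried out carefully so that the cancellation producing the compact integrand $-H\langle\nu,\eta\rangle L\gamma_1$ is visible. By comparison, the implicit function theorem step is routine once the admissibility of the perturbed curve in $\mathcal P'$ has been checked.
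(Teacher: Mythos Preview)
Your proposal is correct and follows essentially the same route as the paper: the existence of $\tilde\Phi$ is obtained by the implicit function theorem applied to $(\epsilon,s)\mapsto\mathcal I[\gamma+\epsilon\phi+s\psi_0]$ (this is exactly the argument in Appendix~\ref{variationexistenceappendix}), and the integral representation of $\delta\mathcal I[\gamma]\eta$ comes from restricting the formulas \eqref{var2}--\eqref{var3} to $[\tau,1]$, integrating by parts, and letting $\tau\to 0^+$. One simplification you can make: your concern about the integrability of the $\ddot\gamma$-term is unnecessary, since $\eta$ vanishes identically near $t=1$ and $\gamma\in C^\infty((0,1))$ by Corollary~\ref{gammasmooththeorem}, so on $[\tau,1]$ the integration by parts takes place entirely on a region where $\gamma$ is smooth and no global $L^1$-bound on $\gamma_1|\ddot\gamma|$ is required.
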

\begin{proof}
    The existence of $\tilde\Phi$ is established in Appendix \ref{variationexistenceappendix}. Here we only establish the formula. Note that $\eta=\chi\cdot(\varphi\circ\gamma_1)e_2-\alpha(\phi)\psi_0\in C^1([0,1])$ since $\gamma\in C^1([0,1])$. Using Equation \eqref{var3}, $\gamma_1(0)=0$ and the fact that $\eta$ vanishes at $1$, we compute 
    \begin{align}
    \delta A[\gamma]\eta=&2\pi\lim_{\tau\rightarrow0^+}\int_\tau^1L\eta_1+\frac{\gamma_1(t)\langle\dot \eta(t),\dot\gamma(t)\rangle}{L}dt\nonumber\\
                        =&-2\pi\lim_{\tau\rightarrow0^+}\left[\frac{\gamma_1(\tau)\langle \eta(\tau), \dot\gamma(\tau)\rangle}L
                            +\int_\tau^1 H[\gamma]\langle n,\eta\rangle L\gamma_1(t)dt\right]\nonumber\\
                        =&-2\pi\lim_{\tau\rightarrow0^+}\int_\tau^1 H[\gamma]\langle \nu,\eta\rangle L\gamma_1(t)dt.\label{uberlegung1}
\end{align}
Using Equation \eqref{var2}, $\gamma_1(0)=0$, $\eta\equiv 0$ near $1$ and the regularity of $\gamma$ we write 
\begin{align}
    \delta V[\gamma]\eta=&\mp_\gamma2\pi\int_0^1 \langle\nu,\eta\rangle L\gamma_1(t) dt\mp_\gamma\pi\gamma_1^2(0)\eta_2(0)\nonumber\\
                        =&\mp_\gamma2\pi\lim_{\tau\rightarrow 0^+}\int_\tau^1 \langle\nu,\eta\rangle L\gamma_1(t) dt.\label{uberlegung2}
\end{align}
Inserting Equations \eqref{uberlegung1} and \eqref{uberlegung2} into Equation \eqref{var1} and using $A[\gamma]=1$ and $V[\gamma]=\frac{\sigma}{6\sqrt\pi}$, we get
\begin{align}
    \delta\mathcal I[\gamma]\eta=&\frac{6\sqrt\pi}{A[\gamma]^{\frac32}}\left[\delta V[\gamma]\eta-\frac{3 V[\gamma]}{2 A[\gamma]}\delta A[\gamma]\eta\right]\nonumber\\
        =&-6\sqrt\pi\lim_{\tau\rightarrow0^+}2\pi\int_\tau^1\left( \pm_\gamma 1-\frac{\sigma}{4\sqrt\pi}H[\gamma]\right)\langle\nu,\eta\rangle L\gamma_1 dt.\label{isoperimetricepsiloin}
\end{align}
\end{proof}

\newpage
\begin{lemma}\label{uisc2}
$\lambda=0$ and consequently by Lemmas \ref{regularityatend01} and \ref{HExpansionLemma}, $u\in C^2([0,r_0])$ and $H\in C^1([0,r_0])$.
\end{lemma}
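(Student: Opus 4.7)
The plan is to compare two expressions for $\delta\mathcal W[\gamma]\phi$, where $\phi(t)=\chi(t)\varphi(\gamma_1(t))e_2$ is the vertical variation field introduced before Equation \eqref{Vdef} (with $\varphi(0)$ free). The first expression uses the admissibility of $\eta=\phi-\alpha(\phi)\psi_0$ together with Lemma \ref{eulerlagrangeequation}; the second applies the classical first variation formula \eqref{classicalformula} to the smooth truncated surface $\Sigma_\delta:=f_\gamma([t_\delta,1]\times\Sp^1)$, where $\gamma_1(t_\delta)=\delta$. The discrepancy is a boundary integral at the axis which, after inserting the logarithmic expansions from Lemmas \ref{regularityatend01} and \ref{HExpansionLemma}, isolates a nonzero multiple of $\lambda\varphi(0)$.

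\paragraph{Cancellation of the interior contributions.}
Since $\psi_0\in C^\infty_0((0,1))$ and $\Lambda=-3\delta\mathcal W[\gamma]\psi_0$, the identity $\delta\mathcal W[\gamma]\eta=0$ from Lemma \ref{eulerlagangeataxis} rearranges to
$$\delta\mathcal W[\gamma]\phi \,=\, -\tfrac{\Lambda}{3}\delta\mathcal I[\gamma]\phi \,=\, \pi\Lambda\int_0^1(\pm_\gamma 4\sqrt\pi-\sigma H)\langle\phi,\nu\rangle L\gamma_1\,dt.$$
Here the second equality uses that $\gamma_1(0)=\gamma_1(1)=0$ kills the integration-by-parts boundary terms leading from Equations \eqref{var2}, \eqref{var3} to Equations \eqref{var4}, \eqref{var5}, so these formulas remain valid for $\phi$ even though it does not vanish at the endpoints. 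Because $\Phi(\epsilon,\cdot)$ does not alter $\gamma_1$, $\partial\Sigma_\delta$ stays at $r=\delta$ for every $\epsilon$ and \eqref{classicalformula} applied to $\Sigma_\delta$ yields
$$\frac{d}{d\epsilon}\bigg|_0\mathcal W[\Phi(\epsilon)|_{\Sigma_\delta}] \,=\, \int_{\Sigma_\delta}W[\gamma]\langle\phi,n\rangle\,d\mu_f + \int_{\partial\Sigma_\delta}\omega(\eta)\,dS_f.$$
The expansions $u'(r)=O(r|\ln r|)$, $u''(r)=O(|\ln r|)$ and $H(r)=O(|\ln r|)$ show that $\tfrac{d}{d\epsilon}\big|_0\mathcal W[\Phi(\epsilon)|_{[0,t_\delta]}]$ decays like $O(\delta^2|\ln\delta|^c)$ as $\delta\to 0^+$. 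Passing to the limit and using $2W[\gamma]=\Lambda(\pm_\gamma 4\sqrt\pi-\sigma H)$ to rewrite the interior integral, cancellation against the first display forces
$$\lim_{\delta\to 0^+}\int_{\partial\Sigma_\delta}\omega(\eta)\,dS_f \,=\, 0.$$

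\paragraph{Boundary asymptotics and conclusion.}
In the graph parametrization with inward boundary normal $\eta=(\cos\theta,\sin\theta,u'(r))/\sqrt{1+u'^2}$, one reads off at $r=\delta$: $\langle\phi,n\rangle=-\chi(\gamma_1^{-1}(r))\varphi(r)/\sqrt{1+u'^2}\to -\varphi(0)$; $\partial_\eta H = H'(r)/\sqrt{1+u'^2}=-\lambda/r+O(1)$ by Lemma \ref{HExpansionLemma}; $|DfX| = O(r|\ln r|)$ for the tangential part $X$ of the lifted variation; and $\partial_\eta\langle\phi,n\rangle = O(r\ln^2 r)$ because $\dot\chi(0)=\varphi'(0)=0$. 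Plugging into \eqref{WillmoreBoundaryterm} and integrating against the arc-length element $\delta\,d\theta$, the terms $-\tfrac12 H\,\partial_\eta\langle\phi,n\rangle$ and $-\tfrac14 H^2\langle Df\eta,DfX\rangle$ only contribute $O(\delta^2|\ln\delta|^3)\to 0$, while the leading term $\tfrac12\langle\phi,n\rangle\,\partial_\eta H$ gives
$$\lim_{\delta\to 0^+}\int_{\partial\Sigma_\delta}\omega(\eta)\,dS_f \,=\, \pi\lambda\varphi(0).$$
Combined with the vanishing above, $\pi\lambda\varphi(0)=0$ for every admissible $\varphi$; choosing $\varphi$ with $\varphi(0)\neq 0$ forces $\lambda=0$. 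The delicate step is the cap cancellation: one must exploit the vertical structure of $\phi$ (so $\Phi_1\equiv\gamma_1$), the cutoff conditions $\dot\chi(0)=0$ and $\varphi'(0)=0$, and the logarithmic bounds of Lemmas \ref{regularityatend01} and \ref{HExpansionLemma} to produce the required $O(\delta^2|\ln\delta|^c)$ decay of $\tfrac{d}{d\epsilon}\big|_0\mathcal W[\Phi(\epsilon)|_{[0,t_\delta]}]$.
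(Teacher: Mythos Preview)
Your proof is correct and follows essentially the same strategy as the paper's: truncate the surface, apply the classical first variation formula, cancel the interior integral against the Euler--Lagrange equation, and read off $\lambda$ from the surviving boundary term via the expansions of Lemmas~\ref{regularityatend01} and~\ref{HExpansionLemma}. The paper organizes the cap step through Lebesgue's theorem rather than an explicit $O(\delta^2|\ln\delta|^c)$ estimate, and routes the interior cancellation through $\delta\mathcal I[\gamma]\eta=0$ (Lemma~\ref{eulerlagangeataxis}) instead of your direct rewriting of $\delta\mathcal W[\gamma]\phi$, but these are equivalent reorderings of the same computation.
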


\begin{proof}
Throughout the proof, we use both the profile curve and the graph function parameterization. To shorten the notation, we denote the mean curvature by $\tilde H$ when working in the graph function parameterization and by $ H$ when using the profile curve parameterization. That is: $\tilde H(r)=\tilde H(\gamma_1(s))=H(s)$.\\

Let $\varphi\in C^\infty_0([0, r_1))$ such that $\varphi'(0)=0$ and define $\eta$ as in Equation \eqref{Vdef}. By Lemma \ref{eulerlagangeataxis} there exists a variation $\tilde\Phi:(-\epsilon_0,\epsilon_0)\rightarrow\mathcal F_\sigma'$ such that $\frac{\partial\tilde\Phi(\epsilon)}{\partial\epsilon}\big|_{\epsilon=0}=\eta$ and we get the Euler-Lagrange equation 
\begin{equation}\label{weakeulerlagrangelambda}
0=\delta\mathcal W[\gamma]\eta=\delta \mathcal W[\gamma]\phi-\alpha(\phi)\delta\mathcal W[\gamma]\psi_0.
\end{equation}

\paragraph{The first term}\ \\
$\phi$ is supported on $[0, t_1]$. Here $\chi\equiv 1$ and hence
\begin{align*}
    \delta\mathcal W[\gamma]\phi&=\frac d{d\epsilon}\bigg|_{\epsilon=0}\mathcal W[\gamma+\epsilon\phi]=\frac{2\pi}4 \frac d{d\epsilon}\bigg|_{\epsilon=0}\int_0^{r_1} \tilde H[\gamma+\epsilon\phi]^2 \sqrt{1+(u'+\epsilon\varphi')^2}rdr\\
   =& \frac{2\pi}4 \int_0^{r_1}
   \left(
   2 \tilde H(r)\delta \tilde  H[\gamma]\phi\sqrt{1+u'(r)^2}+ \tilde  H(r)^2 \frac{u'(r)\varphi'(r)}{\sqrt{1+u'(r)^2}} 
   \right)rdr.
\end{align*}
Let us consider $\delta  \tilde H[\gamma]\phi$. By Lemma \ref{regularityatend01} there exists $\xi\in C^1([0,r_1])$ such that $u'(r)=\frac\lambda 2r\ln(r)+\xi(r)$. Since $u'(0)=0$ we have $\xi(0)=0$. Also $u''(r)=\frac\lambda2\ln(r)+\tilde\xi(r)$ where $\tilde\xi=\xi'+\frac\lambda2$ is continuous on $[0,r_0]$. Using Equation \eqref{graphicalPrincipalCurvatures} shows that for all $p\in[1,\infty)$
$$\delta \tilde H[\gamma]\phi=
-\frac{\varphi''(r)}{\sqrt{1+u'(r)^2}^3}
+3\frac{u''(r)u'(r)\varphi'(r)}{\sqrt{1+u'(r)^2}^5}
-\frac{\varphi'(r)}{r\sqrt{1+u'(r)^2}}
+\frac{u'(r)^2\varphi'(r)}{r\sqrt{1+u'(r)^2}^3}
\in L^p((0,r_1)).$$
Additionally, $ \tilde H\in L^p((0, r_0))$ for all $p\in[1,\infty)$ by Lemma \ref{HExpansionLemma}. Let $\rho>0$ be a small parameter. We put $\gamma_\rho:=\gamma|_{[\gamma_1^{-1}(\rho), \gamma_1^{-1}(r_1)]}$. Using Lebesgue's theorem, we get
\begin{align}
    \delta\mathcal W[\gamma]\phi
   &= \lim_{\rho\rightarrow0^+}\frac{2\pi}4 \int_\rho ^{r_1}  \left(2 \tilde H(r)\delta \tilde H[\gamma]\phi
   \sqrt{1+u'(r)^2}+  \tilde H(r)^2 \frac{u'(r)\varphi'(r)}{\sqrt{1+u'(r)^2}} \right) rdr\nonumber\\
   &= \lim_{\rho\rightarrow0^+}\delta\mathcal W[\gamma_\rho]\phi.\label{limitfirstterm}
\end{align}
Since $\gamma\in C^\infty((0,1))$ we have  $\gamma_\rho\in C^\infty([\gamma_1^{-1}(\rho),\gamma_1^{-1}(r_1)])$ for all small $\rho>0$. Using Equation \eqref{variationformulastandard} from the appendix and using that $\phi$ is compactly supported in $[0,\gamma_1^{-1}(r_1))$, we get
\begin{align}
\delta\mathcal W[\gamma_\rho]\phi=2\pi\int_{\gamma_1^{-1}(\rho)}^{\gamma_1^{-1}(r_1)} W[\gamma]\langle \phi, \nu\rangle \gamma_1 |\dot\gamma|dt
+B(\rho)\nonumber\\
=2\pi\int_{\gamma_1^{-1}(\rho)}^{1} W[\gamma]\langle \phi, \nu\rangle \gamma_1 |\dot\gamma|dt
+B(\rho).
\label{firsttermresult}
\end{align}
Here we have introduced the boundary term 
\begin{equation}\label{Bdefinition}
B(\rho)=
\pi\left[
\frac{\varphi(\gamma_1(s))   H'(s)-(\varphi\circ\gamma_1)'(s) H(s)
-\frac12H^2(s)\langle \dot\gamma(s),\phi(s)\rangle}
{|\dot\gamma(s)|}
\right]\gamma_1(s)\bigg|_{s=\gamma_1^{-1}(\rho)}.
\end{equation}

\paragraph{The second term}\ \\
As $\psi_0\in C^\infty_0((0,1))$, its support lies in an interval $(a,b)$ with $0<a<b<1$ where $\gamma\in C^\infty([a,b])$. We use Equation \eqref{variationformulastandard} from the appendix. As $\psi_0$ has compact support in $(a,b)$ we get
\begin{equation}\label{secondtermresult}
\delta\mathcal W[\gamma]\psi_0=2\pi\int_a^bW[\gamma]\langle \psi_0, \nu\rangle \gamma_1|\dot\gamma|ds
=2\pi\lim_{\rho\rightarrow 0^+}\int_{\gamma_1^{-1}(\rho)}^1W[\gamma]\langle \psi_0, \nu\rangle \gamma_1|\dot\gamma|ds.
\end{equation}

\paragraph{Combining the results}\ \\
By Lemma \ref{eulerlagrangeequation}, $\gamma$ satisfies the Euler-Lagrange equation $W[\gamma]=\Lambda (\pm_\gamma4\sqrt\pi-\sigma H[\gamma])$ on $(0,1)$. Using Lemma \ref{eulerlagangeataxis}, we get
 \begin{align}
     &\lim_{\rho\rightarrow0^+}\left[2\pi\int_{\gamma_1^{-1}(\rho)}^1W[\gamma]\langle \eta, \nu\rangle \gamma_1|\dot\gamma|ds\right]\nonumber\\
    =&
    \lim_{\rho\rightarrow0^+}\left[2\pi\Lambda \int_{\gamma_1^{-1}(\rho)}^1(\pm_\gamma4\sqrt\pi-\sigma H[\gamma])\langle \eta, \nu\rangle \gamma_1|\dot\gamma|ds\right]\nonumber\\
   =&
    -\frac{2\Lambda}3\delta\mathcal I[\gamma]\eta
    =0.\label{surfaceintegralscombined}
 \end{align}
Combining Equations \eqref{weakeulerlagrangelambda}, \eqref{limitfirstterm}, \eqref{firsttermresult},  \eqref{secondtermresult} and  \eqref{surfaceintegralscombined} we deduce 
$
\lim_{\rho\rightarrow0^+}B(\rho)=~0.
$
We now rewrite the boundary term in terms of the graph function $u$. Note 
$$
r=\gamma_1(s)
\hspace{.5cm}\textrm{and consequently}\hspace{.5cm}
\frac\partial{\partial s}=\frac{\partial r}{\partial s}\frac\partial{\partial r}=\dot\gamma_1(s)\frac{\partial}{\partial r}.$$
Hence $\dot\gamma_2(s)=\dot\gamma_1(s)(\gamma_2\circ\gamma_1^{-1})'(r)=\dot\gamma_1(s) u'(r)$ and therefore $|\dot\gamma(s)|=\dot\gamma_1(s)\sqrt{1+u'(r)^2}$. Inserting into Equation \eqref{Bdefinition}, we get
$$B(\rho)=\pi r\frac{\varphi(r) \tilde H'(r)-\varphi'(r) \tilde H(r)-\frac12 \tilde H^2(r)u'(r)\varphi(r)}{\sqrt{1+u'(r)^2}}\bigg|_{r=\rho}.$$
Using the formulas $u'(r)=\frac\lambda2 r\ln(r)+\xi(r)$ where $\xi\in C^1([0, r_0])$ satisfies $\xi(0)=0$ from Lemma \ref{regularityatend01}, $ \tilde H(r)=-\lambda\ln(r)+ \tilde H_0(r)$ with $ \tilde H_0\in C^{1,\alpha}([0,r_0))$ from Lemma \ref{HExpansionLemma}, the smoothness of $\varphi$ and $\varphi'(0)=0$, we get 
$$0=\lim_{\rho\rightarrow 0^+} B(\rho)=\lim_{\rho\rightarrow0^+}\left[\pi\rho\frac{\varphi(\rho)}{\sqrt{1+u'(\rho)^2}}\left(-\frac\lambda\rho +\tilde H_0'(\rho)\right)\right]=-\pi\lambda\varphi(0).$$
As $\varphi(0)$ is arbitrary we deduce $\lambda=0$. 
\end{proof}

Having improved the regularity of $u$ to $C^2$, we can now easily deduce that $u$ is, in fact, smooth.
\begin{lemma}\label{uissmooth}
$u\in C^\infty([0,r_0))$. 
\end{lemma}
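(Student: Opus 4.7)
With $\lambda = 0$ now in hand from Lemma \ref{uisc2}, Lemma \ref{regularityatend01} gives $u \in C^2([0, r_0])$ with $u'(0) = 0$, and Lemma \ref{HExpansionLemma} gives $H \in C^{1,\alpha}([0, r_0])$ for every $\alpha \in (0,1)$. Away from the axis, $u \in C^\infty((0, r_0))$ by Corollary \ref{gammasmooththeorem}, so the only remaining task is to bootstrap smoothness up to $r = 0$. The difficulty is that the radial ODE \eqref{ODEamRand} is singular at $r = 0$, which makes a direct bootstrap in the variable $r$ cumbersome; I would therefore lift the problem to Cartesian coordinates, where the singularity of polar coordinates disappears.

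Set $U \colon B_{r_0}(0) \subset \R^2 \to \R$, $U(x,y) := u(\sqrt{x^2 + y^2})$. Using $\partial_x U = u'(r)\,x/r$ together with the representation $u'(r)/r = \int_0^1 u''(tr)\,dt$, the assumptions $u \in C^2$ and $u'(0) = 0$ give $U \in C^2(B_{r_0}(0))$ with $\nabla U(0) = 0$ and $\nabla^2 U(0) = u''(0) I_2$. The map $U$ parametrizes $\Sigma_\gamma$ as a Cartesian graph, so by Lemma \ref{eulerlagrangeequation} it satisfies the quasilinear fourth-order elliptic PDE
$$\Delta_g H[U] + \tfrac12 H[U]\bigl(H[U]^2 - 4K[U]\bigr) = \Lambda\bigl(\pm_\gamma 4\sqrt{\pi} - \sigma H[U]\bigr) \quad \text{on } B_{r_0}(0),$$
where $g_{ij} = \delta_{ij} + \partial_i U\,\partial_j U$. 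The principal symbol of this fourth-order operator agrees with that of $(1+|\nabla U|^2)^{-5/2}\Delta^2$, which is uniformly elliptic in a neighborhood of $0$ because $\nabla U(0) = 0$.

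With the equation set up, a standard elliptic bootstrap closes the argument. Treating the PDE as a linear fourth-order equation whose coefficients depend smoothly on $(U, \nabla U, \nabla^2 U, \nabla^3 U)$, the $L^p$-theory of Agmon--Douglis--Nirenberg upgrades $U$ from $W^{3,p}$ to $W^{4,p}$ and hence to $C^{3,\alpha}$; Schauder estimates then promote this to $C^{4,\alpha}$, and iteration yields $U \in C^\infty(B_{r_0/2}(0))$. By rotational symmetry $u(r) = U(r, 0) \in C^\infty([0, r_0/2])$, which combined with $u \in C^\infty((0, r_0))$ gives $u \in C^\infty([0, r_0))$.

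\paragraph{Main obstacle.}
The only non-trivial step is launching the bootstrap, i.e., extracting the initial regularity $U \in W^{3,p}(B_{r_0}(0))$ (equivalently $u \in W^{3,p}((0,r_0))$) needed to apply the ADN theory. This is obtained by differentiating Equation \eqref{ODEamRand} once with $\lambda = 0$ and combining the pointwise bound $|w(r)| \leq C r \ln(1/r)$ from Step 4 of the proof of Lemma \ref{regularityatend01} with the integrability bound on $w'$ from Step 5 (and the corresponding $L^p$-bound on $H''$ from the proof of Lemma \ref{HExpansionLemma}) to conclude that $u''' \in L^p((0, r_0))$ for every $p < \infty$. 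Once this initial regularity is in place, the bootstrap is entirely routine, and the polar coordinate singularity at the axis is handled automatically by working in the Cartesian picture where the equation is uniformly elliptic.
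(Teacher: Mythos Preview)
Your route is genuinely different from the paper's and works in principle, but it leans on heavier machinery and leaves the launch of the bootstrap loose. The paper stays entirely in the radial variable and runs a two-step induction using only Lemma~\ref{oderegularitylemma}: from $H\in C^k$ the identity $-\tfrac1r\tfrac{d}{dr}\bigl(r\,u'/v\bigr)=H$ yields $u\in C^{k+2}$, and then the radial form $\tfrac1r\tfrac{d}{dr}\bigl(\tfrac{r}{v}H'\bigr)\in C^k$ of the Euler--Lagrange equation yields $H\in C^{k+2}$. The polar singularity is absorbed by that single ODE lemma at every step, so no PDE theory enters.

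Two points in your sketch deserve more care. First, Equation~\eqref{ODEamRand} already contains $u'''=w''$, so there is nothing to differentiate; and the $\log$-weighted bounds you cite from Steps~4--5 of Lemma~\ref{regularityatend01} are weaker than what $\lambda=0$ actually delivers. With $\lambda=0$ one has $(rw)'/r\in C^1$, and Lemma~\ref{oderegularitylemma} (equivalently the explicit formula $w''=g-\int_0^1 t^2 g(rt)\,dt$) gives $u'''\in C^0$ with $u'''(0)=0$, hence $U\in C^3(B_{r_0})$. This is exactly the first turn of the paper's bootstrap, so your initial step does not really bypass the ODE argument. Second, even with $U\in W^{3,p}$ you only know the fourth-order PDE classically on $B_{r_0}\setminus\{0\}$; pushing ADN or Schauder estimates through the origin needs a removable-singularity or uniform-interior-estimate argument that you do not mention. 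Both gaps can be filled, so your strategy is valid; the paper's ODE induction is simply shorter and self-contained, whereas your approach would pay off mainly in a setting without a one-dimensional reduction.
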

\begin{proof}
The proof is inductive. Let $k\geq 0$ and suppose that we know $H\in C^k([0,r_0))$. By Equation \eqref{graphH} 
$$-\frac1r\frac{d}{dr}\left[r\frac{u'(r)}{\sqrt{1+u'(r)^2}}\right]=H\in C^k([0,r_0)).$$
Lemma \ref{oderegularitylemma} implies $u'\in C^{k+1}([0,r_0))$ and thus $u\in C^{k+2}([0,r_0))$. In view of Equation \eqref{graphicalPrincipalCurvatures} we get $k_1\in C^k([0, r_0))$ and since $u'(0)=0$ 
$$k_2(r)=-\frac{u'(r)}{r\sqrt{1+u'(r)^2}}=-\frac1{\sqrt{1+u'(r)^2}}\int_0^1 u''(sr) ds\hspace{.5cm}\textrm{is also of class }C^k([0, r_0)).$$
Thus $K=k_1k_2\in C^ k([0,r_0))$. Writing out the Euler-Lagrange equation from Lemma \ref{eulerlagrangeequation}, we get
$$\frac1{r\sqrt{1+u'(r)^2}}\frac d{dr}\left[\frac r{\sqrt{1+u'(r)^2}}\frac {dH}{dr}\right]
=
\Delta_g H
=-\frac12 H(H^2-4K)+a-bH\in C^k([0, r_0)).
$$
Using $u'\in C^{k+1}([0, r_0))$ and putting $\alpha(r):=\frac 1{\sqrt{1+u'(r)^2}}\frac {dH}{dr}$, we get $\frac{(r\alpha)'}r\in C^ k([0,r_0))$. Lemma \ref{oderegularitylemma} implies $\alpha\in C^{k+1}([0, r_0))$. 
By definition of $\alpha$ and using $u'\in C^{k+1}([0, r_0))$ we deduce $H'(r)\in C^{k+1}([0, r_0))$ and consequently $H\in C^{k+2}([0, r_0))$.\\

We have shown the following two implications: $H\in C^k([0,r_0) \Rightarrow H\in C^{k+2}([0,r_0))$ and $H\in C^k([0,r_0)) \Rightarrow u\in C^{k+2}([0,r_0))$. Since we have shown that $H\in C^1([0, r_0))$ in Lemma \ref{uisc2}, we deduce $u\in C^\infty([0,r_0))$.
\end{proof}

In Lemma \ref{uissmooth}, we have shown that the graph function $u$ describing $\gamma$ near $t=0$ is smooth. By the same arguments, the graph function $\tilde u$ describing $\gamma$ near $t=1$ is also smooth.
As an easy corollary, we obtain that $\gamma$ is smooth up to $t=0$ and $t=1$.

\begin{korollar}\label{gammasmoothuptoBoundaryCorollary}
    $\gamma\in C^\infty([0,1])$
\end{korollar}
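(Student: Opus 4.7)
The plan is to combine the smoothness of the graph function $u$ established in Lemma \ref{uissmooth} with the fact that $\gamma$ is parameterized proportional to arc length in order to transfer regularity from the graph parameterization to the profile-curve parameterization. Since $\gamma\in C^\infty((0,1))$ by Corollary \ref{gammasmooththeorem}, it suffices to prove smoothness up to $t=0$ and up to $t=1$; by symmetry we focus on $t=0$.

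Recall from the setup preceding Lemma \ref{regularityatend01} that $\gamma_1:[0,t_0]\to[0,r_0]$ is a $C^1$-diffeomorphism and $\gamma_2(t)=u(\gamma_1(t))$ for $t\in[0,t_0]$, where $u\in C^\infty([0,r_0))$ by Lemma \ref{uissmooth}. Differentiating the relation $\gamma_2=u\circ\gamma_1$ yields $\dot\gamma_2(t)=u'(\gamma_1(t))\,\dot\gamma_1(t)$, and the arc-length condition $|\dot\gamma(t)|^2=L^2$ then gives
$$
\dot\gamma_1(t)^2\bigl(1+u'(\gamma_1(t))^2\bigr)=L^2.
$$
Since $\dot\gamma_1(0)=L>0$ by Equation \eqref{velocityatends}, the positive root is selected in a neighborhood of $t=0$, producing the first-order ODE
$$
\dot\gamma_1(t)=F(\gamma_1(t)),\qquad F(r):=\frac{L}{\sqrt{1+u'(r)^2}}.
$$

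The key observation is that $F\in C^\infty([0,r_0))$ with $F(0)=L>0$, because $u\in C^\infty([0,r_0))$. Standard ODE regularity (Picard--Lindelöf with a smooth right-hand side, or inductive differentiation of the ODE using the chain rule) then implies $\gamma_1\in C^\infty([0,t_1])$ for some small $t_1>0$. Composing, $\gamma_2=u\circ\gamma_1\in C^\infty([0,t_1])$. An identical argument applied to the graph function $\tilde u$ near $(0,0,\gamma_2(1))$ yields smoothness on a left-neighborhood of $t=1$. Combined with Corollary \ref{gammasmooththeorem}, this gives $\gamma\in C^\infty([0,1])$.

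No real obstacle is anticipated: the nontrivial regularity work has already been done in Lemmas \ref{regularityatend01}--\ref{uissmooth}, where the residue $\lambda$ was eliminated and $u$ was bootstrapped to $C^\infty$. The only thing to verify is that the change of parameterization from the graph variable $r$ to the arc-length variable $t$ preserves smoothness, which follows from the smoothness and non-degeneracy of the ODE $\dot\gamma_1=F(\gamma_1)$ at $t=0$.
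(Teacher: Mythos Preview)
Your proposal is correct and follows essentially the same route as the paper: derive the autonomous ODE $\dot\gamma_1=L/\sqrt{1+u'(\gamma_1)^2}$ from the arc-length condition and $\gamma_2=u\circ\gamma_1$, then bootstrap smoothness of $\gamma_1$ (and hence $\gamma_2$) from the smoothness of $u$ established in Lemma~\ref{uissmooth}. The paper's argument is slightly more terse---it just observes that the right-hand side is continuous so $\dot\gamma_1$ is continuous, and iterates---but the content is identical to your ODE-regularity justification.
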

\begin{proof}
    In view of Corollary \ref{gammasmooththeorem} we already have $\gamma\in C^\infty((0,1))$. Recall that we chose $t_0>0$ and $r_0:=\gamma_1(t_0)$ such that $\gamma_1:[0, t_0)\rightarrow [0, r_0)$ is a $C^1$-diffeomorphism. In particular, as $\dot\gamma_1(0)=L:=L[\gamma]>0$, we have $\dot\gamma_1(t)>0$ for all $t\in[0, t_0)$. From Lemma \ref{uissmooth}, we know that  $u:[0,r_0)\rightarrow\R,\ u(r):=\gamma_2(\gamma_1^{-1}(r))$
    is in $C^\infty([0, r_0))$. Clearly $\gamma_2(t)=u(\gamma_1(t))$. Differentiating and using $|\dot\gamma(t)|^2=L^2$ yields 
    $$L^2-\dot\gamma_1(t)^2=\dot\gamma_2(t)^2=u'(\gamma_1(t))^2\dot\gamma_1(t)^2\hspace{.5cm}\textrm{and so}\hspace{.5cm}\dot\gamma_1(t)^2=\frac{L^2}{1+u'(\gamma_1(t))^2}.$$
    Since $\dot\gamma_1(t)\geq 0$ for all $t\in[0, t_0)$ we can take the square root and obtain 
    $$\dot\gamma_1(t)=\frac{L}{\sqrt{1+u'(\gamma_1(t))^2}}.$$
    As $\gamma_1\in C^0([0, t_0))$ and $u'\in C^\infty([0, r_0))$, we deduce that $\gamma_1\in C^\infty([0, t_0))$ and consequently $\gamma_2=u\circ\gamma_1\in C^\infty([0, t_0))$. This establishes the smoothness of $\gamma$ near $t=0$. The smoothness near $t=1$ can be established by the same method. 
\end{proof}

We now prove Theorem \ref{regularitytheorem1} by demonstrating that $u^{(2k+1)}(0)=0$ for all integers $k\geq 0$. Indeed the same proof shows the analog result for the graph function $\tilde u$ describing $\gamma$ near $t=1$. 

\begin{lemma}[Proof of Theorem \ref{theorem1} (apart from $\beta(\sigma)\rightarrow 8\pi$ for $\sigma\rightarrow 0^+$)]\label{Sigmaissmooth}\ \\
$u^{(2k+1)}(0)=0$ for all $k\in\N_0$ and consequently  $\Sigma_\gamma$ is smooth.
\end{lemma}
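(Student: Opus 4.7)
The implication ``$u^{(2k+1)}(0)=0$ for all $k\in\mathbb N_0\Rightarrow\Sigma_\gamma$ is smooth'' is a standard consequence of rotational symmetry: the graph $(r\cos\theta,r\sin\theta,u(r))$ extends to a smooth surface at the pole if and only if $u$ is a smooth function of $r^2$, equivalently if and only if all odd derivatives of $u$ at $0$ vanish. So it suffices to prove the vanishing statement, which I plan to do by induction on $k$. The base case $k=0$ is $u'(0)=0$, which was already established from $\dot\gamma_2(0)=0$ (a consequence of \eqref{velocityatends}) together with $\dot\gamma_1(0)=L[\gamma]$.

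For the inductive step, assume $u^{(2j+1)}(0)=0$ for all $0\leq j<k$. Writing $w=u'$ and $w_j:=w^{(j)}(0)/j!$, this hypothesis is equivalent to $w_0=w_2=\cdots=w_{2k-2}=0$, and the goal $u^{(2k+1)}(0)=0$ translates to $w_{2k}=0$. Because $\lambda=0$ by Lemma \ref{uisc2}, Equation \eqref{ODEamRand} reads
$$w''+\left(\frac{w}{r}\right)'-\varphi(w)=\tfrac12 v^5 a r+bv^4 w,$$
and the strategy is to Taylor expand both sides at $r=0$ and match the coefficient of $r^{2k-2}$. A direct calculation shows that the coefficient of $r^{m}$ in $w''+(w/r)'$ equals $(m+1)(m+3)\,w_{m+2}$, so under the inductive hypothesis every even-index coefficient of order strictly below $r^{2k-2}$ vanishes while the coefficient at $r^{2k-2}$ equals $(2k-1)(2k+1)\,w_{2k}$.

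The heart of the argument is to show that neither $\varphi(w)$ nor the right hand side $\tfrac12 v^5 ar+bv^4 w$ contributes anything at order $r^{2k-2}$. Decompose $w=A+B$ with $A(r):=\sum_{j=0}^{k-1}w_{2j+1}r^{2j+1}$ an odd polynomial and $B=w_{2k}r^{2k}+O(r^{2k+1})$. Because $A$ is odd, $\sqrt{1+A^2}$ and its powers are even in $r$, and a direct parity check shows that $\varphi(A)$, $\tfrac12(1+A^2)^{5/2}ar$ and $b(1+A^2)^2 A$ are odd functions of $r$; in particular their Taylor expansions contain only odd powers. The corrections produced by $B$ enter linearly at leading order: the linearisations $D\varphi(A)[B]$ and the corresponding ones on the right hand side pair coefficient functions evaluated at $A$ with $B=O(r^{2k})$ and $B'=O(r^{2k-1})$, and the parities of those coefficient functions force every such contribution to start at order $r^{2k}$. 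Matching the coefficient of $r^{2k-2}$ therefore yields $(2k-1)(2k+1)\,w_{2k}=0$, hence $w_{2k}=0$.

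The main obstacle I expect is the parity-and-order bookkeeping in $\varphi(w)=\frac{5w(w')^2}{2(1+w^2)}+\frac{w^3(3+w^2)}{2r^2}$, and in particular the explicit $r^{-2}$ factor in the second summand, which must be handled by noting that $A=O(r)$ makes $A^3/r^2$ an odd and smooth function of $r$. Running the same argument for the graph function $\tilde u$ describing $\gamma$ near $t=1$ yields the analogous vanishing, and $\Sigma_\gamma$ is then smooth by the equivalence recalled at the start.
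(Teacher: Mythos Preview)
Your argument is correct and follows the same inductive parity strategy as the paper. The only difference is packaging: instead of your explicit Taylor decomposition $w=A+B$ and linearization bookkeeping, the paper introduces the even extension $U(r):=u(|r|)$, observes that under the inductive hypothesis $U\in C^{2n+2}$ so that the full right-hand side $F$ is an odd $C^{2n}$ function with $F^{(2n)}(0)=0$, and then differentiates the ODE $2n$ times and sends $r\to0^+$ to obtain $(1+\tfrac{1}{2n+2})u^{(2n+3)}(0)=0$---the same identity you reach via $(2k-1)(2k+1)w_{2k}=0$.
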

\begin{proof}
The proof is inductive. By Equation \eqref{ODEamRand} we have $w(0)=u'(0)=0$. We show that if there exists $n\geq 0$ such that $u^{(2k+1)}(0)=0$  for all $0\leq k\leq n$, then $u^{(2n+3)}(0)=0$. By Lemma \ref{uisc2} and Equation \eqref{ODEamRand} we have 
\begin{equation}\label{SmoothnessFinalStepODE}
u'''+\left(\frac{u'}{r}\right)'
=
\frac12 \sqrt{1+u'(r)^2}^ 5 a r+b(1+u'(r)^2)^2u'(r)+\varphi(u').
\end{equation}
Since $u\in C^\infty([0, r_0)$ and $u^{(2k+1)}(0)=0$ for $0\leq k\leq n$, the function
$$U:(-r_0,r_0)\rightarrow\R,\ U(r):=\left\{
\begin{aligned}
u(r) &\hspace{.5cm} \textrm{ if }r\geq 0,\\
u(-r) &\hspace{.5cm}\textrm{ if } r<0
\end{aligned}
\right.$$
is smooth on $[0,r_0)$, $(-r_0, 0]$ and $(-r_0,r_0)\backslash\set 0$. However, the derivatives at $0$ of order$\geq 2n+3$ might not match up so that only $U\in C^{2n+2}((-r_0,r_0))$ is guaranteed. Consequently, the function 
$$F(r):=\frac12 \sqrt{1+U'(r)^2}^ 5 a r+b(1+U'(r)^2)^2U'(r)+\varphi(U')$$
is of class $C^{2n}((-r_0, r_0))$. Indeed, considering the definition of $\varphi(U')$ from Equation \eqref{varphiofwdef}, we see that only the term $\frac{(U'(r))^3}{2r^2}(3+(U'(r))^2)$ is of concern. However, using that $u'(0)=0$, we get 
$$\frac{U'(r)^2}{r^2}=\left(\int_0^1 U''(sr)ds\right)^2,\hspace{.5cm}\textrm{which is of class }C^{2n}(-r_0, r_0).$$
Additionally, it is readily checked that $F(-r)=-F(r)$. Thus $F^{(2n)}(0)=0$. Differentiating Equation \eqref{SmoothnessFinalStepODE} $2n$ times, we get 
\begin{equation}\label{differentiatedODE}
u^{(2n+3)}+\left(\frac{u'}r\right)^{(2n+1)}=F^{(2n)}(r)\hspace{.5cm}\textrm{on }[0, r_0).
\end{equation}
Since $u\in C^\infty([0, r_0))$ we may write
$$\left(\frac{u'}r\right)^{(2n+1)}
=
\left(\int_0^1 u''(rs)ds\right)^{(2n+1)}
=
\int_0^1 u^{(2n+3)}(rs)s^{2n+1} ds.$$
Now we consider Equation \eqref{differentiatedODE} the limit $r\rightarrow 0^+$. Since $F^{(2n)}(0)=0$ we get
\begin{equation}\label{nextderivative0}
0=\lim_{r\rightarrow 0^+}\left(u^{(2n+3)}(r)+\int_0^ 1  u^{(2n+3)}(rs)s^{2n+1} ds\right)=
\left(1+\frac1{2n+2}\right)u^{(2n+3)}(0)
\end{equation}
and hence $u^{(2n+3)}(0)=0$.
\end{proof}

\section{Convergence to a Double Sphere}\label{doublespheresection}
Let $\sigma_k\rightarrow0^+$ and  $\gamma^{(k)}\in\mathcal F_{\sigma_k}$ satisfy $\mathcal W[\gamma^{(k)}]=\beta(\sigma_k)$. Since vertical translations $(\gamma_1,\gamma_2)\mapsto (\gamma_1,\gamma_2+c)$ and the reflection $(\gamma_1,\gamma_2)\mapsto (\gamma_1,-\gamma_2)$ leave the Willmore energy and the isoperimetric ratio invariant, we cannot expect the profile curves $\gamma^{(k)}$ to converge. To rule out both obstructions, we introduce 
$$\mathcal F^ {0+}_\sigma:=\set{\gamma\in\mathcal F_\sigma\ |\ \gamma(0)=0\textrm{ and }\int_0^1 \gamma_2(t) dt\geq 0},$$
and stress that $\beta(\sigma)=\inf\set{\mathcal W[\gamma]\ |\ \gamma\in\mathcal F_\sigma}
=\inf\set{\mathcal W[\gamma]\ |\ \gamma\in\mathcal F_\sigma^{0+}}$. Now, considering a sequence $\gamma^{(k)}\in\mathcal F_{\sigma_k}^{0+}$ convergence to a limit is possible. Indeed, letting $R:=\frac1{\sqrt{8\pi}}$ and 
$$\kappa:[0,1]\rightarrow \R^2,\ \kappa(t):=(0, R)+R(|\sin(2\pi t)|, -\cos(2\pi t)),$$
we prove the following:

\begin{theorem}[Convergence to a Double Sphere]\label{asymptoticsthm}
Let $(\sigma_k)\subset (0,1)$ such that $\sigma_k\rightarrow 0^+$ and let $\gamma^ {(k)}\in\mathcal F_{\sigma_k}^ {0+}$ such that $\mathcal W[\gamma^{(k)}]=\beta(\sigma_k)$.
Then $\beta(\sigma_k)\rightarrow 8\pi$ and $\gamma^{(k)}\rightarrow\kappa$ in $W^ {1,2}((0,1))$ and in $C^\infty_{\operatorname{loc}}([0,1]\backslash\{\frac12\})$.
\end{theorem}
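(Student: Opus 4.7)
The plan is to apply the compactness theorem of Section \ref{diectmethod} to the minimizing sequence, identify the weak subsequential limit via Li-Yau together with the vanishing-volume condition, and then upgrade the mode of convergence.

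\textbf{Compactness.} I would verify the hypotheses of Theorem \ref{compactnesstheorem} for $(\gamma^{(k)})$. The area is fixed at $1$; Theorem \ref{schygulla} gives $\mathcal W[\gamma^{(k)}]=\beta(\sigma_k)<8\pi$, so Gauß-Bonnet (Lemma \ref{gausbonnetlemma}) yields a uniform $L^2$-bound on the principal curvatures; Theorem \ref{choskiveneroniresult} then bounds $L[\gamma^{(k)}]$ from above, and the anchor $\gamma^{(k)}(0)=0$ bounds the diameters. A subsequence converges in $\mathcal P^w$ to a limit $\gamma^*$, and \eqref{areaconv}-\eqref{volconv} together with $\sigma_k\to 0$ give $A[\gamma^*]=1$ and $V[\gamma^*]=0$, while \eqref{Willmorelowersemicont} gives $\mathcal W[\gamma^*]\le\liminf\beta(\sigma_k)\le 8\pi$.

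\textbf{Identification of $\gamma^*$.} Theorem \ref{weakandstrongPconnection} splits $\gamma^*$ into $m$ pieces in $\mathcal P$, each contributing at least $4\pi$ by Li-Yau (Lemma \ref{WillmoreLowerBound}), so $m\le 2$. The case $m=1$ is ruled out because Lemma \ref{volumegauss} would force $V[\gamma^*]>0$. Hence $m=2$ with equality in Li-Yau on each piece, so $\gamma^*$ is a concatenation of two round circles of radii $R_1,R_2$ with centres on the $x_2$-axis; this already yields $\mathcal W[\gamma^*]=8\pi$ and consequently $\beta(\sigma_k)\to 8\pi$. To pin down the configuration, note that $\gamma^*(0)=(0,0)$ forces the first centre to be $\pm R_1 e_2$, and continuity at the junction determines the second centre accordingly. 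Computing the signed volume integral on each piece gives contributions $\pm\tfrac{4}{3}R_i^3$; the vanishing of the total, i.e. $V[\gamma^*]=0$, forces opposite signs and $R_1=R_2=R$. The area constraint $4\pi(R_1^2+R_2^2)=1$ then gives $R=1/\sqrt{8\pi}$, while $\int_0^1\gamma_2^*\ge 0$ rules out the centre $-Re_2$. The remaining double-cover configuration is precisely $\kappa$. Since every subsequential limit agrees with $\kappa$, the full sequence converges in $\mathcal P^w$, and in particular in $W^{1,2}((0,1))$.

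\textbf{Smooth convergence away from $\tfrac12$.} On any compact $K\Subset(0,\tfrac12)\cup(\tfrac12,1)$, uniform convergence $\gamma^{(k)}\to\kappa$ together with $\kappa_1>0$ on $K$ yields $\inf_K\gamma_1^{(k)}\ge\tfrac12\inf_K\kappa_1$ for large $k$. Lemma \ref{eulerlagrangeequation} provides an Euler-Lagrange equation with Lagrange multiplier $\Lambda_k$; a uniform bound on $\Lambda_k$ can be extracted by testing the equation against a fixed $\psi_0\in C_0^\infty((0,1))$ whose support lies in a region where $\kappa_1>0$, and controlling the resulting integrals by $\mathcal W[\gamma^{(k)}]\le 8\pi$. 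Corollary \ref{gammasmooththeorem1} then bootstraps this into uniform $C^m$-bounds on $K$, yielding $C^\infty$ convergence there. Near the axis points $t=0,1$, where $\kappa$ is smooth but $\kappa_1$ vanishes, I would transfer to the graph representation of Subsection \ref{RegularityAtAxisSubsection}: by Lemma \ref{uisc2} the residue $\lambda_k$ vanishes and $u^{(k)}$ is smooth, and a bootstrap on the reduced ODE \eqref{ODE} on a fixed axis neighbourhood $[0,r_0)$ (whose existence is secured by the convergence to $\kappa$) provides uniform $C^m$-bounds on $u^{(k)}$, hence $C^\infty_{\operatorname{loc}}$ convergence up to the endpoints.

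\textbf{Main obstacles.} The subtlest points are (i) the volume-cancellation step, which requires the configuration analysis above to exclude externally tangent spheres and other exotic two-piece limits, and (ii) establishing uniform bounds on the Lagrange multipliers $\Lambda_k$ as $\sigma_k\to 0$, since the right-hand side of the Euler-Lagrange equation in Lemma \ref{eulerlagrangeequation} could otherwise blow up and prevent any regularity bootstrap.
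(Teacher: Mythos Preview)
Your identification step has a real gap. To rule out $m=1$ you invoke Lemma \ref{volumegauss}, but that lemma requires $\gamma^*$ to be \emph{injective}, which is not part of the definition of $\mathcal P$; a non-injective profile curve in $\mathcal P$ can perfectly well satisfy $V[\gamma^*]=0$, since the signed integral in \eqref{volumedef} may cancel. So the case $\gamma^*\in\mathcal P$ with a self-intersection and zero volume is not excluded by your argument. The paper handles this via the monotonicity formula: a double point gives multiplicity $\ge 2$, hence $\mathcal W[\gamma^*]\ge 8\pi$ by Lemma \ref{monotonicityformula}; combined with the upper bound one is in the equality case, and Lemma \ref{SphereOrInvertedCat} forces $\Sigma_{\gamma^*}$ to be an inverted catenoid, which encloses positive volume---contradiction. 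This is exactly the ``exotic'' limit you should have flagged as the main obstacle, not the two-sphere configuration analysis, which is comparatively routine.

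This also reverses your order of steps. Lemma \ref{monotonicityformula} is stated for $\gamma\in\mathcal P\cap C^\infty((0,1))$, so one must know $\gamma^*$ is smooth on $(0,1)$ \emph{before} invoking it; the paper accordingly proves the $C^m(I_\delta)$ convergence (Lemma \ref{smoothconvergence}) prior to the identification (Lemma \ref{Lemma5point4}). A smaller point: the uniform interior $C^m$ bounds are not obtained by first bounding $\Lambda_k$, but by constructing a single $\psi^*$ with $\delta\mathcal I[\gamma^{(k)}](\zeta_k\psi^*)=1$ and $\zeta_k\to 1$, then applying Corollary \ref{gammasmooththeorem1}. The convergence $\Lambda_k\to 0$ is a \emph{consequence} of the smooth interior convergence (Corollary \ref{Lagrangemultipliergoto0Lemma}), used only later for the endpoint analysis.
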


In particular, we note that Theorem \ref{asymptoticsthm} implies  Theorem \ref{doublesphereconvergence} and concludes the proof of Theorem \ref{theorem1}.\\
\ \\
For the entirety of this section we fix a sequence $\sigma_k\rightarrow 0^+$ as well as $\gamma^{(k)}\in \mathcal F^{0+}_{\sigma_k}$ such that $\beta(\sigma_k)=\mathcal W[\gamma^{(k)}]$. We will prove that there exists a subsequence $\gamma^{(k_l)}$ that converges to $\kappa$ in $W^{1,2}((0,1))$ and in $C^\infty_{\operatorname{loc}}([0,1]\backslash\{\frac12\})$. Once we have shown this, Theorem \ref{asymptoticsthm} follows by the usual subsequence argument. 

\begin{lemma}\label{convergentsubsequence}
There exists a subsequence $\gamma^{(k_l)}$ and a curve $\gamma^*\in\mathcal P^w$ such that $\gamma^{(k_l)}\rightarrow\gamma^*$ in the sense of Definition \ref{convergencedefinition}. Moreover, either $\gamma^*\in \mathcal P$ or there exists $\tau\in(0,1)$ such that, after reparameterization, $\gamma^*|_{[0,\tau]}$ and  $\gamma^*|_{[\tau,1]}$ both belong to $\mathcal P$.
\end{lemma}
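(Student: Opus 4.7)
The plan is to apply the Compactness Theorem (Theorem \ref{compactnesstheorem}) to the sequence $(\gamma^{(k)})$ and then use Theorem \ref{weakandstrongPconnection} together with the Li--Yau inequality (Lemma \ref{WillmoreLowerBound}) to bound the number of pieces in the admissible decomposition of the limit.

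First I verify the three hypotheses of Theorem \ref{compactnesstheorem}. Since $\gamma^{(k)}\in\mathcal F^{0+}_{\sigma_k}$, one has $A[\gamma^{(k)}]=1$, so condition (3) holds. For the uniform curvature bound in (2), I combine Schygulla's theorem (Theorem \ref{schygulla}), which gives $\mathcal W[\gamma^{(k)}]=\beta(\sigma_k)<8\pi$, with the Gau{\ss}--Bonnet identity from Lemma \ref{gausbonnetlemma}, which applies because each $\gamma^{(k)}$ lies in $\mathcal P$. Writing $k_1^2+k_2^2=H^2-2K$ yields
$$\int_0^1\left((k_1^{(k)})^2+(k_2^{(k)})^2\right)2\pi|\dot\gamma^{(k)}|\gamma_1^{(k)}\,dt=4\mathcal W[\gamma^{(k)}]-8\pi<24\pi.$$
The upper bound in Theorem \ref{choskiveneroniresult} then produces $L[\gamma^{(k)}]\leq C$, and because $\gamma^{(k)}(0)=0$ the crude estimate $|\gamma^{(k)}(t)|\leq \int_0^1|\dot\gamma^{(k)}|\,dt=L[\gamma^{(k)}]\leq C$ delivers condition (1). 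Applying Theorem \ref{compactnesstheorem} I extract a subsequence $\gamma^{(k_l)}\to\gamma^*$ in the sense of Definition \ref{convergencedefinition} with $\gamma^*\in\mathcal P^w$.

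For the structural statement about the limit, Theorem \ref{weakandstrongPconnection} decomposes $\gamma^*$ (up to reparameterization) into $m$ pieces $\gamma^*|_{[\tau_{i-1},\tau_i]}\in\mathcal P$ with $0=\tau_0<\tau_1<\cdots<\tau_m=1$. Suppose for contradiction that $m\geq 3$. Then the Li--Yau inequality applied on each piece gives
$$\mathcal W[\gamma^*]=\sum_{i=1}^m\mathcal W\bigl[\gamma^*|_{[\tau_{i-1},\tau_i]}\bigr]\geq 4\pi m\geq 12\pi.$$
On the other hand, lower semicontinuity of the Willmore energy (Equation \eqref{Willmorelowersemicont}) combined with $\mathcal W[\gamma^{(k_l)}]<8\pi$ yields $\mathcal W[\gamma^*]\leq 8\pi$, a contradiction. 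Hence $m\in\{1,2\}$, which is precisely the dichotomy claimed: either $\gamma^*\in\mathcal P$, or there is exactly one interior breakpoint $\tau:=\tau_1\in(0,1)$ across which $\gamma^*$ splits into two $\mathcal P$-pieces.

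The compactness step is a routine verification of hypotheses; the only delicate point is the exclusion of $m\geq 3$, which relies crucially on the universal bound $\beta(\sigma_k)\leq 8\pi$ from Theorem \ref{schygulla}. In contrast to the proof of Theorem \ref{weakminimizerexistence}, I cannot here upgrade this to the strict bound $\mathcal W[\gamma^*]<8\pi$ needed to rule out $m=2$, which is exactly why a second, genuinely non-trivial alternative survives -- and indeed this second alternative is the double-sphere scenario that the rest of the section will show actually occurs.
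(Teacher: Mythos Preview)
Your proof is correct and follows essentially the same route as the paper: verify the hypotheses of the compactness theorem via Gau\ss--Bonnet and Theorem \ref{choskiveneroniresult}, extract a limit $\gamma^*\in\mathcal P^w$, and then combine lower semicontinuity of $\mathcal W$ with the Li--Yau inequality on each $\mathcal P$-piece to rule out $m\geq 3$. The paper phrases the last step slightly differently---first invoking the curvature bound on $\gamma^*$ together with Theorem \ref{weakandstrongPconnection} to get $m\leq 3$ a priori, and then using the Li--Yau argument to exclude $m=3$---but this is the same mechanism as yours.
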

\begin{proof}
The proof for the existence of $\gamma^*$ and the convergence is essentially the same as the one from Theorem \ref{weakminimizerexistence}. However, instead of Estimate \eqref{leq38pi} we only get 
$$\mathcal W[\gamma^*]\leq 8\pi\hspace{.5cm}\textrm{and}\hspace{.5cm}\int_0^1 \left((k_1^*)^2+(k_2^*)^2 \right)2\pi L_*\gamma_1^*(t)\leq 3\cdot 8\pi.$$
A priori Theorem \ref{weakandstrongPconnection} implies that there are at most 2 points $\tau\neq\tau'\in (0,1)$ such that $\gamma_1^*(\tau)=\gamma_1^*(\tau')=0$. However, following the arguments after Equation \eqref{leq38pi} implies that there can be at most one such point.
\end{proof}

Let $\delta>0$ be small. Depending on whether the limit $\gamma^*$ from Lemma \ref{convergentsubsequence} satisfies $\gamma^*_1(\tau)=0$ for some $\tau\in(0,1)$ or not, we put 
$$
I_\delta:=
\left\{
\begin{aligned}
(\delta,1-\delta) &\ \textrm{if $\gamma^*\in\mathcal P$},\\
(\delta,\tau-\delta)\cup (\tau+\delta,1-\delta)&\ \textrm{if $\gamma^*\not\in\mathcal P$}.
\end{aligned}
\right.
$$

We can now prove that along a subsequence $\gamma^{(k)}$ converges to $\gamma^*$ not only in $\mathcal P^w$ but in $C^m(I_\delta)$ for all $\delta>0$ and $m\in\N$. 

\begin{lemma}\label{smoothconvergence}
Let  $\gamma^ {(k_l)}$ and $\gamma^*$ be as in Lemma \ref{convergentsubsequence}. Then  $\gamma^*\in C^\infty(I_\delta)$ for all $\delta>0$ and $\gamma^ {(k_l)}\rightarrow\gamma^*$ in $C^m(I_\delta)$ for all  $m\in\N$ and small $\delta>0$.
\end{lemma}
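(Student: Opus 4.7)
The plan is to apply Corollary~\ref{gammasmooththeorem1} uniformly along the sequence $(\gamma^{(k_l)})$ on each connected component of $I_\delta$, and then extract $C^m$-convergence via Arzel\`a--Ascoli.

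Fix a connected component $J=(a,b)$ of $I_\delta$. Since $\gamma_1^{*}>0$ on $\overline{J}$ and $\gamma^{(k_l)}\to\gamma^{*}$ uniformly, there exists $c=c(\delta,J)>0$ with $\inf_J\gamma_1^{(k_l)}\geq c$ for all sufficiently large $l$. Theorem~\ref{choskiveneroniresult} and Estimate~\eqref{inverseLbound} provide uniform bounds on $L[\gamma^{(k_l)}]^{\pm 1}$; further, $A[\gamma^{(k_l)}]=1$, $V[\gamma^{(k_l)}]=\sigma_{k_l}/(6\sqrt{\pi})\to 0$, and $\mathcal{W}[\gamma^{(k_l)}]<8\pi$ by Theorem~\ref{schygulla}.

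The central step is constructing a uniformly admissible test function $\psi_0\in C_0^{\infty}(J,\R^2)$. After passing to a further subsequence, the signs $\epsilon_l:=\pm_{\gamma^{(k_l)}}$ may be assumed equal to a fixed $\epsilon\in\{+1,-1\}$. Since $V[\gamma^{*}]=0$ and $A[\gamma^{*}]=1$, combining Equations \eqref{var1}--\eqref{var3} with the convergence in $\mathcal{P}^w$ (hence uniform convergence of $\gamma_1^{(k_l)}$ and $L^2$-convergence of $\dot\gamma^{(k_l)}$) and integration by parts (valid on $\operatorname{supp}\phi\Subset J$ where $\gamma^{*}\in W^{2,2}_{\mathrm{loc}}$), one obtains
\begin{equation*}
\delta\mathcal{I}[\gamma^{(k_l)}]\phi\;\longrightarrow\;-12\pi\sqrt{\pi}\,\epsilon\,L^{*}\int_J\langle\nu^{*},\phi\rangle\gamma_1^{*}\,dt\qquad\text{for every }\phi\in C_0^{\infty}(J,\R^2).
\end{equation*}
If this limit vanished for every $\phi$ of the form $(\chi,0)$ and $(0,\chi)$ with $\chi\in C_0^{\infty}(J)$, then $\dot\gamma_1^{*}\equiv\dot\gamma_2^{*}\equiv 0$ on $J$, contradicting $|\dot\gamma^{*}|=L^{*}>0$. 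Hence we may fix such a $\psi_0$ with nonzero limit; rescaling $\tilde\psi_0^{(l)}:=\psi_0/\delta\mathcal{I}[\gamma^{(k_l)}]\psi_0$ yields $\tilde\psi_0^{(l)}\in C_0^{\infty}(J,\R^2)$ with $\delta\mathcal{I}[\gamma^{(k_l)}]\tilde\psi_0^{(l)}=1$ and $\|\tilde\psi_0^{(l)}\|_{C^2(J)}$ uniformly bounded.

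With these ingredients, Corollary~\ref{gammasmooththeorem1} applied to $\gamma^{(k_l)}$ with $U=J$ and $\psi_0=\tilde\psi_0^{(l)}$, combined with $\gamma^{(k_l)}(0)=0$, produces uniform bounds $\|\gamma^{(k_l)}\|_{C^m(J)}\leq C(m,\delta,J)$ for every $m\in\N$. Arzel\`a--Ascoli makes $(\gamma^{(k_l)})$ precompact in $C^m(J)$; since any subsequential $C^0$-limit must coincide with $\gamma^{*}$, the standard subsequence argument forces $\gamma^{(k_l)}\to\gamma^{*}$ in $C^m(J)$ for all $m$, so in particular $\gamma^{*}\in C^{\infty}(J)$. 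Repeating over the (at most two) components of $I_\delta$ completes the proof. The main obstacle is the construction of $\psi_0$: because $V[\gamma^{*}]=0$, the isoperimetric constraint degenerates in the limit, and one must extract the uniform nondegeneracy from the first variation of the volume alone on $J$, while simultaneously eliminating the ambiguity in the sign $\pm_\gamma$ by subsequencing.
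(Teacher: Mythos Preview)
Your proof is correct and follows essentially the same approach as the paper: establish uniform $C^m$-bounds on each component $J$ of $I_\delta$ by applying Corollary~\ref{gammasmooththeorem1} with a test function $\psi_0^{(l)}$ whose $C^2$-norm is controlled uniformly in $l$, obtained from the nondegeneracy of the limiting volume variation on $J$, and then extract $C^m$-convergence to $\gamma^*$. Your explicit subsequencing on the sign $\epsilon_l=\pm_{\gamma^{(k_l)}}$ is in fact slightly more careful than the paper, which simply asserts $\delta\mathcal I[\gamma^{(k)}]\psi^*\to 1$ without addressing the sign; either way the rescaled $\psi_0^{(l)}$ has bounded $C^2$-norm, which is all Corollary~\ref{gammasmooththeorem1} requires.
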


\begin{proof}
We prove that for arbitrary $m\in\N$ the sequence $\gamma^ {(k_l)}$ is bounded in $W^ {m+1,\infty}(I_\delta)$. The Sobolev embedding $W^ {m+1,\infty}(I_\delta)\hookrightarrow C^ {m,\alpha}(\bar I_\delta)$ then shows that $\gamma^ {(k_l)}$ is bounded in $C^ {m,\alpha}(I_\delta)$. Once this is shown, we can use $\gamma^ {(k_l)}\rightarrow \gamma^*$ in $\mathcal P^w$ to deduce the lemma. From now on we write $\gamma^{(k)}$ instead of $\gamma^{(k_l)}$\\

\noindent
\textbf{Uniform Bounds in $W^ {m,\infty}$}\ \\
By Definition \ref{convergencedefinition} and Lemma \ref{convergentsubsequence} we get $\gamma^ {(k)}\rightarrow \gamma^*$ in $C^0([0,1])$. So we can assume that
\begin{equation}\label{kappauniform}
\inf_{t\in I_\delta}\gamma_1^{(k)}(t)
\geq 
\frac12\inf_{t\in I_\delta}\gamma_1^*(t)
=\frac12\kappa(\gamma^*;I_\delta)>0.
\end{equation}
Either $I_\delta=(\delta,1-\delta)$ or $I_\delta=(\delta,\tau-\delta)\cup(\tau+\delta,1-\delta)$. Let $(\tau_1,\tau_2)$ denote any of these intervals. We claim that there is $\psi^*\in C^ \infty_0((\tau_1,\tau_2))$ such that 
\begin{equation}\label{psistardef}
1\overset!=6\sqrt\pi \delta V[\gamma^*]\psi^*
=\pm_{\gamma^*}6\sqrt\pi^\frac32 \int_{\tau_1}^ {\tau_2} (\gamma_1^*)^2\dot\psi^*_2+2\gamma_1^*\dot\gamma_2^*\psi^*_1 dt.
\end{equation}
Indeed, otherwise, we could use partial integration to get for all $\psi^*\in C^ \infty_0((\tau_1,\tau_2))$
$$
0=\int_{\tau_1}^ {\tau_2} (\gamma_1^*)^2\dot\psi^*_2+2\gamma_1^*\dot\gamma_2^*\psi^*_1 dt
=2\int_{\tau_1}^ {\tau_2} \gamma_1^*\langle\dot\gamma^*, (-\psi_2^*,\psi_1^*)\rangle dt.
$$
Using $\gamma^*\in W^{1,2}((0,1))$ and $\gamma_1^*>0$ almost everywhere we conclude $\dot\gamma^*= 0$ almost everywhere in $(\tau_1,\tau_2)$. This is a contradiction as $\gamma^*\in\mathcal P^w$ and hence $|\dot\gamma^*|=L[\gamma^*]$ almost everywhere. \\

Now let $\psi^*$ be as in Equation \eqref{psistardef}. Using $\gamma^{(k)}\rightarrow\gamma^*$ in $C^0([0,1])$ and $W^{1,2}((0,1))$ and $\sigma_k\rightarrow 0$, we can use Equations \eqref{var1}-\eqref{var3} to get $1=\lim_{k\rightarrow\infty}\delta\mathcal I[\gamma^{(k)}]\psi^*$. This proves that, after potentially ignoring the first couple of terms in the sequence, there exists a sequence $\zeta_k\subset(0,\infty)$ converging to $1$ such that 
$$1=\zeta_k\left(\delta\mathcal I[\gamma^{(k)}]\psi^*\right)=\delta\mathcal I[\gamma^{(k)}](\zeta_k\psi^*).$$
This allows us to use Corollary \ref{gammasmooththeorem1} with $U=(\tau_1,\tau_2)$ and $\psi_0^ {(k)}=\zeta_k\psi^*$ to prove the lemma. Indeed, Estimate \eqref{kappauniform} provides us with a uniform lower bound for $\kappa(\gamma^{(k)};U)$ and, as $\zeta_k\rightarrow 1$, the $C^2$-norms of $\psi^ {(k)}_0$ are bounded uniformly.
\end{proof}

We now prove a first part of Theorem \ref{asymptoticsthm}.
\begin{lemma}\label{Lemma5point4}
    Let $(\sigma_k)\subset (0,1)$ such that $\sigma_k\rightarrow 0^+$ and let $\gamma^ {(k)}\in\mathcal F_{\sigma_k}^ {0+}$ such that $\mathcal W[\gamma^{(k)}]=\beta(\sigma_k)$.
Then $\beta(\sigma_k)\rightarrow 8\pi$ and $\gamma^{(k)}\rightarrow\kappa$ in $W^ {1,2}((0,1))$ and in $C^\infty_{\operatorname{loc}}((0,1)\backslash\{\frac12\})$.
\end{lemma}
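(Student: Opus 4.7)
The plan is to extract a convergent subsequence $\gamma^{(k_l)}\to\gamma^*$ via Lemma~\ref{convergentsubsequence}, identify $\gamma^*=\kappa$ geometrically, and then pass back to the full sequence by the standard subsequence argument. Continuity of area and volume (Equations~\eqref{areaconv}--\eqref{volconv}) immediately yields $A[\gamma^*]=1$ and $V[\gamma^*]=0$, while lower semicontinuity of $\mathcal W$ (Equation~\eqref{Willmorelowersemicont}) combined with Theorem~\ref{schygulla} gives $\mathcal W[\gamma^*]\leq 8\pi$.

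The heart of the proof is the geometric identification of $\gamma^*$. First I rule out the alternative in Lemma~\ref{convergentsubsequence} that $\gamma^*\in\mathcal P$: combining Li--Yau (Lemma~\ref{WillmoreLowerBound}), the smooth convergence $\gamma^{(k_l)}\to\gamma^*$ on compact subsets of $(0,1)$ supplied by Lemma~\ref{smoothconvergence}, and the fact that the embedded surfaces $\Sigma_{\gamma^{(k_l)}}$ have positive enclosed volume via Lemma~\ref{volumegauss}, a single-piece limit $\gamma^*\in\mathcal P$ would force $V[\gamma^*]>0$, contradicting $V[\gamma^*]=0$. Hence Lemma~\ref{convergentsubsequence} provides $\tau\in(0,1)$ such that $\gamma^*|_{[0,\tau]}$ and $\gamma^*|_{[\tau,1]}$ each belong to $\mathcal P$ after reparameterization. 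Applying Li--Yau to each piece gives $8\pi\leq\mathcal W[\gamma^*|_{[0,\tau]}]+\mathcal W[\gamma^*|_{[\tau,1]}]=\mathcal W[\gamma^*]\leq 8\pi$, forcing equality in both estimates. Each sub-curve therefore parameterizes a round sphere of some radius $R_1$, $R_2$.

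To pin down radii and orientation, I use $\gamma^*(0)=(0,0)$, which places the first sphere tangent to the origin so that $\gamma^*(\tau)=(0,2R_1)$, and compute the contribution of each piece to the signed volume integral $\int_0^1(\gamma^*_1)^2\dot\gamma^*_2\,dt$: each piece contributes $\pm\tfrac{4}{3}R_i^3$, with sign determined by the orientation of $\dot\gamma^*_2$ on that piece. The condition $V[\gamma^*]=0$ forces equal radii $R_1=R_2=:R$ and opposite orientations (so the second sphere retraces the first in reverse), while $A[\gamma^*]=8\pi R^2=1$ determines $R=\tfrac{1}{\sqrt{8\pi}}$. The constraint $\int_0^1\gamma^*_2\,dt\geq 0$ built into $\mathcal F^{0+}_\sigma$ rules out the reflected configuration with both spheres below the axis. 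Matching arc-length speeds across the two pieces then forces $\tau=\tfrac12$ and $L[\gamma^*]=2\pi R=|\dot\kappa|$, so together with $\gamma^*(0)=\kappa(0)=(0,0)$ we conclude $\gamma^*=\kappa$.

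Finally, Lemma~\ref{smoothconvergence} applied with $\tau=\tfrac12$ delivers $\gamma^{(k_l)}\to\kappa$ in $C^m_{\operatorname{loc}}((0,1)\setminus\{\tfrac12\})$ for every $m\in\N$, while Definition~\ref{convergencedefinition} provides $W^{1,2}((0,1))$-convergence. Since every convergent subsequence yields the same limit $\kappa$, the standard subsequence argument upgrades this to convergence of the full sequence $(\gamma^{(k)})$. Lower semicontinuity of $\mathcal W$ combined with the strict bound $\beta(\sigma_k)<8\pi$ from Theorem~\ref{schygulla} forces $\beta(\sigma_k)\to 8\pi$. The main obstacle is the geometric identification step, in particular the sign/orientation bookkeeping in the volume integral needed to exclude asymmetric radii and ``wrong'' orientations, and the exclusion of the single-piece case, which rests on carefully combining the embeddedness of the $\Sigma_{\gamma^{(k_l)}}$ with the smooth convergence on compact subsets of $(0,1)$.
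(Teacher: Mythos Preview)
Your overall architecture matches the paper's: extract a subsequential limit $\gamma^*$ via Lemma~\ref{convergentsubsequence}, use continuity of $A,V$ to get $A[\gamma^*]=1$, $V[\gamma^*]=0$, use lower semicontinuity and Theorem~\ref{schygulla} for $\mathcal W[\gamma^*]\leq 8\pi$, then identify $\gamma^*=\kappa$ and invoke Lemma~\ref{smoothconvergence}. Your treatment of the second case ($\gamma^*\notin\mathcal P$) is essentially the paper's argument, and the conclusion $\beta(\sigma_k)\to 8\pi$ follows correctly from the sandwich $8\pi=\mathcal W[\gamma^*]\leq\liminf\beta(\sigma_k)\leq\limsup\beta(\sigma_k)\leq 8\pi$.

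However, your exclusion of the case $\gamma^*\in\mathcal P$ has a genuine gap. You assert that $\gamma^*\in\mathcal P$ would force $V[\gamma^*]>0$, appealing to Lemma~\ref{volumegauss} and the embeddedness of the approximating surfaces. But Lemma~\ref{volumegauss} requires $\gamma^*$ itself to be \emph{injective}, and the $C^0$ (or even $C^\infty_{\operatorname{loc}}$ on $(0,1)$) limit of injective curves need not be injective. Nothing in your argument rules out a self-intersecting $\gamma^*\in\mathcal P$ with vanishing signed volume integral. The paper handles precisely this: if $\gamma^*$ were injective, Lemma~\ref{volumegauss} gives $V[\gamma^*]>0$, a contradiction; hence there is a double point, so by the monotonicity formula (Lemma~\ref{monotonicityformula}) $\mathcal W[\gamma^*]\geq 8\pi$, forcing equality, and the equality case (Lemma~\ref{SphereOrInvertedCat}) identifies $\Sigma_{\gamma^*}$ as an inverted catenoid, which encloses positive volume --- again a contradiction. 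This monotonicity/equality-case analysis is the missing ingredient in your first case; Li--Yau in the form of Lemma~\ref{WillmoreLowerBound} only gives $4\pi$ and does not see the double point.
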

\begin{proof}
Combining Lemmas \ref{convergentsubsequence} and \ref{smoothconvergence} we have $\gamma^{(k)}\rightarrow\gamma^*$ in $\mathcal P^w$ and in $C^m(I_\delta)$ for all $m\in\N$ and $\delta>0$ along a subsequence again denoted by $\gamma^{(k)}$. Additionally, either $\gamma^*\in\mathcal P$ or there exists $\tau\in(0,1)$ such that $\gamma_1^*(\tau)=0$ and $\gamma^*|_{[0,\tau]}$,  $\gamma^*|_{[\tau,1]}$ both belong to $\mathcal P$. In either case, we can use the weak lower semicontinuity from Equation \eqref{Willmorelowersemicont} and deduce
\begin{equation}\label{limitWillmoreupperbound}
\mathcal W[\gamma^*]\leq \liminf_{k\rightarrow\infty}\mathcal W[\gamma^{(k)}]\leq\limsup_{k\rightarrow\infty}\mathcal W[\gamma^{(k)}]\leq 8\pi.
\end{equation}
Using $\mathcal I[\gamma^ {(k)}]=\sigma_k\rightarrow 0^ +$, $A[\gamma^ {(k)}]\equiv 1$ and the continuity of the area and volume with respect to the $\mathcal P^w$ convergence (see Equations \eqref{areaconv} and \eqref{volconv}), we get 
\begin{equation}\label{areaANDvolume}
A[\gamma^*]=\lim_{k\rightarrow\infty}A[\gamma^ {(k)}]\equiv 1
\hspace{.5cm}\textrm{and}\hspace{.5cm}
V[\gamma^*]=\lim_{k\rightarrow\infty}V[\gamma^ {(k)}]=0.
\end{equation}

\paragraph{The first case: $\gamma^*\in\mathcal P$}\ \\
We claim that there exist $\tau<\tau'\in[0,1]$ such that $\gamma(\tau)=\gamma(\tau')=:(p_1,p_2)$. Indeed, if $\gamma^*$ was injective, then $V[\gamma^*]>0$ by Lemma \ref{volumegauss} which contradicts Equation \eqref{areaANDvolume}. By Lemma \ref{monotonicityformula} we conclude $\mathcal W[\gamma^*]\geq 8\pi$. Combining this with Estimate \eqref{limitWillmoreupperbound}, we deduce $\mathcal W[\gamma^*]=8\pi$. Putting $p:=(p_1,0,p_2)$, we are in the equality case of the monotonicity formula from Lemma \ref{monotonicityformula} and get 
$$\frac14\vec H[f_{\gamma^*}]=\frac{(f_{\gamma^*}-p)^\perp}{|f_{\gamma^*}-p|^2}.$$
Using Lemma \ref{SphereOrInvertedCat} and $\mathcal W[\Sigma_{\gamma^*}]=8\pi$, we deduce that $\Sigma_{\gamma^*}$ is the inversion of a scaled and vertically translated catenoid and thus encloses a positive volume, which contradicts Equation \eqref{areaANDvolume}.

\paragraph{The Second Case: $\gamma^*\not\in\mathcal P$}\ \\
Let $\alpha^{(1)}:=\gamma^*|_{[0,\tau]}$ and $\alpha^{(2)}:=\gamma^*|_{[\tau,1]}$. Lemma \ref{WillmoreLowerBound} gives $\mathcal W[\alpha^{(i)}]\geq 4\pi$ for $i=1,2$. This shows
\begin{equation}\label{twospheresequation}
8\pi\leq \mathcal W[\alpha^{(1)}]+ \mathcal W[\alpha^{(2)}]= \mathcal W[\gamma^*]\leq 8\pi.
\end{equation}
Hence $\mathcal W[\gamma^*]=8\pi$ and in view of Estimate \eqref{limitWillmoreupperbound} we get $\beta(\sigma_k)=\mathcal W[\gamma^{(k)}]\rightarrow8\pi$. Additionally, Estimate \eqref{twospheresequation} gives $\mathcal W[\alpha^{(i)}]= 4\pi$ for $i=1,2$ so that by the Li-Yau inequality (Lemma \ref{WillmoreLowerBound}), both curves are the profile curve of a sphere. Since $\alpha^{(i)}\in\mathcal P$, are parameterized proportional to arc length, there exist $z_i\in\R$, $\epsilon_i\in\set{\pm1}$ and $\omega_i, R_i\in\R_0^+$, such that
\begin{align*}
    &\alpha^{(1)}:[0,\tau]\rightarrow \mathcal H^2,\ \alpha^{(1)}(t)=(0, z_1)+ R_1(\sin(\omega_1 t), \epsilon_1\cos(\omega_1 t)),\\
   &\alpha^{(2)}:[\tau,1]\rightarrow \mathcal H^2,\ \alpha^{(2)}(t)=(0, z_2)+ R_2(\sin(\omega_2(t-\tau)), \epsilon_1\cos(\omega_2(t-\tau))).
\end{align*}
Using Equation \eqref{areaANDvolume} we get
\begin{align*}
    1&=A[\gamma^*]=A[\gamma^ {(1)}]+A[\gamma^ {(2)}]=4\pi(R_1^2+R_2^2),\\
    0&=V[\gamma^*]=V[\gamma^ {(1)}]+V[\gamma^ {(2)}]=\frac43\pi|\epsilon_1R_1^3+\epsilon_2 R_2^3|.
\end{align*}
These imply $\epsilon_2=-\epsilon_1=:-\epsilon$ and $R_1=R_2=(8\pi)^{-\frac12}=:R$. Since $\gamma^*$ is parameterized proportional to arc length, we have $\omega_1 R=|\dot\alpha^{(1)}|=|\dot\gamma^*|=|\dot\alpha^{(2)}|=\omega_2 R$
and hence $\omega_1=\omega_2=:\omega$. Note $\alpha^{(1)}_1(t)>0$ for $t\in(0,\tau)$ and $\alpha^{(1)}_1(\tau)=0$. This proves $\omega\tau=\pi$. Similarly  $\alpha^{(2)}_1(t)>0$ for $t\in(\tau,1)$ and $\alpha^{(2)}_1(1)=0$ give $\omega(1-\tau)=\pi$. Therefore $\omega=2\pi$ and $\tau=\frac12$. By continuity of $\gamma^*$ at $\tau$ 
$$z_1+\epsilon R\cos(\omega\tau)=z_2-\epsilon R\cos(\omega(\tau-\tau)).$$
Using $\omega=2\pi$ and $\tau=\frac12$ we get $z_1=z_2=:z$. As $\gamma^*(0)=(0,0)$, we get $\alpha_2^{(1)}(0)=0$ and hence $z=-\epsilon R$.  Finally, since $\gamma^{(k)}\in\mathcal F^{0+}_{\sigma_k}$ we get
$$0\leq \lim_{k\rightarrow\infty}\int_0^1\gamma_2^{(k)}(t)dt
=\int_0^1\gamma_2^*(t)dt
=\frac{z_1+z_2}2
=z
=-\epsilon R.$$
So $\epsilon=-1$ and $\gamma^*=\kappa$.
\end{proof} 

It remains to study the asymptotic behaviour of $\gamma^ {(k)}$ near $t=0$ and $t=1$ when $k\rightarrow\infty$. 
\begin{korollar}\label{Lagrangemultipliergoto0Lemma}
We denote by $\Lambda_k$ the Lagrange multiplier from Lemma \ref{eulerlagrangeequation}. Then $\Lambda_k\rightarrow 0$ as $k\rightarrow\infty$
\end{korollar}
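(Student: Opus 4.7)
The plan is to exploit the explicit formula $\Lambda_k=-3\bigl(\delta\mathcal{W}[\gamma^{(k)}]\psi_0^{(k)}\bigr)$ from the proof of Lemma \ref{eulerlagrangeequation}, where $\psi_0^{(k)}\in C^\infty_0((0,1))$ is any test function with $\delta\mathcal{I}[\gamma^{(k)}]\psi_0^{(k)}=1$. The freedom to choose $\psi_0^{(k)}$ will be used to arrange that the $\psi_0^{(k)}$ are supported in a fixed compact subinterval of $[0,1]\setminus\{\tfrac12\}$ on which $\gamma^{(k)}\to\kappa$ smoothly, and converge there to a fixed limit. Since $\kappa$ is, away from $t=\tfrac12$, the profile curve of a round sphere, which is a Willmore surface (so $W[\kappa]\equiv 0$ on $(0,\tfrac12)\cup(\tfrac12,1)$), the Willmore gradient tested against $\psi_0^{(k)}$ must tend to zero.

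First I would fix an interval $(\tau_1,\tau_2)\subset(0,\tfrac12)$ and mimic the argument following Equation (\ref{psistardef}) in the proof of Lemma \ref{smoothconvergence}: since $\kappa$ on $(\tau_1,\tau_2)$ parameterises a nontrivial arc of a circle with $|\dot\kappa|=L[\kappa]\neq 0$, there exists $\psi^*\in C^\infty_0((\tau_1,\tau_2))$ such that
\[
\pi\int_{\tau_1}^{\tau_2}\kappa_1^2\dot\psi_2^*+2\kappa_1\dot\kappa_2\psi_1^*\,dt\neq 0.
\]
Combining Equations (\ref{var1})--(\ref{var3}) with $A[\gamma^{(k)}]=1$, $V[\gamma^{(k)}]=\sigma_k/(6\sqrt{\pi})\to 0$ and the smooth convergence $\gamma^{(k)}\to\kappa$ on $[\tau_1,\tau_2]$ from Lemma \ref{Lemma5point4}, one obtains $\delta\mathcal{I}[\gamma^{(k)}]\psi^*\to c_\infty$ for a nonzero constant $c_\infty$. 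After replacing $\psi^*$ by $\psi^*/c_\infty$ and dropping finitely many initial terms, the scalars $\alpha_k:=\delta\mathcal{I}[\gamma^{(k)}]\psi^*$ satisfy $\alpha_k\to 1$, so $\psi_0^{(k)}:=\alpha_k^{-1}\psi^*$ satisfies $\delta\mathcal{I}[\gamma^{(k)}]\psi_0^{(k)}=1$ with $\psi_0^{(k)}\to\psi^*$ in $C^m$ for every $m$, all supported in the fixed compact set $[\tau_1,\tau_2]$.

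Next, since $\gamma^{(k)}$ is smooth on $(\tau_1,\tau_2)$ and $\psi_0^{(k)}$ has compact support there, the formula from the proof of Lemma \ref{eulerlagrangeequation} gives
\[
\delta\mathcal{W}[\gamma^{(k)}]\psi_0^{(k)}
=2\pi\int_{\tau_1}^{\tau_2}W[\gamma^{(k)}]\,\langle\psi_0^{(k)},\nu^{(k)}\rangle\,\gamma_1^{(k)}|\dot\gamma^{(k)}|\,dt.
\]
The $C^m$-convergence $\gamma^{(k)}\to\kappa$ on $[\tau_1,\tau_2]$ makes every factor in the integrand converge uniformly, so the integral converges to $2\pi\int_{\tau_1}^{\tau_2}W[\kappa]\langle\psi^*,\nu_\kappa\rangle\kappa_1|\dot\kappa|\,dt$. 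But on $(\tau_1,\tau_2)\subset(0,\tfrac12)$, $\kappa$ parameterises a round sphere of radius $R=(8\pi)^{-1/2}$, so $H$ and $K$ are constant and a direct computation yields $W[\kappa]=\tfrac12(\Delta_g H+\tfrac12 H(H^2-4K))\equiv 0$. Therefore $\delta\mathcal{W}[\gamma^{(k)}]\psi_0^{(k)}\to 0$, and consequently $\Lambda_k=-3\,\delta\mathcal{W}[\gamma^{(k)}]\psi_0^{(k)}\to 0$.

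The only nontrivial point is ensuring that the normalisation $\delta\mathcal{I}[\gamma^{(k)}]\psi_0^{(k)}=1$ can be enforced uniformly — i.e.\ producing a test function supported away from $t=\tfrac12$ and the endpoints, whose isoperimetric variation against $\kappa$ is nonzero. This is handled by the same nondegeneracy argument used in Lemma \ref{smoothconvergence}, which works on any subinterval on which $\kappa$ has a smooth, nondegenerate parameterisation; everything else is immediate from the smooth convergence on compact subsets away from $t=\tfrac12$ and the fact that the round sphere is a Willmore surface.
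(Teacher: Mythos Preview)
Your argument is correct and complete. The key observation—that the formula $\Lambda_k=-3\,\delta\mathcal W[\gamma^{(k)}]\psi_0^{(k)}$ holds for \emph{any} $\psi_0^{(k)}\in C_0^\infty((0,1))$ with $\delta\mathcal I[\gamma^{(k)}]\psi_0^{(k)}=1$—follows from Equation~\eqref{weakeulerlagrangefirstappe} applied with $\eta$ equal to a second such function, and this is what lets you choose the $\psi_0^{(k)}$ conveniently supported away from $t=\tfrac12$. A small cosmetic point: because the sign $\pm_{\gamma^{(k)}}$ in $\delta V$ could in principle vary with $k$, you do not literally get $\alpha_k\to c_\infty$, only $|\alpha_k|\to|c_\infty|>0$; but this is all you need, since $\psi_0^{(k)}=\alpha_k^{-1}\psi^*$ then stays bounded in every $C^m$ and the conclusion $\Lambda_k=-3\alpha_k^{-1}\,\delta\mathcal W[\gamma^{(k)}]\psi^*\to0$ follows.

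The paper's own proof takes a shorter, pointwise route: it rewrites the Euler--Lagrange equation from Lemma~\ref{eulerlagrangeequation} as
\[
|\Lambda_k|=\left|\frac{2W[\gamma^{(k)}]}{\pm_{\gamma^{(k)}}4\sqrt\pi-\sigma_k H[\gamma^{(k)}]}\right|
\]
on a fixed interval $I=(\tfrac18,\tfrac38)$, and then uses the smooth convergence $\gamma^{(k)}\to\kappa$ on $I$ to see that the numerator tends to $W[\kappa]\equiv0$ while the denominator tends to $\pm4\sqrt\pi\neq0$. Both proofs rest on the same fact (the round sphere is Willmore, so $W[\kappa]=0$ on the smooth part of $\kappa$); the paper extracts $\Lambda_k$ algebraically from the pointwise PDE, whereas you extract it from the variational definition via a controlled test function. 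Your approach is a bit longer but has the virtue of not needing the denominator $\pm4\sqrt\pi-\sigma_kH$ to be bounded away from zero, which the paper's argument implicitly requires.
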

\begin{proof}
By Lemma \ref{eulerlagrangeequation}
\begin{equation}\label{LambdakGoestoZeroEquation}
|\Lambda_k|=\left|\frac{2W[\gamma^{(k)}]}{\pm_{\gamma^{(k)}} 4\sqrt\pi-\sigma_k H[\gamma^{(k)}]}\right|.
\end{equation}
Let $I:=(\frac18,\frac38)$. By Lemma \ref{Lemma5point4} we have $\gamma^{(k)}\rightarrow\kappa$ in $C^r(I)$ for all $r\in\N$ and hence  
$$W[\gamma^{(k)}]\bigg|_{I}\rightarrow W[\kappa]\bigg|_{I}\equiv 0
\hspace{.5cm}\textrm{and}\hspace{.5cm}
H[\gamma^{(k)}]\bigg|_{I}\rightarrow H[\kappa]\bigg|_{I}\equiv 2.$$
In particular, $\sigma_k H[\gamma^{(k)}]\rightarrow0$ on $I$ and consequently $\Lambda_k\rightarrow 0$ by Equation \eqref{LambdakGoestoZeroEquation}.
\end{proof}

For all $k\in\N$ we define 
$$\tau_k:=\sup\{t\geq 0\ |\ \dot\gamma_1^ {(k)}(s)>0\hspace{.2cm}\textrm{for all $0\leq s\leq t$}\}.$$
We prove that $\tau_k$ is bounded from below. In fact, we even prove the following fact:

\begin{lemma}\label{lowerboundeddotgamma1Lemma}
   Let $\epsilon>0$. There exist $k_0(\epsilon)\in\N$ and $t_0(\epsilon)>0$ such that 
   $$\dot\gamma_1^ {(k)}(t)\geq L[\gamma^{(k)}](1-\epsilon)\hspace{.5cm}\textrm{for all $k\geq k_0(\epsilon)$ and }t\in[0, t_0(\epsilon)].$$
\end{lemma}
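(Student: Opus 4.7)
I would argue by contradiction. Suppose the lemma fails for some $\epsilon_0 \in (0,1)$. Then, after passing to a subsequence, there exist $k \to \infty$ and $t_k \in [0,1]$ with $\dot\gamma_1^{(k)}(t_k) < L_k(1-\epsilon_0)$, where $L_k := L[\gamma^{(k)}]$, and $t_k \to t_* \in [0,\tfrac12)$. First I would exclude $t_* > 0$. By Lemma \ref{Lemma5point4} the convergence $\gamma^{(k)} \to \kappa$ is in $C^\infty_{\operatorname{loc}}((0,\tfrac12))$, so passing to the limit together with $L_k \to L[\kappa]$ yields $\dot\kappa_1(t_*) \leq L[\kappa](1-\epsilon_0) < L[\kappa]$. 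Since $\kappa(t) = (0,R) + R(\sin(2\pi t), -\cos(2\pi t))$ on $[0,\tfrac12]$, the function $\dot\kappa_1(t) = 2\pi R\cos(2\pi t)$ is continuous at $0$ with $\dot\kappa_1(0) = L[\kappa]$, so there is $t_0(\epsilon_0) > 0$ on which $\dot\kappa_1 > L[\kappa](1-\epsilon_0)$. Taking this as the $t_0(\epsilon)$ in the conclusion, the case $t_* > 0$ is ruled out and we may assume $t_* = 0$.

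For $t_* = 0$ I switch to the graph representation of Subsection \ref{RegularityAtAxisSubsection}. By Corollary \ref{gammasmoothuptoBoundaryCorollary} and $\dot\gamma_1^{(k)}(0) = L_k > 0$, the map $\gamma_1^{(k)}$ is a smooth diffeomorphism from some $[0,\tilde t_k]$ onto $[0, r_0^{(k)}]$, and $u_k(r) := \gamma_2^{(k)}\bigl((\gamma_1^{(k)})^{-1}(r)\bigr)$ is smooth with $u_k(0) = u_k'(0) = 0$. The identity $\dot\gamma_1^{(k)}(t) = L_k / \sqrt{1+u_k'(\gamma_1^{(k)}(t))^2}$ converts the standing assumption into a uniform lower bound $|u_k'(r_k)| \geq \delta(\epsilon_0) > 0$ at the points $r_k := \gamma_1^{(k)}(t_k)$, which tend to $0^+$ since $r_k \leq L_k t_k$ and $L_k$ is bounded.

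The contradiction should then follow from uniform decay for $u_k'$ as $r \to 0^+$. The key inputs are: (i) $\lambda = 0$ for each minimizer by Lemma \ref{uisc2}, so that $u_k$ satisfies \eqref{ODEamRand} with right-hand side $\tfrac12 v^5 a_k r + b_k v^4 w$; (ii) the coefficients $a_k = \pm 4\sqrt\pi\,\Lambda_k$ and $b_k = \sigma_k\Lambda_k$ tend to zero by Corollary \ref{Lagrangemultipliergoto0Lemma} and $\sigma_k \to 0$; (iii) the Willmore energy $\mathcal W[\gamma^{(k)}] \leq 8\pi$ and the length $L_k \to L[\kappa]$ are uniformly bounded. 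Revisiting the six steps in the proof of Lemma \ref{regularityatend01}, every constant there is explicit in $a_k, b_k, L_k$ and the Willmore energy, so these estimates can be made uniform in $k$: there exist $\rho > 0$ and $C < \infty$ independent of $k$ such that $\|u_k''\|_{L^p([0,\rho])} \leq C$ for some $p > 1$. Using $u_k'(0) = 0$ and Hölder's inequality, $|u_k'(r)| \leq r^{1-1/p}\|u_k''\|_{L^p([0,\rho])} \to 0$ uniformly in $k$ as $r \to 0^+$, contradicting $|u_k'(r_k)| \geq \delta(\epsilon_0)$. The main obstacle is this last step: carefully tracking constants through the chain of energy estimates of Lemma \ref{regularityatend01} to ensure that the threshold $\rho > 0$ can be chosen independently of $k$; this seems to be exactly what the vanishing of $\Lambda_k$ and $\sigma_k$ makes possible.
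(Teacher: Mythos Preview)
Your approach has a genuine circularity. To make the constants in the six steps of Lemma~\ref{regularityatend01} uniform in $k$, you need three ingredients, none of which is available before Lemma~\ref{lowerboundeddotgamma1Lemma} is proved: (i) the graph functions $u_k$ must be defined on a common interval $[0,\rho]$; (ii) the boundary values $u_k'(\rho)$ and $u_k''(\rho)$ must be uniformly controlled, since they appear in $C(\rho)$ in Step~1 and in the left-hand side of Equation~\eqref{step2begin}; (iii) one needs a common threshold $\rho(\epsilon)$ with $|u_k'|\leq\epsilon$ on $[0,\rho(\epsilon)]$, which is precisely how Step~2 is run. In the paper these three things are supplied \emph{after} Lemma~\ref{lowerboundeddotgamma1Lemma} (via Corollary~\ref{Asymptotics_Derivativesmall} and the convergence $u_k\to u^*$ in $C^\infty_{\operatorname{loc}}((0,r_0])$), and only then is the uniform version of Lemma~\ref{regularityatend01} carried out (Lemma~\ref{AsymptoticsSecondDerivativeBound}). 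Your assertion that ``every constant there is explicit in $a_k,b_k,L_k$ and the Willmore energy'' is therefore false: $C(\rho)$ and $\rho(\epsilon)$ depend on $w_k(\rho)$, $w_k'(\rho)$ and on the modulus of continuity of $w_k$ at~$0$. If instead you try to work on the shrinking domain $[0,r_k]$ with $r_k\to 0$, the boundary contributions at $\rho=r_k$ are not controlled: already $w_k(r_k)^2/r_k$ blows up, and $w_k'(r_k)$ involves the pointwise value of $k_1^{(k)}$ near the axis, for which there is no a~priori bound.

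The paper bypasses the graph representation altogether with a direct energy argument. From $K\gamma_1|\dot\gamma|=-\ddot\gamma_1/L$ (Equation~\eqref{ArcLength_K_Identity}) and $(k_1+k_2)^2\geq 2k_1k_2$ one gets
\[
\frac{\pi}{2}\int_0^{t_k}(k_1^{(k)}+k_2^{(k)})^2\gamma_1^{(k)}|\dot\gamma^{(k)}|\,dt
\;\geq\;-\frac{\pi}{L_k}\bigl(\dot\gamma_1^{(k)}(t_k)-L_k\bigr)\;\geq\;\pi\epsilon,
\]
so a definite amount of Willmore energy sits on $[0,t_k]$. Combining this with $\mathcal W[\gamma^{(k)}]\leq 8\pi$ and $\mathcal W[\gamma^{(k)}|_{I_\delta}]\to\mathcal W[\kappa|_{I_\delta}]\to 8\pi$ as $k\to\infty$ and then $\delta\to 0$ yields the contradiction.
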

\begin{proof}
    The proof is by contradiction. Assuming that the lemma is false, there exists $\epsilon>0$ such that after passing to a subsequence, we have $\dot\gamma_1^{(k)}(t_k)<L_k(1-\epsilon)$ for a sequence $t_k\rightarrow 0^+$ and where $L_k:=L[\gamma^{(k)}]$. By Lemma \ref{Lemma5point4} we have $\gamma^ {(k)}\rightarrow\kappa$ in $W^ {1,2}((0,1))$ and in $C^\infty_{\operatorname{loc}}((0,1)\backslash\{\frac12\})$ and therefore $L_k\rightarrow L_*:=L[\kappa]$.  For small $\delta>0$, we recall $I_\delta:=(\delta,\frac12-\delta)\cup(\frac12+\delta,1-\delta)$.
    Denoting the principal curvatures of $\kappa$ by $k_i^*$ we have
    \begin{equation}\label{Limitdeltarightarrow0eq}
   \lim_{\delta\rightarrow0^+}\frac{2\pi}4\int_{I_\delta}(k_1^*+k_2^*)^2 \kappa_1|\dot\kappa| dt
   = \mathcal W[\kappa]
   = 8\pi. 
    \end{equation}    
    For each $\delta>0$ there exists $k_0(\delta)>0$ such that $t_k<\delta$ for all $k\geq k_0(\delta)$. Using that $\mathcal W[\gamma^{(k)}]=\beta(\sigma_k)\leq8\pi$  we estimate 
    \begin{align}
    8\pi\geq \mathcal W[\gamma^{(k)}]=&\frac{2\pi}4\int_0^1 (k_1^{(k)}+k_2^{(k)})^2 \gamma_1^{(k)}|\dot\gamma^{(k)}|dt\nonumber\\
    \geq &\frac{2\pi}4\int_{I_\delta} (k_1^{(k)}+k_2^{(k)})^2 \gamma_1^{(k)}|\dot\gamma^{(k)}|dt+\frac{2\pi}4\int_{0}^{t_k} (k_1^{(k)}+k_2^{(k)})^2 \gamma_1^{(k)}|\dot\gamma^{(k)}|dt.\label{8piContradictionsetupspellmistake}
    \end{align}
     We derive a suitable estimate for the second integral. By Corollary \ref{gammasmoothuptoBoundaryCorollary} we know $\gamma^ {(k)}\in C^ \infty([0, t_k])$ and using Equation \eqref{ArcLength_K_Identity}, we get
     $$k_1^{(k)}k_2^{(k)}\gamma_1^ {(k)}|\dot\gamma^ {(k)}|=-\frac1{L_k}\ddot\gamma_1^ {(k)}\hspace{.5cm}\textrm{for $t\in[0,t_k]$}.$$
     Using this identity, $\dot\gamma_1^ {(k)}(0)=L_k$ and $\dot\gamma_1^ {(k)}(t_k)\leq L_k(1-\epsilon)$ we may compute
    \begin{align*}
    &\int_0^{t_k}(k_1^{(k)}+k_2^{(k)})^2 \gamma_1^{(k)}|\dot\gamma^{(k)}|dt
    \geq 
    2\int_0 ^{t_k}k_1^{(k)}k_2^{(k)} |\dot\gamma^{(k)}|\gamma_1^{(k)}dt
    =
-\frac2{L_k}\int_0^{t_k}\ddot\gamma_1^{(k)}(t) dt\\
=&-\frac2{L_k}\left(\dot\gamma_1^ {(k)}(t_k)-\dot\gamma_1^ {(k)}(0)\right)\geq  2L_k-2(L_k-\epsilon)
    =2\epsilon.
    \end{align*}
    Inserting into Estimate \eqref{8piContradictionsetupspellmistake}, we deduce that for all $\delta>0$ and $k\geq k_0(\delta)$ we have  
    $$
    8\pi\geq \frac{2\pi}4\int_{I_\delta} (k_1^{(k)}+k_2^{(k)})^2 \gamma_1^{(k)}|\dot\gamma^{(k)}|dt+\pi\epsilon.
   $$
   Since $\gamma^{(k)}\rightarrow \kappa$ in $C^r(I_\delta)$ for all $r\in\N$, we can send $k\rightarrow\infty$ and deduce that for all $\delta>0$
   $$
    8\pi\geq \frac{2\pi}4\int_{I_\delta} (k_1^*+k_2^*)^2 \kappa_1|\dot\kappa|dt+\pi\epsilon.
   $$
   In view of Equation \eqref{Limitdeltarightarrow0eq}, we arrive at a contradiction by letting $\delta\rightarrow0^+$. 
\end{proof}

Putting $L_*:=L[\kappa]$ we have $L_k\rightarrow L_*$. So, by Lemma \ref{lowerboundeddotgamma1Lemma}, there exists $k_0>0$ and $t_0>0$ such that for all $k\geq k_0$ and $t\in[0, t_0]$ we have $\dot\gamma_1^ {(k)}(t)\geq\frac14L_*>0$.
Consequently, each $\gamma_1^ {(k)}|_{[0, t_0]}$ is a smooth diffeomorphism onto its image. We also note that 
$$\gamma_1^ {(k)}(t_0)=\int_0^{t_0}\dot\gamma_1^ {(k)}(t)dt\geq \frac{t_0 L_*}4=:r_0.$$
This implies that for all $k\geq k_0$
\begin{equation}\label{definitionofuk}
u_k(r):[0, r_0]\rightarrow\R,\ u_k(r):=\gamma_2^{(k)}\circ \left(\gamma_1^{(k)}\right)^{-1}(r)
\end{equation}
is well-defined and smooth. Since $\gamma^{(k)}\rightarrow\kappa$ in $C^\infty_{\operatorname{loc}}((0,1)\backslash\{0\})$, we deduce that 
\begin{equation}\label{LimitofGraphfunction01}
u_k(r)\rightarrow u^*(r):= R-\sqrt{R^2-r^2}\hspace{.5cm}\textrm{ in $C^\infty_{\operatorname{loc}}((0, r_0]))$}\hspace{.2cm}\textrm{ where $R=\frac1{\sqrt{8\pi}}$.}
\end{equation}
We wish to prove that the convergence is even in $C^\infty([0, r_0]))$. 

\begin{korollar}\label{Asymptotics_Derivativesmall}
   For all $\epsilon>0$ there exist $k_0(\epsilon)\in\N$ and $r_1(\epsilon)\in(0, r_0]$ such that $|u_k'(r)|\leq\epsilon$ for all $k\geq k_0(\epsilon)$ and $r\in[0, r_1(\epsilon)]$.
\end{korollar}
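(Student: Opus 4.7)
The plan is to derive the corollary directly from Lemma \ref{lowerboundeddotgamma1Lemma}, exploiting the arc-length parametrization of $\gamma^{(k)}$. Differentiating the identity $u_k(\gamma_1^{(k)}(t)) = \gamma_2^{(k)}(t)$ and combining with $(\dot\gamma_1^{(k)})^2 + (\dot\gamma_2^{(k)})^2 = L_k^2$, where $L_k := L[\gamma^{(k)}]$, gives the key identity
$$|u_k'(r)|^2 = \frac{(\dot\gamma_2^{(k)}(t))^2}{(\dot\gamma_1^{(k)}(t))^2} = \frac{L_k^2}{(\dot\gamma_1^{(k)}(t))^2} - 1, \qquad r = \gamma_1^{(k)}(t).$$
Thus smallness of $|u_k'(r)|$ is equivalent to $\dot\gamma_1^{(k)}(t)$ being close to $L_k$, which is precisely what Lemma \ref{lowerboundeddotgamma1Lemma} provides on small intervals $[0,\tilde t_0]$.

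Given $\epsilon>0$, I would choose $\delta=\delta(\epsilon)\in(0,1)$ small enough that $(1-\delta)^{-2}-1\leq\epsilon^2$, and then apply Lemma \ref{lowerboundeddotgamma1Lemma} to obtain $k_1\in\N$ and $\tilde t_0>0$ with $\dot\gamma_1^{(k)}(t)\geq L_k(1-\delta)$ for all $k\geq k_1$ and $t\in[0,\tilde t_0]$. Substituting this into the identity above immediately yields $|u_k'(r)|\leq\epsilon$ whenever $r=\gamma_1^{(k)}(t)$ with $t\in[0,\min(t_0,\tilde t_0)]$, where $t_0$ is the parameter fixed in the setup preceding the corollary (on which $\dot\gamma_1^{(k)}\geq L_*/4$).

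It remains to choose $r_1(\epsilon)>0$ independently of $k$ so that every $r\in[0,r_1(\epsilon)]$ lies in the image $\gamma_1^{(k)}([0,\min(t_0,\tilde t_0)])$. Since $L_k\to L_*$ (a consequence of $\gamma^{(k)}\to\kappa$ in $W^{1,2}$ from Lemma \ref{Lemma5point4}), I may enlarge $k_1$ to ensure $L_k\geq L_*/2$, whence $\dot\gamma_1^{(k)}\geq L_*(1-\delta)/2$ on $[0,\min(t_0,\tilde t_0)]$. Setting
$$r_1(\epsilon):=\min\!\left(r_0,\ \tfrac12 L_*(1-\delta)\min(t_0,\tilde t_0)\right)$$
then forces $(\gamma_1^{(k)})^{-1}(r)\in[0,\min(t_0,\tilde t_0)]$ for every $r\in[0,r_1(\epsilon)]$, and the pointwise bound $|u_k'(r)|\leq\epsilon$ follows. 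The argument is essentially mechanical; the only mild hurdle is ensuring that $r_1$ can be taken independent of $k$, which is resolved by the uniform lower bound on $L_k$ together with the $k$-uniform statement of Lemma \ref{lowerboundeddotgamma1Lemma}.
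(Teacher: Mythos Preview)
Your proof is correct and follows essentially the same approach as the paper: both derive the bound on $|u_k'|$ from Lemma \ref{lowerboundeddotgamma1Lemma} via the arc-length identity $|u_k'(\gamma_1^{(k)}(t))|^2=L_k^2/(\dot\gamma_1^{(k)}(t))^2-1$, and both obtain a $k$-independent $r_1(\epsilon)$ by integrating the uniform lower bound on $\dot\gamma_1^{(k)}$ together with $L_k\to L_*>0$. The only cosmetic difference is that you pre-select $\delta(\epsilon)$ to hit the target $\epsilon$ exactly, whereas the paper bounds $|u_k'|$ by $\tfrac{\sqrt{2-\epsilon}}{1-\epsilon}\sqrt{\epsilon}$ and implicitly renormalizes.
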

\begin{proof}
For small $\epsilon>0$ we take $k_0(\epsilon)$ and $t_0(\epsilon)$ as in Lemma \ref{lowerboundeddotgamma1Lemma} and restrict ourselves to $k\geq k_0(\epsilon)$. We put $\rho_k(\epsilon):=\gamma_1^ {(k)}(t_0(\epsilon))$. Note that for $r\in[0,\rho_k(\epsilon)]$ we have $(\gamma^ {(k)}_1)^ {-1}(r)\in[0, t_0(\epsilon)]$. Hence $\dot\gamma_1^ {(k)}((\gamma^ {(k)}_1)^ {-1}(r))\geq L_k(1-\epsilon)$ and therefore 
$$\left|\dot\gamma_2^{(k)}\left(\left(\gamma^ {(k)}_1\right)^ {-1}(r)\right)\right|=\left(L_k^2-\left(\dot\gamma_1^{(k)}\left(\left(\gamma^ {(k)}_1\right)^ {-1}(r)\right)\right)^2\right)^{\frac12}\leq L_k\sqrt{1-(1-\epsilon)^2}.$$
So, for all $k\geq k_0(\epsilon)$ and  $r\leq \rho_k(\epsilon)$ we get 
    $$|u_k'(r)|=\left|\dot\gamma_2^ {(k)}\left(\left(\gamma_1^{(k)}\right)^{-1}(r)\right)\cdot\frac1{\dot\gamma_1^ {(k)}\left(\left(\gamma_1^{(k)}\right)^{-1}(r)\right)}\right|
    \leq \frac{L_k\sqrt{2\epsilon-\epsilon^2}}{L_k(1-\epsilon)}=\frac{\sqrt{2-\epsilon}}{1-\epsilon}\sqrt\epsilon.$$
 This implies that the lemma is proven as soon as a uniform lower bound for $\rho_k(\epsilon)$ is established. To do so, we use the defining property of $t_0(\epsilon)$ from Lemma \ref{lowerboundeddotgamma1Lemma} and estimate  
$$\rho_k(\epsilon)=\gamma_1^{(k)}(t_0(\epsilon))=\int_0^{t_0(\epsilon)}\dot\gamma_1^{(k)}(t) dt\geq t_0(\epsilon) L_k(1-\epsilon).$$
Since $L_k\rightarrow L_*>0$ we have $\inf_k L_k>0$ and so $\rho_k(\epsilon)\geq r_1(\epsilon)>0$ independent of $k$. 
\end{proof}

Corollary \ref{Asymptotics_Derivativesmall} allows us to essentially repeat the proof of Lemma \ref{regularityatend01} to establish a uniform bound for $u_k''$.

\begin{lemma}\label{AsymptoticsSecondDerivativeBound}
There exists $C>0$ and $r_0>0$ such that $|u_k''(r)|\leq C$ for all $k\in\N$ and $r\in[0, r_0]$.
\end{lemma}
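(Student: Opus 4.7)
The strategy is to transfer the argument from Lemma~\ref{regularityatend01} to the sequence $(u_k)$, ensuring that every estimate is uniform in $k$ on a common interval $[0,r_0]$. Setting $w_k:=u_k'$ and $v_k:=\sqrt{1+w_k^2}$, each $u_k$ satisfies the ODE~\eqref{ODEamRand} with parameters $a_k=\pm_{\gamma^{(k)}}4\sqrt\pi\Lambda_k$, $b_k=\sigma_k\Lambda_k$ and, crucially, with $\lambda_k=0$ by Lemma~\ref{uisc2} applied to each smooth minimizer $\gamma^{(k)}$. Corollary~\ref{Lagrangemultipliergoto0Lemma} then yields $a_k,b_k\to 0$, so the coefficients in the ODE are uniformly bounded.

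The plan is to revisit Steps~1--5 in the proof of Lemma~\ref{regularityatend01} and check that every constant appearing there can be chosen uniformly in $k$. The pointwise smallness of $w_k$ that is used from Step~2 onward to absorb bad terms is exactly what Corollary~\ref{Asymptotics_Derivativesmall} provides: for any $\varepsilon>0$ we obtain $k_0(\varepsilon)$ and $r_1(\varepsilon)>0$ such that $|w_k|\le\varepsilon$ on $[0,r_1(\varepsilon)]$ for all $k\ge k_0(\varepsilon)$. The boundary contributions at the right endpoint $\rho$ (which produced the $k$--independent but $\rho$--dependent constants $C(\rho)$ in Steps~1 and~2) require uniform control of $w_k(\rho)$ and $w_k'(\rho)$ at a fixed small $\rho>0$; this is supplied by the $C^\infty_{\mathrm{loc}}((0,r_0])$ convergence $u_k\to u^*$ from Equation~\eqref{LimitofGraphfunction01}. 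With these two ingredients, Steps~1--5 go through verbatim and produce, on a common interval $[0,\rho_0]$, the uniform estimates
\[
|w_k(r)|\le Cr\ln\tfrac1r,\qquad \int_0^{\rho_0}(w_k')^2+\frac{w_k^2}{2t^2}\,dt\le C,
\]
together with the uniform version of the integrability statement~\eqref{step5result}.

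Armed with these, one follows Step~6 to extract the pointwise bound on $u_k''$. Setting
\[
\psi_k(r):=-\int_r^{\rho_0}\left[\varphi(w_k)+\tfrac12a_kv_k^5\,t+b_kv_k^4w_k\right]dt,
\]
(where the $\lambda_k(v_k^5-1)/t$ term has vanished) the Step~5 estimates and the uniform bounds on $w_k$, $a_k$, $b_k$ give $\|\psi_k\|_{L^\infty([0,\rho_0])}\le C$ uniformly. Exactly as in Step~6, integrating the ODE and using $\lambda_k=0$ together with $w_k(0)=0$ yields $w_k(r)=\frac1r\int_0^r t\psi_k(t)\,dt$, whence
\[
u_k''(r)=w_k'(r)=\psi_k(r)-\frac1{r^2}\int_0^r t\psi_k(t)\,dt,
\]
and both summands are controlled by $\|\psi_k\|_{L^\infty}\le C$ on $[0,\rho_0]$. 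Combining with the $C^\infty_{\mathrm{loc}}$ bounds of $u_k$ on any compact subset of $(0,r_0]$ gives $|u_k''|\le C$ on $[0,r_0]$ after possibly shrinking $r_0$.

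The main obstacle is bookkeeping rather than new analysis: Steps~1--5 of Lemma~\ref{regularityatend01} are written for a fixed $\gamma$, and one must verify that no constant secretly depends on the full regularity or on quantitative data of an individual $u_k$. The two places that require genuine care are (i)~the choice of the shrinking parameters $\rho(\varepsilon)$ in Steps~2--4, which must be made with respect to the \emph{uniform} modulus provided by Corollary~\ref{Asymptotics_Derivativesmall} rather than the modulus of continuity of a single $w_k$, and (ii)~the boundary terms at the right endpoint, which must be bounded using the $C^\infty_{\mathrm{loc}}$ convergence $u_k\to u^*$ off the axis rather than by invoking the smoothness of each particular $u_k$.
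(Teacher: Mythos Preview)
Your strategy is correct and matches the paper's proof through Steps~1--4: both arguments run the estimates of Lemma~\ref{regularityatend01} uniformly in $k$, using Corollary~\ref{Asymptotics_Derivativesmall} to supply the smallness of $w_k$ and the $C^\infty_{\mathrm{loc}}$ convergence $u_k\to u^*$ to control the boundary data at a fixed $\rho_0\in(0,r_0)$. Since $\lambda_k=0$, Step~4 actually yields the sharper bound $|w_k(r)|\le Cr$ (the logarithm in Lemma~\ref{regularityatend01} came from the $\lambda$--term), but your weaker statement is of course sufficient.

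Where you diverge is in the last step. The paper does not pass through the $\psi_k$ representation of Step~6; instead, after inserting $|w_k|\le Cr$ into the Step~3 estimate to bound $\int(w_k')^2+w_k^2/(2t^2)\,dt$, it feeds this back into the Step~2 inequality to obtain $|(rw_k)'|\le Cr$, and then simply writes $r|w_k'|\le |(rw_k)'|+|w_k|\le Cr$. Your route via $\psi_k$ is equally valid but slightly longer. One small slip: integrating $(w_kr)'/r=\psi_k(r)+d_k$ with $d_k:=w_k'(\rho_0)+w_k(\rho_0)/\rho_0$ gives $w_k(r)=\tfrac{d_k}{2}r+\tfrac1r\int_0^r t\psi_k(t)\,dt$, so your formula for $w_k$ (and hence $w_k'$) is missing the harmless linear term $\tfrac{d_k}{2}r$; since $d_k$ is uniformly bounded by the interior convergence, the conclusion is unaffected.
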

\begin{proof}
As each individual $u_k\in C^\infty([0, r_0))$, we only need to prove a uniform bound $|u_k''|\leq C$ for all $k\geq k_1$ with some arbitrary $k_1$. To do so, we follow the proof of Lemma \ref{regularityatend01} and put $w_k:=u_k'$ and $v_k:=\sqrt{1+w_k^2}$. In view of Equation \eqref{ODEamRand}, we have 
\begin{equation}\label{ODEamRand_Neu}
w_k''+\left(\frac{w_k}{r}\right)'-\varphi(w_k)
=
\frac12 v_k^ 5 a_k r+b_kv_k^ 4w_k.
\end{equation}
Here we used that $\lambda_k=0$ by Lemma \ref{uisc2}. Also we note that $a_k=\pm_{\gamma^{(k)}}4\sqrt\pi \Lambda_k$ and $b_k=\sigma_k\Lambda_k$. Corollary \ref{Lagrangemultipliergoto0Lemma} gives $\Lambda_k\rightarrow0$ as $k\rightarrow \infty$ so that $a_k$ and $b_k$ are bounded. Additionally, Corollary \ref{Asymptotics_Derivativesmall} and $u_k\rightarrow u^*$ in $C^\infty((0, r_0])$ together imply that $w_k=u_k'$ is bounded.\\

\noindent
\textbf{Step 1}\ \\
Note  $w_k\varphi(w_k)\geq 0$ by the definition of $\varphi(w)$ in Equation \eqref{varphiofwdef}. We multiply Equation \eqref{ODEamRand_Neu} by $w_k$, choose any $\rho\in (0, r_0)$  and integrate to get
$$\int_r ^ \rho w_k''w_k+\left(\frac {w_k}t\right)' w_k dt
\geq \frac {a_k}2 \int_ r^ \rho v_k^ 5 t w_k dt+
b_k\int_r^ \rho v_k^ 4 w_k^2 dt\geq -C.$$
$C$ is uniform over $k$ as $a_k$, $b_k$ and $w_k$ are bounded. Performing the same calculations that lead to Estimate \eqref{step1result}, we derive
\begin{align}
\int_r^ \rho (w_k'(t))^2+\frac{ w_k(t)^2}{2t^2}dt
&\leq C(\rho)+\frac{w_k(r)^2}{2r}-w_k(r)\frac{w_k(r) +rw_k'(r)}r\nonumber\\
&= C(\rho)+\frac{w_k(r)^2}{2r}-w_k(r)\frac{(r w_k(r))'}r.\label{step1result_Neu}
\end{align}
The constant $C(\rho)$ includes the values $w_k(\rho)$ and $w_k'(\rho)$ but is uniform over $k$. Indeed, as $u_k\rightarrow u^*$ in $C^\infty_{\operatorname{loc}}((0, r_0])$, we have $w_k(\rho)\rightarrow (u^*)'(\rho)$ and $w_k'(\rho)\rightarrow  (u^*)''(\rho)$.\\

\noindent
\textbf{Step 2}\ \\
Let again $\rho\in(0,r_0)$. This time we integrate Equation \eqref{ODEamRand_Neu} directly and use $w_k'+\frac {w_k}r=\frac1r(w_kr)'$ to get 
\begin{equation}\label{step2begin_Neu}
\frac {(w_k t)'}t\bigg|_{t=\rho}-\frac {(w_k r)'}r=\int_ r^ \rho \varphi(w_k)(t)+\frac12 a_k v_k^ 5 t + b_k v_k^ 4w_kdt.
\end{equation}
Given $\epsilon>0$, we use Corollary \ref{Asymptotics_Derivativesmall} and choose $k_0(\epsilon)\in\N$ and $\rho(\epsilon):=r_1(\epsilon)\in(0, r_0]$ such that $|w_k(r)|\leq \epsilon$ for all $k\geq k_0$ and $r\in(0, \rho(\epsilon))$. This allows us to perform the same manipulations that lead to Estimate \eqref{step2result} and get
\begin{align}
    \left|\frac{(r w_k(r))'}{r}\right| 
    &\leq C(\epsilon)+\epsilon \int_r^ \rho w_k'(t)^2 +\frac{w_k(t)^2}{2t^2} dt\hspace{.5cm}\textrm{for all }r\in(0, \rho(\epsilon)).\label{step2result_Neu}
\end{align}
Again $C(\epsilon)$ contains the boundary values $w_k(\rho(\epsilon))$ and $w_k'(\rho(\epsilon))$ which are, however, controlled uniformly over $k$.\\ 

\noindent
\textbf{Step 3}\ \\
Let $\epsilon>0$ be arbitrary. By taking $\rho=\rho(\epsilon)$ small enough in Estimate \eqref{step1result_Neu} we may insert Estimate \eqref{step2result_Neu} to get 
\begin{align*} 
\int_ r^ {\rho}w_k'(t)^2+\frac{w_k(t)^2}{2t^2} dt&\leq C(\epsilon)+\frac{w_k(r)^2}{2r}
+|w_k(r)|\left(C(\epsilon)+\epsilon \int_r^ \rho w_k'(t)^2 +\frac{w_k(t)^2}{2t^2} dt\right).
\end{align*}
By potentially shrinking $\rho$ we can assume without loss of generality that $|w_k|\leq \epsilon$ on $[0,\rho]$ and so, in particular $|w_k(r)|\leq \epsilon$. Choosing $\epsilon$ small enough, we can absorb the integral on the right to the left and obtain that for some small enough $\rho$ and $r\in(0,\rho)$
\begin{equation}\label{step3result_Neu}
\int_r^ {\rho}w_k'(t)^2+\frac{w_k(t)^2}{2t^2} dt
\leq
C\left(\frac{w_k(r)^2}{2r}+|w_k(r)|+1\right).
\end{equation}

\noindent
\textbf{Step 4}\ \\
Let $\rho$ be small enough so that Estimate \eqref{step3result_Neu} holds. Inserting Estimate \eqref{step3result_Neu} into Estimate \eqref{step2result_Neu} with $\epsilon=1$ and using that $|w_k|$ is bounded uniformly, we get
\begin{align*} 
\left|\frac{(r w_k(r))'}{r}\right| 
    \leq &
    C+C\left(\frac{w_k(r)^2}{2r}+|w_k(r)|+1\right) 
    \leq 
    C\left(\frac{w_k(r)^2}{2r}+1\right).
\end{align*}
We multiply by $r$ and choose any $\mu\in(0,1)$. By using Corollary \ref{Asymptotics_Derivativesmall} and potentially shrinking $\rho$ we can assume that $|w_k|\leq C^ {-1}\mu$ on $[0,\rho]$. Hence 
$$|(r w_k)'|\leq C(w_k(r)^2+r)\leq \mu |w_k|+Cr.$$ 
Using the same justification as in the proof of Lemma \ref{regularityatend01}, we compute
$$r|w_k|'+|w_k|=(r|w_k|)'=|rw_k|'\leq |(rw_k)'|\leq \mu|w_k|+Cr\hspace{.5cm}\textrm{for all $r\in(0,\rho]$},$$
multiply with $r^ {-\mu}$ and get 
$$(r^ {1-\mu} |w_k|)'=r^ {1-\mu}|w_k|'+(1-\mu) r^ {-\mu}|w_k|\leq C r^ {1-\mu} .$$
We integrate this inequality from $0$ to $r$ and get
\begin{equation}\label{step4result_Neu}
r^ {1-\mu}|w_k(r)|\leq C\int_0^ r t^ {1-\mu} dt\leq C(\mu) r^{2-\mu}
\hspace{.5cm}\textrm{and hence}\hspace{.5cm}
|w_k(r)|\leq Cr.
\end{equation}

We now deviate from the proof of Lemma \ref{regularityatend01}.\\

\noindent
\textbf{Step 5}\ \\
Inserting \eqref{step4result_Neu} into \eqref{step3result_Neu}, we get
$$\int_r^ {\rho}w_k'(t)^2+\frac{w_k(t)^2}{2t^2} dt
\leq
C.$$
Inserting this into Estimate \eqref{step2result_Neu} with $\epsilon=1$, we obtain $|(w_k(r) r)'|\leq Cr$. Combining this with Estimate \eqref{step4result_Neu}, we get 
$$r|w_k'(r)|\leq |rw_k'(r)+w_k(r)|+|w_k(r)|\leq \left|(w_k(r) r)'\right|+|w_k(r)|\leq Cr.$$
Hence $w_k'(r)$ is bounded on $[0, \rho]$. The lemma follows by recalling that $u_k\rightarrow u^*$ in $C^\infty([\rho, r_0])$.
\end{proof}

Corollary \ref{Asymptotics_Derivativesmall} and Lemma \ref{AsymptoticsSecondDerivativeBound} prove that $u_k'$ and $u_k''$ are bounded. To deduce $u_k\rightarrow u^*$ in $C^\infty([0, r_0])$, we prove that all higher derivatives are bounded as well.

\begin{lemma}\label{ukfinalconvergencecorollary}
$u_k\rightarrow u^*$ in $C^\infty([0, r_0])$.
\end{lemma}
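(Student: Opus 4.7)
The plan is to upgrade $u_k\to u^*$ from $C^\infty_{\mathrm{loc}}((0,r_0])$ to $C^\infty([0,r_0])$ by establishing uniform bounds $\|u_k\|_{C^m([0,r_0])}\le C_m$ for every $m\in\mathbb{N}$ and then concluding by an Arzel\`a--Ascoli argument together with uniqueness of the limit.

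The base case $m=2$ is the content of Corollary~\ref{Asymptotics_Derivativesmall} together with Lemma~\ref{AsymptoticsSecondDerivativeBound}. For the inductive step, I would mimic the bootstrap from the proof of Lemma~\ref{uissmooth}, but track every constant uniformly in $k$. Assume $\|u_k\|_{C^m([0,r_0])}\le C_m$ for some $m\ge 2$. The identity $u_k'(r)/r=\int_0^1 u_k''(rs)\,ds$, legitimate because Lemma~\ref{Sigmaissmooth} ensures $u_k'(0)=0$ for each fixed $k$, combined with the graph formulas \eqref{graphicalPrincipalCurvatures}--\eqref{graphH}, yields uniform $C^{m-2}$ bounds on $H_k$ and $K_k$. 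Since Corollary~\ref{Lagrangemultipliergoto0Lemma} gives $|\Lambda_k|\le C$, the right-hand side $F_k$ of the Euler--Lagrange equation (Lemma~\ref{eulerlagrangeequation}) is bounded in $C^{m-2}$ uniformly in $k$. Rewritten in graph form, the equation reads $r^{-1}(rH_k'/v_k)'=v_k F_k$, so one application of Lemma~\ref{oderegularitylemma} lifts $H_k$ to $C^{m-1}$ with a uniform constant. A second application, this time to \eqref{graphH} rewritten as $r^{-1}(ru_k'/v_k)'=-H_k$, gives $u_k'/v_k\in C^m$ uniformly; since $|u_k'|$ is small on $[0,r_0]$ by Corollary~\ref{Asymptotics_Derivativesmall} (shrinking $r_0$ if necessary and using the $C^\infty_{\mathrm{loc}}$ convergence on the complement), we invert $u_k'/v_k=u_k'/\sqrt{1+(u_k')^2}$ to recover a uniform $C^m$ bound on $u_k'$, i.e.\ a uniform $C^{m+1}$ bound on $u_k$, closing the induction.

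Once uniform $C^m$ bounds are in hand for every $m$, Arzel\`a--Ascoli extracts from any subsequence a $C^{m-1}$-convergent sub-subsequence whose limit agrees with $u^*$ on $(0,r_0]$ by Equation~\eqref{LimitofGraphfunction01}, and therefore on $[0,r_0]$ by continuity; a standard diagonal argument then gives $u_k\to u^*$ in $C^m([0,r_0])$ for every $m\in\mathbb{N}$. The main obstacle is purely a bookkeeping one: I must verify that each constant emerging from Lemma~\ref{oderegularitylemma} and from the nonlinear manipulations of $v_k$, $1/v_k$ and $\varphi(u_k')$ depends only on previously established $C^{m-1}$-norms together with the finite quantity $\sup_k|\Lambda_k|$, and not on any feature peculiar to a given $u_k$.
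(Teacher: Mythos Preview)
Your proposal is correct and follows a route closely related to, but not identical with, the paper's. The paper works directly with the single third-order ODE \eqref{ODEamRand_Neu} for $w_k=u_k'$, namely $w_k''+(w_k/r)'=f_k$, and invokes Lemma~\ref{uniformboundedLemma}, whose proof furnishes the explicit formula $w_k''(r)=f_k(r)-\int_0^1 y^2 f_k(yr)\,dy$; differentiating this identity bootstraps $w_k$ from $C^m$ to $C^{m+1}$ once $f_k\in C^{m-1}$ uniformly (the rewriting $w_k^3/r^2=w_k\bigl(\int_0^1 w_k'(sr)\,ds\bigr)^2$ handles the singular term in $\varphi(w_k)$). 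You instead keep the two-step structure of Lemma~\ref{uissmooth}: first lift $H_k$ via Lemma~\ref{oderegularitylemma} applied to $(rH_k'/v_k)'/r=v_k F_k$, then lift $u_k$ via the same lemma applied to $(ru_k'/v_k)'/r=-H_k$. Both routes rest on the same radial inversion formula and both require exactly the bookkeeping you flag---that the constants in Lemma~\ref{oderegularitylemma} (resp.\ \ref{uniformboundedLemma}) depend only on $\|h\|_{C^n}$, which is immediate from the explicit expressions in their proofs. Your approach has the virtue of being a transparent ``uniformisation'' of the already-written Lemma~\ref{uissmooth}; the paper's is marginally more compact. One small remark: for the inversion $u_k'/v_k\mapsto u_k'$ you only need $|u_k'|$ uniformly bounded (so that $|u_k'/v_k|$ stays in a compact subset of $(-1,1)$), not small---so the appeal to Corollary~\ref{Asymptotics_Derivativesmall} and shrinking $r_0$ is unnecessary at that point.
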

\begin{proof}
    In view of Equation \eqref{LimitofGraphfunction01} it suffices to prove that $(u_k)\subset C^m((0, r_0))$ is bounded for all $m\geq 0$. We have $\gamma^{(k)}\rightarrow\kappa$ in $C^0([0,1])$ and hence $\gamma_2^{(k)}$ is a bounded sequence of  functions. By the definition of $u_k$ in Equation \eqref{definitionofuk} we deduce that $(u_k)\subset C^0([0, r_0])$ is bounded. Combining this with
        Corollary \ref{Asymptotics_Derivativesmall} and Lemma \ref{AsymptoticsSecondDerivativeBound}, we obtain that $(u_k)\subset C^2([0, r_0])$ is bounded.\\
        
    We put $w_k:=u_k'$ and $v_k:=\sqrt{1+w_k^2}$. By Equation \eqref{ODEamRand_Neu} 
    $$w_k''+\left(\frac{w_k'}r\right)'=\frac12 v_k^5 a_k r+b_k v_k^4 w_k+\frac{5w_k}{2(1+w_k^2)}(w_k')^2+\frac{w_k^3}{2r^2}(3+w_k^2)=:f_k(r).$$

    To prove that $(w_k)\subset C^m([0, r_0])$ is bounded for all $m\geq 1$ we argue by induction. $m=1$ has already been established. Assuming that $(w_k)\subset C^m([0, r_0])$ is bounded for some $m\geq 1$, we first use $w_k(0)=u_k'(0)=0$ to rewrite
    \begin{equation}\label{Newformulaforf}
    f_k(r)=
    \frac12 v_k^5 a_k r+b_k v_k^4w_k +\frac{5w_k}{2(1+w_k^2)}(w_k')^2+\frac{w_k}{2}(3+w_k^2)\left(\int_0^1 w_k'(s r) ds\right)^2.
    \end{equation}
    As $(w_k)\subset  C^{m}([0, r_0])$ is bounded, Equation \eqref{Newformulaforf} implies that $(f_k)\subset C^{m-1}([0, r_0])$ is bounded. Also $f\in C^\infty((0, r_0])$. So, using Lemma \ref{uniformboundedLemma} we deduce that $(w_k^{(m+2)})\subset C^0([0, r_0])$ is bounded. Noting that $w_k^{(m+1)}(r_0)$ is bounded since $u_k\rightarrow u^*$ in $C^\infty_{\operatorname{loc}}((0, r_0])$, we may use Lemma \ref{uniformboundedLemma} to get that $w_k^{(m+1)}\subset C^0([0,r_0])$ is bounded and the inductive step follows.\\

    Since $u_k'=w_k$ and $(u_k)\subset C^0([0, r_0])$ is bounded, we have shown that $(u_k)\subset C^m([0, r_0])$ is bounded for all $m\in\N_0$. 
\end{proof}

Finally, we prove Theorem \ref{asymptoticsthm} In view of Lemma \ref{Lemma5point4}, we only need to prove that $\gamma^{(k)}$ is bounded in $C^m([0,t_0]\cup[1-t_0,1])$ for some small $t_0>0$ and all $m\in\N_0$.\\

\noindent
\textit{Proof of Theorem \ref{asymptoticsthm}}\ \\
Let $L_*:=L[\kappa]$ and $t_0>0$ such that $\gamma^{(k)}_1|_{[0,t_0]}$ are smooth diffeomorphisms for all $k\in\N$. By Lemma \ref{lowerboundeddotgamma1Lemma}, we may assume $\dot\gamma_1^{(k)}(t)\geq\frac{L_*}2>0$ for all $k\geq k_0$ and $t\in[0, t_0]$ after potentially shrinking $t_0$. We establish that $(\gamma^{(k)})\subset C^m([0, t_0])$ is bounded for all $m\in\N$ by an inductive argument. For $m=0$ this follows from the fact that $\gamma^{(k)}\rightarrow\kappa$ in $C^0([0,1])$. Now for the inductive step. Suppose that for some $m\in\N_0$ we already know that $\gamma^{(k)}\subset C^m([0, t_0])$ is bounded.
Recalling $\gamma_2^{(k)}(t)=u_k(\gamma_1^{(k)}(t))$ and $L_k^2=|\dot\gamma^{(k)}(t)|^2$, we deduce 
$$L_k^2-\left(\dot\gamma_1^{(k)}(t)\right)^2=(u_k'(\gamma_1^{(k)}(t))^2\left(\dot\gamma_1^{(k)}(t)\right)^2
\hspace{.5cm}\textrm{and so}\hspace{.5cm}
\left(\dot\gamma_1^{(k)}(t)\right)^2=\frac{L_k^2}{ 1+ u_k'(\gamma_1^{(k)}(t))^2}.$$
Using that $\dot\gamma_1^{(k)}(t)\geq 0$ for all $t\in[0, t_0]$ we can take the square root and obtain
\begin{equation}\label{gamma1odewithu}
    \dot\gamma_1^{(k)}(t)=\frac{L_k}{\sqrt{1+u_k'(\gamma_1^{(k)}(t))^2}}\hspace{.5cm}\textrm{when $t\in[0, t_0]$}.
\end{equation}
By Lemma \ref{ukfinalconvergencecorollary}
$(u_k)\subset C^l([0, r_0])$ is bounded for all $l\in\N$. Using Equation \eqref{gamma1odewithu} it then follows that $\gamma_1^{(k)}\subset C^{m+1}([0, t_0])$ is bounded. Since $\gamma_2^{(k)}=u_k\circ \gamma_1^{(k)}$, we deduce that $\gamma^{(k)}$ is bounded in $C^{m+1}([0, t_0])$, which completes the inductive step.\\

Finally, we note that a similar discussion proves that $(\gamma^{(k)})$ is bounded $C^m([1-t_1,1])$ for some (potentially small) $t_1>0$ and all $m\in\N$. Theorem \ref{asymptoticsthm} follows.  
\qed

\section{Formation of the Catenoidal Neck}\label{catenoidNeckSection}
Throughout this section let $\sigma_k\rightarrow 0^+$ and $\gamma^{(k)}\in\mathcal F_{\sigma_k}^{0+}$ such that $\mathcal W[\gamma^{(k)}]=\beta(\sigma_k)$. Additionally, let $L_k:=L[\gamma^{(k)}]$ and
$$\epsilon_k:=\inf_{t\in[\frac18,\frac78]}\gamma_1^{(k)}(t).$$
\begin{lemma}\label{catenoidLemma01}
    \begin{enumerate}[(1)]
    \item If $(t_k)\subset [\frac18,\frac78]$ is a sequence of times such that $\gamma_1^{(k)}$ has a local minimum at $t_k$, then $t_k\rightarrow\frac12$ and $\gamma_1^{(k)}(t_k)\rightarrow 0$. Moreover, there exists $k_0\in\N$ such that the minima are strict for all $k\geq k_0$.
    \item  $\epsilon_k\rightarrow 0^+$ as $k\rightarrow{\infty}$. Additionally, $\tau_k\rightarrow\frac12$ for every sequence $(\tau_k)\subset [\frac18,\frac78]$ satisfying $\gamma_1^{(k)}(\tau_k)=\epsilon_k$.
    \end{enumerate} 
\end{lemma}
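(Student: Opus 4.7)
The idea is to exploit the smooth convergence $\gamma^{(k)}\to\kappa$ away from $\{\tfrac12\}$ from Theorem~\ref{asymptoticsthm}, together with the fact that on $[\tfrac18,\tfrac78]\setminus\{\tfrac12\}$ the limit profile $\kappa_1(t)=R|\sin(2\pi t)|$ is smooth with $\ddot\kappa_1(t)=-4\pi^2\kappa_1(t)<0$; in particular $\kappa_1$ has no interior local minimum there. For the first statement in (1), I would argue by contradiction: suppose along a subsequence $t_k\to t^*\neq \tfrac12$, choose $\delta>0$ with $[t^*-\delta,t^*+\delta]\subset[\tfrac18,\tfrac78]\setminus\{\tfrac12\}$, and use $C^2$-convergence on this interval to pass the smooth interior local-minimum conditions $\dot\gamma_1^{(k)}(t_k)=0$ and $\ddot\gamma_1^{(k)}(t_k)\ge 0$ to the limit, obtaining $\ddot\kappa_1(t^*)\ge 0$ and a contradiction. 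Hence $t_k\to \tfrac12$, and uniform convergence $\gamma_1^{(k)}\to\kappa_1$ on $[0,1]$ (from the $W^{1,2}$-convergence and the Sobolev embedding $W^{1,2}((0,1))\hookrightarrow C^0([0,1])$) together with $\kappa_1(\tfrac12)=0$ yields $\gamma_1^{(k)}(t_k)\to 0$.

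For the strictness statement, I would invoke real-analyticity of $\gamma^{(k)}$ on the open interval $(0,1)$. The Euler-Lagrange equation of Lemma~\ref{eulerlagrangeequation}, in the arc-length parameterization, is a quasilinear fourth-order ODE system whose coefficients are real-analytic functions of $(\gamma,\dot\gamma,\ddot\gamma,\dddot\gamma)$ wherever $\gamma_1>0$; by classical analytic regularity the smooth solutions from Theorem~\ref{regularitytheorem1} are real-analytic on $(0,1)$. If a local minimum of $\gamma_1^{(k)}$ at $t_k\in(\tfrac18,\tfrac78)$ were not strict, the real-analytic function $\gamma_1^{(k)}-\gamma_1^{(k)}(t_k)$ would have a zero with an accumulation point at $t_k$, and by the identity theorem it would vanish identically on $(0,1)$; this contradicts $\gamma_1^{(k)}(0)=0<\gamma_1^{(k)}(t_k)$ since $t_k\to\tfrac12$ forces $\gamma_1^{(k)}(t_k)>0$ for $k$ large.

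Part (2) then follows immediately from (1). The bound $\epsilon_k\le\gamma_1^{(k)}(\tfrac12)\to\kappa_1(\tfrac12)=0$ gives $\epsilon_k\to 0^+$. For $k$ large the boundary values $\gamma_1^{(k)}(\tfrac18)$ and $\gamma_1^{(k)}(\tfrac78)$ are bounded below by a positive constant close to $R/\sqrt{2}=\kappa_1(\tfrac18)$, so any $\tau_k$ realising the infimum $\epsilon_k$ must lie in the open interval $(\tfrac18,\tfrac78)$, is consequently a local minimum of $\gamma_1^{(k)}$, and is driven to $\tfrac12$ by (1). The main subtlety is the strictness assertion; the analyticity route is the cleanest, though one could alternatively try to show $\ddot\gamma_1^{(k)}(t_k)>0$ directly from the arc-length identity \eqref{ArcLength_H_Identity} by ruling out the degenerate relation $\gamma_1^{(k)}(t_k)H^{(k)}(t_k)=\pm 1$.
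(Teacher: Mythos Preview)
Your argument for $t_k\to\tfrac12$, $\gamma_1^{(k)}(t_k)\to 0$, and for part~(2) is essentially the same as the paper's: pass the first- and second-order conditions at $t_k$ to the limit using smooth convergence away from $\tfrac12$, and observe that $\kappa_1$ admits no interior local minimum on $[\tfrac18,\tfrac78]\setminus\{\tfrac12\}$.

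For the \emph{strictness} of the minima your route differs genuinely from the paper's. The paper argues by a finite-order Taylor analysis: if $\ddot\gamma_1^{(k)}(t_k)=0$ then necessarily $\dddot\gamma_1^{(k)}(t_k)=0$ and $\ddddot\gamma_1^{(k)}(t_k)\ge 0$, and if moreover $\ddddot\gamma_1^{(k)}(t_k)=0$ one computes (via the arc-length relations) that $k_1^{(k)}$, $\dot k_1^{(k)}$, $\ddot k_1^{(k)}$, $\dot k_2^{(k)}$, $\ddot k_2^{(k)}$ all vanish at $t_k$ while $|k_2^{(k)}(t_k)|=1/h_k$; plugging this into the Euler--Lagrange equation of Lemma~\ref{eulerlagrangeequation} gives $\tfrac{1}{2h_k^3}=|\Lambda_k(\pm 4\sqrt\pi\pm\sigma_k/h_k)|$, which is impossible for large $k$ since $\Lambda_k\to 0$ (Corollary~\ref{Lagrangemultipliergoto0Lemma}) and $h_k\to 0$. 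Your analyticity-plus-identity-theorem argument is cleaner and in fact yields strictness for \emph{every} $k$, not just large $k$; the trade-off is that real-analyticity of $\gamma^{(k)}$ on $(0,1)$ is not established in the paper and has to be imported (it does follow from standard analytic ODE regularity once one writes the Euler--Lagrange equation as a normal-form system, e.g.\ in the angle variable $\theta$ with $\dot\gamma=L(\cos\theta,\sin\theta)$). Two minor remarks: the justification ``since $t_k\to\tfrac12$ forces $\gamma_1^{(k)}(t_k)>0$'' is unnecessary, as $\gamma_1^{(k)}(t_k)>0$ holds for every $k$ simply because $t_k\in(0,1)$ and $\gamma^{(k)}\in\mathcal P$; and the alternative you mention at the end (``ruling out $\gamma_1^{(k)}(t_k)H^{(k)}(t_k)=\pm 1$'') is close in spirit to what the paper actually does, but the paper needs the fourth-order information to get the contradiction, not just the second-order relation.
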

    \begin{proof}
   Clearly, the second part is a special case of the first part. \\
   
        Now let $(t_k)\subset[\frac18,\frac78]$ be a sequence of local minima for $\gamma_1^{(k)}$.  After passing to a subsequence, we have $t_k\rightarrow t^*\in[\frac18,\frac78]$. We prove that $t^*=\frac12$ by contradiction. Clearly $\dot\gamma_1^{(k)}(t_k)=0$ and $\ddot\gamma_1^{(k)}(t_k)\geq 0$. If $t^*\neq\frac12$, we can use $\gamma^{(k)}\rightarrow\kappa$ in $C^\infty([0,1]\backslash\{\frac12\})$ to obtain $\dot\kappa_1(t_*)=0$ and $\ddot\kappa_1(t^*)\geq 0$. However, since $\kappa_1(t)=|\sin(2\pi t)|$, this is impossible. So $t^*=\frac12$ and 
        $$h_k:=\gamma_1^{(k)}(t_k)\rightarrow\kappa_1(\frac12)=0.$$
        Finally, we prove that for large $k$, the minima at $t_k$ are strict. For each $k$ we have $\dot \gamma_1^{(k)}(t_k)=0$ and $\ddot\gamma_1^{(k)}(t_k)\geq 0$. If $\ddot\gamma_1^{(k)}(t_k)>0$, the local minimum at $t_k$ is strict. If $\ddot\gamma_1^{(k)}(t_k)=0$ we also have $\dddot \gamma_1^{(k)}(t_k)=0$ and $\ddddot\gamma_1^{(k)}(t_k)\geq 0$ as otherwise $t_k$ could not be a local minimum of $\gamma_1^{(k)}$. If $\dot\gamma_1^{(k)}(t_k)=\ddot\gamma_1^{(k)}(t_k)=\dddot\gamma_1^{(k)}(t_k)=0$ and $\ddddot\gamma_1^{(k)}(t_k)>0$ the local minimum at $t_k$ is strict.  We prove by contradiction that 
        \begin{equation}\label{deriavtives0contrdiction}
        \dot\gamma_1^{(k)}(t_k)=\ddot\gamma_1^{(k)}(t_k)=\dddot\gamma_1^{(k)}(t_k)=\ddddot\gamma_1^{(k)}(t_k)=0\end{equation}
         along a subsequence is impossible. So let us assume that we had \eqref{deriavtives0contrdiction} along a subsequence again denoted by $\gamma^{(k)}$. Since $\dot\gamma_1^{(k)}(t_k)=0$ we have $|\dot\gamma_2^{(k)}(t_k)|=L_k\neq 0$ so that differentiating $|\dot\gamma^{(k)}(t)|^2=L_k^2$ implies
         $\ddot\gamma_2^{(k)}(t_k)=\dddot\gamma_2^{(k)}(t_k)=\ddddot\gamma_2^{(k)}(t_k)=0$.     
         Using Equation \eqref{principalcurvatures} we get 
         $$k_1^{(k)}(t_k)=\dot k_1^{(k)}(t_k)=\ddot k_1^{(k)}(t_k)=0=\dot k_2^{(k)}(t_k)=\ddot k_2^{(k)}(t_k)
         \hspace{.5cm}\textrm{and}\hspace{.5cm}|k_2^{(k)}(t_k)|=\frac1{h_k}.$$
         This implies that $|H[\gamma^{(k)}](t_k)|=\frac1{h_k}$, $K[\gamma^{(k)}](t_k)=0$ and $(\Delta_g H)[\gamma^{(k)}](t_k)=0$. Inserting into the Euler-Lagrange equation from Lemma \ref{eulerlagrangeequation}, we get
         \begin{equation}\label{MoonZappa}
         \frac1{2h_k^3}=\frac12 |H[\gamma^{(k)}](t_k)|^3=2|W[\gamma^{(k)}](t_k)|=\left|\Lambda_k\left(\pm_k4\sqrt\pi\pm_k\frac{\sigma_k}{h_k}\right)\right|. 
        \end{equation}
         By Corollary \ref{Lagrangemultipliergoto0Lemma} we have $\Lambda_k\rightarrow 0$. So, Equation \eqref{MoonZappa} implies $\frac12=|\Lambda_k(\pm_k4\sqrt\pi h_k^3\pm_k\sigma_kh_k^2)|\rightarrow 0$ as $k\rightarrow\infty$, which is a contradiction.        
    \end{proof}

By Theorem \ref{asymptoticsthm}, the curves $\gamma^{(k)}$ asymptotically look like two circles. The fact that these are joined by a catenoidal shape is derived by showing that near $t=\frac12$, they solve approximately $H[\gamma^{(k)}]=0$. The precise formulation of this observation is given in the following lemma.

\begin{lemma}\label{MinimalSurfaceLemma}
    Let $a_k\uparrow\frac12$ and $b_k\downarrow\frac12$. Then 
    $$\lim_{k\rightarrow\infty}\mathcal W\left[\gamma^{(k)}\big|_{[a_k, b_k]}\right]
    =
    \frac\pi2\lim_{k\rightarrow\infty}\int_{a_k}^{b_k}H[\gamma_k]^2\gamma_1^{(k)}|\dot\gamma^{(k)}|dt=0.$$
\end{lemma}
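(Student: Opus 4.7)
The plan is to exploit that the total Willmore energy $\mathcal W[\gamma^{(k)}]=\beta(\sigma_k)$ converges to $8\pi$, combined with the smooth local convergence $\gamma^{(k)}\to\kappa$ away from $t=\tfrac12$, so that any "extra" energy on a shrinking neighbourhood of $\tfrac12$ must vanish in the limit. The second equality in the statement is immediate from the definition of $\mathcal W$ in Equation \eqref{willmoredef} applied to the restricted profile curve, so only the first limit requires work.

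First, I would appeal to Theorem \ref{asymptoticsthm}, which gives both $\beta(\sigma_k)\to 8\pi$ and $\gamma^{(k)}\to\kappa$ in $C^\infty_{\operatorname{loc}}([0,1]\setminus\{\tfrac12\})$. Fix any $\delta>0$. Since $H[\gamma^{(k)}]$, $\gamma_1^{(k)}$ and $|\dot\gamma^{(k)}|$ converge uniformly to their $\kappa$-counterparts on $[0,\tfrac12-\delta]\cup[\tfrac12+\delta,1]$, the definition of $\mathcal W$ yields
\begin{equation*}
\mathcal W\bigl[\gamma^{(k)}\big|_{[0,\frac12-\delta]}\bigr]\to\mathcal W\bigl[\kappa|_{[0,\frac12-\delta]}\bigr]\text{ and }\mathcal W\bigl[\gamma^{(k)}\big|_{[\frac12+\delta,1]}\bigr]\to\mathcal W\bigl[\kappa|_{[\frac12+\delta,1]}\bigr].
\end{equation*}
Because $\kappa$ parametrises two round spheres of radius $R=(8\pi)^{-1/2}$, each of Willmore energy $4\pi$ (by scale invariance), monotone convergence gives
\begin{equation*}
\mathcal W\bigl[\kappa|_{[0,\frac12-\delta]}\bigr]+\mathcal W\bigl[\kappa|_{[\frac12+\delta,1]}\bigr]\longrightarrow 4\pi+4\pi=8\pi\qquad\text{as }\delta\to 0^+.
\end{equation*}

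Second, since $a_k\uparrow\tfrac12$ and $b_k\downarrow\tfrac12$, for each fixed $\delta>0$ there is some $k_0(\delta)$ so that $[0,\tfrac12-\delta]$ and $[\tfrac12+\delta,1]$ are both disjoint from $[a_k,b_k]$ for all $k\geq k_0(\delta)$. Using nonnegativity of the Willmore integrand,
\begin{equation*}
\mathcal W\bigl[\gamma^{(k)}\big|_{[a_k,b_k]}\bigr]\leq \mathcal W[\gamma^{(k)}]-\mathcal W\bigl[\gamma^{(k)}\big|_{[0,\frac12-\delta]}\bigr]-\mathcal W\bigl[\gamma^{(k)}\big|_{[\frac12+\delta,1]}\bigr].
\end{equation*}
Taking $\limsup_{k\to\infty}$ and invoking the two convergences above together with $\beta(\sigma_k)\to 8\pi$, the right-hand side converges to $8\pi-\mathcal W[\kappa|_{[0,\frac12-\delta]}]-\mathcal W[\kappa|_{[\frac12+\delta,1]}]$. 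Letting finally $\delta\to 0^+$ this tends to $8\pi-8\pi=0$, which proves the claim.

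I do not anticipate any real obstacle: this is a clean energy-concentration argument that relies solely on previously established facts (Theorem \ref{asymptoticsthm} for $\beta(\sigma_k)\to 8\pi$ and smooth convergence, plus the explicit form of $\kappa$). The only subtlety worth noting is that $\kappa$ is not $C^1$ at $\tfrac12$, but this is harmless because we never integrate across $t=\tfrac12$; we only use $\mathcal W$ on closed subintervals of $[0,\tfrac12)$ and $(\tfrac12,1]$ where $\kappa$ is smooth.
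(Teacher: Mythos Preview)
Your proposal is correct and follows essentially the same approach as the paper: both arguments split $[0,1]$ into $I_\delta=[0,\tfrac12-\delta]\cup[\tfrac12+\delta,1]$ and the shrinking interval $[a_k,b_k]$, use nonnegativity of the Willmore integrand, the smooth convergence $\gamma^{(k)}\to\kappa$ on $I_\delta$, and the fact that $\mathcal W[\kappa]=8\pi$. The only cosmetic difference is that the paper frames the argument by contradiction and uses the bound $\mathcal W[\gamma^{(k)}]\leq 8\pi$, whereas you argue directly and use the slightly stronger (but already established) convergence $\beta(\sigma_k)\to 8\pi$.
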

\begin{proof}
    The proof is by contradiction. Assume that along a subsequence again denoted by $\gamma^{(k)}$ we had 
    \begin{equation}
        \frac\pi2\int_{a_k}^{b_k}H[\gamma_k]^2\gamma_1^{(k)}|\dot\gamma^{(k)}|dt\geq \epsilon^*>0.
    \end{equation}
    For $\delta>0$ we put $I_\delta:=[0,\frac12-\delta)\cup(\frac12+\delta, 1]$. For each $\delta>0$ there exists $k_0(\delta)$ such that $I_\delta\cap(a_k, b_k)=\emptyset$ for all $k\geq k_0(\delta)$. Note that $\mathcal W[\gamma^{(k)}]=\beta(\sigma_k)\leq8\pi$. Taking any $\delta>0$ and $k\geq k_0(\delta)$, we can therefore estimate
    \begin{align*} 
    8\pi\geq &\mathcal W[\gamma^{(k)}]\\
    =&\frac\pi2\int_0^1 H[\gamma^{(k)}]^2 |\dot\gamma^{(k)}|\gamma_1^{(k)}dt\\
    \geq & \epsilon^*+\frac\pi2\int_{I_\delta} H[\gamma^{(k)}]^2 |\dot\gamma^{(k)}|\gamma_1^{(k)}dt.
    \end{align*}
    By Theorem \ref{asymptoticsthm}, we have $\gamma^{(k)}\rightarrow\kappa$ in $C^\infty(I_\delta)$ as $k\rightarrow\infty$. So, letting $k\rightarrow\infty$, we obtain 
    $$8\pi\geq \epsilon^*+\frac\pi2\int_{I_\delta} H[\kappa]^2 |\dot\kappa|\kappa_1dt=\epsilon^*+\mathcal W\left[\kappa\big|_{I_\delta}\right].$$
    We have $\mathcal W[\kappa]=8\pi$, so, letting $\delta\rightarrow0^+$, we arrive at a contradiction. 
\end{proof}

The remaining arguments are essentially based on the following blow-up lemma.

\begin{lemma}[Blow-Up Lemma]\label{blowuplemma}
    Let $t_k\rightarrow\frac12$ and put $h_k:=\gamma_1^ {(k)}(t_k)$. Assume that there is $\rho_0>0$ independent of $k$ such that $\gamma_1^ {(k)}(t_k+h_kt)\geq \frac12 h_k$ for all $|t|\leq \rho_0$. Then 
    $$\Gamma^{(k)}(t):=\frac1{h_k}\left(\gamma^ {(k)}(t_k+h_k t)-(0,\gamma_2^ {(k)}(t_k))\right)$$
   is bounded in $C^m([-\rho,\rho])$ for all $\rho\in(0,\rho_0)$ and $m\in\N$. Additionally, if along some subsequence $\Gamma^{(k_l)}\rightarrow \Gamma^*$ in $C^2([-\rho,\rho])$, the limit $\Gamma^*$ satisfies $H[\Gamma^*]=0$. 
\end{lemma}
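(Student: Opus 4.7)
The plan is to obtain the uniform $C^m$ bounds by rescaling the Euler-Lagrange equation from Lemma~\ref{eulerlagrangeequation} and invoking the regularity machinery behind Corollary~\ref{gammasmooththeorem1}, and then to identify the limit by combining the scale invariance of the Willmore energy with Lemma~\ref{MinimalSurfaceLemma}.

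First I would observe that differentiating the definition of $\Gamma^{(k)}$ gives $\dot\Gamma^{(k)}(t) = \dot\gamma^{(k)}(t_k + h_k t)$, so $|\dot\Gamma^{(k)}| \equiv L_k$ and $\Gamma^{(k)}$ is itself parameterized proportional to arc length. Together with $\Gamma^{(k)}(0) = (1, 0)$ and the hypothesis $\Gamma_1^{(k)} \geq \tfrac12$ on $[-\rho_0, \rho_0]$, this yields uniform $C^1$ bounds on $\Gamma^{(k)}$ and a uniform positive lower bound on $\Gamma_1^{(k)}$.

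Next I would rescale the Euler-Lagrange equation. From $\gamma^{(k)}(s) = h_k \Gamma^{(k)}\bigl(\tfrac{s-t_k}{h_k}\bigr) + (0, \gamma_2^{(k)}(t_k))$ one computes $k_i[\Gamma^{(k)}] = h_k\, k_i[\gamma^{(k)}]$, hence $H[\Gamma^{(k)}] = h_k H[\gamma^{(k)}]$ and $W[\Gamma^{(k)}] = h_k^3 W[\gamma^{(k)}]$. Lemma~\ref{eulerlagrangeequation} translates into the rescaled equation
\begin{equation*}
W[\Gamma^{(k)}] = \pm 4\sqrt{\pi}\, h_k^3 \Lambda_k \;-\; \sigma_k h_k^2 \Lambda_k\, H[\Gamma^{(k)}],
\end{equation*}
whose coefficients tend to zero by Corollary~\ref{Lagrangemultipliergoto0Lemma} together with $h_k, \sigma_k \to 0$. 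On every compact $[-\rho,\rho] \subset (-\rho_0, \rho_0)$, this equation has exactly the structure treated in the proof of Corollary~\ref{gammasmooththeorem1}: uniform $W^{1,\infty}$ and, via Lemma~\ref{gamma1ddotgammainL1} applied after rescaling, $W^{2,2}$ estimates; a uniform lower bound on $\Gamma_1^{(k)}$; and bounded right-hand-side coefficients. The underlying abstract regularity theorem therefore gives $\|\Gamma^{(k)}\|_{C^m([-\rho,\rho])} \leq C(m,\rho)$.

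For the limit equation, the scale invariance of the Willmore energy yields
\begin{equation*}
\mathcal{W}\bigl[\Gamma^{(k)}\big|_{[-\rho,\rho]}\bigr] \;=\; \mathcal{W}\bigl[\gamma^{(k)}\big|_{[t_k - h_k\rho,\, t_k + h_k\rho]}\bigr].
\end{equation*}
Since $t_k \to \tfrac12$ and $h_k \to 0$, the right-hand side tends to zero by Lemma~\ref{MinimalSurfaceLemma} (after enlarging the interval slightly if necessary so that its endpoints straddle $\tfrac12$ from opposite sides and using monotonicity of the integrand). If $\Gamma^{(k_l)} \to \Gamma^*$ in $C^2([-\rho,\rho])$, then $H[\Gamma^{(k_l)}]^2 \Gamma_1^{(k_l)}|\dot\Gamma^{(k_l)}|$ converges uniformly, so $\int_{-\rho}^\rho H[\Gamma^*]^2 \Gamma_1^*\, |\dot\Gamma^*|\, dt = 0$; the positivity $\Gamma_1^* \geq \tfrac12$ and $|\dot\Gamma^*| = L_* > 0$ then force $H[\Gamma^*] \equiv 0$. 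The main obstacle I foresee is the first step: Corollary~\ref{gammasmooththeorem1} is phrased for global minimizers of the constrained problem, whereas $\Gamma^{(k)}$ is only a rescaled local piece. One must therefore revisit the underlying abstract elliptic regularity result in the appendix and verify that the rescaled equation, with vanishing right-hand-side coefficients, still falls within its scope; given the uniform bounds already established, this should be a matter of careful bookkeeping rather than of any new idea.
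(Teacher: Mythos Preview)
Your proposal is correct and follows essentially the same route as the paper: rescale the Euler-Lagrange equation to obtain $W[\Gamma^{(k)}]$ with coefficients $\alpha_k\sim h_k^3\Lambda_k$ and $\beta_k\sim \sigma_k h_k^2\Lambda_k$ tending to zero, verify the hypotheses of the abstract regularity result (Theorem~\ref{GeneralRegularityTheorem}) via $\Gamma_1^{(k)}\geq\tfrac12$, $|\dot\Gamma^{(k)}|=L_k$, and the rescaled $L^2$-bound on $\ddot\Gamma^{(k)}$ from Lemma~\ref{gamma1ddotgammainL1}, and then pass to the limit using scale invariance of $\mathcal W$ together with Lemma~\ref{MinimalSurfaceLemma}. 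Your self-diagnosed ``obstacle'' is exactly what the paper does---it bypasses Corollary~\ref{gammasmooththeorem1} and appeals directly to Theorem~\ref{GeneralRegularityTheorem}---and your handling of the endpoint issue for Lemma~\ref{MinimalSurfaceLemma} by enlarging the interval and using nonnegativity of the integrand is a clean way to make that citation rigorous.
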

\begin{proof}
By Lemma \ref{eulerlagrangeequation}, the curves $\gamma^{(k)}$ satisfy an Euler Lagrange equation. Using the well-known scaling of the Willmore operator and the mean curvature, we get
\begin{align*}
W[\Gamma^{(k)}](t)=&h_k^3W[\gamma^{(k)}](t_k+h_k t)\\
    =&2\Lambda_k(\pm_{\gamma^{(k)}}4\sqrt\pi h_k^3-\sigma_k h_k^3H[\gamma^{(k)}](t_k+h_k t))\\
    =&2\Lambda_k(\pm_{\gamma^{(k)}} 4\sqrt\pi h_k^3-\sigma_k h_k^2H[\Gamma^{(k)}](t)).
\end{align*}
Using Equation \eqref{variationformulastandard} and  Equations \eqref{var2}, \eqref{var3}, \eqref{var4}, \eqref{var5} we deduce that for all $\phi\in C^\infty_0((-\rho,\rho))$
$$\delta\mathcal W[\Gamma^{(k)}]\phi=2\Lambda_k\sigma_k h_k^2 \delta A[\Gamma^{(k)}]\phi-8\sqrt\pi\Lambda_k h_k^3 \delta V[\Gamma^{(k)}]\phi.$$
 $h_k\rightarrow 0^+$ since $t_k\rightarrow\frac12$ and by Corollary \ref{Lagrangemultipliergoto0Lemma} we also know $\Lambda_k\rightarrow 0$. We have $|\dot\Gamma^{(k)}|=L_k\rightarrow L_*>0$ and hence $L_k+L_k^{-1}$ is bounded. By scaling invariance of the Willmore energy we have $\mathcal W[\Gamma^{(k)}|_{(-\rho_0,\rho_0)}]\leq \mathcal W[\gamma^{(k)}]\leq 8\pi$. Next we exploit the assumption $\gamma_1^{(k)}(t_k+h_k t)\geq\frac12 h_k$. First, by definition of $\Gamma^{(k)}$, it implies $\Gamma_1^{(k)}(t)\geq \frac12$ for all $|t|< \rho_0$. Second, using Lemma \ref{gamma1ddotgammainL1}, we get 
$$\int_{-\rho_0}^{\rho_0}|\ddot\Gamma^{(k)}(t)|^2 dt=h_k \int_{t_k-h_k\rho_0}^{t_k+h_k\rho_0}|\ddot\gamma(s)|^2ds\leq C.$$
Finally, since by construction $\Gamma^{(k)}(0)=(1,0)$ and $|\dot\Gamma^{(k)}|=L_k\leq C$, we obtain $|\Gamma^{(k)}|\leq C$. We have proven that the assumptions of Theorem \ref{GeneralRegularityTheorem} are satisfied with $M$ and $\kappa(\Gamma^{(k)};(-\rho,\rho))\geq \kappa_0>0$ independent of $k$. Therefore, the uniform $C^m$ bounds follow from Theorem \ref{GeneralRegularityTheorem}. \\

Now assume that after passing to a subsequence $\Gamma^{(k)}\rightarrow\Gamma^*$ in $C^2$. We use the invariance of the Willmore energy under scaling and apply Lemma \ref{MinimalSurfaceLemma} to compute 
  \begin{align*}
  \frac\pi2\int_{-\rho}^ {\rho}H[\Gamma^*]^2 |\dot\Gamma^*|\Gamma^*_1dt=&\frac\pi2\lim_{k\rightarrow\infty}
  \int_{-\rho}^ {\rho}H[\Gamma^{(k)}]^2 |\dot\Gamma^{(k)}|\Gamma^{(k)}_1dt\\
  =&\lim_{k\rightarrow\infty}\mathcal W[\Gamma^ {(k)}|_{[-\rho,\rho]}]\\
   =&\lim_{k\rightarrow\infty}\mathcal W[\gamma^ {(k)}|_{[t_k-h_k\rho,t_k+h_k\rho]}]\\
   =&0.
  \end{align*}
Since $\Gamma_1^*\geq \frac12$ on $(-\rho,\rho)$ and $|\dot\Gamma^*|=L_*> 0$ we deduce  $H[\Gamma^*]\equiv0$. 
\end{proof}

Next, we establish that for $k$ large enough, the minimal value $\epsilon_k$ is only attained once.

\begin{lemma}[Proof of Theorem \ref{CatenoidNeckTheorem}, parts (1) and (2)] \label{catenoidLemma02}\ \\
    There exists $k_0\in\N$ such that for all $k\geq k_0$ there exists a unique $\tau_k\in[\frac18,\frac78]$ satisfying $\gamma_1^{(k)}(\tau_k)=\epsilon_k$. Moreover, there exists $\rho_0>0$ such that $\dot\gamma_1(t)<0$ for $t\in(\tau_k-\rho_0,\tau_k)$ and $\dot\gamma_1(t)>0$ for $t\in(\tau_k, \tau_k+\rho_0)$.
\end{lemma}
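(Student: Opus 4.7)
Pick any $\tau_k\in[\tfrac18,\tfrac78]$ with $\gamma_1^{(k)}(\tau_k)=\epsilon_k$; by Lemma~\ref{catenoidLemma01} we have $\tau_k\to\tfrac12$ and each $\tau_k$ is a strict local minimum for $k$ large. Since $\gamma_1^{(k)}\ge\epsilon_k$ on $[\tfrac18,\tfrac78]$, the hypothesis of Lemma~\ref{blowuplemma} at scale $h_k=\epsilon_k$ is satisfied on arbitrarily large symmetric intervals for $k$ large, so a diagonal extraction yields $\Gamma^{(k)}\to\Gamma^*$ in $C^m_{\mathrm{loc}}(\R)$ for every $m$, with $H[\Gamma^*]\equiv 0$. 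The properties $\Gamma^*_1(0)=1$, $\dot\Gamma^*_1(0)=0$, $|\dot\Gamma^*|\equiv L_*$, and $\Gamma^*_1\ge 1$ together with the classification of minimal surfaces of revolution (catenoids and horizontal planes) force $\Gamma^*_1(t)=\sqrt{1+L_*^2 t^2}$; the plane case is excluded because $\dot\Gamma^*_1(0)=0$ is incompatible with the constant speed $|\dot\Gamma^*|=L_*>0$. In particular, $\dot\Gamma^*_1$ vanishes only at $0$ and $\ddot\Gamma^*_1(0)=L_*^2>0$.

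\textbf{Contradiction setup.} Fix $\rho_0\in(0,\tfrac14)$. Both assertions (uniqueness of the argmin and uniform strict monotonicity on $(\tau_k-\rho_0,\tau_k+\rho_0)$) follow once we show that, for $k$ large, $\gamma_1^{(k)}$ has no critical point in $(\tau_k-\rho_0,\tau_k+\rho_0)\setminus\{\tau_k\}$. Assume the contrary: along a subsequence, a critical point $s_k$ lies in that set. Using Theorem~\ref{asymptoticsthm} and the fact that $\dot\kappa_1$ is non-vanishing on $(\tfrac14,\tfrac12)\cup(\tfrac12,\tfrac34)$, we must have $s_k\to\tfrac12$. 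Setting $t_k:=(s_k-\tau_k)/\epsilon_k$, if $(t_k)$ is bounded then $t_k\to t^*$ and $\dot\Gamma^*_1(t^*)=0$ gives $t^*=0$; the strict convexity $\ddot\Gamma^*_1(0)>0$ combined with $C^2_{\mathrm{loc}}$-convergence then forces $\dot\Gamma^{(k)}_1$ to be strictly monotone, hence injective, on a fixed neighborhood of $0$, ruling out a second zero.

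\textbf{Main obstacle.} The remaining case $|t_k|\to\infty$ is the central difficulty: $s_k\to\tfrac12$ but $s_k-\tau_k\gg\epsilon_k$. Choosing $s_k$ closest to $\tau_k$ on its side, the absence of critical points in between combined with catenoid monotonicity on the $\epsilon_k$-window gives $\dot\gamma_1^{(k)}>0$ on $(\tau_k,s_k)$, so $s_k$ is necessarily a local maximum with $\eta_k:=\gamma_1^{(k)}(s_k)$ satisfying $\eta_k/\epsilon_k\to\infty$ (compare with $\gamma_1^{(k)}(\tau_k+R\epsilon_k)\to\epsilon_k\sqrt{1+L_*^2 R^2}$ for each fixed $R$). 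My plan is to perform a secondary rescaling of $\gamma^{(k)}|_{[\tau_k,s_k]}$, horizontally by $T_k:=s_k-\tau_k$ and vertically by $\eta_k$, producing $\tilde\gamma^{(k)}:[0,1]\to\mathcal H^2$. Scale invariance of $\mathcal W$ and Lemma~\ref{MinimalSurfaceLemma} then give $\mathcal W[\tilde\gamma^{(k)}]=\mathcal W[\gamma^{(k)}|_{[\tau_k,s_k]}]\to 0$. The catenoid asymptote $\Gamma^*_1(t)\sim L_*|t|$ supplies a uniform lower bound on $\tilde\gamma^{(k)}_1$ on each compact subset of $(0,1]$, enabling a Lemma~\ref{blowuplemma}-style compactness argument to extract a limit $\tilde\gamma^*$ parametrizing a minimal surface of revolution with $\dot{\tilde\gamma}^*_1$ vanishing at two distinct points. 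This is forbidden by the classification (catenoid profiles have exactly one critical point, planar profiles none), giving the desired contradiction. The technical subtlety lies in controlling the collapse $\tilde\gamma^{(k)}_1(0)=\epsilon_k/\eta_k\to 0$ at the left endpoint; I would address this by restricting the limiting argument to compact subsets of $(0,1]$ and matching the boundary behavior at $s=0$ using the catenoid expansion inherited from the first blow-up.
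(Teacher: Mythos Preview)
Your bounded-$t_k$ case is fine, but the unbounded case is where your argument has a genuine gap, and the paper avoids this case entirely with a much cleaner idea.

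The paper does \emph{not} blow up at $\tau_k$. Instead, starting from a hypothetical critical point $\tilde\tau_k\neq\tau_k$ with $|\tilde\tau_k-\tau_k|\to 0$, it first observes that either $\ddot\gamma_1^{(k)}(\tilde\tau_k)\le 0$, or (if $\tilde\tau_k$ is a strict local minimum) there is a local maximum $\hat\tau_k$ strictly between $\tau_k$ and $\tilde\tau_k$. In either situation one obtains a point $\hat\tau_k\to\tfrac12$ with $\dot\gamma_1^{(k)}(\hat\tau_k)=0$ and $\ddot\gamma_1^{(k)}(\hat\tau_k)\le 0$. Now blow up at $\hat\tau_k$ with scale $h_k:=\gamma_1^{(k)}(\hat\tau_k)$. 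The hypothesis $\gamma_1^{(k)}(\hat\tau_k+h_k t)\ge\tfrac12 h_k$ of Lemma~\ref{blowuplemma} follows on a \emph{small} fixed interval simply from the Lipschitz bound $|\dot\gamma_1^{(k)}|\le L_k$, namely $\gamma_1^{(k)}(\hat\tau_k+h_kt)\ge h_k(1-L_k|t|)$. Any $C^2$ subsequential limit $\Gamma^*$ then satisfies $H[\Gamma^*]=0$ together with $\Gamma^*(0)=(1,0)$, $\dot\Gamma^*(0)=(0,\pm L_*)$ and $\ddot\Gamma_1^*(0)\le 0$. Plugging $t=0$ into $H[\Gamma^*]=0$ gives the one-line contradiction
\[
0=-\frac{\ddot\Gamma_1^*(0)}{L_*^2}+1\ \ge\ 1.
\]
No global classification of the limit and no secondary rescaling are needed.

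Regarding your unbounded case: the secondary rescaling you propose sends $\tilde\gamma_1^{(k)}(0)=\epsilon_k/\eta_k\to 0$, so the limit curve touches the axis at $t=0$ and you lose control there. You acknowledge this by restricting to compact subsets of $(0,1]$, but then the only critical point that survives in the limit is the one at $t=1$; the information that there was a second critical point at $t=0$ is precisely what collapses. So the ``two distinct critical points of a minimal profile'' contradiction cannot be reached this way. One could perhaps repair this with an additional argument near $t=0$, but the paper's device of blowing up at the spurious local-max-type point rather than at $\tau_k$ sidesteps the whole difficulty.
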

Note that part (3) of Theorem \ref{CatenoidNeckTheorem} follows by combining Lemmas \ref{catenoidLemma01} and \ref{catenoidLemma02}.
\begin{proof}
    We choose a sequence $(\tau_k)\subset[\frac18,\frac78]$ such that $\gamma_1^{(k)}(\tau_k)=\epsilon_k$ and prove by contradiction that there exists $\rho_0>0$ such that $\dot\gamma_1^{(k)}(t)\neq 0$ for all $t\in(\tau_k-\rho_0,\tau_k+\rho_0)\backslash\set{\tau_k}$. Once this is shown, uniqueness follows since any other sequence $\tau_k'$ would satisfy $|\tau_k-\tau_k'|\rightarrow 0$ by Lemma \ref{catenoidLemma01}.\\
    
    Assuming the above claim was false and potentially passing to a subsequence again denoted by $\gamma^{(k)}$, we obtain a sequence $\tilde\tau_k\neq\tau_k$ satisfying $|\tilde\tau_k-\tau_k|\rightarrow 0$ and $\dot\gamma_1^{(k)}(\tilde\tau_k)=0$. For fixed $k$ either $\ddot\gamma_1^{(k)}(\tilde\tau_k)\leq 0$ or $\ddot\gamma_1^{(k)}(\tilde\tau_k)> 0$. Suppose that the second case is true. Then $\gamma_1^{(k)}$ has a local minimum at $\tilde\tau_k$.  By Lemma \ref{catenoidLemma01}, the minima are strict and hence we deduce that $\gamma_1^{(k)}$ must attain a local maximum at some $\hat\tau_k$ in between $\tau_k$ and $\tilde\tau_k$. Then $\dot\gamma^{(k)}(\hat\tau_k)=(0,\pm L_k)$ and $\ddot\gamma_1^{(k)}(\hat\tau_k)\leq 0$. In the first case,  $\tilde\tau_k$ has the same properties: $\dot\gamma^{(k)}(\tilde\tau_k)=(0,\pm L_k)$ and $\ddot\gamma_1^{(k)}(\tilde \tau_k)\leq 0$.\\
    
    So, in any case we have produced a sequence $\tau_k\neq \hat\tau_k\rightarrow\frac12$ such that after potentially passing to another subsequence we have $\ddot\gamma_1^{(k)}(\hat\tau_k)\leq 0$ and $\dot\gamma_1^{(k)}(\hat\tau_k)=(0,\pm L_k)$. We will assume that $+$ is the correct sign. If `$-$' is correct, the argument needs no modification.\\
    
     Since $\gamma^{(k)}\rightarrow\kappa$ in $C^0([0,1])$ we deduce $h_k:=\gamma_1^{(k)}(\hat\tau_k)\rightarrow \kappa_1(\frac12)=0$.
    For a parameter $\rho_0>0$, which we will choose shortly, we define the curve 
    $$\Gamma^{(k)}:[-\rho_0,\rho_0]\rightarrow\mathcal H^2,\ \Gamma^{(k)}(t):=\frac1{h_k}\left(\gamma^{(k)}(\hat\tau_k+h_k t)-\left(0, \gamma^{(k)}_2(\hat\tau_k)\right)\right).$$
    Using $L_k|=\dot\gamma^{(k)}|\rightarrow L[\kappa]=L_*$ we deduce that $L_k$ is bounded and estimate 
    \begin{equation}\label{gamma1klowerbound}
    \gamma_1^{(k)}(\hat\tau_k+h_k t)=h_k+h_k\int_0^t\dot\gamma_1^{(k)}(\hat\tau_k+h_k s)ds\geq h_k(1-L_k |t|)\geq h_k(1-C\rho_0).
    \end{equation}
 Choosing $\rho_0$ small enough, we obtain $\gamma^ {(k)}_1(\hat\tau_k+h_k t)\geq \frac12h_k$. We fix any $\rho\in (0,\rho_0)$. By Lemma \ref{blowuplemma} we deduce that $\Gamma^ {(k)}$ is bounded in $C^ m([-\rho,\rho])$ for all $m\in\N$. Consequently there exists a subsequence, again denoted by $\Gamma^ {(k)}$, that converges in $C^2([-\rho,\rho])$ to some limit $\Gamma^*$.  Lemma \ref{blowuplemma} gives $H[\Gamma^*]=0$. Since $\Gamma^ {(k)}(0)=(1,0)$, $\dot\Gamma^ {(k)}(0)=(0,L_k)$ and $\ddot\Gamma^ {(k)}_1(0)\leq 0$ we obtain $\Gamma^*(0)=(1,0)$, $\dot\Gamma^*(0)=(0,L_*)$ and $\ddot\Gamma^*_1(0)\leq 0$. Inserting into $H[\Gamma^*]=0$, we produce the contradiction 
 $$0=k_1[\Gamma^*](0)+k_2[\Gamma^*](0)
 =-\frac{\ddot\Gamma_1^*(0)\dot\Gamma^*_2(0)}{L_*^3}+\frac{\dot\Gamma_2^*(0)}{\Gamma_1^*(0)L_*}=-\frac{\ddot\Gamma_1^*(0)}{L_*^2}+1\geq 1.$$
Consequently there exists $\rho_0>0$ such that $\dot\gamma_1^{(k)}(t)\neq 0$ for all $t\in(\tau_k-\rho_0,\tau_k+\rho_0)$. We argue for example that $\dot\gamma_1^{(k)}> 0$ when $t\in(\tau_k,\tau_k+\rho)$. Indeed, if this were not true, then $\dot\gamma_1(t)<0$ would have to be true. But that would imply that for small $t>0$
$$\epsilon_k=\inf_{[\frac18,\frac78]}\gamma_1^{(k)}\leq \gamma_1^{(k)}(\tau_k+t)=\gamma_1^{(k)}(\tau_k)+\int_0^t \dot\gamma_1^{(k)}(\tau_k+s)ds<\epsilon_k.$$
\end{proof}

\begin{lemma}\label{gammakModificationLemma}
    There exists $k_1\in\N$ such that for $k\geq k_1$, the curves $\hat\gamma^{(k)}(s):=(\gamma_1^{(k)}(1-s),\gamma_2^{(k)}(1-s)-\gamma_2^{(k)}(1))$ satisfy $\hat\gamma^{(k)}\in \mathcal F_\sigma^{0+}$ and $\mathcal W[\hat\gamma^{(k)}]=\beta(\sigma_k)$.
\end{lemma}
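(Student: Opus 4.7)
The curve $\hat\gamma^{(k)}$ is obtained from $\gamma^{(k)}$ by two operations that are manifest symmetries of the problem: reversing the parameter $t\mapsto 1-t$ (which reverses orientation but preserves the trace $\Sigma_{\gamma^{(k)}}$) and translating vertically by $-\gamma_2^{(k)}(1) e_2$. The strategy is thus to first observe that these symmetries preserve everything entering the definitions of $\mathcal F_{\sigma_k}$ and $\mathcal W$, then to verify that the remaining sign condition defining $\mathcal F_\sigma^{0+}$ holds for $k$ large enough using the convergence $\gamma^{(k)}\to\kappa$.

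I would first check that $\hat\gamma^{(k)}\in\mathcal F_{\sigma_k}$ with $\mathcal W[\hat\gamma^{(k)}]=\beta(\sigma_k)$ by direct substitution in the formulas of Section \ref{PreliminariesSection}. The conditions defining $\mathcal P$ (Definition \ref{classPdef}) all carry over trivially: $\hat\gamma^{(k)}_1(0)=\gamma_1^{(k)}(1)=0$ and $\hat\gamma_1^{(k)}(1)=\gamma_1^{(k)}(0)=0$; positivity of $\hat\gamma_1^{(k)}$ on $(0,1)$ and the smoothness $\hat\gamma^{(k)}\in C^\infty([0,1])$ follow directly; $|\dot{\hat\gamma}^{(k)}(s)|=|\dot\gamma^{(k)}(1-s)|\equiv L_k$, which equals $L[\hat\gamma^{(k)}]$; and the $L^2$ curvature bound is invariant under reparameterization and rigid motion. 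The change of variables $t=1-s$ then gives $A[\hat\gamma^{(k)}]=A[\gamma^{(k)}]=1$, while $\int_0^1\hat\gamma_1^2\,\dot{\hat\gamma}_2\,ds=-\int_0^1(\gamma_1^{(k)})^2\dot\gamma_2^{(k)}\,dt$, whose absolute value equals $V[\gamma^{(k)}]=\sigma_k/(6\sqrt\pi)$. Hence $\mathcal I[\hat\gamma^{(k)}]=\sigma_k$. The same substitution applied to Equation \eqref{willmoredef} gives $\mathcal W[\hat\gamma^{(k)}]=\mathcal W[\gamma^{(k)}]=\beta(\sigma_k)$.

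The only nontrivial point is the sign condition in the definition of $\mathcal F_{\sigma_k}^{0+}$. The base-point condition $\hat\gamma^{(k)}(0)=(\gamma_1^{(k)}(1),\gamma_2^{(k)}(1)-\gamma_2^{(k)}(1))=(0,0)$ holds identically. For the integral condition, the substitution $t=1-s$ yields
\begin{equation*}
\int_0^1 \hat\gamma_2^{(k)}(s)\,ds \;=\; \int_0^1 \gamma_2^{(k)}(t)\,dt \;-\; \gamma_2^{(k)}(1).
\end{equation*}
By Theorem \ref{asymptoticsthm}, $\gamma^{(k)}\to\kappa$ in $W^{1,2}((0,1))$, which by the Sobolev embedding $W^{1,2}((0,1))\hookrightarrow C^0([0,1])$ turns into uniform convergence on $[0,1]$. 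A direct computation with $\kappa_2(t)=R(1-\cos(2\pi t))$, $R=(8\pi)^{-1/2}$, gives $\kappa_2(1)=0$ and $\int_0^1\kappa_2(t)\,dt=R>0$, so the right-hand side converges to $R>0$. Hence it exceeds $R/2>0$ for all $k\geq k_1$ with some $k_1\in\N$, completing the verification.

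No genuine obstacle is anticipated: the lemma amounts to the observation that the involution $t\mapsto 1-t$ composed with the appropriate vertical translation is a symmetry of the minimization problem, and $\mathcal F_{\sigma_k}^{0+}$ becomes stable under this operation once $\gamma^{(k)}$ is close enough to the symmetric limit curve $\kappa$.
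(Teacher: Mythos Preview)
Your proposal is correct and follows essentially the same approach as the paper: both note that membership in $\mathcal P$, the area, volume, isoperimetric ratio, and Willmore energy are preserved under the reparameterization $t\mapsto 1-t$ composed with vertical translation, and both verify the sign condition $\int_0^1\hat\gamma_2^{(k)}\,ds\geq 0$ for large $k$ via the uniform convergence $\gamma^{(k)}\to\kappa$, using $\int_0^1\kappa_2\,dt=R>0$ and $\kappa_2(1)=0$. The paper invokes $C^0$ convergence directly, whereas you deduce it from the $W^{1,2}$ convergence in Theorem~\ref{asymptoticsthm} via Sobolev embedding; both are valid.
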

\begin{proof}
    It is clear that $\hat\gamma^{(k)}\in\mathcal P$, $\mathcal W[\gamma^{(k)}]=\mathcal W[\hat\gamma^{(k)}]$ and that $\mathcal I[\gamma^{(k)}]=\mathcal I[\hat\gamma^{(k)}]$. Additionally $\hat\gamma^{(k)}(0)=0$.
    Let $\epsilon>0$ satisfy 
    $$0<\epsilon<\int_0^1 \kappa_2(t)dt.$$
    Since $\gamma^{(k)}\rightarrow \kappa$ in $C^0([0,1])$ we can choose $k_1(\epsilon)$ such that for all $k\geq k_1$ 
    $$
\int_0^1\hat\gamma_2^{(k)}(t) dt
=
\int_0^1\gamma_2^{(k)}(t) dt-\gamma_2^{(k)}(1)
\geq 
\int_0^1\kappa_2(s) ds-\kappa_2(1)-\epsilon
= 
\int_0^1\kappa_2(s) ds-\epsilon
>0.
$$
\end{proof}
By Lemma \ref{catenoidLemma02} we know that for $k$ large enough, $\gamma_1^{(k)}$ attains its minimum on $[\frac18,\frac78]$ only once, namely at $t=\tau_k$. At $t=\tau_k$ we then have $\dot\gamma^{(k)}(\tau_k)=(0,\pm_k L_k)$. Using Lemma \ref{gammakModificationLemma}, we can now modify the sequence $\gamma^{(k)}$ and ensure that for all $k\geq k_1$
\begin{equation}\label{assumeptionLk}
    \dot\gamma^{(k)}(\tau_k)=(0,- L_k).
\end{equation}

\begin{lemma}[Proof of Theorem \ref{CatenoidNeckTheorem}, part (4)]\ \\
    Let $\gamma^{(k)}$ now additionally satisfy Equation \eqref{assumeptionLk}. For all $R>0$ there exists $k_0(R)\in\N$ such that for all $k\geq k_0(R)$ the function 
    $$\Gamma^ {(k)}:[-R,R]\rightarrow\mathcal H^2,\ \Gamma^ {(k)}(t):=\frac1{\epsilon_k}\left(\gamma^ {(k)}(\tau_k+\epsilon_k t)-(0,\gamma_2^ {(k)}(\tau_k))\right)$$
    is well-defined. Additionally, putting 
    $$\Gamma^*(t):=\left(\sqrt{1+L_*^2t^2},\ -\operatorname{asinh}(L_*t)\right)\hspace{.5cm}\textrm{where }L_*=\sqrt{\frac\pi2},$$
    we have $\Gamma^{(k)}\rightarrow \Gamma^*$ in $C^m([-R, R])$ for all $m\in\N$. 
\end{lemma}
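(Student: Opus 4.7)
The plan is to apply the blow-up lemma (Lemma \ref{blowuplemma}) at $(t_k, h_k) = (\tau_k, \epsilon_k)$ to obtain uniform $C^m$ bounds, extract a convergent subsequence by compactness, identify the limit by integrating the minimal surface equation $H = 0$ with the prescribed initial data, and conclude by the subsequence principle.

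First I establish well-definedness and uniform bounds. Since $\tau_k \to \tfrac12$ and $\epsilon_k \to 0^+$, for any fixed $R > 0$ there exists $k_0(R)$ such that $\tau_k + \epsilon_k \cdot [-R-1, R+1] \subset [\tfrac18, \tfrac78]$ whenever $k \ge k_0(R)$. By the definition of $\epsilon_k = \inf_{[1/8, 7/8]} \gamma_1^{(k)}$, we obtain $\gamma_1^{(k)}(\tau_k + \epsilon_k t) \ge \epsilon_k$ for all $|t| \le R+1$, so $\Gamma_1^{(k)}(t) \ge 1$ on $[-R-1, R+1]$. This yields well-definedness on $[-R,R]$ and verifies the hypothesis of the blow-up lemma with $\rho_0 = R+1$; hence $\|\Gamma^{(k)}\|_{C^m([-R, R])} \le C(R, m)$ for every $m \in \N$.

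By compactness and a diagonal argument, any subsequence of $(\Gamma^{(k)})$ admits a further subsequence converging in $C^m([-R, R])$ for every $m \in \N$ to some smooth limit $\bar\Gamma$ with $\bar\Gamma_1 \ge 1$. Lemma \ref{blowuplemma} gives $H[\bar\Gamma] \equiv 0$. From $\gamma_1^{(k)}(\tau_k) = \epsilon_k$, Equation \eqref{assumeptionLk}, and $L_k \to L_* := L[\kappa] = 2\pi \cdot (8\pi)^{-1/2} = \sqrt{\pi/2}$, the initial data transfer as $\bar\Gamma(0) = (1, 0)$, $\dot{\bar\Gamma}(0) = (0, -L_*)$, and $|\dot{\bar\Gamma}| \equiv L_*$ on $[-R, R]$. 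Equation \eqref{ArcLength_H_Identity} then rewrites $H[\bar\Gamma] = 0$ as $\tfrac{d}{dt}(\bar\Gamma_1 \dot{\bar\Gamma}_2) = 0$ together with $\ddot{\bar\Gamma}_1 = \dot{\bar\Gamma}_2^2/\bar\Gamma_1$; the former gives the first integral $\bar\Gamma_1 \dot{\bar\Gamma}_2 \equiv -L_*$, and combined with $\dot{\bar\Gamma}_1^2 + \dot{\bar\Gamma}_2^2 = L_*^2$ this yields $\dot{\bar\Gamma}_1^2 = L_*^2(\bar\Gamma_1^2 - 1)/\bar\Gamma_1^2$. Since $\dot{\bar\Gamma}_1(0) = 0$ and $\ddot{\bar\Gamma}_1(0) = L_*^2 > 0$, integrating gives $\bar\Gamma_1(t) = \sqrt{1 + L_*^2 t^2}$ and subsequently $\bar\Gamma_2(t) = -\operatorname{asinh}(L_* t)$; that is, $\bar\Gamma = \Gamma^*$.

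Because every subsequence admits a further subsequence converging to the same $\Gamma^*$, the whole sequence satisfies $\Gamma^{(k)} \to \Gamma^*$ in $C^m([-R, R])$ for every $m \in \N$ and every $R > 0$, which establishes the lemma. I expect the main obstacle to be the identification step: one must convert the abstract equation $H[\bar\Gamma] = 0$ together with three scalar pieces of initial data into the explicit closed-form expression for $\Gamma^*$, distinguishing the correct catenoid among minimal surfaces of revolution. The first integral $\bar\Gamma_1 \dot{\bar\Gamma}_2 \equiv -L_*$ and the sign of $\ddot{\bar\Gamma}_1(0)$ are what ultimately single out the catenoid with the prescribed vertex at $(1,0)$ and tangent direction $(0,-L_*)$.
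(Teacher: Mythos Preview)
Your proof is correct and follows essentially the same strategy as the paper: apply the blow-up lemma at $(\tau_k,\epsilon_k)$ to obtain uniform $C^m$ bounds, pass to a subsequential limit $\bar\Gamma$ with $H[\bar\Gamma]=0$, read off the initial data $\bar\Gamma(0)=(1,0)$, $\dot{\bar\Gamma}(0)=(0,-L_*)$, $|\dot{\bar\Gamma}|\equiv L_*$, solve the resulting ODE system explicitly, and conclude via the subsequence principle. The only cosmetic difference is in the integration step: the paper notes that $\tfrac12\frac{d^2}{dt^2}\bar\Gamma_1^2=L_*^2$ and integrates this linear equation directly, whereas you combine the first integral $\bar\Gamma_1\dot{\bar\Gamma}_2\equiv -L_*$ with the arc-length constraint to reach the same formula; both routes are equally valid.
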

\begin{proof}
    For $R>0$ we choose $k_0(R)$ so large that $|\tau_k+\epsilon_k t|\in(\frac18,\frac78)$ for all $|t|\leq R+1$ and $k\geq k_0(R)$. By definition of $\epsilon_k$ we get $\Gamma^ {(k)}_1(t)\geq 1$ for all $t\in[-R,R]$. Therefore we can use Lemma \ref{blowuplemma} and deduce that $\Gamma^ {(k)}$ is bounded in $C^ m([-R,R])$ for all $m\in\N$. We show that any subsequence contains another subsequence that converges to the claimed limit. Let $\Gamma^ {(k)}$ be some subsequence, again denoted by $\Gamma^ {(k)}$. By the uniform $C^ m$ bound we can assume that $\Gamma^ {(k)}\rightarrow\Gamma^*$ in $C^2([-R,R])$ after potentially passing to yet another subsequence and by Lemma \ref{blowuplemma}, the limit satisfies $H[\Gamma^*]=0$. By assumption we have $\dot\gamma_2 ^{(k)}(\tau_k)=-L_k$ and since $\tau_k$ is a minimum of $\gamma_1^{(k)}$ we have $\dot\gamma_1^{(k)}(\tau_k)=0$ and $\ddot\gamma_1^{(k)}(\tau_k)\geq 0$. By Theorem \ref{asymptoticsthm}, we have $L_k=L[\gamma^{(k)}]\rightarrow L[\kappa]=\sqrt{\frac\pi2}$. Exploiting the $C^2$-convergence, we get
    $$\Gamma^ *(0)=(1,0),\hspace{.5cm} \dot\Gamma^*(0)=(0, -L_*)\hspace{.5cm}\textrm{and}\hspace{.5cm}\ddot\Gamma^*_1(0)\geq 0.$$
    Since $|\dot\Gamma^*(t)|=\lim_{k\rightarrow\infty}|\dot\Gamma^{(k)}(t)|=\lim_{k\rightarrow\infty}L_k=L_*$, the limit $\Gamma^*$ is parameterized proportional to arc length. Using Equation \eqref{ArcLength_H_Identity}, we get
    \begin{equation}\label{LimitH0Equations}
    \ddot\Gamma_1^*=\frac{(\dot\Gamma_2^*)^2}{\Gamma_1^*}
    \hspace{.5cm}\textrm{and}\hspace{.5cm}
    \ddot\Gamma_2^*=-\frac{\dot\Gamma_1^*\dot\Gamma_2^*}{\Gamma_1^*}.
    \end{equation}
    Using the first identity and $|\dot\Gamma^*|^2=L_*^2$, we compute 
   $$
        \frac12\frac{d^2}{dt^2}\Gamma_1^*(t)^2=\ddot\Gamma_1^*\Gamma_1^*+(\dot\Gamma_1^*)^2=(\dot\Gamma_2^*)^2+(\dot\Gamma_1^*)^2=L_*^2.
   $$
So, using  $\Gamma_1^*(0)=1$, $\dot\Gamma_1^*(0)=0$  and $\Gamma_1^*(t)\geq 0$, we deduce  $\Gamma_1^*(t)=\sqrt{1+L_*^2 t^2}$. Now, using the second identity in Equation \eqref{LimitH0Equations}, we compute
    $$\frac d{dt}(\dot\Gamma_2^*\Gamma_1^*)=\ddot\Gamma_2^*\Gamma_1^*+\dot\Gamma_1^*\dot\Gamma_2^*=0
     \hspace{.5cm}\textrm{and hence}\hspace{.5cm}
     \dot\Gamma_2^*(t)\Gamma_1^*(t)=\dot\Gamma_2^*(0)\Gamma_1^*(0).$$
     We have $\dot\Gamma_2^*(0)=-L_*$ and $\Gamma^*(0)=(1,0)$, which implies 
     $$\dot\Gamma_2^*(t)=\frac{-L_*}{\Gamma_1^*(t)}=\frac{-L_*}{\sqrt{1+L_*^2 t^2}}\hspace{.5cm}\textrm{and hence}\hspace{.5cm}
     \Gamma_2^*(t)=-\operatorname{asinh}(L_*t).$$
\end{proof}
\newpage
\appendix
\section{Properties of Profile Curves}\label{Appendix1}
\subsection{Proofs for Subsection \ref{functionalspacesection}}\label{proofpropertiesofP}
We begin by proving Lemma \ref{WillmoreLowerBound}
\begin{proof}
We put $L:=L[\gamma]$ and $d\mu_\gamma:=2\pi|\dot\gamma|\gamma_1 dt$ so that $k_1^2+k_2^2\in L^1(\mu_\gamma)$ by definition of the class $\mathcal P$. Hence $|k_1k_2|\in L^1(\mu_\gamma)$ and we may estimate 
\begin{align}
\mathcal W[\gamma]=&\frac{2\pi}4\int_0^1 (k_1+k_2)^2\gamma_1|\dot\gamma(t)|dt\nonumber\\ &\geq 2\pi\int_0 ^1 k_1 k_2 |\dot\gamma|\gamma_1 dt
=\lim_{\epsilon\rightarrow0^+} 2\pi\int_\epsilon^{1-\epsilon} k_1 k_2 |\dot\gamma|\gamma_1 dt.\label{WillLB01}
\end{align}
By definition of the class $\mathcal P$ we have $\gamma\in W^{2,2}((\epsilon,1-\epsilon))$. So we may use Equation \eqref{ArcLength_K_Identity} and deduce
$$2\pi\int_\epsilon^{1-\epsilon} k_1k_2|\dot\gamma|\gamma_1dt=-\frac{2\pi}L\int_\epsilon^ {1-\epsilon}\ddot\gamma_1(t)dt=\frac{2\pi}L(\dot\gamma_1(\epsilon)-\dot\gamma_1(1-\epsilon)).$$
We insert into Equation \eqref{WillLB01} and wish to let $\epsilon\rightarrow 0^ +$. To do so, note that $\dot\gamma$ is continuous by the definition of the class $\mathcal P$. So, using Equation \eqref{velocityatends}, we find
\begin{equation}\label{gaussbonnet}
\mathcal W[\Sigma_\gamma]\geq 2\pi\int_0^1 k_1k_2|\dot\gamma|\gamma_1 dt=\lim_{\epsilon\rightarrow0^ +}2\pi\int_\epsilon^{1-\epsilon} k_1k_2|\dot\gamma|\gamma_1dt
= \frac{2\pi}L(L-(-L))=4\pi.
\end{equation}
Equality only occurs when $(k_1+k_2)^2=4k_1k_2$ almost everywhere, which is true only if $k_1=k_2$ almost everywhere. This gives 
$$\ddot\gamma_2\dot\gamma_1-\ddot\gamma_1\dot\gamma_2=L^3k_1=L^3 k_2=\frac{\dot\gamma_2}{\gamma_1}L^2.$$
Multiplying this equation by $\dot\gamma_2$ and using $\langle\dot\gamma,\ddot\gamma\rangle=0$ yields 
\begin{equation}\label{globalgamma1ode}
\ddot\gamma_1\gamma_1=\dot\gamma_1^2-L^2.
\end{equation}
As $\gamma_1>0$ on $(0,1)$ and $\gamma\in C^1([0,1])$, we deduce $\gamma_1\in C^2((0,1))\cap C^1([0,1])$. Since $\dot\gamma_1\in C^0([0,1])$, $\dot\gamma_1(0)=L$ and $\dot\gamma_1(1)=-L$ there exists a non-empty, open interval $I\subset(0,1)$ such that $\dot\gamma_1(t)\not\in\set{0,\pm L}$ for all $t\in I$. On $I$, the following computation is valid:
$$-\frac12\frac d{dt}\ln(L^2-\dot\gamma_1^2)=\frac{\dot\gamma_1\ddot\gamma_1}{L^2-\dot\gamma_1^2}\overset{\eqref{globalgamma1ode}}=-\frac{\dot\gamma_1}{\gamma_1}=-\frac d{dt}\ln(\gamma_1(t))$$
From this, we deduce that there exists $K>0$ such that $L^2-\dot\gamma_1^2=K\gamma_1^2(t)$ for all $t\in I$. Differentiating gives $\dot\gamma_1(\ddot\gamma_1+K\gamma_1)=0$. As $\dot\gamma_1(t)\neq 0$ we deduce that for some $a,b\in\R$ we get
\begin{equation}\label{gamma1sol}
\gamma_1(t)=a\cos(\sqrt K t)+b\sin(\sqrt K t)\hspace{.5cm}\textrm{for all $t\in I$}.
\end{equation}
Using $\gamma_1>0$ for all $t\in(0,1)$ and Equation \eqref{globalgamma1ode}, the Picard-Lindeöff theorem implies that Equation \eqref{gamma1sol} is valid for all $t\in(0,1)$. Using $\dot\gamma_2^2=L^2-\dot\gamma_1^2$, it is then readily seen that $\gamma$ traces out a semicircle and hence $\Sigma_\gamma$ is a sphere. 
\end{proof}

Note that in Equation \eqref{gaussbonnet}, we have verified Lemma \ref{gausbonnetlemma} as we have shown that
$$
\int_{\Sp^2}k_1k_2d\mu_{\Sigma_\gamma}
=2\pi\int_0^1 k_1k_2 |\dot\gamma|\gamma_1 dt
=4\pi.
$$

Next, we prove Lemma \ref{hopftypetheorem}
\begin{proof}
First we note $H_0\neq 0$ as $4\pi\leq \mathcal W[\gamma]=\frac14 A[\gamma]H_0^2$ by Lemma \ref{WillmoreLowerBound}. Let $L:=L[\gamma]$. Using the assumption $H=H_0$ and Equation \eqref{ArcLength_H_Identity}, we get
\begin{equation}\label{hopf0}
    H_0\dot\gamma_2=-\frac{\ddot\gamma_1}L+\frac{\dot\gamma_2^2}{L\gamma_1}
    \hspace{.5cm}\textrm{and}\hspace{.5cm}
     H_0\dot\gamma_1=\frac{\ddot\gamma_2}L+\frac{\dot\gamma_1\dot\gamma_2}{L\gamma_1}.
\end{equation}
Using $\gamma\in C^1([0,1])$, $\gamma_1>0$ on $(0,1)$ and Equation \eqref{hopf0}, we get $\gamma\in C^\infty((0,1))$. Using the second identity in Equation \eqref{hopf0}, we compute 
\begin{equation}\label{sphereidentity1}
\frac d{dt}(\dot\gamma_2\gamma_1)=\ddot\gamma_2\gamma_1+\dot\gamma_2\dot\gamma_1=LH_0\gamma_1\dot\gamma_1
\hspace{.5cm}\textrm{and hence}\hspace{.5cm}
\dot\gamma_2\gamma_1=\frac12LH_0\gamma_1^2.
\end{equation}
There is no integration constant since $\gamma_1(0)=0$. Using $\gamma_1(t)>0$ for $t\in (0,1)$, we may deduce $\dot\gamma_2=\frac12 LH_0\gamma_1$. Inserting into the first identity in Equation \eqref{hopf0} and putting $\omega:=\frac{LH_0}2$, we get 
$$\ddot\gamma_1+\omega^2\gamma_1=0\hspace{.5cm}\textrm{and hence}\hspace{.5cm}\gamma_1(t)=a\cos(\omega t)+b\sin(\omega t).$$
Combining $\gamma_1(0)=\gamma_1(1)=0$ and $\gamma_1(t)>0$ for $t\in(0,1)$ we deduce $a=0$, $b>0$ and $\omega=\pi$.  Using $\dot\gamma_2=\omega \gamma_1$ we get $\gamma_2(t)=\gamma_2(0)+b(1-\cos(\pi t))$. Finally, since $|\dot\gamma|=L$, we conclude $b=\frac L\pi$.
\end{proof}
\subsection{First Variation of the Willmore Energy}
Let $I=[s_0,s_1]$ be an interval in $\R$ and $\gamma=(\gamma_1,\gamma_2)\in C^\infty(I,\mathcal H^2)$. We assume that the axisymmetric surface defined by 
$$f:I\times[0,2\pi)\rightarrow\R^3,\ f(s, \theta):=\begin{bmatrix}
\gamma_1(s)\cos\theta\\
\gamma_1(s)\sin\theta\\
\gamma_2(s)
\end{bmatrix}$$
is smooth. Given a variation $\phi:(-\epsilon_0,\epsilon_0)\rightarrow\mathcal H^2$ of $\gamma$, we can define a variation of $f$ by putting 
$$\Phi:(-\epsilon_0,\epsilon_0)\times I\times[0,2\pi)\rightarrow\R^3,\ \Phi(\epsilon,s,\theta):=\begin{bmatrix}
\phi_1(\epsilon,s)\cos\theta\\
\phi_1(\epsilon,s)\sin\theta\\
\phi_2(\epsilon,s)
\end{bmatrix}.$$
Considering Equations \eqref{metricprofilecurve} for the metric, \eqref{principalcurvatures} for the principal curvatures of $f$ and the formula for $W[f]$ from Equation \eqref{WillmoreOperator}, we observe that $W$ only depends on $s$ and not on $\theta$. This justifies the notation $W[\gamma]$ for the very same function. Using the formulas for the normals of $\Sigma_\gamma$ and $\gamma$ from Equation \eqref{normalsprofilecurve} we further have
$\langle\Phi'(0),n\rangle=\langle\phi'(0), \nu\rangle$. 
Using the surface element from Equation \eqref{metricprofilecurve}, we can rewrite the surface integral in Equation \eqref{classicalformula} as
$$\int_{I\times\Sp^1}W[f]\langle\Phi'(0),n\rangle d\mu_f=2\pi\int_{s_0}^{s_1} W[\gamma]\langle\phi'(0), \nu\rangle \gamma_1|\dot\gamma|ds.$$
Let us now consider the boundary term. We assume that $\gamma$ is not closed -- that is $\gamma(s_0)\neq\gamma(s_1)$. Additionally, we assume that $\gamma_1(s_0)\neq 0\neq \gamma_1(s_1)$. In this case $\partial\Sigma_\gamma\neq\emptyset$ and
$$\partial\Sigma_\gamma=\set{f(s,\theta)\ |\ s\in\set{s_0,s_1},\ \theta\in[0,2\pi)}.$$
We focus on the boundary component at $s=s_0$. The interior conormal is given by
\begin{equation}\label{boundarycomputation1}
\eta=\frac1{|\dot\gamma(s)|}\frac\partial{\partial s}\bigg|_{s=s_0}
\hspace{.5cm}\textrm{and}\hspace{.5cm}
Df\eta=\frac1{|\dot\gamma|}\frac{\partial f}{\partial s}\bigg|_{s=s_0}.
\end{equation}
The tangential part of $\Phi'(0)$ is $\Phi'(0)-\langle\Phi'(0), n\rangle n$ and 
\begin{equation}\label{boundarycomputation2}
    \langle \Phi'(0)-\langle\Phi'(0), n\rangle n, Df\eta\rangle=\frac1{|\dot\gamma(s_0)|}\langle \Phi'(0), \frac{\partial f}{\partial s}\bigg|_{s=s_0}\rangle=\frac1{|\dot\gamma(s_0)|}\langle \phi'(0), \dot\gamma(s_0)\rangle.
\end{equation}
Combining Equations \eqref{boundarycomputation1} and \eqref{boundarycomputation2} and using the formula fom Equation \eqref{WillmoreBoundaryterm}, we get
\begin{align*}
\int_{s_0\times\Sp^1}\omega(\eta)dS_f=&\frac12\left[
\frac{\varphi(s_0) H'(s_0)-\varphi'(s_0) H(s_0)}{|\dot\gamma(s_0)|}
-
\frac12\frac{H(s_0)^2}{|\dot\gamma(s_0)|}\langle\dot\gamma(s_0), \phi'(0)\rangle 
\right]\int_0^{2\pi}\gamma_1(s_0)d\theta\\
=&\pi\left[
\frac{\varphi(s_0) H'(s_0)-\varphi'(s_0) H(s_0)
-\frac12H(s_0)^2\langle \dot\gamma(s_0),\phi'(0)\rangle}
{|\dot\gamma(s_0)|}
\right]\gamma_1(s_0).
\end{align*}

Computing the boundary term at $s=s_1$ is achieved by a similar computation. In total, we get the formula
\begin{equation}\label{variationformulastandard}
\begin{aligned}
    \delta\mathcal W[\gamma]\phi'(0)=&2\pi\int_{s_0}^{s_1} W[\gamma]\langle\phi,\nu\rangle \gamma_1|\dot\gamma|ds\\
    &-\pi\left[
\frac{\varphi(s) H'(s)-\varphi'(s) H(s)
-\frac12H^2(s)\langle \dot\gamma(s),\phi'(0)\rangle}
{|\dot\gamma(s)|}
\right]\gamma_1(s)\bigg|_{s=s_0}^{s=s_1}.
\end{aligned}
\end{equation}

\subsection{Existence of Variations}\label{variationexistenceappendix}
\begin{lemma}\label{variationexistence}
Let $\sigma\in(0,1)$, $\gamma\in\mathcal F_\sigma$ and $\eta\in C^\infty_0((0,1))$ such that $\delta\mathcal I[\gamma]\eta=0$. Then there exists $\Phi:(-\epsilon_0,\epsilon_0)\rightarrow\mathcal F'_\sigma$ such that $\Phi'(0)=\eta$.
\end{lemma}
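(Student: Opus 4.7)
\textbf{Proof proposal for Lemma \ref{variationexistence}.} The plan is a standard Lagrange multiplier argument via the implicit function theorem: we take the naive linear variation $\gamma+\epsilon\eta$ and correct it by a multiple of a second vector field $\psi_0$ that genuinely changes the isoperimetric ratio, then invoke the implicit function theorem to solve for the correction.

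First, since $\sigma\in(0,1)$, the surface $\Sigma_\gamma$ is not a round sphere, so Lemma \ref{psi0existencelemma} furnishes a $\psi_0\in C_0^\infty((0,1))$ with $\delta\mathcal I[\gamma]\psi_0=1$. Consider the map
$$F:(-\epsilon_1,\epsilon_1)\times(-\delta_1,\delta_1)\rightarrow\R,\quad F(\epsilon,s):=\mathcal I[\gamma+\epsilon\eta+s\psi_0]-\sigma.$$
For small $\epsilon,s$ the perturbed curve $\tilde\gamma_{\epsilon,s}:=\gamma+\epsilon\eta+s\psi_0$ lies in $\mathcal P'$: the regularity class $C^1([0,1])\cap W^{2,2}_{\operatorname{loc}}((0,1))$ is preserved since $\eta,\psi_0$ are smooth; $\tilde\gamma_{\epsilon,s}$ still vanishes at $t=0,1$ because $\eta$ and $\psi_0$ do; on the compact set $K:=\operatorname{supp}(\eta)\cup\operatorname{supp}(\psi_0)\Subset(0,1)$ one has $\inf_K\gamma_1>0$, so for $|\epsilon|+|s|$ small enough the first component of $\tilde\gamma_{\epsilon,s}$ remains strictly positive on $(0,1)$; and since $\gamma\in W^{2,2}_{\operatorname{loc}}((0,1))$ with bounded $L^2$ curvature, while $\eta,\psi_0$ are smooth and compactly supported, the $L^2$ curvature bound for $\Sigma_{\tilde\gamma_{\epsilon,s}}$ holds. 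Moreover $|\dot\gamma|=L[\gamma]>0$ and $\dot\eta,\dot\psi_0$ are bounded, so $|\dot{\tilde\gamma}_{\epsilon,s}|>0$ uniformly for small $\epsilon,s$, which ensures that the area, volume, and hence isoperimetric ratio depend smoothly on $(\epsilon,s)$.

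Thus $F$ is of class $C^1$ (in fact $C^\infty$) near the origin with $F(0,0)=0$ and $\partial_s F(0,0)=\delta\mathcal I[\gamma]\psi_0=1\neq 0$. The implicit function theorem yields $\epsilon_0>0$ and a $C^1$ function $s:(-\epsilon_0,\epsilon_0)\to\R$ with $s(0)=0$ and $F(\epsilon,s(\epsilon))\equiv 0$. Define
$$\Phi:(-\epsilon_0,\epsilon_0)\rightarrow\mathcal F'_\sigma,\qquad \Phi(\epsilon):=\gamma+\epsilon\eta+s(\epsilon)\psi_0,$$
which, shrinking $\epsilon_0$ if necessary, takes values in $\mathcal P'$ by the verification above, and lies in $\mathcal F'_\sigma$ by construction. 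Differentiating $F(\epsilon,s(\epsilon))\equiv 0$ at $\epsilon=0$ and using the hypothesis $\delta\mathcal I[\gamma]\eta=0$ gives
$$0=\delta\mathcal I[\gamma]\eta+s'(0)\,\delta\mathcal I[\gamma]\psi_0=s'(0),$$
so $\Phi'(0)=\eta+s'(0)\psi_0=\eta$, as required.

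The main obstacle is the bookkeeping in the second paragraph: verifying that $\tilde\gamma_{\epsilon,s}\in\mathcal P'$ uniformly in small $\epsilon,s$, and that the maps $(\epsilon,s)\mapsto A[\tilde\gamma_{\epsilon,s}]$ and $(\epsilon,s)\mapsto V[\tilde\gamma_{\epsilon,s}]$ are $C^1$. The essential ingredient is that perturbations are supported away from the axis of rotation (where all degeneracies of the parameterization live), which allows one to treat the problem as one of smooth perturbation of a $C^1$ curve with uniformly positive radius on the support.
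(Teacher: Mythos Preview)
Your proof is correct and follows essentially the same approach as the paper: both use Lemma \ref{psi0existencelemma} to obtain $\psi_0$ with $\delta\mathcal I[\gamma]\psi_0=1$, apply the implicit function theorem to the map $(\epsilon,s)\mapsto\mathcal I[\gamma+\epsilon\eta+s\psi_0]$, and then differentiate the implicit relation at $\epsilon=0$ to conclude $s'(0)=0$ from the hypothesis $\delta\mathcal I[\gamma]\eta=0$. Your version is slightly more explicit about why the perturbed curves remain in $\mathcal P'$ and why $F$ is $C^1$, which the paper leaves mostly implicit.
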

\begin{proof}
Since $\sigma\in(0,1)$, Lemma \ref{psi0existencelemma} provides $\psi_0\in C^\infty_0((0,1))$ such that $\delta\mathcal I[\gamma]\psi_0=1$. For small $\epsilon,\rho\in\R$ we have $\gamma+\epsilon\eta+\rho\psi_0\in\mathcal P'$ as $\operatorname{supp}(\eta),\ \operatorname{supp}(\psi_0)\subset(0,1)$. Now, we define the operator 
$$Q(\epsilon,\rho):=\mathcal I[\gamma+\epsilon\eta+\rho\psi_0].$$
Clearly $Q(0,0)=\sigma$ and 
$$\frac d{d\rho}\bigg|_{\rho=0}Q(0,\rho)=\delta\mathcal I[\gamma]\psi_0=1\neq 0.$$
So, by the implicit function theorem, there exist $\epsilon_0, \rho_0>0$ and a smooth map $\kappa:(-\epsilon_0,\epsilon_0)\rightarrow (-\rho_0,\rho_0)$ such that for $\epsilon\in(-\epsilon_0,\epsilon_0)$ and $\rho\in(-\rho_0,\rho_0)$ we have
$$Q(\epsilon,\rho)=\sigma
\hspace{.5cm}\Leftrightarrow\hspace{.5cm}
\rho=\kappa(\epsilon).
$$
Moreover $\kappa'(0)=0$ as
$$0=\frac d{d\epsilon}\bigg|_{\epsilon=0}Q(\epsilon,\kappa(\epsilon))=\delta\mathcal I[\gamma]\eta+\kappa'(0)\delta\mathcal I[\gamma]\psi_0=\kappa'(0).$$
Therefore $\Phi(\epsilon):=\gamma+\epsilon\eta+\kappa(\epsilon)\psi_0$ provides a variation as claimed. 
\end{proof}

\noindent
\textit{Proof of Lemma \ref{eulerlagangeataxis}.}\ \\
Using $\varphi'(0)=0$ and $\operatorname{supp}(\psi_0)\subset(0,1)$, we have $\gamma+\epsilon\chi\cdot(\varphi\circ\gamma_1)e_2+\rho\psi_0\in\mathcal P'$. This allows the definition of the operator 
$$Q(\epsilon,\rho):=\mathcal I[\gamma+\epsilon\chi\cdot(\varphi\circ\gamma_1)e_2+\rho\psi_0].$$
Note $Q(0,0)=\sigma$ and 
$$\frac d{d\rho}\bigg|_{\rho=0} Q(0,\rho)=\delta\mathcal I[\gamma]\psi_0=1\neq 0.$$
So, by the implicit function theorem, there exist $\epsilon_0, \rho_0>0$ and a diffeomorphism $\kappa:(-\epsilon_0,\epsilon_0)\rightarrow (-\rho_0,\rho_0)$ such that for $\epsilon\in(-\epsilon_0,\epsilon_0)$ and $\rho\in(-\rho_0,\rho_0)$ we have
$$Q(\epsilon,\rho)=\sigma
\hspace{.5cm}\Leftrightarrow\hspace{.5cm}
\rho=\kappa(\epsilon).
$$
Moreover we have $\kappa'(0)=-\delta\mathcal I[\gamma]\left(\chi\cdot(\varphi\circ\gamma_1) e_2\right)$ as 
$$0=\frac d{d\epsilon}\bigg|_{\epsilon=0}Q(\epsilon,\kappa(\epsilon))=\delta\mathcal I[\gamma]\left(\chi\cdot(\varphi\circ\gamma_1) e_2\right)+\kappa'(0)\delta\mathcal I[\gamma]\psi_0.$$
Therefore $\Phi(\epsilon):=\gamma+\epsilon\chi\cdot(\varphi\circ\gamma_1) e_2+\kappa(\epsilon)\psi_0$ provides a variation as claimed. 
\qed

\subsection{Proof of Theorem \ref{schygulla}}\label{schygullapp}

Schygulla's proof in \cite{schygulla} is based on studying the scaled catenoid. For $a>0$ he considers 
$$g_a:\R\times[0,2\pi)\rightarrow\R^3,\ g_a(s,\theta):=\left(
a\cosh\left(\frac sa\right)\cos(\theta),a\cosh\left(\frac sa\right)\sin(\theta),s
\right).$$
Next, he introduces the inversion $I(x):=e_3+\frac{x-e_3}{|x-e_3|^2}$, defines $f_a:=I\circ g_a$ and puts 
$$\Sigma_a:=f_a(\R\times[0,2\pi))\cup\set{e_3}.$$

$\Sigma_a$ is smooth away from $e_3$. Additionally, there exist $R>0$, functions $u^\pm:B_R(0)\rightarrow\R$ and a neighbourhood of $e_3$ in which $\Sigma_a=\operatorname{graph}(u^+)\cup\operatorname{graph}(u^-)$. Direct computation shows $u^\pm\in C^{1,\alpha}(B_R(0))\cap W^{2,p}(B_R(0))$ for all $\alpha\in(0,1)$ and $p\in[1,\infty)$. Another direct computation shows $\mathcal W[\Sigma_a]=8\pi$.\\

We now sketch the proof of Theorem \ref{schygulla} by following Schygulla's arguments and commenting on the necessary alterations. For details, we refer to Lemma 1 in Schygulla's article \cite{schygulla}.
\begin{proof}
Let $\varphi_\pm\in C^\infty_0(B_R)$ be axially symmetric and smooth functions --  that is $\varphi(x)=\varphi(Tx)$ for all $T\in \operatorname{SO}(2)$ -- that satisfy $\varphi_{\pm}(0)=\pm 1$. For small $\epsilon>0$, we define variations
$$\Phi_\pm^\epsilon:B_R(0)\rightarrow\R^3,\ \Phi^\epsilon_\pm(x):=(x, u_\pm(x)+\epsilon\varphi_\pm(x)).$$
Following Schygulla's computations, it follows that there exists $c_0>0$ such that
$$\frac d{d\epsilon}\bigg|_{\epsilon=0}\mathcal W[\Phi_\pm^\epsilon]=\mp c_0\varphi_\pm(0).$$
We modify $\Sigma_a$ and construct a surface $\Sigma_a^\epsilon$ by replacing the graphs of $u^\pm$ with the graphs of $u_\pm+\epsilon\varphi_\pm$ with small $\epsilon>0$. Consequently $\mathcal W[\Sigma_a^\epsilon]<8\pi-\epsilon c_0$ for small $\epsilon>0$. Additionally, $\Sigma_a^\epsilon$ can be parameterized over $\Sp^2$ and is axially symmetric. Finally, we note $\mathcal I[\Sigma_a]\rightarrow 0$, as $a\rightarrow0^+$. Given $\sigma\in(0,1)$ we can therefore choose $a$ and $\epsilon$ so small such that simultaneously
\begin{align*}
    \mathcal W[\Sigma_a^\epsilon]&\leq \mathcal W[\Sigma_a]-\epsilon c_0=8\pi-\epsilon c_0,\\
    \mathcal I[\Sigma_a^\epsilon]&<\sigma.
\end{align*}
Approximating $\Sigma_a^\epsilon$ with smooth surfaces, we deduce that there exists a smooth, axially symmetric embedding $F_0:\Sp^2\rightarrow\R^3$ with $\mathcal W[F_0]<8\pi$ and $\mathcal I[F_0]<\sigma$. Theorem 5.2 in \cite{removability} allows us to deduce that the Willmore flow $F(t)$ with initial datum $F(0)=F_0$ exists for all times and converges to a sphere. As $F_0$ is axially symmetric and the solution to the Willmore flow is unique  (Proposition 1.1 in \cite{kuwert2002gradient}), it follows that $F(t)$ is axially symmetric for all $t\geq 0$. Since $\mathcal I[F(0)]<\sigma$ and $\mathcal I[F(t)]\rightarrow 1$ as $t\rightarrow\infty$, there exists a time $t_0>0$ such that $\mathcal I[F(t_0)]=\sigma$. Finally, by definitions of $\beta(\sigma)$ and the dissipation of Willmore energy along the Willmore flow, we get
$$\beta(\sigma)\leq \mathcal W[F(t_0)]\leq \mathcal W[F(0)]<8\pi.$$
\end{proof}

\subsection{The Monotonicity Formula}
Let $\Sigma$ be a smooth $2$-dimensional manifold without boundary, $f\in C^1(\Sigma,\R^3)$ be an immersion and $\mu_\Sigma$ denote the surface measure on $\Sigma$ that is induced by $f$. For $p\in\R^3$, the \emph{muliplicity} of $p$ is defined as 
\begin{equation}\label{multiplicityDef}
\theta^2_{f}(p):=\lim_{r\rightarrow0^+}\frac{\mu_{\Sigma}(B_r(p))}{\pi r^2}.
\end{equation}
If $\Sigma$ is compact, this expression is well-defined and
\begin{equation}\label{multiplicitiyisNumberPreImages}
\theta^2_f(p)=|\set{x\in\Sigma\ |\ f(x)=p}|.
\end{equation}

The following result is due Simon \cite{SimonMonotonicity}. Its applicability in the present setting has essentially been established by Kuwert and Schätzle \cite{removability}, Appendix A.
\begin{lemma}[Monotonicity Formula]\label{monotonicityformula}
Let $\gamma\in \mathcal P \cap  C^\infty((0,1),\mathcal H^2)$. The surface $f_\gamma:[0,1]\times[0,2\pi)\rightarrow\R^3$ as defined in Equation \eqref{fgammadef} is $C^1$-immersed. Putting $\theta^2(p):=\theta^2_{f_\gamma}(p)$, we have
$$\frac1\pi\int_{\Sp^2}\left|
\frac14\vec H+\frac{(f_\gamma(x)-p)^\perp}{|f_\gamma(x)-p|^2}
\right| d\mu_{\Sigma_\gamma}(x)
\leq 
\frac1{4\pi}\mathcal W[f_\gamma]-\theta^2(p).$$
Here $\vec H$ is the almost everywhere defined mean curvature vector of $f_\gamma$ and $(f_\gamma(x)-p)^\perp$ is the component of $f_\gamma(x)-p$ that is orthogonal to $Df_\gamma(x)( T_x \Sp^2)$. 
\end{lemma}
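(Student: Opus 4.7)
The plan is to reduce to Simon's classical monotonicity formula \cite{SimonMonotonicity}, valid for smooth closed immersions, via an approximation argument; the only obstruction is the $C^1$ (rather than $C^\infty$) regularity of $f_\gamma$ at the two points on the axis of rotation corresponding to $t=0,1$. As noted in the statement, the applicability to profile-curve surfaces of class $\mathcal P$ is essentially the content of \cite{removability}, Appendix A, and the proposal below either mimics or simply cites that extension.

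First I would verify that $f_\gamma:\Sp^2\to\R^3$ is a $C^1$ immersion. Away from the poles this is immediate from $\gamma\in C^\infty((0,1))$; at the poles, the boundary relations \eqref{velocityatends}, together with the graph representation $u(r)=\gamma_2(\gamma_1^{-1}(r))$ near $t=0$ (which is $C^1$ with $u'(0)=0$), make $f_\gamma$ into a $C^1$ immersion there. With this in hand, the density $\theta^2(p)$ defined by \eqref{multiplicityDef} is a well-defined nonnegative integer equal to $|f_\gamma^{-1}(p)|$, i.e.\ formula \eqref{multiplicitiyisNumberPreImages}, because on a compact surface the number of preimages of any point under a $C^1$ immersion is finite.

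Next I would construct a sequence of smooth, axially symmetric immersions $f^{(k)}$ approximating $f_\gamma$. Concretely, one mollifies $\gamma$ in the interior and hand-crafts a modification near $t=0,1$ so that $\gamma^{(k)}\in C^\infty([0,1],\mathcal H^2)$ satisfies $\gamma_1^{(k)}(0)=\gamma_1^{(k)}(1)=0$, $f^{(k)}\to f_\gamma$ uniformly in $C^1$, $\vec H[f^{(k)}]\to\vec H[f_\gamma]$ in $L^2(\mu_{f_\gamma})$, and $\mathcal W[f^{(k)}]\to\mathcal W[f_\gamma]$. For each smooth $f^{(k)}$, Simon's classical monotonicity formula gives the stated inequality with density $\theta^2_k(p)$. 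Passing to the limit, the integral on the left-hand side is controlled by the $L^2$-convergence of the mean curvature together with uniform convergence of positions, the Willmore term converges, and the multiplicity satisfies $\theta^2(p)\leq\liminf_k\theta^2_k(p)$ via the preimage characterization combined with $C^0$-convergence of the immersions.

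The main obstacle is the construction in the previous step: mollifying $\gamma$ while preserving the axially symmetric structure, the boundary conditions $\gamma_1=0$ at the endpoints, and the $L^2$-closeness of the mean curvatures, all without introducing spurious curvature concentrations near the poles. In practice I would shortcut this by citing \cite{removability}, Appendix A, where precisely this extension of Simon's formula to (possibly branched) $W^{2,2}$-immersions is carried out, and only verify that the required hypotheses — compactness of $\Sp^2$, the $L^2$-bound on the mean curvature (Definition \ref{classPdef} (4)), and the $C^1$-immersion property established in the first step — are granted by $\gamma\in\mathcal P$.
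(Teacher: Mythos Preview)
Your final paragraph --- citing \cite{removability}, Appendix A, after verifying $H\in L^2(\mu_{\Sigma_\gamma})$ and the $C^1$-immersion property --- is exactly the route the paper takes. The paper is slightly more explicit: it invokes the finite-radius identity (A3) from \cite{removability}, then sends $\sigma\to 0^+$ and $\rho\to\infty$, using (A6) and (A13) there to kill the remainder terms $R_{p,\sigma}$ and $R_{p,\rho}$, and finally applies Fatou together with \eqref{multiplicityDef}. But the substance is the same.

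The smooth-approximation route you sketch first, however, has a genuine gap at the density step. You claim $\theta^2(p)\leq\liminf_k\theta^2_k(p)$ ``via the preimage characterization combined with $C^0$-convergence,'' but this is false as stated: if $f_\gamma$ has two preimages $x_1,x_2$ of $p$, the approximations $f^{(k)}$ will map $x_1,x_2$ \emph{near} $p$ but typically not \emph{to} $p$, so $\theta^2_k(p)$ can drop to $0$ or $1$. (Think of two sheets touching at $p$ that are pulled apart under perturbation.) One can repair this by applying Simon's inequality to $f^{(k)}$ at moving base points $p_k\to p$ chosen so that $\theta^2_k(p_k)\geq\theta^2(p)$, but that requires an extra argument you did not supply. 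Since you already identify the citation shortcut as the cleaner path, I would drop the approximation paragraph entirely and present only the \cite{removability}-based argument, ideally spelling out the $\sigma\to0^+$, $\rho\to\infty$ limit as the paper does.
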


\begin{proof}
We put $f:=f_\gamma$ and $\mu:=\mu_{\Sigma_\gamma}$. We use the monotonicity formula proved in Appendix A of \cite{removability}. We may apply their results as the mean curvature $H$ is in $L^2(\mu)$. Indeed, by the definition of the class $\mathcal P$ (see Definition \ref{classPdef})
$$
\int_{\Sp^2} H^2 d\mu
\leq 2\int_{\Sp^2} k_1^2+k_2^2 d\mu
<\infty.
$$
Using Equation (A3) from \cite{removability}, we get 
\begin{align}
    &\sigma^{-2}\mu(B_\sigma (x))
    +\int_{f^{-1}(B_\rho(p)\backslash B_\sigma(p))}\left|\frac14 \vec H+\frac{(f(x)-p)^\perp}{|f(x)-p|^2}\right|^2 d\mu\nonumber\\
    =&
    \rho^{-2}\mu(B_\rho (x))
    +\frac1{16}\int_{f^{-1}(B_\rho(p)\backslash B_\sigma(p))} \vec H^2 d\mu
    +
    R_{p,\rho}-R_{p,\sigma}\label{monotonicityformulaeq1}
\end{align}
where for $\alpha=\sigma,\rho$
$$R_{p,\alpha}:=\frac1{2\alpha^2}\int_{f^{-1}(B_\alpha(p))}\langle(x-p),\vec H\rangle d\mu.$$
The Equation after (A6) in \cite{removability} implies $R_{p,\sigma}\rightarrow 0$ as $\sigma\rightarrow 0^+$. We note that $\mu(B_\rho(p))\leq A[f]<\infty$ by definition of the class $\mathcal P$. Hence $\rho^{-2}\mu_{\Sigma}(B_\rho(p))\rightarrow 0$ for $\rho\rightarrow\infty$. So Equation (A13) in \cite{removability} implies $\lim_{\rho\rightarrow \infty}R_{p,\rho}=0$. 
Letting $\rho\rightarrow\infty$ and $\sigma\rightarrow 0^+$ in Equation \eqref{monotonicityformulaeq1} then gives
$$\liminf_{\sigma\rightarrow 0^+}\left[
\sigma^{-2}\mu_{\Sigma}(B_\sigma (x))
    +\int_{\Sp^2\backslash f^{-1}(B_\sigma(p))}\left|\frac14 \vec H+\frac{(f(x)-p)^\perp}{|f(x)-p|^2}\right|^2 d\mu
\right]
\leq 
\frac14\mathcal W[f].
$$
The lemma follows by using Fatou's lemma and Equation \eqref{multiplicityDef}.
\end{proof}

\begin{lemma}\label{SphereOrInvertedCat}
Let $\gamma\in\mathcal P\cap C^\infty((0,1))$ and denote by $\vec H$ the almost everywhere defined mean curvature vector of $f_\gamma$. Suppose that there exists $p\in\R^3$ such that
\begin{equation}\label{ssumptioncritical}
\frac14\vec H(s)=-\frac{(f_\gamma(s,\theta)-p)^\perp}{|f_\gamma(s,\theta)- p|^2}.
\end{equation}
Then $\Sigma_\gamma$ is either a sphere or the inversion of a scaled and vertically translated catenoid. In particular, only the second case is possible if $\mathcal W[\Sigma_\gamma]=8\pi$.
\end{lemma}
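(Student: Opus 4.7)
The plan is to recognize the equation as the pullback of the minimal surface equation under inversion at $p$. With $I_p(x) := p + (x-p)/|x-p|^2$ and $\tilde f := I_p \circ f_\gamma$, the standard transformation formula
\begin{equation*}
\vec H_{\tilde f} = |f_\gamma - p|^2\,\vec H_{f_\gamma} + 4(f_\gamma - p)^\perp
\end{equation*}
shows that the hypothesis is equivalent to $\tilde\Sigma := \tilde f(\Sp^2 \setminus f_\gamma^{-1}(p))$ being a minimal surface in $\R^3 \setminus \{p\}$.

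First I show that $p$ lies on the $x_3$-axis of rotation. Since $\vec H = H\cdot n$ and the unit normal $n(s,\theta)$ is axisymmetric, both sides of the hypothesis are parallel to $n$, so the equation reduces to the scalar identity
\begin{equation*}
\tfrac14 H(s) = -\frac{\langle f_\gamma(s,\theta) - p,\, n(s,\theta)\rangle}{|f_\gamma(s,\theta) - p|^2},
\end{equation*}
whose LHS is independent of $\theta$. However
\begin{equation*}
|f_\gamma(s,\theta) - p|^2 = \gamma_1(s)^2 - 2\gamma_1(s)(p_1\cos\theta + p_2\sin\theta) + p_1^2 + p_2^2 + (\gamma_2(s)-p_3)^2
\end{equation*}
is $\theta$-dependent unless $p_1 = p_2 = 0$, using $\gamma_1 > 0$ on $(0,1)$. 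Moreover, since $\R^3$ admits no closed minimal surface, $\tilde\Sigma$ must fail to be compact, forcing $p \in \Sigma_\gamma$. Combined with $p$ on the axis, this yields $p = (0,0,\gamma_2(0))$ or $p = (0,0,\gamma_2(1))$, possibly both.

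With $p$ on the axis, $I_p$ preserves axial symmetry, so $\tilde\Sigma$ is a complete axisymmetric minimal surface. Reducing the equation $H \equiv 0$ to the ODE for the rotational profile shows $\tilde\Sigma$ is either a horizontal plane perpendicular to the $x_3$-axis or a catenoid coaxial with it. Inverting back (noting $I_p^{-1} = I_p$), the plane case produces a sphere through $p$ (corresponding to $\gamma(0)\neq\gamma(1)$ with $p$ at one of the poles), while the catenoid case produces a scaled, vertically translated inverted catenoid (corresponding to $\gamma(0) = \gamma(1) = p$, as both ends of the catenoid fold back to $p$ with multiplicity $2$). For the final assertion, the round sphere has $\mathcal W = 4\pi$ while the inverted catenoid has $\mathcal W = 8\pi$, so $\mathcal W[\Sigma_\gamma] = 8\pi$ excludes the sphere. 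The principal technical obstacles are verifying the inversion transformation formula at the poles of $\Sigma_\gamma$ (where only $C^1$ regularity is available) and carefully applying the compactification identity $\mathcal W(I_p(\Sigma)) = \mathcal W(\Sigma) + 4\pi\,\theta^2(p)$ to the catenoid (with $\mathcal W = 0$ and multiplicity $2$ at $p$) to obtain the value $\mathcal W(\text{inverted catenoid}) = 8\pi$.
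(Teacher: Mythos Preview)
Your overall strategy---invert at $p$ to obtain a minimal surface, then classify---matches the paper's treatment of the on-axis case, but the argument that $p$ must lie on the axis has a genuine gap. You observe that $|f_\gamma - p|^2$ depends on $\theta$ unless $p_1 = p_2 = 0$, but the numerator $\langle f_\gamma - p,\, n\rangle$ \emph{also} depends on $\theta$, through the very same combination $p_1\cos\theta + p_2\sin\theta$. The ratio can therefore be $\theta$-independent even when $p$ is off the axis, and in fact this happens: if $\Sigma_\gamma$ is a round sphere centered on the axis, then for \emph{any} $p \in \Sigma_\gamma$ the inversion $I_p(\Sigma_\gamma)$ is a plane, so the hypothesis is satisfied with $p$ possibly off-axis. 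In that situation $I_p$ no longer commutes with the $\operatorname{SO}(2)$-action, and your reduction to axisymmetric minimal surfaces does not apply.

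The paper handles this by normalising $p = Re_1$ with $R\geq 0$ and treating $R>0$ separately. Clearing denominators makes both sides of the scalar identity affine in $\cos\theta$; comparing the $\cos\theta$-coefficients gives $\frac{\dot\gamma_2}{L} = \frac12 H\gamma_1$, equivalently $k_1 + k_2 = 2k_2$, so $k_1 = k_2$ and $\Sigma_\gamma$ is totally umbilical, hence a sphere. (The paper carries this out by solving the resulting ODE $\ddot\gamma_1\gamma_1 = \dot\gamma_1^2 - L^2$ explicitly.) Only in the case $R=0$ does your inversion argument apply; there the paper solves the minimal-surface ODE for the inverted profile and obtains either a line segment (giving back a sphere) or a catenary profile (giving the inverted catenoid), as you outline.
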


\begin{proof}
By vertical translation of $\gamma$ and rotational invariance, we can assume $p=Re_1$ for some $R\geq 0$. Let $L:=L[\gamma]$. Using Equation \eqref{normalsprofilecurve}, we compute 
\begin{align*} 
(f_\gamma(s,\theta)-Re_1)\cdot n(s,\theta)&=\begin{bmatrix}
\gamma_1\cos\theta-R\\
\gamma_1\sin\theta\\
\gamma_2
\end{bmatrix}
\cdot
\frac1L\begin{bmatrix}
-\dot\gamma_2\cos\theta\\
-\dot\gamma_2\sin\theta\\
\dot\gamma_1
\end{bmatrix}
=\frac{-\gamma_1\dot\gamma_2+\dot\gamma_1\gamma_2+R\dot\gamma_2\cos\theta}L.
\end{align*}
A quick computation shows $|f_\gamma(s,\theta)-Re_1|^2=\gamma_1^2+\gamma_2^2+R^2-2R\gamma_1\cos\theta$. Inserting into Equation \eqref{ssumptioncritical}, we get
\begin{equation}\label{newequationLemmaA13}
\frac14 H(s)\left(\gamma_1^2+\gamma_2^2+R^2-2R\gamma_1\cos\theta\right)
=
-\frac{-\gamma_1\dot\gamma_2+\dot\gamma_1\gamma_2+R\dot\gamma_2\cos\theta}L.
\end{equation}
\paragraph{The case $R>0$}\ \\
Comparing the coefficients of $\cos\theta$ in Equation \eqref{newequationLemmaA13}and using $R\neq 0$ we derive\\ $\frac{\dot\gamma_2}L=\frac12H(s)\gamma_1(s)$. By using Equation \eqref{ArcLength_H_Identity}, we get
$$
\frac{2\dot\gamma_2^2}L
=H\gamma_1\dot\gamma_2
=-\frac{\gamma_1\ddot\gamma_1}L+\frac{\dot\gamma_2^2}{L}.
$$
We use $\dot\gamma_2^2=L^2-\dot\gamma_1^2$ and get $-\ddot\gamma_1\gamma_1=\dot\gamma_2^2=L^2-\dot\gamma_1^2$. Next, we put $x(t):=\gamma_1(\frac tL)$. $x$ satisfies the following problem
\begin{equation}\label{xproblem}
    \left\{\begin{aligned}
    \ddot x x&=\dot x^2-1,\\
    x(0)&=x(L)=0,\\
    \dot x(0)&=1,\\
    x(t)&>0\hspace{.5cm}\textrm{for }t\in(0,L).
\end{aligned}\right.
\end{equation}
Since $|\dot\gamma(t)|\leq L$ we have $|\dot x|\leq 1$. Suppose that there exists $t_0\in(0,L)$ such that $\dot x(t_0)^2=1$. Then by the Picard-Lindelöff theorem $x(t)=x(t_0)\pm (t-t_0)$. This, however, contradicts  $x(0)=x(L)=0$. So $x^2(t)<1$ for all $t\in(0,L)$ and hence the following computation is admissible for $t\in(0,L)$:
$$-\frac12\frac d{dt}\ln(1-\dot x^2)=\frac{\dot x\ddot x}{1-\dot x^2}=-\frac{\dot x}x=-\frac d{dt}\ln(x)$$
Integrating and subsequent exponentiation proves that there exists a positive constant $K$ such that 
$1-\dot x^2=K x^2$. Differentiating this identity gives $\dot x(\ddot x+Kx)=0$ Since $\dot x(0)=1$ we have $\dot x\neq 0$ for small times $t\in[0, t_0]$ and get 
$$
x(t)=a\sin(\sqrt K t)+b\cos(\sqrt K t)\hspace{.5cm}\textrm{for }t\in[0,t_0].$$
As $x(t)>0$, Equation \eqref{xproblem} and the Picard-Lindelöff theorem imply that this formula is valid not only on $[0, t_0]$ but for all $t\in[0,L]$. Since $x(0)=x(L)=0$ we get $b=0$ and $\sqrt K=L^{-1}n\pi$ for some $n\in\N$. Moreover, as $x>0$ on $(0,L)$ and $\dot x(0)=1$, we get 
$$x(t)=\frac L\pi\sin\left(\frac \pi L t\right)\hspace{.5cm}\textrm{for all $t\in[0, L]$}.$$
Let $y(t):=\gamma_2(\frac tL)$. As $\dot y^2=1-\dot x^2=\sin^2(\frac\pi L t)$, we have $\dot y(t)=\pm\sin(\frac\pi L t)$ and hence
$$y(t)=y(0)\mp\frac L\pi\cos\left(\frac \pi L t\right).$$
The formulas for $x(t)$ and $y(t)$ imply that $\gamma$ parameterises a circle of radius $\frac L\pi$ with center at $(0, y(0))$. In particular $\mathcal W[\gamma]=4\pi$.\\

\paragraph{The Case $R=0$}\ \\
Let $I:\R^3\backslash\set0\rightarrow\R^3\backslash\set 0$, $I(x):=\frac{x}{|x|^2}$. Then $I^*\delta=\frac1{|x|^4}\delta$ and  
$$H[I\circ f_\gamma]=H[f_\gamma, I^*\delta]=|f_\gamma|^2H[f_\gamma]+4\langle f_\gamma, n\rangle=0.$$
This shows that the surface defined by the profile curve $\Gamma(s):=\frac{\gamma(s)}{|\gamma(s)|^2}$ is a minimal surface. Let $\alpha$ denote the maximal solution to the problem
$$
\left\{
\begin{aligned}
    \dot\alpha(t)&=\frac{|\gamma(\alpha(t))|^2}{L},\\
    \alpha(0)&=\frac12.
\end{aligned}
\right.
$$
We define $\Gamma^*(t):=\Gamma(\alpha(t))$ and compute 
$$|\dot\Gamma^*(s)|=\left|\left(\frac{\dot\gamma}{|\gamma|^2}-2\frac{\langle\gamma,\dot\gamma\rangle}{|\gamma|^4}\gamma\right)\dot\alpha\right|=1.$$
Consequently, $\Gamma^*$ is parameterized by arc length and satisfies $H[\Gamma^*]=0$. Following the arguments after Equation \eqref{LimitH0Equations} we get $\Gamma_1^*(t)=\sqrt{a+bt+t^2}$ and $\dot\Gamma_2^*(t)\Gamma_1^*(t)=c$ for constants $a>0$ and $b,c\in\R$. By potentially redefining $\Gamma^*(t)\rightarrow\Gamma^*(-t)$ we may further assume $b\geq 0$. Since $|\dot\Gamma^*|=1$ we have 
$$1=(\dot\Gamma_1^*)^2+(\dot\Gamma_2^*)^2=\frac{c^2+(t+\frac b2)^2}{(\Gamma_1^*)^2}
\hspace{.5cm}\textrm{and hence}\hspace{.5cm}
t^2+bt+a=(t+\frac b2)^2+c^2.$$
If $c=0$ we deduce $\Gamma^*(t)=(t+\frac b2,\Gamma_2^*(0))$ for $t\geq -\frac b2$. Hence $\Sigma_{\Gamma^*}$ is part of a a plane $P$ orthogonal to the $x_3$-axis and consequently $\Sigma_\gamma$ is part of a sphere or, if $0\in P$, part of $P$. The second case is impossible. Indeed, if $\gamma(s)=(\gamma_1(s), c_0)$ for some $c_0\in\R$, we get $L=|\dot\gamma|=|\dot\gamma_1|$. Since $\gamma_1(0)=0$ and $\gamma_1(t)\geq 0$ for all $t\in[0,1]$ we get $\gamma_1(t)=Lt$, which contradicts $\gamma_1(1)=0$. So, if $c=0$, $\Sigma_\gamma$ is part of a sphere and by the definition of the class $\mathcal P$, it then follows that $\Sigma_\gamma$ is a sphere and hence $\mathcal W[\Sigma_\gamma]=4\pi$. If $c\neq 0$, we have 
\begin{align*}
\Gamma_1^*(t)&=\sqrt{\left(t+\frac b2\right)^2+c^2},\\
    \Gamma_2^*(t)&=\Gamma_2^*(0)+\int_0^t \frac {cdt}{\sqrt{(t+\frac b2)^2+c^2}}=\Gamma_2^*(0)-|c|\operatorname{asinh}(\frac b{2c})+|c|\operatorname{asinh}(\frac tc+\frac b{2c}).
\end{align*}
This is the profile curve of a scaled and vertically translated catenoid. Hence, $\Sigma_\gamma$ is the inversion of a scaled and vertically translated catenoid and $\mathcal W[\Sigma_\gamma]=8\pi$.
\end{proof}

\section{Technicalities regarding the Direct Method}

\subsection{Lower Semi Continuity}
The proof of Equations \eqref{Willmorelowersemicont} and \eqref{curvsqlowersemicont} is the consequence of two lemmas. First, we establish the following:
\begin{lemma}\label{lowersemiconLemma}
Let $\rho_n,\rho\in C^0([0,1],\R^+_0)$ and $f_n,f:[0,1]\rightarrow\R$ be measurable such that $\rho_n\rightarrow\rho$ in $C^0([0,1])$ and $f_n\rho_n\rightarrow f\rho$ weakly in $L^1((0,1))$. Further assume $f^2\rho\in L^1((0,1))$ and $\|f_n^2\rho_n\|_{L^1((0,1))}\leq C<\infty$ for all $n\in\N$. Then
$$\int_0^1 f^2(t)\rho(t)dt\leq\liminf_{n\rightarrow\infty}\int_0^1 f_n^2(t)\rho_n(t)dt.$$
\end{lemma}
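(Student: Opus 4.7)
The plan is to substitute and reduce to the standard weak lower semicontinuity of the $L^2$-norm. Set $g_n := f_n\sqrt{\rho_n}$ so that $\int_0^1 f_n^2 \rho_n \, dt = \|g_n\|_{L^2}^2 \leq C$. Thus $(g_n)$ is bounded in $L^2((0,1))$, and after passing to a subsequence realizing the $\liminf$ on the right-hand side, I can extract a further subsequence $g_{n_k} \rightharpoonup g$ weakly in $L^2((0,1))$ for some $g \in L^2((0,1))$. Weak lower semicontinuity of $\|\cdot\|_{L^2}$ then gives $\|g\|_{L^2}^2 \leq \liminf_n \|g_n\|_{L^2}^2$, so it remains to show $\int_0^1 f^2 \rho \, dt \leq \|g\|_{L^2}^2$.

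To identify $g$, I first note that since $\rho_n \to \rho$ uniformly on $[0,1]$ and $\sqrt{\cdot}$ is uniformly continuous on $[0, \|\rho\|_\infty + 1]$, we also have $\sqrt{\rho_n} \to \sqrt{\rho}$ in $C^0([0,1])$. Hence for any $\phi \in L^\infty((0,1))$, the product $\phi \sqrt{\rho_n} \to \phi\sqrt{\rho}$ strongly in $L^2((0,1))$. By weak$\cdot$strong convergence,
\begin{equation*}
\int_0^1 g_{n_k}\, \phi \sqrt{\rho_{n_k}}\, dt \longrightarrow \int_0^1 g\, \phi \sqrt{\rho}\, dt.
\end{equation*}
On the other hand $g_{n_k}\sqrt{\rho_{n_k}} = f_{n_k}\rho_{n_k}$, and the assumption $f_n \rho_n \rightharpoonup f\rho$ in $L^1((0,1))$ tested against $\phi \in L^\infty$ gives the same limit equal to $\int_0^1 f\rho\, \phi \, dt$. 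Since $\phi \in L^\infty$ is arbitrary, the fundamental lemma of the calculus of variations yields the pointwise identity $g\sqrt{\rho} = f\rho$ almost everywhere on $(0,1)$.

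On the set $\{\rho > 0\}$ I can divide by $\sqrt{\rho}$ to obtain $g = f\sqrt{\rho}$, and on $\{\rho = 0\}$ one has $f^2\rho = 0$. Thus $g^2 \geq f^2\rho$ almost everywhere, and integration gives
\begin{equation*}
\int_0^1 f^2(t)\rho(t)\, dt \leq \int_0^1 g^2(t)\, dt \leq \liminf_{k\to\infty}\|g_{n_k}\|_{L^2}^2 = \liminf_{n\to\infty}\int_0^1 f_n^2(t)\rho_n(t)\, dt,
\end{equation*}
where the last equality uses that $(n_k)$ was chosen to realize the $\liminf$ on the right-hand side. The argument then applies to any subsequence of the original sequence, establishing the claim in full.

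The only mildly delicate point is the identification of $g$ on $\{\rho=0\}$; this is handled for free because $g^2 \geq 0$ there while the right-hand side $f^2\rho$ vanishes. The weak-$L^1$ hypothesis on $f_n\rho_n$ is used only to pin down $g\sqrt{\rho}=f\rho$, which is the key structural ingredient that prevents concentration of mass being lost to the limit.
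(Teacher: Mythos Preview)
Your proof is correct, and takes a genuinely different route from the paper's.

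The paper argues directly without any subsequence extraction: it fixes a truncation $\phi_R := f\,1_{|f|\leq R}\in L^\infty$, tests the weak $L^1$-convergence against $\phi_R$, and applies Cauchy--Schwarz with weight $\rho_n$ to obtain
\[
\int_0^1 f\phi_R\,\rho\,dt \;\leq\; \liminf_{n}\Big(\int_0^1 f_n^2\rho_n\Big)^{1/2}\Big(\int_0^1 \phi_R^2\rho\Big)^{1/2},
\]
and then lets $R\to\infty$ via dominated convergence. Your approach instead substitutes $g_n=f_n\sqrt{\rho_n}$, invokes weak $L^2$-compactness, identifies the weak limit through the assumed $L^1$-convergence, and then appeals to the standard weak lower semicontinuity of the Hilbert norm. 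The paper's argument is slightly more elementary in that it avoids Banach--Alaoglu and the subsequence bookkeeping; your argument is more conceptual, cleanly isolating the Hilbert-space structure and handling the degeneracy set $\{\rho=0\}$ through the inequality $g^2\geq f^2\rho$ rather than through truncation. Both approaches ultimately rest on the same Cauchy--Schwarz mechanism, just packaged differently.
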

All integrals are well-defined since $f^2\rho\geq 0$ and $f_n^2\rho_n\geq 0$. 
\begin{proof}
For $R>0$ we put $\phi_R:=f1_{|f|\leq R}\in L^\infty((0,1))$. Using $f_n\rho_n\rightarrow f\rho$ weakly in $L^1((0,1))$ and $\rho_n\rightarrow\rho$ in $C^0([0,1])$, we estimate
\begin{align*}
    \int_0^1 f(t)\phi_R(t)\rho(t)dt=&\lim_{n\rightarrow\infty}\int_0^1 f_n(t)\phi_R(t)\rho_n(t)dt\\
                                \leq &\liminf_{n\rightarrow\infty}\left[\left(\int_0^1 f_n(t)^2 \rho_n(t)dt\right)^{\frac12}\left(\int_0^1 \phi_R(t)^2\rho_n(t)dt\right)^{\frac12}\right]\\
                                \leq & \liminf_{n\rightarrow\infty}\left(\int_0^1 f_n(t)^2 \rho_n(t)dt\right)^{\frac12}\left(\int_0^1 \phi_R(t)^2\rho(t)dt\right)^{\frac12}.
\end{align*}    
Note that $\phi_R\rightarrow f$ pointwise and $|\phi_R|\leq |f|$. As $|f|^2\rho\in L^1((0,1))$ we can let $R\rightarrow\infty$ and get the claimed estimate. 
\end{proof}

 \begin{lemma}\label{lemmaprinciplacuvaturesconv}
Let $\gamma^{(n)},\gamma^*\in\mathcal P^w$ such that $\gamma^{(n)}\rightarrow \gamma^*$ in $\mathcal P^w$ and 
$$\sup_n\int_0^1 \left((k_1^ {(n)})^2+(k_2^ {(n)})^2\right) 2\pi \gamma_1^ {(n)}|\dot\gamma^ {(n)}|dt<\infty.$$
Then $k_i^{(n)}\gamma_1^{(n)}\rightarrow k_i^*\gamma_1^*$ weakly in $L^1((0,1))$ for $i=1,2$. 
\end{lemma}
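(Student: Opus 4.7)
\textbf{Proof plan for Lemma \ref{lemmaprinciplacuvaturesconv}.} The plan is to reduce each statement to an algebraic expression in the quantities that are controlled by the definition of convergence in $\mathcal{P}^w$ and then handle the products carefully. Since $|\dot\gamma^{(n)}|=L_n$ and $|\dot\gamma^*|=L_*$ almost everywhere, and $\dot\gamma^{(n)}\to\dot\gamma^*$ in $L^2((0,1))$, one gets $L_n^2=\|\dot\gamma^{(n)}\|_{L^2}^2\to\|\dot\gamma^*\|_{L^2}^2=L_*^2$, and hence $L_n\to L_*>0$. Recalling Equation \eqref{principalcurvatures},
\begin{equation*}
k_1^{(n)}\gamma_1^{(n)}=\frac{1}{L_n^3}\Bigl[\dot\gamma_1^{(n)}\bigl(\ddot\gamma_2^{(n)}\gamma_1^{(n)}\bigr)-\dot\gamma_2^{(n)}\bigl(\ddot\gamma_1^{(n)}\gamma_1^{(n)}\bigr)\Bigr],\qquad k_2^{(n)}\gamma_1^{(n)}=\frac{\dot\gamma_2^{(n)}}{L_n}.
\end{equation*}
The case $i=2$ is immediate: $\dot\gamma_2^{(n)}/L_n\to \dot\gamma_2^*/L_*$ strongly in $L^2$, and strong $L^2$ convergence on $(0,1)$ implies weak $L^1$ convergence.

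For $i=1$, the key claim I will prove is the following product rule: if $f_n\rightharpoonup f$ weakly in $L^1((0,1))$ and $(g_n)\subset L^\infty((0,1))$ satisfies $\sup_n\|g_n\|_\infty<\infty$ together with $g_n\to g$ almost everywhere, then $f_ng_n\rightharpoonup fg$ weakly in $L^1((0,1))$. Given $\varphi\in L^\infty$, split
\begin{equation*}
\int_0^1 \varphi f_n g_n\,dt=\int_0^1\varphi(f_n-f)g\,dt+\int_0^1\varphi f_n(g_n-g)\,dt.
\end{equation*}
The first term vanishes because $\varphi g\in L^\infty$. For the second, note that $\{f_n\}$ is equi-integrable by Dunford–Pettis, so for any $\varepsilon>0$ there exists $\delta>0$ with $\int_E|f_n|\,dt<\varepsilon$ whenever $|E|<\delta$. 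Egorov's theorem supplies a set $E$ with $|(0,1)\setminus E|<\delta$ on which $g_n\to g$ uniformly, and splitting the integral over $E$ and its complement gives the required bound.

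Now I apply this claim twice with $f_n:=\ddot\gamma_i^{(n)}\gamma_1^{(n)}$, which converges weakly in $L^1$ by clause (3) of Definition \ref{convergencedefinition}, and $g_n:=\dot\gamma_j^{(n)}$, which is uniformly bounded by $\sup_n L_n<\infty$. The almost-everywhere convergence of $g_n$ is obtained by passing to a subsequence of the $L^2$-convergent sequence $\dot\gamma_j^{(n)}\to\dot\gamma_j^*$. Combined with $L_n^{-3}\to L_*^{-3}$, this yields weak $L^1$ convergence of $k_1^{(n)}\gamma_1^{(n)}$ to $k_1^*\gamma_1^*$ along the subsequence; a standard subsequence-of-subsequence argument then promotes this to convergence of the full sequence.

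The main obstacle is precisely justifying that a weakly-$L^1$-convergent sequence can be multiplied against a uniformly bounded, a.e.\ convergent sequence while preserving weak $L^1$ convergence. The hypothesis $\dot\gamma^{(n)}\to\dot\gamma^*$ sits only in $L^2$, not in $L^\infty$, so one cannot simply use a standard weak–strong pairing; the step that genuinely requires work is the equi-integrability argument via Dunford–Pettis together with Egorov.
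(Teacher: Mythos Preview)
Your argument is correct, but the paper takes a more direct route that avoids Dunford--Pettis, Egorov, and the subsequence-of-subsequence detour. Writing $k_1^{(n)}\gamma_1^{(n)}=L_n^{-3}\langle\ddot\gamma^{(n)},R\dot\gamma^{(n)}\rangle\gamma_1^{(n)}$ with $R=\begin{pmatrix}0&-1\\1&0\end{pmatrix}$, the paper observes that the hypothesis together with Lemma~\ref{gamma1ddotgammainL1} gives a uniform bound on $\int_0^1|\ddot\gamma^{(n)}|^2\gamma_1^{(n)}\,dt$. A weighted Cauchy--Schwarz then yields
\[
\left|\int_0^1 \langle\ddot\gamma^{(n)},R\dot\gamma^{(n)}-R\dot\gamma^{*}\rangle\gamma_1^{(n)}\phi\,dt\right|
\le \|\phi\|_{L^\infty}\Bigl(\int_0^1|\ddot\gamma^{(n)}|^2\gamma_1^{(n)}\,dt\Bigr)^{1/2}\Bigl(\sup_n\gamma_1^{(n)}\Bigr)^{1/2}\|\dot\gamma^{(n)}-\dot\gamma^*\|_{L^2}\to 0,
\]
so $\dot\gamma^{(n)}$ can be replaced by $\dot\gamma^*$ \emph{directly} from the strong $L^2$ convergence. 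After this replacement, $R\dot\gamma^*\phi\in L^\infty$ and clause~(3) of Definition~\ref{convergencedefinition} finishes the job. The paper's approach thus exploits the specific weighted-$L^2$ structure of the problem, whereas your product lemma is a general soft-analysis tool; the trade-off is that you invoke heavier machinery but produce a result reusable elsewhere, while the paper's argument is shorter and works for the full sequence without any subsequence extraction. (Minor slip: your displayed decomposition should read $\int\varphi f_ng_n-\int\varphi fg$ on the left, not $\int\varphi f_ng_n$ alone.)
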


\begin{proof}
Let $\phi\in L^ \infty((0,1))$, $L_n:=L[\gamma^{(n)}]$ and $L_*:=L[\gamma^*]$. Then $L_n\rightarrow L_*$ and, as $\gamma^*\in\mathcal P^ w$, we have $L_*>0$. We put $R=\begin{bmatrix}
0 & -1\\
1 & 0
\end{bmatrix}$ and estimate 
$$\left|\int_0^1 \langle\ddot\gamma^{(n)},R\dot\gamma^{(n)}-R\dot\gamma^{*}\rangle\gamma_1^{(n)}\phi dt\right|
\leq 
\|\phi\|_{L^\infty}\left[\int_0^1|\ddot\gamma^{(n)}|^2\gamma_1^{(n)}dt\right]^{\frac12}\left(\sup_n \gamma_1^{(n)}\right)^{\frac12}\|\dot\gamma^{(n)}-\dot\gamma^*\|_{L^2}.$$
$\||\ddot\gamma^ {(n)}|^2\gamma_1^ {(n)}\|_{L^1((0,1))}$ is bounded by combining the assumptions of the lemma with Lemma \ref{gamma1ddotgammainL1}. Using $\gamma^{(n)}\rightarrow\gamma^*$ in $C^0([0,1])$ and $W^ {1,2}((0,1))$, we deduce that 
\begin{equation}\label{erroerestiamtekconvergenceeq1}
\lim_{n\rightarrow\infty}\left|\int_0^1 \langle\ddot\gamma^{(n)},R\dot\gamma^{(n)}-R\dot\gamma^{*}\rangle\gamma_1^{(n)}\phi dt\right|=0.
\end{equation}
Note that $R\dot\gamma^*\phi\in L^\infty$. So, by weak convergence in $L^1$, we get 
$$\lim_{n\rightarrow\infty}\int_0^1\langle \ddot\gamma^{(n)} ,R\dot\gamma^{*}\rangle\gamma_1^{(n)}\phi dt
=
\lim_{n\rightarrow\infty}\int_0^1 \langle\ddot\gamma^{*}, R\dot\gamma^{*}\rangle\gamma_1^{*}\phi dt
.$$
In view of Equation \eqref{erroerestiamtekconvergenceeq1} as well as Equation \eqref{principalcurvatures}, we get $k_1^{(n)}\gamma_1^{(n)}\rightarrow k_1^*\gamma_1^*$ weakly in $L^1((0,1))$.\ \\

Next, we take care of $k_2^{(n)}$. As $\dot\gamma^{(n)}\rightarrow\dot\gamma^*$ in $L^2((0,1))$ we get 
 $$\int_0^1 k_2^{(n)}\gamma_1^{(n)}(t)\phi(t)dt=\frac{2\pi}{ L_n}\int_0^1 \dot\gamma_2^{(n)}(t) \phi(t)dt\rightarrow \int_0^1 k_2^*\gamma_1^*(t)\phi(t)dt.$$
\end{proof}

\noindent
\textit{Proof of Estimates \eqref{Willmorelowersemicont} and \eqref{curvsqlowersemicont}.}\ \\
   The proof for both estimates is essentially the same. To prove Estimate \eqref{Willmorelowersemicont}, we pass to a subsequence that realizes the $\liminf$ and assume without loss of generality that the $\liminf$ is finite. Consequently 
   \begin{equation}\label{liminfsuprealization}
   \sup_{n\in\N}\int_0^1\left(k_1^ {(n)}+k_2^ {(n)}\right)^22\pi|\dot\gamma^ {(n)}|\gamma_1^ {(n)}dt<\infty.\\
  \end{equation}
   We wish to apply Lemma \ref{lowersemiconLemma} with the functions $f_n=k_1^ {(n)}+k_2^ {(n)}$, $f=k_1^*+k_2^*$, $\rho_n=\gamma_1^ {(n)}$ and $\rho=\gamma_1^*$. To justify this choice we first use Lemma  \ref{lemmaprinciplacuvaturesconv} to get $(k_1^{(n)}+k_2^{(n)})\gamma_1^{(n)}\rightarrow (k_1^*+k_2^*)\gamma_1^*$ weakly in $L^1((0,1))$. Additionally we have Estimate \eqref{liminfsuprealization} and $(k_1^*+k_2^*)^2\gamma_1^*\in L^1((0,1))$ by definition of the class $\mathcal P^ w$.  Estimate \eqref{Willmorelowersemicont} now follows from Lemma \ref{lowersemiconLemma}. To get  Estimate \eqref{curvsqlowersemicont}, the only change is to take $f_n=k_i^ {(n)}$ and $f=k_i^*$ for $i=1,2$. \qed

\subsection{Compactness}\label{compactnessappendix}
The following lemma is due to Choski and Veneroni \cite{choskiveneroni} (see Lemma 5). We present a modified proof. 

\begin{lemma}\label{oscbound}
Let $\gamma\in\mathcal P^w$, $L:=L[\gamma]$ and $(a,b)\subset [0,1]$ such that $\gamma_1(t)>0$ for all $t\in(a,b)$. Then 
\begin{align*} 
|\dot\gamma_1(b^-)-\dot\gamma_1(a^+)|&\leq \frac L{2\pi}\int_a^b |k_1k_2|2\pi L\gamma_1(t)dt,\\
|\dot\gamma_2(b^-)-\dot\gamma_2(a^+)|&\leq\sqrt{\frac{L}{2\pi}}  \left(\int_a^b k_1^2 2\pi L\gamma_1 dt\right)^{\frac12}
    \left(\int_a^b\frac{\dot\gamma_1^2}{\gamma_1}dt\right)^{\frac12}.
\end{align*}
In  particular, if $\dot\gamma$ is continuous at $a$ and $b$, the one sided limits can be replaced by $\dot\gamma(a)$ and $\dot\gamma(b)$ respectively. 
\end{lemma}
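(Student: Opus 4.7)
\textbf{Proof Plan for Lemma \ref{oscbound}.}

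On the open interval $(a,b)$ the assumption $\gamma_1>0$ combined with part (3) of Definition \ref{PWeakDefinition} gives $\gamma\in W^{2,2}_{\operatorname{loc}}((a,b))$. Using Equation \eqref{ArcLength_K_Identity}, one has pointwise a.e.\ the identity $\ddot\gamma_1=-L^2\gamma_1 k_1k_2$ on $(a,b)$. Since $2|k_1k_2|\le k_1^2+k_2^2$ and $(k_1^2+k_2^2)\gamma_1\in L^1((0,1))$ by part (4) of Definition \ref{PWeakDefinition}, the right-hand side lies in $L^1((a,b))$. Hence $\dot\gamma_1$ is absolutely continuous on $(a,b)$ and possesses one-sided limits $\dot\gamma_1(a^+),\dot\gamma_1(b^-)$. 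The first estimate then follows by a direct integration:
\begin{equation*}
|\dot\gamma_1(b^-)-\dot\gamma_1(a^+)|=\left|\int_a^b L^2\gamma_1 k_1k_2\,dt\right|
\le L^2\int_a^b|k_1k_2|\gamma_1\,dt=\frac{L}{2\pi}\int_a^b|k_1k_2|\,2\pi L\gamma_1\,dt.
\end{equation*}

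For the second estimate I would first derive the cleaner identity $\ddot\gamma_2=L\,k_1\dot\gamma_1$. This comes from the first equation in \eqref{ArcLength_H_Identity}: since $H=k_1+k_2$ and Equation \eqref{principalcurvatures} gives $k_2=\dot\gamma_2/(L\gamma_1)$, one computes
\begin{equation*}
\frac{\ddot\gamma_2}{L}=\dot\gamma_1 H-\frac{\dot\gamma_1\dot\gamma_2}{L\gamma_1}=\dot\gamma_1(k_1+k_2)-\dot\gamma_1 k_2=\dot\gamma_1 k_1.
\end{equation*}
Integrating and applying the Cauchy--Schwarz inequality yields
\begin{equation*}
|\dot\gamma_2(b^-)-\dot\gamma_2(a^+)|\le L\int_a^b|k_1||\dot\gamma_1|\,dt
\le L\left(\int_a^b k_1^2\gamma_1\,dt\right)^{\frac12}\left(\int_a^b\frac{\dot\gamma_1^2}{\gamma_1}\,dt\right)^{\frac12}.
\end{equation*}
Pulling the factor $L$ inside the first square root as $\sqrt{L/(2\pi)}\cdot\sqrt{2\pi L}$ gives precisely the form stated in the lemma.

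The only delicate point is justifying that $\dot\gamma_2$ admits one-sided limits at $a^+$ and $b^-$. The relation $|\dot\gamma_2|=\sqrt{L^2-\dot\gamma_1^2}$ together with the already established existence of $\dot\gamma_1(a^+)$ and $\dot\gamma_1(b^-)$ shows that $|\dot\gamma_2|$ has one-sided limits. Since $\dot\gamma_2$ is continuous on $(a,b)$ (by the $W^{2,2}_{\operatorname{loc}}$ regularity), either the limit of $|\dot\gamma_2|$ is zero, in which case $\dot\gamma_2\to 0$, or it is strictly positive, in which case $\dot\gamma_2$ must keep a constant sign near the endpoint and therefore has a one-sided limit itself. (If the right-hand side of the second estimate is $+\infty$, the inequality is trivial.) The final sentence of the lemma is immediate, as the one-sided limits coincide with $\dot\gamma(a)$ and $\dot\gamma(b)$ whenever $\dot\gamma$ is continuous there.
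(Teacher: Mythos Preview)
Your proof is correct and the computational core---the identities $\ddot\gamma_1=-L^2\gamma_1 k_1k_2$ and $\ddot\gamma_2=L k_1\dot\gamma_1$, followed by integration and Cauchy--Schwarz---matches the paper's argument exactly.

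The one genuine difference lies in how you justify the existence of the one-sided limits $\dot\gamma(a^+)$ and $\dot\gamma(b^-)$. The paper works on compact subintervals $[a+\epsilon,b-\epsilon]$ and then invokes Theorem~\ref{weakandstrongPconnection} (the structure theorem for weak profile curves, borrowed from \cite{choskiveneroni}) to pass to the limit $\epsilon\to0^+$; it even includes a remark explaining why this citation is not circular. Your route is more self-contained: you obtain $\dot\gamma_1(a^+),\dot\gamma_1(b^-)$ directly from $\ddot\gamma_1\in L^1((a,b))$, and then recover the limits of $\dot\gamma_2$ from the arc-length constraint $|\dot\gamma_2|=\sqrt{L^2-\dot\gamma_1^2}$ together with the continuity-and-sign argument. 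This avoids any appeal to the structure theorem and is arguably cleaner; the paper's approach, on the other hand, makes the connection to the broader regularity theory of $\mathcal P^w$ more explicit. Both are valid.
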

Before presenting the proof, we have the following remark: In the proof, we use Theorem \ref{weakandstrongPconnection} to deduce that $\dot\gamma$ has one-sided limits at $a$ and $b$. The analog of Theorem \ref{weakandstrongPconnection} in \cite{choskiveneroni} is Lemma 6 and the proof uses Lemma 5 (the analog of our Lemma \ref{oscbound}). However, the continuity statement we use here is Corollary 2 in \cite{choskiveneroni} and proven without using the analog of our Lemma \ref{oscbound}. 
\begin{proof}
Let $\epsilon>0$ be small. Then $\gamma_1\geq c_0(\epsilon)>0$ on $[a+\epsilon, b-\epsilon]$ so that $\gamma\in W^{2,2}((a+\epsilon, b-\epsilon))$ by Definition \ref{PWeakDefinition}. 
Using Equation \eqref{ArcLength_K_Identity}, we compute 
$$
    \frac1{2\pi}\int_{a+\epsilon}^{b-\epsilon}|k_1k_2|2\pi L\gamma_1 dt
    \geq 
    \frac1L\left|\int_{a+\epsilon}^{b-\epsilon}\ddot\gamma_1(t)dt\right|
    =
    \frac{|\dot\gamma_1(b-\epsilon)-\dot\gamma_1(a+\epsilon)|}{L}.
$$
Now we let $\epsilon\rightarrow 0^+$. To do so, we use $|k_1k_2|2\pi L\gamma_1\in L^1((0,1))$ by definition of the class $\mathcal P^w$ and Theorem \ref{weakandstrongPconnection} to get the first estimate. For the second estimate, we argue similarly and write 
\begin{align*}
 \frac{|\dot\gamma_2(b-\epsilon)-\dot\gamma_2(a+\epsilon)|}{L}&=
 \frac1L\left|\int_{a+\epsilon}^{b-\epsilon}\ddot\gamma_2(t)dt\right|\\
 &=  \left|\int_{a+\epsilon}^{b-\epsilon}k_1 \dot\gamma_1 dt\right|\\
 &=  \frac1{\sqrt{2\pi L}}\left|\int_{a+\epsilon}^{b-\epsilon}k_1 \frac{\dot\gamma_1}{\sqrt{\gamma_1}} \sqrt{2\pi L\gamma_1} dt\right|\\
 &\leq  \frac1{\sqrt{2\pi L}}\left[
    \left(\int_a^b k_1^2 2\pi L\gamma_1 dt\right)^{\frac12}
    \left(\int_a^b\frac{\dot\gamma_1^2}{\gamma_1}dt\right)^{\frac12}
 \right].
\end{align*}
Note that while valid, we may have estimated by $+\infty$ in the last step. Letting $\epsilon\rightarrow 0^+$, the second estimate claimed in the lemma follows.
\end{proof}

For the proof of Theorem \ref{compactnesstheorem}, we require the space of bounded variations.
\begin{definition}
Let $\alpha,\beta\in\R$. For a measurbale function $f:[\alpha,\beta]\rightarrow\R$ we put
$$V_\alpha^\beta(f):=\sup\left\{\sum_{i=0}^{N-1}|f(t_{i+1})-f(t_i)|\ \bigg|\ N\in\N\textrm{ and }\alpha=t_0<t_1<...<t_{N-1}<t_N=\beta\right\}.$$
$f$ is said to have \emph{bounded variation} if $V_\alpha^\beta(f)<\infty$. If additionally $f\in L^1((\alpha,\beta))$, we define
$\|f\|_{\operatorname{BV}((\alpha,\beta))}:=\|f\|_{L^1((\alpha,\beta))}+V_\alpha^\beta(f)$. 
The space of all $f$ with $\|f\|_{\operatorname{BV}((\alpha,\beta))}<\infty$ is denoted by $\operatorname{BV}((\alpha,\beta))$. 
\end{definition}
We stress that to define $V_\alpha^\beta(f)$, we consider a function $f:[\alpha,\beta]\rightarrow\R$ and not the equivalence class of all functions $\tilde f$ such that $\tilde f=f$ almost everywhere. We require the following theorem. For a proof, we refer to Theorem 2.35 and Proposition 2.38 in \cite{leoni2017first}.
\begin{theorem}[Helly's Selection Theorem]\label{helly}
Let $(f_n)$ be a sequence of measurable functions $f_n:[\alpha,\beta]\rightarrow\R$ and $c\in[\alpha,\beta]$ such that $|f_n(c)|+V_\alpha^\beta(f_n)\leq C<\infty$ for all $n\in\N$. Then, there exists a subsequence $f_{n_a}$ and a measurable function $f:[\alpha,\beta]\rightarrow\R$ such that $f_{n_a}\rightarrow f$ pointwise and
$$V_\alpha^\beta(f)\leq\liminf_{a\rightarrow\infty}V_\alpha^\beta(f_{n_a}).$$
\end{theorem}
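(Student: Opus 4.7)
The plan is to reduce the problem to sequences of monotone functions via a Jordan decomposition, then use a diagonal extraction argument together with the fact that monotone functions have at most countably many discontinuities.

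First I would write each $f_n$ as the difference of two nondecreasing functions: set
\begin{equation*}
g_n(x):=V_\alpha^x(f_n), \qquad h_n(x):=g_n(x)-f_n(x)+f_n(\alpha).
\end{equation*}
A direct verification shows that both $g_n$ and $h_n$ are nondecreasing on $[\alpha,\beta]$, and $f_n = g_n - h_n + f_n(\alpha)$. Since $|f_n(c)|+V_\alpha^\beta(f_n) \leq C$ and $|f_n(\alpha)| \leq |f_n(c)| + V_\alpha^\beta(f_n) \leq C$, both sequences $(g_n)$ and $(h_n)$ are uniformly bounded by some constant $C'$ on $[\alpha,\beta]$.

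Next I would perform a diagonal extraction. Let $D \subset [\alpha,\beta]$ be a countable dense set containing $\alpha$ and $\beta$. By Bolzano--Weierstrass and a standard Cantor diagonal argument, I can extract a subsequence (still denoted $n_a$) such that $g_{n_a}(x)$ and $h_{n_a}(x)$ converge for every $x \in D$. Define $\tilde g, \tilde h : D \to \R$ as these pointwise limits; by monotonicity of each $g_{n_a}$, $h_{n_a}$ on $D$, the limits $\tilde g, \tilde h$ are nondecreasing on $D$. Extend them to $[\alpha,\beta]$ by
\begin{equation*}
g(x):=\inf\{\tilde g(y)\ |\ y\in D,\ y>x\},\qquad h(x):=\inf\{\tilde h(y)\ |\ y\in D,\ y>x\},
\end{equation*}
with analogous boundary conventions; the resulting $g,h$ are nondecreasing and bounded, hence have at most countably many discontinuity points. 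A straightforward squeezing argument using density of $D$ shows that $g_{n_a}(x) \to g(x)$ at every continuity point of $g$, and similarly for $h$. Finally, enumerating the (at most countable) remaining discontinuity points and performing one more diagonal extraction, I obtain a subsequence converging pointwise everywhere. Setting $f := g - h + \lim_a f_{n_a}(\alpha)$ (with the last limit extracted along the subsequence as well), I have $f_{n_a}(x) \to f(x)$ for every $x \in [\alpha,\beta]$.

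For the lower semicontinuity of total variation, I would argue directly from the definition: given any partition $\alpha = t_0 < t_1 < \cdots < t_N = \beta$, pointwise convergence gives
\begin{equation*}
\sum_{i=0}^{N-1}|f(t_{i+1})-f(t_i)| = \lim_{a\to\infty}\sum_{i=0}^{N-1}|f_{n_a}(t_{i+1})-f_{n_a}(t_i)| \leq \liminf_{a\to\infty}V_\alpha^\beta(f_{n_a}),
\end{equation*}
and taking the supremum over all partitions on the left-hand side yields $V_\alpha^\beta(f) \leq \liminf_a V_\alpha^\beta(f_{n_a})$. The main obstacle is bookkeeping: handling the countable exceptional set where monotone limits can fail, which is why the two-stage diagonal extraction is needed. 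Everything else is routine once the Jordan decomposition is in place.
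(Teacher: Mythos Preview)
Your proof sketch is correct and follows the classical route to Helly's theorem: Jordan decomposition into monotone pieces, diagonal extraction on a countable dense set, extension by monotonicity, a second diagonal pass over the countable set of discontinuities, and finally lower semicontinuity of $V_\alpha^\beta$ directly from pointwise convergence on finite partitions. The paper does not give its own proof of this statement; it simply cites Theorem~2.35 and Proposition~2.38 in Leoni's \emph{A First Course in Sobolev Spaces}, where essentially the same argument appears.
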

We establish the following corollary:
\begin{korollar}\label{BVCompactInL1}
Let $(f_n)\subset \operatorname{BV}((\alpha,\beta))$ be a sequence such that $\|f_n\|_{\operatorname{BV}((\alpha,\beta))}\leq C<\infty$. Then, there exist $f\in \operatorname{BV}((\alpha,\beta))$ and a subsequence  $(f_{n_a})$ such that $f_{n_a}\rightarrow f$ pointwise and in $L^1((\alpha,\beta))$. 
\end{korollar}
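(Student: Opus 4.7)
The strategy is to reduce the statement to Helly's selection theorem (Theorem \ref{helly}) and then upgrade the pointwise convergence to $L^1$-convergence via dominated convergence. The only nontrivial ingredient is producing a point $c \in [\alpha, \beta]$ at which the sequence $(f_n(c))$ is uniformly bounded, since the hypothesis provides only $L^1$-control of $f_n$, not pointwise control.

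\textbf{Step 1: Uniform pointwise bound at some point.} For any $s, t \in [\alpha, \beta]$ one has the elementary inequality
$$
\bigl||f_n(t)| - |f_n(s)|\bigr| \leq |f_n(t) - f_n(s)| \leq V_\alpha^\beta(f_n) \leq C.
$$
Integrating this in $t$ over $[\alpha, \beta]$ and using the $L^1$-bound yields
$$
(\beta - \alpha)|f_n(s)| \leq \|f_n\|_{L^1((\alpha,\beta))} + C(\beta - \alpha) \leq C + C(\beta - \alpha),
$$
so that every point $s \in [\alpha,\beta]$ serves as a valid $c$, and even more, the entire sequence is uniformly bounded in sup-norm:
$$
\sup_{n} \|f_n\|_{L^\infty([\alpha,\beta])} \leq C' := \frac{C}{\beta - \alpha} + 2C.
$$

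\textbf{Step 2: Applying Helly.} Fix $c = \alpha$; by Step 1, $|f_n(c)| + V_\alpha^\beta(f_n) \leq C' + C$ uniformly in $n$. Theorem \ref{helly} then furnishes a subsequence $(f_{n_a})$ and a measurable function $f:[\alpha,\beta]\to\mathbb{R}$ such that $f_{n_a} \to f$ pointwise on $[\alpha,\beta]$ and $V_\alpha^\beta(f) \leq \liminf_{a\to\infty} V_\alpha^\beta(f_{n_a}) \leq C < \infty$.

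\textbf{Step 3: Upgrading to $L^1$-convergence and membership in BV.} By Step 1, $|f_{n_a}(t)| \leq C'$ for all $t \in [\alpha,\beta]$, and passing to the pointwise limit gives $|f(t)| \leq C'$ as well, so in particular $f \in L^1((\alpha,\beta))$. The constant function $C'$ is integrable on the bounded interval $[\alpha, \beta]$, so Lebesgue's dominated convergence theorem yields
$$
\lim_{a\to\infty} \int_\alpha^\beta |f_{n_a}(t) - f(t)|\,dt = 0.
$$
Combined with the bound $V_\alpha^\beta(f) \leq C$ from Step 2, this shows $f \in \operatorname{BV}((\alpha,\beta))$ and completes the proof.

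\textbf{Main obstacle.} There is no substantial obstacle once Helly's theorem is available; the only subtlety is recognizing that a pointwise bound at \emph{some} $c$ is automatic from the joint $L^1$ and BV control, which is precisely what converts the conclusion of Helly's theorem (pointwise convergence) into $L^1$-convergence via dominated convergence.
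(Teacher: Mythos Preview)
Your proof is correct and follows essentially the same approach as the paper: establish a uniform $L^\infty$ bound from the combined $L^1$ and variation control, apply Helly's theorem, and then use dominated convergence. Your Step~1 is in fact slightly cleaner than the paper's version---you integrate the inequality $|f_n(s)|\le |f_n(t)|+V_\alpha^\beta(f_n)$ directly in $t$ to obtain the $L^\infty$ bound in one stroke, whereas the paper first locates a near-infimum point $x_n$, bounds $|f_n(x_n)|$ via the $L^1$ norm, and only afterward derives the $L^\infty$ bound.
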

\begin{proof}
We first claim that $f_n(\alpha)$ is bounded. Once this is shown, we can use Helly's selection theorem to get a subsequence $f_{n_a}$ such that $f_{n_a}\rightarrow f$ pointwise. For $x\in(\alpha,\beta)$ we estimate 
\begin{align*}
|f_{n_a}(x)|&\leq |f_{n_a}(\alpha)|+|f_{n_a}(\alpha)-f_{n_a}(x)|\\
&\leq  |f_{n_a}(\alpha)|+|f_{n_a}(\alpha)-f_{n_a}(x)|+|f_{n_a}(x)-f_{n_a}(\beta)|\\
&\leq \sup_{a\geq 0}\left(|f_{n_a}(\alpha)|+V_\alpha^\beta(f_{n_a})\right).
\end{align*}
Therefore $f_{n_a}$ is bounded in $L^\infty((\alpha,\beta))$ and hence $f\in L^\infty((\alpha,\beta))$. Using Lebesgue's theorem, we obtain 
$$0=\lim_{a\rightarrow\infty}\int_\alpha^\beta |f(t)-f_{n_a}(t)|dt.$$
To prove that $f_n(\alpha)$ is bounded, let $x_n\in[\alpha,\beta]$ such that $|f_n(x_n)|<\inf_{[\alpha,\beta]}|f_n|+1$. We have the estimate 
$$C\geq \int_\alpha^\beta |f_n(x)|dx\geq \int_\alpha^\beta |f_n(x_n)|-1 dx= |f_n(x_n)|-(\beta-\alpha).$$
So, $f_n(x_n)$ is bounded and we can estimate
$$|f_n(\alpha)|\leq |f_n(x_n)|+|f_n(x_n)-f_n(\alpha)|+|f_n(\beta)-f_n(x_n)|\leq |f_n(x_n)|+V_\alpha^\beta(f_n)\leq C.$$
\end{proof}

Let $U\subset\R$ be open. Then there exist at most countably many and uniquely determined disjoint intervals $I_j=(\alpha_j,\beta_j)$ such that $U=\bigcup_j I_j$. This is e.g. shown in \cite{pugh2003real}, Chapter 2, Section 1, Theorem 9. Using this fact, we extend the notion of the space of bounded variations to general open sets of $\R$.

\begin{definition}
Let $U\subset\R$ be open, $m\in\N\cup\set\infty$ and $U=\bigcup_{j=1}^m(\alpha_j,\beta_j)$ be the unique decomposition of $U$ into disjoint, open intervals. For a measurable function $f:U\rightarrow\R$, we put 
$$V_U(f):=\sum_{j=1}^m V_{\alpha_j}^{\beta_j}(f).$$
$f$ is said to have \emph{bounded variation} if $V_U(f)<\infty$. If additionally $f\in L^1(U)$, we define $\|f\|_{\operatorname{BV}(U)}:=\|f\|_{L^1(U)}+V_U(f)$. The space of all $f$ with $\|f\|_{\operatorname{BV}(U)}<\infty$ is denoted by $\operatorname{BV}(U).$
\end{definition}

By a usual diagonal argument, we get the following corollary from Corollary \ref{BVCompactInL1}
\begin{korollar}\label{BVCompactInL1Mk2}
    Let $U\subset\R$ be open and $(f_n)\subset \operatorname{BV}(U)$ such that $\|f_n\|_{\operatorname{BV}(U)}\leq C<\infty$. Then there exists $f\in \operatorname{BV}(U)$ and a subsequence $(f_{n_a})$ such that $f_{n_a}\rightarrow f$ in $L^1(U)$. 
\end{korollar}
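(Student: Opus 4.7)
The plan is to reduce the statement to Corollary~\ref{BVCompactInL1} by a standard diagonal extraction. Write $U = \bigcup_{j=1}^m (\alpha_j, \beta_j)$ as the unique decomposition into its disjoint open components, $m \in \N \cup \{\infty\}$. For each fixed $j$, the restriction of $f_n$ to $(\alpha_j, \beta_j)$ satisfies
$$\|f_n\|_{\operatorname{BV}((\alpha_j, \beta_j))} = \|f_n\|_{L^1((\alpha_j, \beta_j))} + V_{\alpha_j}^{\beta_j}(f_n) \leq \|f_n\|_{\operatorname{BV}(U)} \leq C,$$
so Corollary~\ref{BVCompactInL1} produces a subsequence of $(f_n)$ that converges in $L^1((\alpha_j, \beta_j))$ to some limit $g_j \in \operatorname{BV}((\alpha_j, \beta_j))$. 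First I would apply this on $(\alpha_1,\beta_1)$, then pass to a further subsequence which in addition converges on $(\alpha_2, \beta_2)$, and so on; taking the diagonal yields a single subsequence $(h_a) \subset (f_n)$ with $h_a \to g_j$ in $L^1((\alpha_j, \beta_j))$ for every $j$ simultaneously.

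The candidate limit is defined piecewise by $f|_{(\alpha_j,\beta_j)} := g_j$. The lower semicontinuity of the variation (already part of Helly's selection theorem used in the proof of Corollary~\ref{BVCompactInL1}), combined with Fatou's lemma applied to the non-negative series, gives
$$V_U(f) + \|f\|_{L^1(U)} = \sum_j \left(V_{\alpha_j}^{\beta_j}(g_j) + \|g_j\|_{L^1((\alpha_j, \beta_j))}\right) \leq \liminf_{a \to \infty} \|h_a\|_{\operatorname{BV}(U)} \leq C,$$
so $f \in \operatorname{BV}(U)$.

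The main obstacle I anticipate is the final step, promoting componentwise $L^1$-convergence to convergence in $L^1(U)$ when $m = \infty$. Splitting
$$\|h_a - f\|_{L^1(U)} = \sum_{j \leq J} \|h_a - f\|_{L^1((\alpha_j, \beta_j))} + \sum_{j > J} \|h_a - f\|_{L^1((\alpha_j, \beta_j))},$$
I would fix $\epsilon > 0$, choose $J$ so that $\sum_{j > J} \|f\|_{L^1((\alpha_j, \beta_j))} < \epsilon$ (possible since $f \in L^1(U)$), and observe that the finite prefix sum vanishes as $a \to \infty$. The tail of $(h_a)$ needs a uniform estimate, which I expect to obtain by refining the diagonal further so that also $\|h_a\|_{L^1((\alpha_j, \beta_j))} \to \|g_j\|_{L^1((\alpha_j, \beta_j))}$ for each $j$, and then matching total masses via $\|h_a\|_{L^1(U)} \leq C$ through a Scheff\'e-type argument. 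In every application relevant to this paper, $U$ has only finitely many components, so this delicate tail analysis collapses to the trivial finite case.
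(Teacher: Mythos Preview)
Your diagonal extraction is exactly what the paper has in mind; the paper offers no more detail than the phrase ``usual diagonal argument''. You also put your finger on the only genuine issue, the tail $\sum_{j>J}\|h_a-f\|_{L^1((\alpha_j,\beta_j))}$ when $m=\infty$. However, the Scheff\'e-type patch you sketch cannot close this gap, because the corollary as stated is actually \emph{false}. Take pairwise disjoint intervals $I_j\subset\R$ with $|I_j|=2^{-j}$, set $U=\bigcup_{j\geq 1}I_j$, and let $f_n:=2^{n}\,1_{I_n}$. Each $f_n$ is constant on every component, so $V_U(f_n)=0$, while $\|f_n\|_{L^1(U)}=1$; hence $\|f_n\|_{\operatorname{BV}(U)}=1$ for all $n$. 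On each fixed $I_j$ one has $f_n|_{I_j}\to 0$ in $L^1$, so the only possible limit is $f\equiv 0$, yet $\|f_n\|_{L^1(U)}\equiv 1$. No subsequence converges in $L^1(U)$, and no amount of mass-matching will help since the componentwise limits of the $L^1$-norms do not sum to the limit of the total norm.

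What rescues the paper's single use of this corollary (in the proof of Theorem~\ref{compactnesstheorem}, applied to $\dot\gamma_2^{(n)}$ on the sets $A_k$) is not that $A_k$ has finitely many components---it need not---but that the sequence comes with a uniform $L^\infty$ bound $|\dot\gamma_2^{(n)}|\leq L_n\leq C$ and $|A_k|\leq 1<\infty$. Under those extra hypotheses your tail estimate is immediate:
\[
\sum_{j>J}\|h_a-f\|_{L^1((\alpha_j,\beta_j))}\;\leq\; 2C\sum_{j>J}(\beta_j-\alpha_j)\;\xrightarrow[J\to\infty]{}\;0
\]
uniformly in $a$. So the correct additional ingredient is a uniform pointwise bound together with finite measure, not finiteness of the decomposition.
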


Finally, we establish Theorem \ref{compactnesstheorem}. The following proof is due to Choski and Veneroni \cite{choskiveneroni} (see Proposition 2) with only minor modifications.
\renewcommand{\proofname}{\textit{Proof of Theorem \ref{compactnesstheorem}.}}
\begin{proof}
We put $L_n:=L[\gamma^{(n)}]$. By the assumptions of Theorem \ref{compactnesstheorem}, $\gamma^{(n)}$ is bounded in $C^0([0,1])$ and combining the assumptions of Theorem \ref{compactnesstheorem} with Theorem \ref{choskiveneroniresult} shows that $\dot\gamma^ {(n)}$ is bounded in $L^\infty((0,1))$. In particular, we can assume without loss of generality that $L_n\rightarrow L_*\in[0,\infty)$.
\ \\
\ \\
\noindent
\textbf{Step 1 - Weak Limit}\ \\
Since $\gamma^{(n)}$ is bounded in $C^0$ and $|\dot\gamma^{(n)}|$ is bounded in $L^\infty((0,1))$ and therefore also in $L^2((0,1))$, we can use the Arzel\`a-Ascoli theorem to deduce that, after passing to a subsequence, $\gamma^{(n)}\rightarrow\gamma^*$ in $C^0([0,1])$ and $\dot\gamma^{(n)}\rightarrow\dot\gamma^* $ weakly in $L^2((0,1))$.
Next, we prove by contradiction that $L_*>0$. Suppose $L_*=0$, then, as $\gamma^ {(n)}\rightarrow\gamma^*$ in $C^ 0([0,1])$, we get
$$\theta\leq\liminf_{n\rightarrow\infty}A[\gamma^{(n)}]=2\pi\liminf_{n\rightarrow\infty}\int_0^1\gamma_1^{(n)}(t)L_n dt=0.$$
\ \\
\noindent
\textbf{Step 2 - Convergence in $W^ {1,2}$}\ \\
By Lemma \ref{oscbound}, the sequence $\dot\gamma^{(n)}_1$ is bounded in $\operatorname{BV}((0,1))$ since 
$$
V_0^1(\dot\gamma_1^{(n)})\leq \frac{L_n}{2\pi}\int_0^1 |k_1^{(n)}k_2^{(n)}|2\pi L_n\gamma_1^{(n)}(t) dt\leq C.
$$
Using Corollary \ref{BVCompactInL1}, we deduce that there exists $\sigma\in L^1((0,1))$ such that  $\dot\gamma^{(n)}_1\rightarrow\sigma$ strongly in $L^1((0,1))$ along a subsequence and since $\dot\gamma^{(n)}\rightarrow\dot\gamma^*$ weakly in $L^2((0,1))$, we get $\sigma=\gamma_1^*$. As $\dot\gamma^{(n)}_1$ is bounded in $L^\infty((0,1))$, we get, $\dot\gamma_1^ {(n)}\rightarrow\dot\gamma_1^*$ in $L^ p((0,1))$ for all $p\in[1,\infty)$. Indeed
$$\int_0^1 |\dot\gamma_1^ {(n)}-\dot\gamma_1^ {(m)}|^ p dt\leq \|\dot\gamma_1^ {(n)}-\dot\gamma_1^ {(m)}\|_{L^ \infty}^ {p-1}\|\dot\gamma_1^ {(n)}-\dot\gamma_1^ {(m)}\|_{L^1((0,1))}\leq C\|\dot\gamma_1^ {(n)}-\dot\gamma_1^ {(m)}\|_{L^1((0,1))}$$
shows that $\dot\gamma_1^ {(n)}$ is Cauchy in $L^ p((0,1))$ and hence converges to some limit. By uniqueness of the limit in $L^1((0,1))$ we get $\dot\gamma_1^ {(n)}\rightarrow\dot\gamma_1^*$ in $L^ p((0,1))$ for all $p\in[1,\infty)$. \\

Next, we claim $\gamma_1^*(t)>0$ almost everywhere. To see this, let $E:=\set{t\in[0,1]\ |\ \gamma_1^*(t)=0}$. Clearly $E$ is compact and, by uniform convergence, $\gamma_1^{(n)}\leq\epsilon$ on $E$ for all $n\geq n_0(\epsilon)$. Using the second assumption of Theorem \ref{compactnesstheorem}, we get
$$C\geq \int_0^1 \left(k_2^{(n)}\right)^22\pi L_n\gamma_1^{(n)}dt=\frac{2\pi}{L_n}\int_0^1 \frac{\left(\dot\gamma_2^{(n)}\right)^2}{\gamma_1^{(n)}}dt\geq \frac{2\pi}{L_n \epsilon}\int_E\left(\dot\gamma_2^{(n)}\right)^2dt.$$
This shows $\dot\gamma^{(n)}_2\rightarrow 0$ in $L^2(E)$. Combining this with $\dot\gamma_1^ {(n)}\rightarrow\dot\gamma_1^*$ in $L^2((0,1))$, we get
$$|E|L_*^2=\lim_{n\rightarrow\infty}|E|L_n^2=\lim_{n\rightarrow\infty}\int_E|\dot\gamma^ {(n)}|^2dt=\int_E(\dot\gamma_1^*)^2dt.$$ 
Now assume $|E|>0$. Then $|\dot\gamma_1^*(t)|=L_*$ for almost all $t\in E$. Using Rademacher's theorem, we deduce that $\gamma$ is differentiable at almost all $t\in E$. Since the classical and the weak derivative agree almost everywhere, we deduce that there exists $t_0\in E$ such that $\gamma$ is differentiable in $t_0$ and $|\dot\gamma^*_1(t_0)|=L_*>0$. But by definition of $E$ we also have $\gamma_1^*(t_0)=0$, which implies that there exists $t\in[0,1]$ close to $t_0$ such that $\gamma_1^*(t)<0$, which is a contradiction.\\

Next, we prove that along a subsequence $\dot\gamma_2^{(n)}\rightarrow\dot\gamma_2$ in $L^1((0,1))$. For $k\in\N$ we put 
$A_k:=\set{t\in[0,1]\ |\ \gamma_1(t)>2k^{-1}}$. For $n\geq n_0(k)$ we have $\gamma_1^{(n)}\geq\frac1k$ on $A_k$. Using  Lemma \ref{oscbound}, we deduce that $\dot\gamma_2^{(n)}$ is bounded in $\operatorname{BV}(A_k)$ for all $k$ with 
$$V_{A_k}(\dot\gamma_2^{(n)})\leq \sqrt{\frac{L_n}{2\pi}}\left(\int_0^1 (k_1^{(n)})^2 2\pi L_n\gamma_1^{(n)}dt\right)^{\frac12}k^{\frac12}\left(\int_0^1 (\dot\gamma_1^{(n)})^2dt\right)^{\frac12}\leq C\sqrt k.$$
After passing to a subsequence we deduce $\dot\gamma_2^{(n)}\rightarrow\dot\gamma_2^*$ in $L^1(A_k)$ for all $k\in\N$. We estimate
$$\int_0^1|\dot\gamma_2^{(n)}-\dot\gamma_2^*|dt\leq 
\int_{A_k}|\dot\gamma_2^{(n)}-\dot\gamma_2^*|dt+(L_n+L_*)|[0,1]\backslash A_k|.$$
$|[0,1]\backslash A_k|\rightarrow 0$ as $k\rightarrow\infty$ since $|E|=0$. Thus $\dot\gamma_2^{(n)}\rightarrow\dot\gamma_2^*$ in $L^1((0,1))$ and, using the $L^\infty$-bound, also in $L^p((0,1))$ for all $1\leq p<\infty$.\\

In particular we have $\dot\gamma^{(n)}\rightarrow\dot\gamma^*$ in $L^1((0,1))$, which implies that for all $0\leq a<b\leq 1$
$$L_*(b-a)=\lim_{n\rightarrow\infty}L_n(b-a)=\lim_{n\rightarrow\infty}\int_a^b |\dot\gamma^{(n)}|dt=\int_a^b|\dot\gamma^*|dt.$$
Taking $a=0$ and $b=1$ shows $L[\gamma^*]=L_*$ and taking $b=a+\delta$ and letting $\delta\rightarrow 0$ shows $|\dot\gamma^*(a)|=L_*$ for almost every $a\in[0,1]$. 
\ \\
\ \\
\noindent
\textbf{Step 3 - Limit in $\mathcal P^ w$}\ \\
Let $U\subset(0,1)$ be open such that $\gamma_1^*>0$ on $\bar U$. We prove that $\gamma^*\in W^ {2,2}(U)$. First, as $\gamma^ {(n)}\rightarrow\gamma^*$ in $C^ 0([0,1])$ we know that there exists $n_0(U)$ such that for all $n\geq n_0(U)$ we have $\inf_U \gamma_1^{(n)}\geq c_0>0$. By Lemma \ref{gamma1ddotgammainL1}, $(\ddot\gamma^ {(n)})\subset L^2(U)$ is a bounded sequence and hence there exists $\xi\in L^2(U)$ such that $\ddot\gamma^ {(n)}\rightarrow\xi$ weakly in $L^2(U)$ along a subsequence. For $\varphi\in C^ \infty_0(U)$ we get 
$$\int_U\xi\varphi dt=\lim_{n\rightarrow\infty}\int_U\ddot\gamma^ {(n)}\varphi dt=-\lim_{n\rightarrow\infty}\int_U\dot\gamma^ {(n)}\dot\varphi dt=-\int_U\dot\gamma^*\dot\varphi dt.$$
We deduce $\gamma^*\in W^ {2,2}_{\operatorname{loc}}([\gamma_1^*>0])$. It remains to prove that the curvature of $\gamma^*$ is bounded in $L^2$. As $\gamma^ {(n)}\rightarrow\gamma^*$ in $W^ {1,2}((0,1))$ we have $\dot\gamma^ {(n_a)}\rightarrow\dot\gamma^*$ pointwise almost everywhere along a suitable subsequence and clearly $\gamma^ {(n)}\rightarrow\gamma^*$ pointwise. Hence, by Fatou's lemma and the assumptions of Theorem \ref{compactnesstheorem}
\begin{align*}
    \int_0^1 (k_2^*)^22\pi L\gamma_1^* dt&=
    \int_0^1 \frac{(\dot\gamma^*_2)^2}{\gamma_1^* L_*}2\pi dt\\
    & \leq \liminf_{a\rightarrow\infty}\int_0^1\frac{(\dot\gamma^{(n_a)}_2)^2}{\gamma_1^{(n_a)} L_{n_a}}2\pi dt\\
    &= \liminf_{a\rightarrow\infty}\int_0^1 \left(k_2^ {(n_a)}\right)^2 2\pi L_{n_a}\gamma_1^ {(n_a)}dt\\
    &\leq C.
\end{align*}
Next, we consider $k_1^*$. Let $U\subset (0,1)$ such that $\bar U\cap E=\emptyset$. Since $(\ddot\gamma^{(n)})\subset L^2(U)$ is bounded and $\gamma_1^{(n)}\rightarrow\gamma_1^*$ in $C^0([0,1])$, we may estimate  
\begin{equation}\label{substiutionestimate}
\left|\int_U |\ddot\gamma^ {(n)}|^2(\gamma_1^*-\gamma_1^ {(n)})dt\right|\leq \|\gamma^ {(n)}_1-\gamma_1^*\|_{C^ 0}\sup_n \|\ddot\gamma^ {(n)}\|_{L^2(U)}^2\rightarrow 0.
\end{equation}
Along a subsequence $n_a$ we have $\ddot\gamma^ {(n_a)}\rightarrow\ddot\gamma^*$ weakly in $L^2(U)$. Let $\varphi\in C^ \infty_0(U)$. Using Estimate \eqref{substiutionestimate}, we can compute 
\begin{align*} 
\int_U|\ddot\gamma^*|^2|\gamma_1^*|dt
&=\lim_{a\rightarrow\infty}\int_U|\ddot\gamma^ {(n_a)}|^2\gamma_1^*dt
=\lim_{a\rightarrow\infty}\int_U|\ddot\gamma^ {(n_a)}|^2\gamma_1^ {(n_a)}dt
\leq \sup_n\int_0^1 |\ddot\gamma^ {(n)}|^2\gamma_1^ {(n)}dt.
\end{align*}
Consequently $|\ddot\gamma^*|^2\gamma_1^*\in L^1(U)$ with norm bounded independent of $U$. Using $|E|=0$ and Equation \eqref{ArcLength_K_Identity}, we get 
$$\int_0^1(k_1^*)^22\pi\gamma_1^*|\dot\gamma^*|dt
=\sup\left\{
\frac{2\pi}{L^3_*} \int_U |\ddot\gamma^*|^2\gamma_1^* dt\ \bigg|\ U\subset[0,1],\ \inf_U\gamma_1^*>0\right\}<\infty.$$
In total we have shown $\gamma^*\in\mathcal P^ w$.\ \\
\ \\
\noindent
\textbf{Step 4 - Convergence in $\mathcal P^ w$}\ \\
Considering Definition \ref{convergencedefinition} and that $\gamma^{(n)}\rightarrow\gamma^*$ in $C^0([0,1])$ and $W^{1,2}((0,1))$ has already been established, we are left with proving that $\ddot\gamma^{(n)}\gamma_1^{(n)}\rightarrow\ddot\gamma^*\gamma_1^*$ weakly in $L^1((0,1))$. Let $\xi\in \mathcal P^ w$. By Theorem \ref{weakandstrongPconnection}, there exist $0=t_0<t_1<...<t_N=1$ such that $\xi_1(t)=0$ precisely when $t=t_i$ for some $i$. For small $\epsilon>0$, we have $\xi\in W^{2,2}(t_i+\epsilon, t_{i+1}-\epsilon)$. So, for $\phi\in C^ \infty([0,1])$, we have
$$\int_{t_i+\epsilon}^ {t_{i+1}-\epsilon}\ddot\xi\xi_1\phi dt
=
\dot\xi\xi_1\phi\bigg|_{t_i+\epsilon}^ {t_{i+1}-\epsilon}
-
\int_{t_i+\epsilon}^ {t_{i+1}-\epsilon}\dot\xi(\dot\xi_1\phi+\xi_1\dot\phi)dt.$$
We sum over $i$ and wish to take $\epsilon\rightarrow 0^+$. As $\xi\in\mathcal P^ w$ we have $\ddot\xi\xi_1\in L^1((0,1))$ by Lemma \ref{gamma1ddotgammainL1}. Using $\phi,\dot\xi\in L^ \infty((0,1))$ and $\xi_1(t_i)=0$, the boundary term vanishes for $\epsilon\rightarrow 0^ +$. So, we deduce 
$$\int_{0}^ {1}\ddot\xi\xi_1\phi
=
-
\int_{0}^ 1\dot\xi_1(\dot\xi_1\phi+\xi_1\dot\phi)dt.$$
As $\gamma^ {(n)}\rightarrow\gamma^*$ in $C^ 0([0,1])$ and in $W^ {1,2}((0,1))$ we can compute 
\begin{align}
    &\lim_{n\rightarrow\infty}\int_{0}^ {1}\ddot\gamma^ {(n)}\gamma_1^ {(n)}\phi dt
=-\lim_{n\rightarrow\infty }
\int_{0}^ 1\dot\gamma^ {(n)}(\dot\gamma_1^ {(n)}\phi+\gamma_1^ {(n)}\dot\phi)dt\nonumber\\
=&-
\int_{0}^ 1\dot\gamma^ {*}(\dot\gamma_1^ {*}\phi+\gamma_1^ {*}\dot\phi)dt=\int_{0}^ {1}\ddot\gamma^ {*}\gamma_1^ {*}\phi dt.\label{identityforsmoothtests}
\end{align}
Finally, we prove that Equation \eqref{identityforsmoothtests} is also valid for $\phi\in L^2((0,1))$.  Let $\phi\in L^2((0,1))$ and $\epsilon>0$. We choose $\phi_\epsilon\in C^ \infty([0,1])$ such that $\|\phi_\epsilon-\phi\|_{L^2((0,1))}\leq \epsilon$ and estimate 
\begin{equation}\label{phiapprox1}
\left|\int_0^1 \ddot\gamma^*\gamma_1^*(\phi-\phi_\epsilon) dt\right|
\leq \left(\int_0^1|\ddot\gamma^*|^2\gamma_1^*dt\right)^ {\frac12}\left(\sup\gamma_1^*\right)^ {\frac12}\|\phi-\phi_\epsilon\|_{L^2((0,1))}\leq C\epsilon.
\end{equation}
The last step is justified by Lemma \ref{gamma1ddotgammainL1}. Similarly, using Lemma \ref{gamma1ddotgammainL1} and the assumptions of Theorem \ref{compactnesstheorem} we estimate 
\begin{align}
&\limsup_{n\rightarrow\infty}\left|\int_0^1 \ddot\gamma^{(n)}\gamma_1^{(n)}(\phi -\phi_\epsilon)dt\right|\nonumber\\
\leq &\sup_{n\in\N}\left[\left(\int_0^1|\ddot\gamma^{(n)}|^2\gamma_1^{(n)}dt\right)^ {\frac12}\left(\sup\gamma_1^{(n)}\right)^ {\frac12}\right]\|\phi-\phi_\epsilon\|_{L^2((0,1))}\leq C\epsilon.\label{phiapprox2}
\end{align}
Combining Estimates \eqref{phiapprox1} and \eqref{phiapprox2} and letting $\epsilon\rightarrow0^+$ shows that Equation \eqref{identityforsmoothtests} is valid for $\phi\in L^2((0,1))$ also. 
\end{proof}
\renewcommand{\proofname}{\textit{Proof.}}

\section{General Regularity Theory}\label{GeneralRegularityAppendix}
Let $U\subset\R$ be an open interval and $\gamma\in W^{1,\infty}(U,\mathcal H^2)\cap W^ {2,2}(U,\mathcal H^2)$. For $\alpha,\beta\in\R$ we define the functional 
\begin{equation}\label{IFUnctionalPDE}
I(\phi)
:=
\alpha\int_0^1 \gamma_1^2(t)\dot\phi_2(t)+2\gamma_1(t)\dot\gamma_2(t)\phi_1(t)dt
+
\beta\int_0^1|\dot\gamma(t)|\phi_1(t)+\frac{\gamma_1(t)\langle\dot\phi(t),\dot\gamma(t)\rangle}{|\dot\gamma(t)|}dt.
\end{equation}
We assume that $\gamma$ satisfies the equation
\begin{equation}\label{regappendixweakeq}
\delta\mathcal W[\gamma]\phi=I(\phi)\hspace{.5cm}\textrm{for all }\phi\in C^\infty_0(U).
\end{equation}
The goal of this section is to prove the following regularity theorem:

\begin{theorem}[Regularity Theorem]\label{GeneralRegularityTheorem}
    Let $\gamma\in W^{1,\infty}(U,\mathcal H^2)\cap W^{2,2}(U,\mathcal H^2)$ satisfy Equation \eqref{regappendixweakeq}. Further, we assume that $|\dot\gamma(t)|=L>0$ and
    \begin{align}
    &|\alpha|+|\beta|+L+L^{-1}+\|\gamma\|_{C^0(U)}+\|\ddot\gamma\|_{L^2(U)}+\mathcal W[\gamma]\leq M,\label{uniformestimate}\\
    &\kappa(\gamma;U):=\inf_{t\in U}\gamma_1(t)>0.\label{kappadef}
    \end{align}
    Then $\gamma\in C^\infty(U)$ and $\|\gamma\|_{C^m(U)}\leq C(M, \kappa(\gamma;U),U,m)$. The constant is increasing in the first and decreasing in the second slot. 
\end{theorem}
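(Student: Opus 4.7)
The plan is a bootstrap argument: first use difference quotients to gain one derivative of regularity, then iterate via ODE theory. The key structural fact is that, by the arc-length condition $|\dot\gamma|=L>0$ and $\gamma_1\geq\kappa(\gamma;U)>0$, the weak equation \eqref{regappendixweakeq} is a quasi-linear fourth-order equation for $\gamma$ whose principal part, after computing $\delta\mathcal W[\gamma]\phi$ explicitly from \eqref{willmoredef}, \eqref{principalcurvatures}, and \eqref{normalsprofilecurve}, is
\begin{equation*}
\pi\int_U \frac{\gamma_1}{L^3}\,\langle\nu,\ddot\gamma\rangle\langle\nu,\ddot\phi\rangle\,dt,
\end{equation*}
with $\nu=(-\dot\gamma_2,\dot\gamma_1)/L$. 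All remaining terms in the equation are of order at most $\dot\phi$, polynomial in $\gamma,\dot\gamma,\ddot\gamma$, and (under Estimate \eqref{uniformestimate}) uniformly controlled.

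Next, for $V'\Subset V\Subset U$ and $\eta\in C^\infty_0(V)$ with $\eta\equiv 1$ on $V'$, I would test the equation with $\phi_h:=\partial^{-h}(\eta^2\partial^h\gamma)$. The standard identity $\int f\cdot\partial^{-h}g\,dt=-\int\partial^h f\cdot g\,dt$ together with the lower bound $\gamma_1/L^3\geq\kappa/M^3$ yields, after absorbing commutator terms by Young's inequality,
\begin{equation*}
\int\eta^2\,|\langle\nu,\partial^h\ddot\gamma\rangle|^2\,dt\leq C(M,\kappa,V,V'),
\end{equation*}
uniform in $h$. The tangential component of $\partial^h\ddot\gamma$ is recovered by differencing the arc-length identity $\langle\dot\gamma,\ddot\gamma\rangle=0$, which gives $\langle\dot\gamma(\cdot+h),\partial^h\ddot\gamma\rangle=-\langle\partial^h\dot\gamma,\ddot\gamma\rangle$. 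A short preliminary bootstrap (first obtaining $\ddot\gamma\in L^p_{\operatorname{loc}}(U)$ for large $p$ by Sobolev embedding applied to the normal component) upgrades this product to an $L^2$-bound, yielding $\gamma\in W^{3,2}_{\operatorname{loc}}(U)$ with quantitative estimates. Then $\ddot\gamma\in C^{0,1/2}_{\operatorname{loc}}(U)$ by Sobolev embedding, so integration by parts in \eqref{regappendixweakeq} is justified and produces the strong fourth-order ODE system for $\gamma$, which after projecting onto the normal direction and using the arc-length identity for the tangential component reads
\begin{equation*}
\ddddot\gamma=\Phi(\gamma,\dot\gamma,\ddot\gamma,\dddot\gamma)
\end{equation*}
with $\Phi$ smooth in its arguments on $\{\gamma_1>0,\ |\dot\gamma|>0\}$.

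From here, iterative ODE bootstrap of the type used in the proof of Lemma \ref{uissmooth} (and based on Lemma \ref{oderegularitylemma}) gains one derivative at each step and produces $\gamma\in C^m(U)$ for every $m$, hence $\gamma\in C^\infty(U)$; all constants depend only on $M$, $\kappa(\gamma;U)$, $U$, and the order $m$, are monotone as claimed in the theorem, and give the quantitative $C^m$-estimate. The main obstacle is the gauge degeneracy of the Willmore Lagrangian: its $\ddot\gamma$-Hessian is rank one with kernel $\dot\gamma$, so the principal part directly controls only the normal component of $\dddot\gamma$. Recovering the tangential component at each stage of the bootstrap relies crucially on differentiating $|\dot\gamma|^2=L^2$, which yields algebraic expressions for $\langle\dot\gamma,\dddot\gamma\rangle$, $\langle\dot\gamma,\ddddot\gamma\rangle$, etc.\ in terms of strictly lower derivatives. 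Performing this tangential/normal decomposition rigorously, and verifying that the resulting constants depend only on $M$ and $\kappa$ (with the correct monotonicity), is where the bulk of the technical work lies.
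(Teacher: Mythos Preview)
Your strategy (difference quotients, then ODE bootstrap) differs from the paper's and is workable, but you underestimate the first step.

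The paper avoids difference quotients entirely. Given $\varphi\in C^\infty_0(U)$, it builds a test function $\phi$ by taking the double antiderivative of $\varphi$ and subtracting a fixed smooth cutoff times an affine correction, so that $\phi\in C^\infty_0(U)$, $\ddot\phi=\varphi$ up to a smooth term, and crucially $\|\phi\|_{C^1}\leq C\|\varphi\|_{L^1}$. Inserting this $\phi$ into \eqref{regappendixweakeq} and estimating every term except the $\ddot\phi$-contribution from $\delta k_1$ in \eqref{vark1} by $C\|\phi\|_{C^1}$ yields
\[
\Big|\int_U H(\varphi_2\dot\gamma_1-\varphi_1\dot\gamma_2)\gamma_1 L\,dt\Big|\leq C\|\varphi\|_{L^1},
\]
so by $(L^1)^*=L^\infty$ duality $H\dot\gamma_i\in L^\infty$, whence $H\in L^\infty$ and then $\ddot\gamma\in L^\infty$ via \eqref{ArcLength_H_Identity}. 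Repeating with a single antiderivative gives $H\in W^{1,\infty}$; an induction with $\phi=\varphi^{(k-1)}$ and successive integration by parts completes the argument. No absorption, no interpolation.

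In your route, the difficulty is that the ``lower-order'' terms are not lower order in the relevant sense: they contain the factor $H$, hence $k_1\sim L^{-2}\langle\nu,\ddot\gamma\rangle$, multiplied by further copies of $\ddot\gamma$. After shifting $\partial^h$ onto these coefficients you meet products like $(\partial^h\ddot\gamma)\cdot\ddot\gamma\cdot\partial^h\dot\gamma$, and the commutator from $\partial^h\nu$ in the principal part contributes $k_1\cdot\partial^h\nu$ with both factors only in $L^2$. Young's inequality alone does not close these; one needs a one-dimensional Gagliardo--Nirenberg interpolation such as $\|\ddot\gamma\|_{L^4}^4\leq C\|\ddot\gamma\|_{L^2}^3\|\dddot\gamma\|_{L^2}$ to trade the quartic error against the quantity being estimated, and then absorb. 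This is standard for quasilinear problems but is not a ``short'' step, and it is precisely what you hide under ``preliminary bootstrap''. Your approach has the merit of being the generic elliptic-regularity machine; the paper's duality trick is specific to dimension one and to the fact that the top-order term factors as $H$ times a second-order operator on $\phi$, but it delivers $L^\infty$ bounds in one stroke with no circularity. Finally, Lemma~\ref{oderegularitylemma} concerns the singular operator $(ru)'/r$ at the axis and plays no role here; once you have the strong ODE $\ddddot\gamma=\Phi(\gamma,\dot\gamma,\ddot\gamma,\dddot\gamma)$, higher regularity is straightforward differentiation.
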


The proof is split into several lemmas. We remark that Eichmann and Grunau have carried out similar arguments in \cite{eichmann} for the Willmore equation without the isoperimetric Lagrange multiplier.

\begin{lemma}\label{regularitystep1}
    Under the assumptions of Theorem \ref{GeneralRegularityTheorem} $\gamma\in W^{2,\infty}(U)$, $H\in L^\infty(U)$ and 
    $$\|\gamma\|_{W^{2,\infty}(U)}+\|H\|_{L^\infty(U)}\leq C(M,\kappa(\gamma;U),U).$$
\end{lemma}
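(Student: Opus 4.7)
The goal is to upgrade $\ddot\gamma\in L^2(U)$ (from $\gamma\in W^{2,2}$) to $\ddot\gamma\in L^\infty(U)$. Since $|\dot\gamma|=L$ forces $\langle\dot\gamma,\ddot\gamma\rangle=0$, one has $\ddot\gamma = L k_1\nu$; moreover $k_2 = \dot\gamma_2/(\gamma_1 L)$ already lies in $L^\infty(U)$ under the standing hypotheses. It therefore suffices to prove $H = k_1+k_2\in L^\infty(U)$.

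The plan is to derive a second-order distributional equation for the Euler--Lagrange momenta $F_{\ddot\gamma_i}$ and bootstrap from $L^2(U)$ to $L^\infty(U)$. Writing $\mathcal W[\gamma] = \int_U F(\gamma_1,\dot\gamma_1,\dot\gamma_2,\ddot\gamma_1,\ddot\gamma_2)\,dt$ via Equation \eqref{willmoredef}, the hypothesis $\delta\mathcal W[\gamma]\phi = I(\phi)$ for $\phi\in C^\infty_0(U,\R^2)$ reads, distributionally,
\begin{equation*}
(F_{\ddot\gamma_i})'' = (F_{\dot\gamma_i})' - F_{\gamma_i} + I_i,\qquad i=1,2,
\end{equation*}
where $I_i$ is the $i$-th component of $I$. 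A direct computation using $|\dot\gamma|=L$ yields the key identity
\begin{equation*}
(F_{\ddot\gamma_1},F_{\ddot\gamma_2}) = \frac{\pi\gamma_1 H}{L}\,\nu,\qquad\text{so}\qquad F_{\ddot\gamma_1}^2+F_{\ddot\gamma_2}^2 = \frac{\pi^2\gamma_1^2 H^2}{L^2}.
\end{equation*}
Meanwhile $F_{\dot\gamma_i}$ and $F_{\gamma_i}$ depend at most quadratically on $\ddot\gamma$ with coefficients controlled by $\gamma_1^{-1}$, $L$ and $\|\dot\gamma\|_\infty$, hence lie in $L^1(U)$ with norm bounded by $C(M,\kappa)$; likewise $I(\phi) = \int(G_0\cdot\phi + G_1\cdot\dot\phi)\,dt$ with $G_0,G_1\in L^\infty(U)$.

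With this set-up, each $F_{\ddot\gamma_i}$ satisfies $(F_{\ddot\gamma_i})'' = P_i' + Q_i$ in distribution on $U$ with $P_i, Q_i\in L^1(U)$, and a priori $F_{\ddot\gamma_i}\in L^2(U)$. In one dimension one integrates this identity twice: the result writes $F_{\ddot\gamma_i}$ as the sum of an $L^\infty(U)$-function (from antiderivatives of $P_i$ and $Q_i$) and an affine function $c_0+c_1 t$, whose two constants are controlled by the a priori $L^2(U)$-bound on $F_{\ddot\gamma_i}$ via the positive-definite Gram matrix of $\{1,t\}$ on $U$. This gives $\|F_{\ddot\gamma_i}\|_{L^\infty(U)}\le C(M,\kappa,U)$, and combined with the identity above and $\gamma_1\ge\kappa$ yields $\|H\|_{L^\infty(U)}\le C(M,\kappa,U)$. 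Finally $k_1 = H-k_2\in L^\infty(U)$ and $\ddot\gamma = L k_1\nu\in L^\infty(U)$ give the full conclusion.

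The main technical obstacle is the handling of integration constants: since the lemma demands a bound on all of $U$ rather than a local one, the two free constants arising in the twofold distributional integration cannot be eliminated by compact support nor by pointwise boundary values and must be pinned down by the a priori $L^2$-datum. A secondary bookkeeping issue is verifying that $F_{\dot\gamma_i}, F_{\gamma_i}$ genuinely lie in $L^1(U)$ quantitatively; here the hypothesis $\gamma_1\ge\kappa>0$ is decisive, taming the $\gamma_1^{-2}$-term in $F_{\gamma_1}$ arising from differentiating $\dot\gamma_2^2/(\gamma_1|\dot\gamma|)$ in the formula for $F$.
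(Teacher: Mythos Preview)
Your proposal is correct and is essentially dual to the paper's argument. The paper constructs, for each $\varphi\in C_0^\infty(U)$, a compactly supported test function $\phi$ equal to a twofold antiderivative of $\varphi$ minus an affine correction $\chi(t)(a+bt)$, plugs this into $\delta\mathcal W[\gamma]\phi=I(\phi)$, and estimates all lower-order terms by $C\|\varphi\|_{L^1}$; this yields $\bigl|\int H(\varphi_2\dot\gamma_1-\varphi_1\dot\gamma_2)\gamma_1|\dot\gamma|\,dt\bigr|\le C\|\varphi\|_{L^1}$, whence $H\dot\gamma_i\gamma_1\in L^\infty$ by $(L^1)'=L^\infty$-duality. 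Your version integrates the distributional Euler--Lagrange system $(F_{\ddot\gamma_i})''=(F_{\dot\gamma_i})'-F_{\gamma_i}+I_i$ twice directly, identifying the momenta $F_{\ddot\gamma_i}=\frac{\pi\gamma_1 H}{L}\nu_i$ up front and writing $F_{\ddot\gamma_i}$ as $L^\infty$ plus an affine remainder. The integration constants are handled in dual ways too: the paper absorbs them into the correction $\chi(a+bt)$ with $|a|+|b|\le C\|\varphi\|_{L^1}$, while you bound them via the a priori $L^2$-norm of $F_{\ddot\gamma_i}$ and the Gram matrix of $\{1,t\}$ on $U$. Your formulation is perhaps cleaner conceptually (it makes the role of the second-order momenta transparent and separates the verification that $F_{\dot\gamma_i},F_{\gamma_i}\in L^1$ from the integration step), while the paper's hands-on testing avoids invoking distributional antiderivatives; but the content is the same.
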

\begin{proof}
Let $U=(\tau_1,\tau_2)$. Throughout the proof $C$ will denote a constant that is allowed to depend on $M$, $\kappa(\gamma;U)$ and $U$ and that may change from line to line.\\
\ \\
\noindent
\textbf{Step 1: Constructing an Appropriate Test Function}\ \\
Given $\varphi=(\varphi_1,\varphi_2)\in C_0^\infty(U,\R^2)$ define 
$$\zeta(t):=\int_{\tau_1}^{t} \int_{\tau_1} ^s \varphi(r)dr ds.$$
Let $\tau_1<r_1<r_2<\tau_2$ such that $\operatorname{supp}(\varphi)\subset[r_1,r_2]$. Then $\zeta(t)=0$ when $t\in[\tau_1,r_1]$ and $\zeta(t)=a+bt$ when $t\in[r_2,\tau_2]$ where 
$$a=\int_{\tau_1}^{\tau_2}\int_{\tau_1}^s\varphi(r)dr ds-b\tau_2\hspace{.5cm}\textrm{and}\hspace{.5cm} b=\int_{\tau_1}^{\tau_2} \varphi(r)dr.$$
We choose a smooth function $\chi\equiv 0$ on $[\tau_1,\tau_1+\frac13(\tau_2-\tau_1)]$ and $\chi\equiv 1$ on $[\tau_1+\frac23(\tau_2-\tau_1),\tau_2]$ and define 
\begin{equation}\label{phiconstruction}
\phi(t):=\zeta(t)-\chi(t)(a+bt).
\end{equation}
It is easy to see that $\phi\in C^ \infty_0(U)$ and that $\|\phi\|_{C^1(U)}\leq C\|\varphi\|_{L^1(U)}$. \\

\noindent
\textbf{Step 2: Weak Euler-Lagrange Equation}\ \\
We compute 
\begin{equation}\label{weakeulerLagrange01}
    I(\phi)=\delta\mathcal W[\gamma]\phi=\pi\int_{\tau_1}^ {\tau_2} H\left(
    \delta H\phi
    \right)\gamma_1|\dot\gamma|dt
    +
    \frac\pi2\int_{\tau_1}^ {\tau_2} H^2 \left(
    |\dot\gamma|\phi_1+\gamma_1\frac{\langle\dot\gamma,\dot{\phi}\rangle}{|\dot\gamma|}
    \right)dt.
\end{equation}
Considering the definition of $I(\phi)$ it is easy to see that $|I(\phi)|\leq\|\phi\|_{C^1}$.
Using Estimates \eqref{uniformestimate} and \eqref{kappadef} we estimate 
\begin{align}
\left|\int_{\tau_1}^ {\tau_2} H^2
\left(|\dot\gamma|\phi_1+\gamma_1 \frac{\langle\dot\gamma,\dot{\phi}\rangle}{|\dot\gamma|} \right)dt\right|
    \leq
    &\left(L+\|\gamma_1\|_{C^0}\right) \| \phi\|_{C^1}\int_{\tau_1}^ {\tau_2} H^2  dt\nonumber\\
\leq & \frac{C\|\phi\|_{C^1}}{2\pi L\kappa(\gamma;U)}\int_{\tau_1}^ {\tau_2} H^22\pi\gamma_1 |\dot\gamma|dt\nonumber\\
\leq &C\|\phi\|_{C^1}.\label{int2estimate0}
\end{align}
We recall that $H=k_1+k_2$. Using the formulas for $k_1$ and $k_2$ from Equation \eqref{principalcurvatures} it is easy to derive the following equations:
\begin{align}
    \delta k_1\phi&=\frac{\ddot{\phi}_2\dot\gamma_1+\ddot\gamma_2\dot{\phi}_1-\ddot{\phi}_1\dot\gamma_2-\ddot\gamma_1\dot{\phi}_2}{L^3}
    -
    3\frac{\ddot\gamma_2\dot\gamma_1-\ddot\gamma_1\dot\gamma_2}{L^4}\frac{\langle\dot\gamma,\dot{\phi}\rangle}{L}\label{vark1}\\
    \delta k_2\phi&=\frac{\dot{\phi}_2}{\gamma_1 L}-\frac{\dot\gamma_2}{\gamma_1^2L}\phi_1-\frac{\dot\gamma_2}{\gamma_1L^2}\frac{\langle\dot\gamma,\dot{\phi}\rangle}{L}\label{vark2}
\end{align}
Next, we estimate the terms in $\delta H\phi$ that do not contain $\ddot{\phi}$. Using $|\dot\gamma|=L$ and Estimate \eqref{uniformestimate} we get
\begin{equation}\label{kestimates}
|\delta k_2\phi|
\leq
C\|\phi\|_{C^1}
\hspace{.5cm}\textrm{and}\hspace{.5cm}
\left|\frac{\ddot\gamma_2\dot{\phi}_1-\ddot\gamma_1\dot{\phi}_2}{L^3}
    -
    3\frac{\ddot\gamma_2\dot\gamma_1-\ddot\gamma_1\dot\gamma_2}{L^4}\frac{\langle\dot\gamma,\dot\phi\rangle}{L}\right|
\leq 
C\|\phi\|_{C^1}|\ddot\gamma|.
\end{equation}
Using Equation \eqref{weakeulerLagrange01} and inserting Estimates \eqref{int2estimate0} and \eqref{kestimates} yields
\begin{align} 
\left|\int_{\tau_1}^ {\tau_2} H(\ddot{\phi}_2\dot\gamma_1-\ddot{\phi}_1\dot\gamma_2)\gamma_1|\dot\gamma|dt\right|
\leq 
&C\|\phi\|_{C^1}\left[1+\int_{\tau_1}^ {\tau_2} |H|\gamma_1|\dot\gamma_1|dt+\int_{\tau_1}^{\tau_2} |H|\ |\ddot\gamma|\gamma_1|\dot\gamma_1|dt\right]\nonumber \\
\leq
&C\|\phi\|_{C^1}(1+\|\ddot\gamma\|_{L^2{((\tau_1,\tau_2))}})\nonumber \\
\leq 
&C\|\phi\|_{C^1}.\label{step2finalestimate0}
\end{align}
Here we have first used the Hölder inequality and the bound on the Willmore energy from Estimate \eqref{uniformestimate}.\\

\noindent
\textbf{Estimate Auxiliary Terms}\ \\
For the moment, we abbreviate $\xi:=\chi(a+bt)$. We use Estimate \eqref{uniformestimate} to estimate 
\begin{align}
&\left|\int_{\tau_1}^ {\tau_2} H(\ddot\xi_2\dot\gamma_1-\ddot\xi_1\dot\gamma_2)\gamma_1|\dot\gamma|dt\right|\nonumber
\leq 
2\|\ddot\xi\|_{C^0}\int_{\tau_1}^ {\tau_2}  |H|\ |\dot\gamma|^2\gamma_1dt\\
\leq &C\|\ddot\xi\|_{C^0}\sqrt{\mathcal W[\gamma]}
\leq C\|\xi\|_{C^2}.\label{smoothestimate}
\end{align}
We wish to use this estimate to replace $\phi$ in Estimate \eqref{step2finalestimate0} with $\zeta$. To do so, we recall $\phi=\zeta-\chi(a+bt)$. Using $\ddot\zeta=\varphi$, Estimate \eqref{smoothestimate} as well as the fact that $|a|+|b|\leq C\|\varphi\|_{L^1}$ we get
\begin{equation}\label{endresultstep2}
\left|\int_{\tau_1}^ {\tau_2} H(\varphi_2\dot\gamma_1-\varphi_1\dot\gamma_2)\gamma_1|\dot\gamma|dt\right|
\leq 
C\|\varphi\|_{L^1((0,1))}.
\end{equation}
In particular, choosing $\varphi=(\varphi_1,0)$ we find that for all $\varphi_1\in C^\infty_0((\tau_1,\tau_2))$ we have 
\begin{equation}\label{firstregest}
\left|\int_{\tau_1}^ {\tau_2} H\varphi_1\dot\gamma_2\gamma_1|\dot\gamma|dt\right|
\leq 
C\|\varphi_1\|_{L^1((\tau_1,\tau_2))}.
\end{equation}

\noindent
\textbf{Regularity Improvement}\ \\
Estimate \eqref{firstregest} allows us to extend the integral expression  $\int_{\tau_1}^ {\tau_2} H\dot\gamma_2\gamma_1|\dot\gamma|\varphi_1dt$ to a linear and continuous functional $\Lambda: L^1((\tau_1,\tau_2))\rightarrow\R$. As $(L^1((\tau_1,\tau_2)))'=L^\infty((\tau_1,\tau_2))$ we may deduce  
 $H\dot\gamma_2\gamma_1|\dot\gamma|\in L^\infty((\tau_1,\tau_2))$ with 
 $$\|H\dot\gamma_2 \gamma_1|\dot\gamma|\ \|_{L^\infty((\tau_1,\tau_2))}\leq C.$$
 As $\gamma_1\geq\kappa(\gamma;U)$ on $(\tau_1,\tau_2)$ and $|\dot\gamma|=L$ we obtain $\|H\dot\gamma_2\|_{L^\infty((\tau_1,\tau_2))}\leq C$. Using $\varphi=(0,\varphi_2)$ one can derive a similar estimate for $\|H\dot\gamma_1\|_{L^\infty((\tau_1,\tau_2))}$. Combining both estimates, we get
$$\|H\|_{L^\infty((\tau_1,\tau_2))}=\frac{\|H\dot\gamma\|_{L^\infty((\tau_1,\tau_2))}}{L}\leq\frac1L(\|H\dot\gamma_1\|_{L^\infty((\tau_1,\tau_2))}+\|H\dot\gamma_2\|_{L^\infty((\tau_1,\tau_2))})\leq C.$$
Using the identities from Equation \eqref{ArcLength_H_Identity}, we deduce $\gamma\in W^{2,\infty}((\tau_1,\tau_2))$ and the estimate $\|\ddot\gamma\|_{L^\infty((\tau_1,\tau_2))}\leq C$. 
\end{proof}

\begin{lemma}\label{regularitystep2}
        Under the assumptions of Theorem \ref{GeneralRegularityTheorem} $\gamma\in W^{3,\infty}(U)$, $H\in W^{1,\infty}(U)$ and 
    $$\|\gamma\|_{W^{3,\infty}(U)}+\|H\|_{W^{1,\infty}(U)}\leq C(M,\kappa(\gamma;U),U).$$
\end{lemma}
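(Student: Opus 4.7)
The plan is to bootstrap Lemma~\ref{regularitystep1} by one derivative: with $\gamma\in W^{2,\infty}(U)$ and $\|H\|_{L^\infty(U)}\leq C$ (hence also $\|K\|_{L^\infty(U)}\leq C$) in hand, I would reread the weak Euler--Lagrange identity \eqref{weakeulerLagrange01} to extract an $L^\infty$-bound on the distributional derivative of $H$, and then use the identities \eqref{ArcLength_H_Identity} to conclude $\gamma\in W^{3,\infty}(U)$. Testing \eqref{weakeulerLagrange01} against $\phi=(0,\phi_2)$ with $\phi_2\in C^\infty_0(U)$ and expanding $\delta H\,\phi$ via \eqref{vark1}--\eqref{vark2}, the unique term containing $\ddot\phi_2$ is $\tfrac{\pi}{L^2}\int_U H\gamma_1\dot\gamma_1\,\ddot\phi_2\,dt$; every remaining contribution involves at most $\phi_2$ or $\dot\phi_2$ and is controlled by $C\|\phi_2\|_{W^{1,1}(U)}$ using the $L^\infty$-bounds from Lemma~\ref{regularitystep1} together with $\gamma_1\geq \kappa(\gamma;U)>0$. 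Combined with $|I(\phi)|\leq C\|\phi\|_{W^{1,1}(U)}$, this yields
\begin{equation*}
\left|\int_U H\gamma_1\dot\gamma_1\,\ddot\phi_2\,dt\right|\leq C\|\phi_2\|_{W^{1,1}(U)}\qquad\forall\,\phi_2\in C^\infty_0(U),
\end{equation*}
and testing with $\phi=(\phi_1,0)$ produces the analogous bound with $\dot\gamma_2$ in place of $\dot\gamma_1$.

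Next I would convert these second-order estimates into $W^{1,\infty}$-regularity of $F_i:=H\gamma_1\dot\gamma_i$, $i=1,2$. A test-function construction analogous to \eqref{phiconstruction}---choosing $\phi_2(t)=\int_{\tau_1}^t(\psi(s)-\bar\psi\,\eta(s))\,ds$ for arbitrary $\psi\in C^\infty_0(U)$, where $\bar\psi:=\int_U\psi$ and $\eta\in C^\infty_0(U)$ is a fixed cutoff with $\int_U\eta=1$---produces admissible $\phi_2\in C^\infty_0(U)$ with $\ddot\phi_2=\dot\psi-\bar\psi\,\dot\eta$ and $\|\phi_2\|_{W^{1,1}(U)}\leq C\|\psi\|_{L^1(U)}$. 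Substituting into the above estimate reduces it to $\bigl|\int_U F_i\,\dot\psi\,dt\bigr|\leq C\|\psi\|_{L^1(U)}$ for every $\psi\in C^\infty_0(U)$, which by $L^\infty(U)=(L^1(U))^*$ duality means $F_i'\in L^\infty(U)$, so $F_i\in W^{1,\infty}(U)$ with norm controlled by $M$, $\kappa(\gamma;U)$ and $|U|$.

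To extract $H$ itself, I would decouple via $\dot\gamma_1^2+\dot\gamma_2^2=L^2$: summing the two relations yields
\begin{equation*}
\dot\gamma_1 F_1+\dot\gamma_2 F_2 = H\gamma_1 L^2,
\end{equation*}
whose left-hand side lies in $W^{1,\infty}(U)$ (both factors in each product are $W^{1,\infty}$); dividing by the strictly positive $W^{1,\infty}$-function $\gamma_1 L^2\geq \kappa L^2$ gives $H\in W^{1,\infty}(U)$ with the claimed quantitative bound. Rearranging \eqref{ArcLength_H_Identity} as
\begin{equation*}
\ddot\gamma_1=-L\dot\gamma_2 H+\frac{\dot\gamma_2^2}{\gamma_1},\qquad \ddot\gamma_2=L\dot\gamma_1 H-\frac{\dot\gamma_1\dot\gamma_2}{\gamma_1}
\end{equation*}
then expresses $\ddot\gamma$ as an arithmetic combination of $W^{1,\infty}$-functions (using once more $\gamma_1\geq \kappa>0$), so $\ddot\gamma\in W^{1,\infty}(U)$, i.e.\ $\gamma\in W^{3,\infty}(U)$.

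The principal obstacle will be the bookkeeping in the first step: one must verify that the only $\ddot\phi_i$-term which actually survives in $\delta H\,\phi$ is the one encoding the ``missing'' derivative of $H$, and that every remaining term---the $\delta k_2$ piece, the $H^2$-boundary piece, the $\dot\phi_i$-components of $\delta k_1$, and $I(\phi)$ itself---is quantitatively controlled by $\|\phi\|_{W^{1,1}(U)}$ with constants depending only on $M$, $\kappa(\gamma;U)$ and $|U|$. Once this accounting is done, the duality step and the algebraic decoupling via $|\dot\gamma|^2=L^2$ are routine.
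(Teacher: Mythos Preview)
Your proposal is correct and follows essentially the same route as the paper: isolate the $\ddot\phi$-contribution in the weak Euler--Lagrange identity, feed in a compactly supported antiderivative of a generic $\psi\in C^\infty_0(U)$ (the paper uses $\zeta=\int\varphi$ corrected by $a\chi$, you use $\int(\psi-\bar\psi\,\eta)$, which is an equivalent construction), conclude by $(L^1)^*=L^\infty$ duality that $H\gamma_1\dot\gamma_i\in W^{1,\infty}(U)$, and recover $H\in W^{1,\infty}$ and then $\gamma\in W^{3,\infty}$ from the arc-length identities \eqref{ArcLength_H_Identity}. Your algebraic decoupling $\dot\gamma_1 F_1+\dot\gamma_2 F_2=H\gamma_1L^2$ is a slightly cleaner way of phrasing the paper's final step, but the argument is the same.
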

\begin{proof}
Let again $U=(\tau_1,\tau_2)$. As in the proof of Lemma \ref{regularitystep1} $C$ always refers to a constant depending on $M$, $\kappa(\gamma;U)$ and $U$. Also, the rest of the proof is very similar to the proof of Lemma \ref{regularitystep1} and we only sketch the parts that need to be altered. First, the test function has to get modified. Given $\varphi=(\varphi_1,\varphi_2)\in C_0^ \infty((\tau_1,\tau_2),\R^2)$ we put
$$\zeta(t):=\int_0^ t\varphi(r)dr.$$
Let $[r_1,r_2]=\operatorname{supp}\varphi$. Then $\zeta(t)=0$ for $t\in[\tau_1, r_1]$ and $\zeta(t)=a$ when $t\in[r_2,\tau_2]$ where
$$a=\int_{\tau_1}^ {\tau_2}\varphi(r) dr.$$
We choose the same smooth function $\chi$ as in the proof of Lemma \ref{regularitystep1} and put $\phi(t):=\zeta(t)-a\chi(t)$. It is easy to see that $\phi\in C^ \infty_0(U)$ and that $\|\phi\|_{C^ 0}\leq C\|\varphi\|_{L^1}$ and $\|\dot\phi\|_{L^1}\leq C\|\varphi\|_{L^1}$. 
Testing the weak Euler-Lagrange equation with $\phi$ as we did in Equation \eqref{weakeulerLagrange01} and using the estimates derived in Lemma \ref{regularitystep1} we get
\begin{equation}\label{step2estimatefehlthiername}
\left|\int_{\tau_1}^ {\tau_2} \hspace{-.3cm}H\left(\delta H\phi\right)\gamma_1|\dot\gamma|dt\right|\leq 
C\left[\int_{\tau_1}^ {\tau_2}\hspace{-.3cm} H^2\left(|\dot\gamma|\ |\phi_1|+\gamma_1 |\dot{\phi}|\right) dt+|I(\phi)|\right]
\leq
C\|\phi\|_{W^ {1,1}(U)}.
\end{equation}
Next, we take care of the terms in $\delta H$ that do not contain $\ddot{\phi}$. Recalling Equations \eqref{vark1} and \eqref{vark2} and using the result of Lemma \ref{regularitystep1}, we estimate 
\begin{align}
\left|\delta H\phi-\frac{\ddot{\phi}_2\dot\gamma_1-\ddot{\phi}_1\dot\gamma_2}{L^3}\right|
&=
|\delta k_2\phi|
+
\left|\frac{\ddot\gamma_2\dot{\phi}_1-\ddot\gamma_1\dot{\phi}_2}{L^3}
    -
    3\frac{\ddot\gamma_2\dot\gamma_1-\ddot\gamma_1\dot\gamma_2}{L^4}\frac{\langle\dot\gamma,\dot\phi\rangle}{L}\right|\nonumber\\
&\leq 
C\left(|\phi|+|\dot{\phi}|\right).\label{kestimates2}
\end{align}
Inserting into Estimate \eqref{step2estimatefehlthiername} and using the estimate for $\|H\|_{L^\infty((\tau_1,\tau_2))}$ from Lemma \ref{regularitystep1}, we get
$$\left|\int_{\tau_1}^ {\tau_2} H(\ddot{\phi}_2\dot\gamma_1-\ddot{\phi}_1\dot\gamma_2)\gamma_1|\dot\gamma|dt\right|
\leq 
C\|\phi\|_{W^ {1,1}(U)}.
$$
As in the proof of Lemma \ref{regularitystep1}, we now insert the formula for $\phi$ and estimate the auxiliary terms to get 
\begin{equation}\label{lemma2est}
\left|\int_{\tau_1}^ {\tau_2} H(\dot\varphi_2\dot\gamma_1-\dot\varphi_1\dot\gamma_2)\gamma_1|\dot\gamma|dt\right|
\leq 
C\|\phi\|_{W^ {1,1,}}\leq C\|\varphi\|_{L^1}.
\end{equation}
Now we argue similarly to as we did in the final step of the proof of Lemma \ref{regularitystep1}. First we take $\varphi=(\varphi_1,0)$ and consider the densely defined linear operator
$$
\Lambda:L^1((\tau_1,\tau_2))\rightarrow\R,
\hspace{.5cm}
\Lambda(\varphi_1):=\int_{\tau_1}^{\tau_2} H\dot\varphi_1\dot\gamma_2\gamma_1|\dot\gamma|dt.
$$
Estimate \eqref{lemma2est} allows us to extend $\Lambda$ to a continuous linear map on all of $L^1((\tau_1,\tau_2))$. Now, by the Riesz representation theorem (see e.g. \cite{hirzebruch}, Theorem 19.2), there exists a function $h\in L^ \infty((\tau_1,\tau_2))$ such that $\Lambda(\varphi)=\int_{\tau_1}^ {\tau_2} h \varphi dt$. Going back to $\varphi\in C_0^ \infty((\tau_1,\tau_2))$ we get 
$$
\int_{\tau_1}^ {\tau_2} h\varphi_1dt
=
\int_{\tau_1}^ {\tau_2} H\gamma_1\dot\gamma_2|\dot\gamma|\dot\varphi_1 dt\hspace{.5cm}\textrm{for all }\varphi_1\in C^ \infty_0((\tau_1,\tau_2)).$$
Thus $H\gamma_1\dot\gamma_2|\dot\gamma|$ is weakly differentiable and Estimate \eqref{lemma2est} implies 
$$\|(H\gamma_1\dot\gamma_2|\dot\gamma|)'\|_{L^ \infty((\tau_1,\tau_2))}\leq C.$$
Using the results from Lemma \ref{regularitystep1} as well as $\gamma_1\geq\kappa(\gamma;U)$ on $U=(\tau_1,\tau_2)$ we obtain $H\dot\gamma_2\in W^ {1,\infty}((\tau_1,\tau_2))$ with $\|H\dot\gamma_2\|_{W^ {1,\infty}((\tau_1,\tau_2))}\leq C$. Taking $\varphi=(0,\varphi_2)$ instead, we can derive a similar estimate for $H\dot\gamma_1$. The lemma follows as in the proof of Lemma \ref{regularitystep1} by using the identities in Equation \eqref{ArcLength_H_Identity}.
\end{proof}

\begin{lemma}
    Under the assumptions of Theorem \ref{GeneralRegularityTheorem} $\gamma\in W^{k+2,\infty}(U)$, $H\in W^{k,\infty(U)}$ for all $k\in\N_0$ and 
    $$\|\gamma\|_{W^{k+2,\infty}(U)}+\|H\|_{W^{k,\infty}(U)}\leq C(M,\kappa(\gamma;U),U,k).$$
\end{lemma}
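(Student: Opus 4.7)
The plan is to argue by induction on $k$, with base cases $k=0$ and $k=1$ already supplied by Lemmas \ref{regularitystep1} and \ref{regularitystep2}. The engine driving the inductive step will be the strong pointwise form of the Euler--Lagrange equation: once enough regularity is available for $\gamma$ to integrate by parts in \eqref{regappendixweakeq}, that equation becomes a second-order ODE for $H$ with smooth coefficients in $\gamma, \dot\gamma, \ddot\gamma$, which can be bootstrapped in the standard way.

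First I would derive the strong form of the Euler--Lagrange equation under the regularity $\gamma\in W^{3,\infty}(U)$, $H\in W^{1,\infty}(U)$ afforded by Lemma \ref{regularitystep2}. Arguing exactly as in the proof of Lemma \ref{eulerlagrangeequation}, one may integrate by parts in \eqref{regappendixweakeq} (the boundary terms vanish because $\phi\in C^\infty_0(U)$, and all required derivatives of $\gamma$ and $H$ exist pointwise almost everywhere and are bounded). Rewriting $I(\phi)$ by means of \eqref{var4} and \eqref{var5} as a linear combination of $\phi\mapsto\int\langle\phi,\nu\rangle\gamma_1|\dot\gamma|\,dt$ and $\phi\mapsto\int H\langle\phi,\nu\rangle\gamma_1|\dot\gamma|\,dt$, and invoking the fundamental lemma of the calculus of variations, produces the pointwise identity $W[\gamma]=\tilde a-\tilde b H$ a.e.\ on $U$, with constants $|\tilde a|+|\tilde b|\leq C(M)$. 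Combining this with the formula \eqref{WillmoreOperator} for $W[\gamma]$, the standard representation $\Delta_g H=(\gamma_1 L^2)^{-1}\partial_t(\gamma_1\dot H)$ valid for rotationally symmetric fields, and $K=-\ddot\gamma_1/(\gamma_1 L^2)$ from \eqref{ArcLength_K_Identity}, I obtain the second-order ODE
$$\partial_t(\gamma_1\dot H)=\gamma_1 L^2\Bigl(2\tilde a-2\tilde b H-\tfrac{1}{2}H(H^2-4K)\Bigr)\quad\text{on }U,$$
whose right-hand side is polynomial in $\gamma_1$, $\ddot\gamma_1$ and $H$ with coefficients bounded by $C(M)$.

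For the inductive step, suppose $\gamma\in W^{k+2,\infty}(U)$ and $H\in W^{k,\infty}(U)$, each with norm controlled by $C(M,\kappa(\gamma;U),U,k)$. Since $W^{k+1,\infty}(U)$ is a Banach algebra and $\gamma_1\geq\kappa(\gamma;U)>0$ ensures $1/\gamma_1\in W^{k+1,\infty}(U)$ with controlled norm, the right-hand side of the ODE above lies in $W^{k,\infty}(U)$ with controlled norm. Integrating from a fixed $t_0\in U$ and using the $L^\infty$-bound on $\gamma_1\dot H$ (from Lemma \ref{regularitystep2}) to absorb the integration constant gives $\gamma_1\dot H\in W^{k+1,\infty}(U)$, and then $\dot H=(\gamma_1\dot H)/\gamma_1\in W^{k+1,\infty}(U)$, i.e.\ $H\in W^{k+2,\infty}(U)$. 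Substituting this improved regularity of $H$ into the arc-length identities \eqref{ArcLength_H_Identity}, which express $\ddot\gamma$ as a polynomial in $\dot\gamma$, $\gamma_1^{-1}$ and $H$ (now all in $W^{k+1,\infty}(U)$), forces $\ddot\gamma\in W^{k+1,\infty}(U)$, hence $\gamma\in W^{k+3,\infty}(U)$. Tracking the constants at each stage produces the claimed quantitative bound and closes the induction.

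The only genuinely delicate point is the first step: justifying the derivation of the strong ODE from the weak identity \eqref{regappendixweakeq} under only $W^{3,\infty}$ regularity of $\gamma$, and verifying that the integration-by-parts arguments of Lemma \ref{eulerlagrangeequation} remain valid in this reduced-regularity setting. After that is done, the remainder is a routine ODE bootstrap and the growth of the constants $C(M,\kappa(\gamma;U),U,k)$ with $k$ comes from iterated product-rule and division estimates in $W^{k+1,\infty}(U)$.
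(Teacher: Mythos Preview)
Your approach is correct and genuinely different from the paper's. The paper never passes to the strong (pointwise) Euler--Lagrange equation; instead it stays in the weak formulation throughout the induction, testing \eqref{regappendixweakeq} with $\phi=\varphi^{(k-1)}$ for $\varphi\in C_0^\infty(U)$ and repeatedly integrating by parts to move the $k+1$ derivatives from $\varphi$ onto the coefficients (which are in $W^{k,\infty}$ by the inductive hypothesis). This yields an estimate of the form $\bigl|\int H\gamma_1\dot\gamma_i\,\varphi^{(k+1)}\,dt\bigr|\le C\|\varphi\|_{L^1}$, whence $H\gamma_1\dot\gamma_i\in W^{k+1,\infty}$ by duality, and then $H\in W^{k+1,\infty}$, $\gamma\in W^{k+3,\infty}$ via \eqref{ArcLength_H_Identity}. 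Your route---extracting the second-order ODE for $H$ once and for all, then bootstrapping it---is arguably more transparent and, as you noticed, even gains two orders of $H$-regularity per step rather than one.

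One caution on the phrase ``arguing exactly as in the proof of Lemma~\ref{eulerlagrangeequation}''. That lemma invokes the smooth first-variation formula \eqref{variationformulastandard}, whose derivation uses $\gamma\in C^\infty$; with only $\gamma\in W^{3,\infty}$ and $H\in W^{1,\infty}$ the quantity $\Delta_g H$ is not a priori a function, so you cannot literally cite that computation. What you should do instead is argue directly from \eqref{weakeulerLagrange01}: integrate the $\ddot\phi$-term by parts once (this uses only $H\gamma_1\dot\gamma_i\in W^{1,\infty}$, which you have), collect all remaining $\dot\phi$- and $\phi$-terms, and observe that the resulting identity states precisely that $\gamma_1\dot H\in L^\infty$ has a weak derivative equal to an explicit $L^\infty$ function---i.e.\ your ODE holds in the distributional, hence a.e., sense. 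You flag this as the delicate point, and it is, but it is no more than a careful rewriting; after that your bootstrap is routine.
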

\begin{proof}
As before, $C$ will always refer to a constant depending on $M$, $\kappa(\gamma; U)$ and $U$ and this time also $k$. The proof is inductive. The case $k=1$ follows from Lemma \ref{regularitystep2}. Therefore it suffices to prove the following implication for $k\geq 1$:
\begin{equation}\label{implication}
\left\{\begin{aligned} 
&H\in W^{k,\infty}(U)\\
&\|H\|_{W^ {k,\infty}(U)}\leq C\\
&\gamma\in W^{k+2,\infty}(U)\\
&\|\gamma\|_{W^ {k+2,\infty}(U)}\leq C
\end{aligned}\right\}
\hspace{.5cm}\Rightarrow \hspace{.5cm}
\left\{\begin{aligned} 
&H\in W^{k+1,\infty}(U)\\
&\|H\|_{W^ {k+1,\infty}(U)}\leq C\\
&\gamma\in W^{k+3,\infty}(U)\\
&\|\gamma\|_{W^ {k+3,\infty}(U)}\leq C
\end{aligned}\right\}
\end{equation}
To establish \eqref{implication} we take $\varphi=(\varphi_1,\varphi_2)\in C_0^ \infty(U,\R^2)$ and choose the test function $\phi:=\varphi^ {(k-1)}$
Testing the weak version of the Euler-Lagrange equation with $ \phi$ as we did in Equation \eqref{weakeulerLagrange01} gives 
\begin{align}
 &\left|\int_{\tau_1}^ {\tau_2} H\left(\delta H\phi\right)\gamma_1|\dot\gamma|dt\right|
\leq 
C\left|
\int_{\tau_1}^ {\tau_2}H^2\left(
|\dot\gamma|\phi_1+\gamma_1\frac{\langle\dot\gamma,\dot{\phi}\rangle}{|\dot\gamma|}
\right)dt
\right|+C|I(\phi)|\nonumber\\
    \leq & C\left|\int_{\tau_1}^ {\tau_2}H^2\left(|\dot\gamma|\varphi^{(k-1)}_1(t)+\gamma_1\frac{\langle\dot\gamma,\varphi^{(k)}(t)\rangle}{|\dot\gamma|}\right)dt\right|+C|I(\varphi^{(k-1)})|.\label{longestimate} 
\end{align}
 First, we establish a suitable estimate for $I(\varphi^ {(k-1)})$. By assumption $\gamma\in W^ {k+2,\infty}(U)$. Therefore we can integrate the integrals in Equation \eqref{IFUnctionalPDE} by parts to obtain 
$|I(\varphi^ {(k-1)})| \leq C\|\varphi\|_{L^1}$. 
Also using the assumption $H\in W^ {k,\infty}(U)$ and $\gamma\in W^ {k+2,\infty}(U)$, we can rewrite the integral on the right in Estimate \eqref{longestimate} by integrating by parts several times and estimate it by $C\|\varphi\|_{L^1}$. In total, we deduce
\begin{equation}\label{step3zwischenfehltname}
\left|
\int_{\tau_1}^ {\tau_2} H\left(\delta H\phi\right)\gamma_1|\dot\gamma|dt
\right|
\leq 
C\|\varphi\|_{L^1}.
\end{equation}
Next, we again estimate all terms in $\delta H\phi$ that do not contain $\ddot{\phi}$. Using Equations \eqref{vark1} and \eqref{vark2}, $\gamma\in W^ {k+2,\infty}$ and repeated integration by parts we get
\begin{align*}
   & \left|\int_{\tau_1}^ {\tau_2}H\left(
   \delta H\phi-\frac{\dot\gamma_1\ddot{\phi}_2-\dot \gamma_2\ddot{ \phi}_1}{L^3}
   \right)\gamma_1|\dot\gamma|dt\right|\\
   \leq & \left|\int_{\tau_1}^ {\tau_2}H
   \left(
        \delta k_2\phi
    +
    \frac{\ddot\gamma_2\dot{\phi}_1-\ddot\gamma_1\dot{\phi}_2}{L^3}
    -
    3\frac{\ddot\gamma_2\dot\gamma_1-\ddot\gamma_1\dot\gamma_2}{L^4}\frac{\langle\dot\gamma,\dot{\phi}\rangle}{L}\right)
   \gamma_1|\dot\gamma|dt\right|\\
   \leq & C\|\varphi\|_{L^1}.
\end{align*}
Combining with Estimate \eqref{step3zwischenfehltname}, we find 
$$\left|\int_{\tau_1}^ {\tau_2} H
\frac{\dot\gamma_1\varphi^ {(k+1)}_2-\dot\gamma_2\varphi^ {(k+1)}_1}{L^3}\gamma_1 |\dot\gamma| dt\right|
\leq
C\|\varphi\|_{L^1((\tau_1,\tau_2))}.
$$
As in the proof of Lemma \ref{regularitystep1} this now implies $H\in W^ {k+1,\infty}((\tau_1,\tau_2))$ and hence $\gamma\in W^ {k+3,\infty}((\tau_1,\tau_2))$. This establishes Implication \eqref{implication} and thereby the lemma.
\end{proof}

\section{Calculus Lemmas  }
\begin{lemma}\label{oderegularitylemma}
Let $n\in\N_0$, $u\in C^1((0,1))\cap C^0([0,1))$, $h\in C^n([0,1)$ and 
$$\frac{(x u(x))'}x=h(x).$$
Then $u\in C^{n+1}([0,1))$.
\end{lemma}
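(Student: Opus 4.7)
My plan is to integrate the ODE to obtain an explicit representation for $u$, use it to establish regularity up to order $n$ at the endpoint, bootstrap regularity on the open interval $(0,1)$, and finally combine the differentiated ODE with the explicit formula to extend the $(n+1)$-st derivative continuously to $0$.

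First, I will integrate the equation. Since $u$ is continuous at $0$, one has $\epsilon u(\epsilon)\to 0$ as $\epsilon\to 0^+$, so integration of $(xu(x))'=xh(x)$ yields
\[
xu(x)=\int_0^x s\,h(s)\,ds \qquad \text{for all } x\in(0,1).
\]
Substituting $s=tx$ produces the key representation $u(x)=xG(x)$ with $G(x):=\int_0^1 t\,h(tx)\,dt$. Differentiation under the integral (valid because $h\in C^n$) gives $G^{(k)}(x)=\int_0^1 t^{k+1}h^{(k)}(tx)\,dt$ for $0\le k\le n$, so $G\in C^n([0,1))$ with $G^{(k)}(0)=h^{(k)}(0)/(k+2)$. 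Applying Leibniz to $u=xG$ yields $u\in C^n([0,1))$, $u(0)=0$, and the boundary formulas $u^{(k)}(0)=k\,G^{(k-1)}(0)=\tfrac{k}{k+1}\,h^{(k-1)}(0)$ for $1\le k\le n$.

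Second, I will bootstrap on $(0,1)$. Rewriting the ODE as $u'(x)=h(x)-u(x)/x$ and using that $1/x$ is smooth on $(0,1)$ while $h\in C^n((0,1))$, iteration upgrades $u\in C^1((0,1))$ to $u\in C^{n+1}((0,1))$. Differentiating $(xu)'=xh$ a total of $n$ times on $(0,1)$ (using $(xu')^{(n)}=xu^{(n+1)}+(n+1)u^{(n)}$ and $(xh)^{(n)}=xh^{(n)}+nh^{(n-1)}$) yields
\[
x\,u^{(n+1)}(x)+(n+1)\,u^{(n)}(x)=x\,h^{(n)}(x)+n\,h^{(n-1)}(x), \qquad x\in(0,1),
\]
equivalently $u^{(n+1)}(x)=h^{(n)}(x)+F(x)/x$ with $F(x):=n\,h^{(n-1)}(x)-(n+1)u^{(n)}(x)\in C^0([0,1))$; the boundary value $F(0)=0$ follows from the formula for $u^{(n)}(0)$ derived in step one (with the convention that the term with coefficient $n$ is absent when $n=0$).

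The final and main step is to show that $F(x)/x$ has a limit as $x\to 0^+$. Since $F(0)=0$, I will split
\[
\frac{F(x)}{x}=n\cdot\frac{h^{(n-1)}(x)-h^{(n-1)}(0)}{x}-(n+1)\cdot\frac{u^{(n)}(x)-u^{(n)}(0)}{x}.
\]
The first quotient converges to $h^{(n)}(0)$ by differentiability of $h^{(n-1)}$ at $0$. For the second, the Leibniz identity $u^{(n)}(x)=x\,G^{(n)}(x)+n\,G^{(n-1)}(x)$ on $[0,1)$ lets me rewrite the difference quotient as $G^{(n)}(x)+n\,[G^{(n-1)}(x)-G^{(n-1)}(0)]/x$, whose summands converge to $G^{(n)}(0)$ and $n\,G^{(n)}(0)$ respectively (continuity of $G^{(n)}$ and differentiability of $G^{(n-1)}$ at $0$). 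Hence $L:=\lim_{x\to 0^+}u^{(n+1)}(x)$ exists, and the mean value theorem applied to $u^{(n)}$ on $[0,x]$ shows that $u^{(n)}$ is right-differentiable at $0$ with derivative $L$, so $u\in C^{n+1}([0,1))$. The main obstacle is to secure the exact cancellation $F(0)=0$ and then to rigorously control $F(x)/x$ at the endpoint without assuming more than $h\in C^n$; both hinge on the precise boundary formulas for $u^{(k)}(0)$ provided by the representation $u=xG$.
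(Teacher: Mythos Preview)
Your proof is correct. Both you and the paper start from the same integrated representation $u(x)=xG(x)$ with $G(x)=\int_0^1 t\,h(tx)\,dt\in C^n([0,1))$, so the core idea is shared. The difference lies in how the final order of regularity is extracted. You first deduce $u\in C^n([0,1))$ from $u=xG$, then bootstrap to $C^{n+1}$ on the open interval, differentiate the ODE $n$ times, and carefully verify that $u^{(n+1)}$ has a limit at $0$ via the cancellation $F(0)=0$ and the difference-quotient analysis. The paper instead computes $u'(x)=G(x)+xG'(x)$ and integrates the term $x\int_0^1 t^2 h'(tx)\,dt$ by parts in $t$ to obtain the clean identity
\[
u'(x)=h(x)-\int_0^1 t\,h(tx)\,dt=h(x)-G(x),
\]
which is manifestly $C^n([0,1))$ and finishes the argument in one line. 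Your route is perfectly valid and self-contained, but the paper's integration-by-parts trick avoids the entire endpoint limit analysis; it is worth noting this shortcut, since the identity $u'=h-G$ is what makes the lemma almost immediate once the representation $u=xG$ is in hand.
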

\begin{proof}
First, we consider the case $n=0$. We have 
\begin{equation}\label{odeRegLemmaEq1}
xu(x)=0\cdot u(0)+\int_0^x th(t)dt.
\end{equation}
So, for $x\in (0,1)$ we get 
$$u'(x)=\frac d{dx}\left[\frac1x\int_0^x th(t)dt\right]=h(x)-\frac1{x^2}\int_0^x th(t)dt.$$
We claim that $u'(x)\rightarrow \frac12 h(0)$ as $x\rightarrow 0$. Indeed
$$\frac1{x^2}\int_0^x h(t)tdt=\int_0^1 s h(sx) ds\rightarrow h(0)\int_0^1 s ds=\frac12 h(0).$$
Now we consider the case $n>0$. Rewriting Equation \eqref{odeRegLemmaEq1} we get
$$u(x)=x\int_0^1h(tx ) t dt$$
and deduce $u\in C^n([0,1))$. Next, compute 
\begin{align*}
    u'(x)=&\int_0^1 th(tx) dt+x\int_0^1 t^2 h'(tx) dt\\
        &\int_0^1 th(tx) dt
            +x\left[\frac {t^2h(tx)}x\bigg|_{t=0}^{t=1}-\int_0^1 2t\frac{h(tx)}xdt\right]\\
        =&h(x)-\int_0^1 t h(tx)dt.
\end{align*}
Clearly, this last formula is $C^n([0,1])$, which proves the lemma. 
\end{proof}

\begin{lemma}\label{uniformboundedLemma}
Let $r_0>0$, $f\in C^0([0, r_0])\cap C^1((0, r_0])$ and $w\in C^2([0, r_0])$ such that 
$$w''(r)+\left(\frac wr\right)'=f(r).$$
Then $|w''(r)|\leq \frac 43 \sup_{0\leq\rho\leq r}|f(\rho)|$ and consequently $|w'(r)|\leq |w'(r_0)|+\frac43 r_0\sup_{0\leq\rho\leq r_0}|f(\rho)|$.
\end{lemma}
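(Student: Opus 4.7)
The plan is to recognize the ODE as an Euler-type equation for the auxiliary function $u(r) := w(r)/r$ and obtain an explicit representation formula for $w''$, from which the claimed bound falls out by direct estimation.

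First I would verify that the hypothesis $w\in C^2([0,r_0])$ together with the ODE forces $w(0)=0$. Writing $(w/r)'(r) = (rw'(r)-w(r))/r^2$, we see that if $w(0)\neq 0$ then the numerator tends to $-w(0)\neq 0$ while the denominator tends to $0$, so $(w/r)'$ would blow up at $0$; this contradicts $(w/r)'=f-w''$ being bounded near $0$. With $w(0)=0$ in hand, setting $u:=w/r$ on $(0,r_0]$ and computing $w'=u+ru'$, $w''=2u'+ru''$, the ODE rewrites as
\begin{equation*}
3u'(r)+ru''(r)=f(r), \qquad \text{equivalently} \qquad \bigl(r^3u'(r)\bigr)'=r^2f(r).
\end{equation*}

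Next I would integrate this identity from $\epsilon$ to $r$ and send $\epsilon\to 0^+$. Using $w(0)=0$ and the $C^2$-regularity of $w$, a Taylor expansion gives $rw'(r)-w(r)=\tfrac{1}{2}w''(0)r^2+o(r^2)$, so $\epsilon^3 u'(\epsilon)=\epsilon(\epsilon w'(\epsilon)-w(\epsilon))\to 0$ as $\epsilon\to 0^+$. Therefore
\begin{equation*}
r^3u'(r)=\int_0^r s^2 f(s)\,ds, \qquad\text{i.e.}\qquad u'(r)=\frac{1}{r^3}\int_0^r s^2f(s)\,ds.
\end{equation*}
Substituting back into $w''=f-(w/r)'=f-u'$ yields the representation formula
\begin{equation*}
w''(r)=f(r)-\frac{1}{r^3}\int_0^r s^2 f(s)\,ds.
\end{equation*}

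Finally, writing $M(r):=\sup_{0\le\rho\le r}|f(\rho)|$, the pointwise estimate
\begin{equation*}
|w''(r)|\le |f(r)|+\frac{1}{r^3}\int_0^r s^2|f(s)|\,ds\le M(r)+\frac{M(r)}{r^3}\cdot\frac{r^3}{3}=\frac{4}{3}M(r)
\end{equation*}
follows immediately. The bound on $w'$ then follows from the fundamental theorem of calculus: $w'(r)=w'(r_0)-\int_r^{r_0}w''(s)\,ds$, hence $|w'(r)|\le |w'(r_0)|+\tfrac{4}{3}(r_0-r)M(r_0)\le |w'(r_0)|+\tfrac{4}{3}r_0\sup_{[0,r_0]}|f|$. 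No step looks especially delicate; the only thing to be careful about is justifying the boundary value $\epsilon^3u'(\epsilon)\to 0$ in passing to the limit, which relies crucially on $w(0)=0$ and the $C^2$ hypothesis on $w$.
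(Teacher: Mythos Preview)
Your proof is correct and arrives at exactly the same representation formula as the paper, namely
\[
w''(r)=f(r)-\frac{1}{r^{3}}\int_{0}^{r}s^{2}f(s)\,ds
\quad\Bigl(=f(r)-\int_{0}^{1}y^{2}f(yr)\,dy\Bigr),
\]
from which the $\tfrac{4}{3}$--bound is immediate. The routes to this formula differ, however. The paper rewrites the equation as $\bigl((rw)'/r\bigr)'=f$, integrates \emph{twice} to obtain $w(r)=c_{1}r+r\int_{0}^{1}\int_{0}^{ry}yf(s)\,ds\,dy$, and then differentiates \emph{twice}; the second differentiation produces a term $r\int_{0}^{1}y^{3}f'(ry)\,dy$, which is handled by integration by parts. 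Your substitution $u=w/r$ leads to the integrating-factor form $(r^{3}u')'=r^{2}f$, needing only one integration, after which $w''=f-u'$ gives the formula directly. Your argument is shorter and, notably, never touches $f'$: it goes through with only $f\in C^{0}([0,r_{0}])$, whereas the paper's differentiation step genuinely uses the hypothesis $f\in C^{1}((0,r_{0}])$. The boundary justification $\epsilon^{3}u'(\epsilon)\to 0$ that you flag is indeed the one place requiring care, and your Taylor argument (resting on $w(0)=0$, which you correctly deduce from $w\in C^{2}$) handles it cleanly.
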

\begin{proof}
    We compute 
    $$
    \left(\frac{(wr)'}r\right)'
    =
    \left(w'+\frac wr\right)'
    =
    f.
    $$
Integrating this equation from $r>0$ to $r_0$ shows 
$$-\frac{(w(r)r)'}r+w'(r_0)+\frac{w(r_0)}{r_0}=\int_r^{r_0} f(s)ds=\int_0^{r_0} f(s)ds-\int_0^rf(s)ds.$$
We put $c_0:=w'(r_0)+\frac{w(r_0)}{r_0}-\int_0^{r_0} f(s)ds$. Then 
$$\frac{(rw(r))'}r=c_0+\int_0^r f(s)ds.$$
We multiply by $r$. Another integration from $r$ to $r_0$ shows 
\begin{align*}
r_0 w(r_0)-rw(r)&=\frac12 c_0(r_0^2-r^2)+\int_r^{r_0}x\int_0^x f(s)dsdx\\
&=-\frac12 c_0 r^2 +\frac12 c_0 r_0^2+\int_0^{r_0}x\int_0^x f(s)dsdx-\int_0^{r}x\int_0^x f(s)dsdx.
\end{align*}
This shows that there exist constants $c_1,c_2\in\R$ such that
$$rw(r)=c_1 r^2+c_2+\int_0^r \int_0^x xf(s)dsdx.$$
Taking $r=0$ shows $c_2=0$. Next, we substitute $x=ry$ and obtain 
$$w(r)=c_1r+r\int_0^1\int_0^{ry}y f(s)dsdy.$$
We compute two $r$-derivatives. 
\begin{align*} 
w''(r)=&2\int_0^1 \frac\partial{\partial r}\int_0^{yr}yf(s)dsdy+r\int_0^1 \frac{\partial^2}{\partial r^2}\int_0^{ry} yf(s)dsdy\\
=&2\int_0^1 y^2f(yr)dy+r\int_0^1 y^3 f'(yr)dy\\
=&2\int_0^1 y^2f(yr)dy+\int_0^1 y^3\frac{\partial}{\partial y} f(yr)dy\\
=&2\int_0^1 y^2f(yr)dy+y^3 f(yr)\bigg|_{y=0}^{y=1}-\int_0^1 3y^2f(yr)dy\\
=& -\int_0^1 y^2f(yr)dy+ f(r)
\end{align*}
Consequently 
$$|w''(r)|
\leq \left(1+\int_0^1 y^2 dy\right)\sup_{0\leq\rho\leq r}|f(\rho)|
=\frac 43 \sup_{0\leq\rho\leq r}|f(\rho)|.$$
\end{proof}

\begin{lemma}\label{derivativeinL1}
Let $f\in C^ 1((0,\rho))$ such that $f'\in L^ 1((0,\rho))$. Then $f\in C^ 0([0,\rho])$.
\end{lemma}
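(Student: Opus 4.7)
The plan is to use the fundamental theorem of calculus to show that $f$ has one-sided limits at the endpoints $0$ and $\rho$, which will then allow one to extend $f$ continuously to $[0,\rho]$.

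First, I would fix any interior reference point $x_{0}\in(0,\rho)$. Since $f\in C^{1}((0,\rho))$, the fundamental theorem of calculus gives, for every $x\in(0,\rho)$, the representation
\[
f(x)=f(x_{0})+\int_{x_{0}}^{x}f'(s)\,ds.
\]
The hypothesis $f'\in L^{1}((0,\rho))$ now does all of the work: the indefinite Lebesgue integral
\[
G(x):=\int_{x_{0}}^{x}f'(s)\,ds
\]
is absolutely continuous on $[0,\rho]$, which is a standard property of $L^{1}$ integrals. In particular, the limits $\lim_{x\to 0^{+}}G(x)$ and $\lim_{x\to\rho^{-}}G(x)$ exist and are finite (they equal $-\int_{0}^{x_{0}}f'$ and $\int_{x_{0}}^{\rho}f'$ respectively, using $f'\in L^{1}$).

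Consequently, defining
\[
f(0):=f(x_{0})+\lim_{x\to 0^{+}}G(x),\qquad f(\rho):=f(x_{0})+\lim_{x\to\rho^{-}}G(x),
\]
yields an extension of $f$ to $[0,\rho]$. Continuity at the endpoints follows immediately since the extension is by construction the continuous function $x\mapsto f(x_{0})+G(x)$; continuity on $(0,\rho)$ was already assumed. There is really no obstacle here — the one subtlety worth flagging is that one must invoke absolute continuity (or equivalently, dominated convergence applied to $f'\cdot\mathbf 1_{[x,x_{0}]}$ as $x\to 0^{+}$) to ensure that the improper integral converges rather than merely existing as a principal value; this is guaranteed by the $L^{1}$ hypothesis on $f'$.
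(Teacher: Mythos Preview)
Your proof is correct and takes essentially the same approach as the paper: both use the fundamental theorem of calculus together with $f'\in L^1$ to control $|f(x)-f(y)|$ by $\int_x^y|f'|$ and conclude that the one-sided limits at the endpoints exist. The paper phrases this via a Cauchy-sequence argument at $x=0$, while you invoke absolute continuity of the indefinite integral; these are equivalent formulations of the same idea.
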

\begin{proof}
We prove continuity at $x=0$. The case $x=\rho$ works similarly. Let $x<y\in(0,\rho]$. Then 
$$|f(x)-f(y)|\leq \int_{x}^ y|f'(t)|dt\leq \int_0^  y|f'(t)|dt.$$
This implies that $f(\epsilon_n)$ is Cauchy for any $\epsilon_n\rightarrow 0^ +$ and hence $f(0):=\lim_{x\rightarrow0^+}f(x)$ is well-defined. To see that $f$ is continuous at $x=0$, we use $f'\in L^1((0,1))$ to estimate
$$|f(0)-f(y)|\leq \int_0^y |f'(t)|dt\rightarrow 0,\hspace{.5cm}\textrm{as $y\rightarrow 0^ +$.}$$
\end{proof}

\section*{Acknowledgements}
The author would like to thank Ernst Kuwert for the suggestion of this interesting topic and the
helpful guidance as well as Marius Müller for the many helpful discussions.
\bibliographystyle{plain}
\bibliography{Quellen}

\end{document}